\documentclass{article}
\usepackage[T2A, T1]{fontenc}
\usepackage[main=english, russian]{babel}
\usepackage[utf8]{inputenc}
\usepackage{amsmath,amsfonts, amssymb, amsthm}
\usepackage{tikz-cd}
\usetikzlibrary{babel}
\usepackage{xcolor}
\usepackage{titlesec}
\usepackage{dutchcal}
\usepackage{marginnote}
\usepackage[a4paper,hmargin=2.5cm,vmargin=2.5cm]{geometry}
\titleformat{\subsubsection}[runin]{\normalfont\bfseries}{\thesubsubsection.}{3pt}{}
\setcounter{tocdepth}{2}
\usepackage[colorlinks=true, linktocpage=true, linkcolor={purple}, citecolor={blue}]{hyperref}

\DeclareSymbolFont{cyrillic}{T2A}{cmr}{m}{n}
\DeclareMathSymbol{\Sha}{\mathalpha}{cyrillic}{216}

\newtheorem{proposition}[subsubsection]{Proposition}
\newtheorem{theorem}[subsubsection]{Theorem}
\newtheorem{lemma}[subsubsection]{Lemma}
\newtheorem{corollary}[subsubsection]{Corollary}
\theoremstyle{definition}
\newtheorem{definition}[subsubsection]{Definition}

\newtheorem{remark}[subsubsection]{Remark}
\newtheorem{conjecture}{Conjecture}

\theoremstyle{plain}
\newtheorem{theor}{Theorem}

\renewcommand{\C}{\mathbb C}

\newcommand{\Z}{\mathbb Z}
\newcommand{\Q}{\mathbb Q}

\newcommand{\R}{\mathbb R}
\renewcommand{\O}{\mathcal O}

\newcommand{\im}{\operatorname{im}}
\renewcommand{\ker}{\operatorname{ker}}

\newcommand{\codim}{\operatorname{codim}}
\newcommand{\rk}{\operatorname{rk}}

\renewcommand{\phi}{\varphi}
\newcommand{\restrict}[1]{{\left|_{{\phantom{|}\!\!}_{#1}}\right.}}

\title{Kähler and non-Kähler Shafarevich--Tate twists}

%%%%%%%%%%%%%%%%%%%%%%%%%%%%%%%%%%%
%%%%%%%%%%%%%%%%%%%%%%%%%%%%%%%%%%%
%%%%%%%%%%%%%%%%%%%%%%%%%%%%%%%%%%%
%%%%%%%%%%% Timeline %%%%%%%%%%%%%%
% Early June: started the file
% June 17. Created a plan of the paper
% .........
% June 28. Finished a rough draft
% June 29. Several subsubsections in preliminaries
% Decided to subdivide the paper into two. 
% Gave names to subsubsections
% Wrote the construction of the identification NS(X)^\pi/eta
% Section 5 (Fujiki class C). Wrote an outline of the proof and started rewriting the proof of the first step (the previous was hard to understand and incorrect)

% Sep 20 v.4.0 final draft
% Nov 7 Got comments from Giulia and Sasha

%%%%%%%%%%%%%%%%%%%%%%%%%%%%%%%%%%
%%%%%%%%%%%%%%%%%%%%%%%%%%%%%%%%%%
%%%%%%%%%%%%%%%%%%%%%%%%%%%%%%%%%%
%%%%%%%%%%%%%%%%%%%%%%%%%%%%%%%%%%

\title{Shafarevich--Tate groups of holomorphic\\ Lagrangian fibrations II}
\author{Anna Abasheva}
\date{}

\begin{document}

\maketitle

\begin{abstract}
    Let $X$ be a compact hyperk\"ahler manifold with a Lagrangian fibration $\pi\colon X\to B$. A {\em Shafarevich--Tate twist} of $X$ is a holomorphic symplectic manifold with a Lagrangian fibration $\pi^\phi\colon X^\phi\to B$ which is isomorphic to $\pi$ locally over the base. In particular, $\pi^\phi$ has the same fibers as $\pi$. A twist $X^\phi$ corresponds to an element $\phi$ in the {\em Shafarevich--Tate group} $\Sha$ of $X$. We show that $X^\phi$ is K\"ahler when a multiple of $\phi$ lies in the connected component of unity of $\Sha$ and give a necessary condition for $X^\phi$ to be bimeromorphic to a K\"ahler manifold.
\end{abstract}

\tableofcontents

\section{Introduction}

\subsection{Definitions}
\begin{definition}
\label{definition irreducible holomorphic symplectic}
    An {\em irreducible holomorphic symplectic} manifold $X$ is a compact complex simply connected manifold admitting a closed holomorphic symplectic form $\sigma$ such that $H^0(\Omega^2_X) = \C\cdot\sigma$. If $X$ is K\"ahler, then we will call $X$ irreducible {\em hyperk\"ahler}\footnote{Most algebraic geometers use terms \textit{holomorphic symplectic manifold} and \textit{hyperk\"ahler manifolds} interchangeably. However, it is important for us to make this distinction because we will encounter non-K\"ahler holomorphic symplectic manifolds in this paper.}.
\end{definition}
\begin{definition}
\label{definition lagrangian fibration}
    A {\em Lagrangian fibration} on an irreducible holomorphic symplectic manifold $X$ is a morphism $\pi\colon X\to B$ with connected fibers to a normal variety $B$ such that the restriction of $\sigma$ to a smooth fiber is zero.
\end{definition}

If $X$ is hyperk\"ahler and the base $B$ is smooth, then $B$ is necessarily isomorphic to $\mathbb P^n$ \cite{hwang2008base}. No examples of Lagrangian fibrations on irreducible holomorphic symplectic manifolds over a base other than $\mathbb P^n$ have been discovered and conjecturally the base should always be $\mathbb P^n$.

\begin{definition}
\label{definition vertical vector fields}
Define the sheaf $T_{X/B}$ of {\em vertical vector fields} on $X$ as the kernel of the map $T_X\to \pi^*T_B/\operatorname{Tors}(\pi^*T_B)$, where $T_B := (\Omega_B)^\vee$ and $\operatorname{Tors}(\pi^*T_B)$ is the torsion subsheaf of $\pi^*T_B$.\footnote{When $B$ is smooth, the sheaf $\pi^*T_B$ is clearly locally free, hence torsion free. We do not know whether $\pi^*T_B$ is torsion free in general.}  
\end{definition} 
The flow of a vertical vector field $v$ induces a vertical automorphism $\exp(v)$ of $X$. 

\begin{definition}
\label{definition of Shafarevich-Tate group}
Consider the sheaf $Aut^0_{X/B}$ on $B$ consisting of all vertical automorphisms that are of the form $\exp(v)$ for some vertical vector field $v$ locally over $B$. The {\em Shafarevich--Tate group} of the fibration $\pi\colon X\to B$ is defined to be the group $\Sha = H^1(B,Aut^0_{X/B})$.\footnote{$\Sha$ is a letter of the Russian alphabet pronounced as {\em “Sha”}. It is the first letter in the last name \foreignlanguage{russian}{Шафаревич} (Shafarevich).}
\end{definition}

The group $\Sha$ has a beautiful geometric interpretation. Cover $B$ by open disks so that $B = \bigcup U_i$. For each subset $I$ of indices, we denote $\bigcap\limits_{i\in I} U_i$ by $U_I$ and $\pi^{-1}(U_I)$ by $X_I$. Every class $\phi\in \Sha$ can be represented by a \v{C}ech $1$-cocycle with coefficients in $Aut^0_{X/B}$. In other words, we have a vertical automorphism $\phi_{ij}$ of $X_{ij}$ for each pair of indices $i,j$, and 
\begin{equation}
\label{cocycle condition}
\phi_{ik} = \phi_{jk}\circ\phi_{ij}.    
\end{equation}
For each $i,j$ glue $X_i$ to $X_j$ by the automorphism $\phi_{ij}$ to get a new variety $X^\phi$. By the cocycle condition (\ref{cocycle condition}) the variety $X^\phi$ is a smooth Hausdorff complex manifold admitting a fibration 
$$
\pi^\phi\colon X^\phi\to B.
$$
\begin{definition}
    The manifold $X^\phi$ constructed above is called the {\em Shafarevich--Tate twist} of $X$ with respect to the class $\phi\in \Sha$. 
\end{definition}

Note that the sheaves $Aut^0_{X/B}$ and $Aut^0_{X^\phi/B}$ are isomorphic. Hence the Shafarevich--Tate group of $\pi\colon X\to B$ is the same as the Shafarevich--Tate group of $\pi^\phi\colon X^\phi\to B$. The Shafarevich--Tate twist of $X^\phi$ with respect to $\psi\in \Sha$ is isomorphic to $X^{\phi + \psi}$.

The Shafarevich--Tate group $\Sha = H^1(B,Aut^0_{X/B})$ has a structure of a topological group, possibly non-Hausdorff \cite[Subsection 3.1]{abasheva2025shafarevich}. Denote its connected component of unity by $\Sha^0$. By Theorem \ref{theorem sha naught} the group $\Sha^0$ is a quotient of $\C$ by a finitely generated subgroup. By \cite[Subsection 6.3]{abasheva2025shafarevich} the discrete part $\Sha/\Sha^0$ of $\Sha$ satisfies:
$$
    (\Sha/\Sha^0)\otimes\Q \simeq H^2(R^1\pi_*\Q).
$$
For a class $\phi\in\Sha$, we will denote by $\overline\phi$ its image in $\Sha/\Sha^0\otimes \Q$. We will denote by $\Sha'$ the set of classes $\phi\in\Sha$ such that $\overline\phi = 0$.

\begin{definition}
\label{def sht deformation}
    A {\em Shafarevich--Tate deformation} is a Shafarevich--Tate twist $X^\phi$ of $X$ with respect to an element $\phi\in\Sha^0$.
\end{definition}

%%%%%%%%%%%%%%%%%%%55

\subsection{Statement of the results}

\begin{theor}[\ref{subsub proof of kahlerness}, Theorem \ref{theorem kahler substitute}]
\label{theorem_kahler}
    Let $\pi\colon X\to B$ be a Lagrangian fibration on an irreducible hyperk\"ahler manifold $X$. Pick a class $\phi\in\Sha'$, i.e., a class $\phi$ such that $r\phi$ lies in $\Sha^0$ for some positive integer $r$. Then the following holds.
    \begin{enumerate}
        \item The twist $X^\phi$ is K\"ahler.
        \item Assume $X$ is projective. Then a twist $X^\phi$ with respect to $\phi\in\Sha'$ is projective if and only if $\phi$ is torsion. 
    \end{enumerate}
\end{theor}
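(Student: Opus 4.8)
The plan is to reduce both statements to linear algebra on $H^2$ equipped with the Beauville--Bogomolov--Fujiki form $q$, together with the global Torelli and projectivity theorems for hyperkähler manifolds. Write $\eta=\pi^*c_1(\O_B(1))\in H^2(X,\Z)$; since the fibers are Lagrangian, $\eta$ is of type $(1,1)$ and isotropic, $q(\eta)=0$. Because $\pi^\phi$ agrees with $\pi$ locally over $B$, the twists in the continuous part $\Sha^0\cong\C/\Gamma$ form a one--parameter family whose period I expect to be $[\sigma+t\eta]$, $t\in\C$. This stays in the period domain: using $q(\eta)=q(\sigma)=q(\sigma,\eta)=0$ one gets $q(\sigma+t\eta)=0$, while pairing against the conjugate leaves $q(\sigma,\bar\sigma)>0$ unchanged. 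The geometric difficulty is already visible: a real class is of type $(1,1)$ for \emph{every} $t$ exactly when it lies in $\eta^\perp\cap H^{1,1}(X,\R)$, and on this hyperplane $q$ is negative semidefinite with radical $\R\eta$. Hence no single positive class survives all the twists, and Kählerness cannot come from transporting a fixed Kähler class.

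For part (1) I would first reduce from $\Sha'$ to $\Sha^0$. Fiberwise multiplication by $r$ on the Lagrangian tori sends the translation cocycle $\phi_{ij}$ to $r\phi_{ij}$, hence defines a finite surjective morphism $[r]\colon X^\phi\to X^{r\phi}$ over $B$; choosing $r$ with $r\phi\in\Sha^0$ exhibits $X^\phi$ as a finite cover of a manifold I will have shown to be Kähler. A finite cover of a compact Kähler manifold is again Kähler: $X^\phi$ lies in Fujiki class $\mathcal C$, and $[\,[r]^*\omega\,]$ is numerically positive on every subvariety $V$ (its top self-intersection equals $\deg([r]|_V)\int_{[r](V)}\omega^{\dim V}>0$), so by the Demailly--Paun description of the Kähler cone it lies in a Kähler chamber. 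The substantial content is thus the case $\phi\in\Sha^0$, a Shafarevich--Tate deformation. These fit into a family $\mathcal X\to\C$ with $\mathcal X_0=X$ Kähler, so small $t$ are Kähler by stability of the Kähler property under small deformations; the main obstacle is to pass to \emph{all} $t$. Since Kählerness is not closed I would not argue by limits but construct an explicit closed positive $(1,1)$--form on $X^{t\phi}$ adapted to the fibration --- a ``Kähler substitute'' built from the semipositive $\eta$ and a fiberwise flat form, exploiting that the gluing automorphisms $\exp(v_{ij})$ are isometries of the flat fiber metrics, so that the local forms differ only by exact terms that can be corrected in \v{C}ech--de Rham cohomology.

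For part (2) I combine part (1) with Huybrechts' criterion: a compact Kähler hyperkähler manifold is projective if and only if its period admits a class $\alpha\in H^2(\Q)$ with $q(\alpha)>0$ and $q(\alpha,\sigma+t\eta)=0$. If $X^{t\phi}$ is projective such an $\alpha$ cannot lie in $\eta^\perp$, for there $q$ is nonpositive; so $q(\alpha,\eta)\ne0$ and the defining relation pins $t$ to an algebraic (i.e.\ $\Gamma\otimes\Q$) value. Conversely, once the period sits at such a value there is a rational $(1,1)$--class $\alpha_0$ with $q(\alpha_0,\eta)\ne0$, and since $\eta$ is itself a rational $(1,1)$--class one adjusts $q(\alpha_0+s\eta)=q(\alpha_0)+2s\,q(\alpha_0,\eta)>0$ for suitable $s\in\Q$, producing an ample class. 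To turn ``algebraic period'' into ``$\phi$ torsion'' I would invoke the description of $\Sha^0\cong\C/\Gamma$ from Theorem \ref{theorem sha naught} and \cite{abasheva2021shafarevich}, under which $\Gamma\otimes\Q$ is exactly the set of period shifts realised by rational classes; then $t\in\Gamma\otimes\Q$ means $rt$ is torsion in $\C/\Gamma$, i.e.\ $\phi$ is torsion in $\Sha$.

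The step I expect to be hardest is the global Kählerness in part (1) for $\phi\in\Sha^0$: the negative semidefiniteness of $q$ on $\eta^\perp\cap H^{1,1}$ blocks any soft cohomological argument with a fixed class, so one must either carry out the fiberwise construction with enough care to control positivity near the singular fibers of $\pi$, or pin down a Kähler model of the period $\sigma+t\eta$ via Torelli. By contrast the projectivity dichotomy should follow formally once part (1) and the lattice identification are in place.
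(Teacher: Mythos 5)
Your part (2) is broadly in line with the paper's argument: both directions ultimately rest on the dichotomy that a rational $(1,1)$-class either pairs non-trivially with $\eta$ (forcing projectivity and pinning the period to a ``rational'' point of $\Sha^0$) or lies in $\eta^\perp$, where $q$ is negative semidefinite. The paper realizes the direction ``torsion $\Rightarrow$ projective'' more concretely, by gluing powers of a relatively ample line bundle across the twist (Lemma \ref{lemma theorem of the square}, Lemma \ref{lemma raynaud}, Theorem \ref{theorem iso between neron severis}), which does not presuppose K\"ahlerness; your version leans on part (1) first and then on a projectivity criterion, which is acceptable once (1) is in place.

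Part (1), however, has two genuine gaps. First, the reduction from $\Sha'$ to $\Sha^0$ via a finite morphism $[r]\colon X^\phi\to X^{r\phi}$ is not available: the fibration has no section, the fibers are torsors rather than abelian varieties with a chosen origin, and --- more seriously --- whatever fiberwise ``multiplication by $r$'' you define over $B^\circ$ has no reason to extend over the singular fibers, which is exactly where the difficulty of the theorem is concentrated. (The cocycles $\phi_{ij}$ are exponentials of vertical vector fields on all of $X_{ij}$, not translations of smooth tori.) Second, and more importantly, the core case $\phi\in\Sha^0$ is not proved but only described as a goal: ``construct an explicit closed positive $(1,1)$-form adapted to the fibration.'' The \v{C}ech--de Rham patching you sketch (local forms differing by exact terms because the gluing maps are isometries of flat fiber metrics) is precisely the naive argument that breaks down near the discriminant, and you give no mechanism that would distinguish $\phi\in\Sha'$ from a general $\phi$ --- yet Theorem \ref{theorem fujiki class c} shows the conclusion fails for general $\phi$, so any correct argument must use $\overline\phi=0$ in an essential way. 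The paper's proof of (1) takes an entirely different route (Section \ref{sec kahler twists}): density of projective twists in $\Sha'$, the Local and Global Torelli theorems to produce a genuinely hyperk\"ahler family with the same periods, the Greb--Lehn--Rollenske theorem to equip its fibers with Lagrangian fibrations, Perego's Bishop-compactness argument to show the central fiber is bimeromorphic to a hyperk\"ahler manifold, and finally Perego's cohomological K\"ahlerness criterion. None of these ingredients appears in your proposal, so part (1) remains unproven.
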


We proved a version of this theorem in \cite[Theorem 1.3]{abasheva2025shafarevich} for a general hyperk\"ahler manifold assuming $\phi\in \Sha^0$. The new proof does not require these assumptions.
\begin{remark}
    A weaker version of Theorem \ref{theorem_kahler} recently appeared in \cite{soldatenkov2024hermitian}. However, our arguments are different, and we prove a more general statement.
\end{remark}

\subsubsection{} A Shafarevich--Tate twist $X^\phi$ of a holomorphic symplectic manifold is holomorphic symplectic, and the fibration $\pi^\phi$ is a Lagrangian fibration \cite[Corollary 3.7]{abasheva2025shafarevich}. We can show more.

\begin{theor}[\ref{subsub proof of theorem b}]
\label{theor irr hol symp}
Let $\pi\colon X\to B$ be a Lagrangian fibration on an irreducible hyperk\"ahler manifold. Then for any $\phi\in\Sha$ we have $H^0(X^\phi, \Omega^2_{X^\phi}) = \C\cdot \sigma$, where $\sigma$ is a holomorphic symplectic form on $X^\phi$. Moreover, $H^1(X^\phi,\Q)=0$. %If we assume that $B = \mathbb P^n$ and $\phi$ has no multiple fibers in codimension one (Definition \ref{def no mult fibers in codim one}), then $X^\phi$ is an irreducible holomorphic symplectic manifold (Definition \ref{definition irreducible holomorphic symplectic}) for any $\phi\in\Sha$.
\end{theor}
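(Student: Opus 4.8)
The plan is to compare $X^\phi$ with $X$ through the common base $B$, exploiting that $X^\phi$ is reglued from the same local pieces by automorphisms $\phi_{ij}=\exp(v_{ij})$ which are vertical and isotopic to the identity. Restricted to a smooth fibre $A$ (an $n$-dimensional complex torus, since $\pi$ is Lagrangian) each $v_{ij}$ is translation-invariant, so $\phi_{ij}|_A$ is a translation, and translations act trivially on $H^\bullet(A,\Omega^p_A)$ and on $H^\bullet(A,\Q)$. This fibrewise triviality is the engine of the argument: it lets me transport the two invariants from $X$, where they are known (since $X$ is irreducible hyperk\"ahler, $H^0(\Omega^2_X)=\C\cdot\sigma$, and $H^1(X,\Q)=0$ by simple connectivity), to $X^\phi$. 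Note that $\dim H^0(X^\phi,\Omega^2_{X^\phi})\ge 1$ is automatic from the symplectic form recalled above, so only the reverse inequality is at stake.

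For $H^1(X^\phi,\Q)$ I would argue on the derived pushforward. Because $\phi_{ij}$ is homotopic to the identity over $U_{ij}$ through the flow $\exp(t v_{ij})$, it induces the identity on $R\pi_*\Q|_{U_{ij}}$ in the derived category; hence $R\pi^\phi_*\Q$, glued from the pieces $R\pi_*\Q|_{U_i}$ along these induced maps, is isomorphic to $R\pi_*\Q$. The two Leray spectral sequences then coincide and $H^\bullet(X^\phi,\Q)\cong H^\bullet(X,\Q)$; in particular $H^1(X^\phi,\Q)=H^1(X,\Q)=0$. If one prefers to avoid homotopy-coherence issues, the same conclusion follows in low degrees from $R^0\pi^\phi_*\Q=\Q_B$, from $R^1\pi^\phi_*\Q\cong R^1\pi_*\Q$ (equal monodromy), from $H^1(B,\Q)=E_\infty^{1,0}\subseteq H^1(X,\Q)=0$, and from the injectivity of the transgression $d_2$ on $E_2^{0,1}$, which is identical for the two fibrations.

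For $H^0(\Omega^2)$ I would use the horizontal filtration $F^\bullet\Omega^2_{X^\phi}$, with $\mathrm{gr}^k=\pi^{\phi*}\Omega^k_B\otimes\Omega^{2-k}_{X^\phi/B}$, and the long exact sequences attached to $0\to F^{k+1}\to F^k\to\mathrm{gr}^k\to 0$. Fibrewise triviality yields canonical isomorphisms $\pi^\phi_*\Omega^p_{X^\phi/B}\cong\pi_*\Omega^p_{X/B}$ over the smooth locus, so by the projection formula the pushed-forward graded pieces $\pi^\phi_*\mathrm{gr}^k$ coincide with those for $X$. If moreover the connecting maps coincide, equivalently if the filtered object $R\pi^\phi_*(\Omega^2_{X^\phi},F)$ agrees with $R\pi_*(\Omega^2_X,F)$ and not merely on associated graded, then $H^0(X^\phi,\Omega^2_{X^\phi})\cong H^0(X,\Omega^2_X)=\C\cdot\sigma$, which together with the lower bound settles the claim. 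The point to transport is that for $X$ the symplectic form lies in $F^1$ (the fibres are Lagrangian) and $H^0(\Omega^2_X)\to H^0(\mathrm{gr}^0)$ vanishes, so the large group $H^0(B,\pi_*\Omega^2_{X/B})$ is killed by the connecting map into $H^1(F^1)$; the same must then hold for $X^\phi$.

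The main obstacle is exactly this last compatibility. Coherent cohomology is not a homotopy invariant, so ``isotopic to the identity'' is not enough: the action of $\phi_{ij}=\exp(v_{ij})$ on the filtered pushforward is a priori only unipotent, being trivial on $\mathrm{gr}$, and I must rule out the off-diagonal part. I expect to do this by showing that the infinitesimal generator, the Lie derivative $\mathcal L_{v_{ij}}$, annihilates $R\pi_*(\Omega^\bullet_X,F)$ as a filtered map (fibrewise this is the vanishing of the Lie derivative of a translation on the cohomology of an abelian variety), whence $\exp(\mathcal L_{v_{ij}})=\mathrm{id}$. A secondary technical point, and the reason this argument dispenses with the hypotheses of the previous paper, is extending the sheaf isomorphisms from the smooth locus across the possibly multiple or non-reduced singular fibres on the normal base $B$; here one uses normality together with torsion-freeness of the relevant pushforwards to control codimension one.
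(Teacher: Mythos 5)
Your argument for $H^1(X^\phi,\Q)=0$ contains a claim that the paper itself refutes: the assertion that $R\pi^\phi_*\Q\simeq R\pi_*\Q$ in the derived category, hence $H^\bullet(X^\phi,\Q)\simeq H^\bullet(X,\Q)$. If that were true, $b_2$ would be a twist invariant, whereas Theorem \ref{theorem b2} shows $b_2(X^\phi)=b_2(X)-1$ whenever $\overline\phi\notin\im d_2$ (and the Bogomolov--Guan example realizes this). The failure is exactly the homotopy-coherence issue you flag: each $\phi_{ij}$ acts trivially on $R\pi_*\Q|_{U_{ij}}$ up to homotopy, but the homotopies need not be compatible on triple overlaps, and the resulting obstruction is precisely the class $\overline\phi\in H^2(B,R^1\pi_*\Q)$ which shifts the transgression $d_2$ on $E_2^{0,2}$ (Proposition \ref{proposition d_2}). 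Your fallback in low degrees does survive, and is essentially the paper's Proposition \ref{proposition H1 vanishes}: the only inputs are $E_2^{1,0}=H^1(B,\Q)=0$ (simple connectivity of $B$, via a projective twist and Koll\'ar) and $E_2^{0,1}=H^0(B,R^1\pi_*\Q)=0$, both of which are genuinely twist-independent; there is no need to invoke any identification of $d_2$ between the two fibrations, which is the kind of statement one must not assert without proof here.

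For $H^0(\Omega^2_{X^\phi})$ the gap is the step you yourself call the main obstacle, and it is not filled. Fibrewise triviality of translations on $H^q(A,\Omega^p_A)$ only shows that $\mathcal L_{v_{ij}}$ kills the \emph{associated graded} of $R\pi_*(\Omega^2_X,F)$; the induced action on the filtered object is then unipotent, and the off-diagonal part lives in groups like $\Hom_{D(B)}\bigl(R\pi_*\operatorname{gr}^0,R\pi_*\operatorname{gr}^1[1]\bigr)$ which have no reason to vanish. Indeed, the rational analogue of your proposed lemma (triviality of $\exp(\mathcal L_{v_{ij}})$ on the filtered de Rham pushforward) is false for the reason above, so an argument is needed that distinguishes the coherent situation, and none is given. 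The paper circumvents this entirely: it first shows (Lemma \ref{lemma holomorphic 2 form restricts trivially}) that \emph{every} global holomorphic $2$-form on $X^\phi$ restricts trivially to smooth fibres, because the monodromy invariants in $H^2(F,\C)$ are one-dimensional of type $(1,1)$; it then contracts such forms against vertical vector fields to build the exact sequence $0\to\Omega^{[2]}_B\to(\pi_*\Omega^2_{X^\phi})'\to\mathcal{End}'(\Omega^{[1]}_B)\to 0$, whose outer terms are defined by data local over $B$ and hence twist-independent; and finally it uses $H^0(\Omega^{[2]}_B)=H^1(\Omega^{[2]}_B)=0$ (Theorem \ref{theorem cohomology of omega}) to get $H^{2,0}(X^\phi)\simeq H^0(\mathcal{End}'(\Omega^{[1]}_B))$. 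Crucially, no claim is made that the extension class is twist-invariant --- the vanishing of the cohomology of the subsheaf in degrees $0$ and $1$ makes that irrelevant. Your filtration is the right object to look at, but without a substitute for this vanishing your comparison of connecting maps remains an unproven assertion at the decisive point.
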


\subsubsection{} In the next theorem we compute the second Betti number of Shafarevich--Tate twists. Note that the differential $d_2$ on the second page of the Leray spectral sequence of $\Q_X$ for the map $\pi$ maps $H^0(B, R^2\pi_*\Q)$ to $H^2(B, R^1\pi_*\Q)\simeq (\Sha/\Sha^0)\otimes \Q$.

\begin{theor}[\ref{subsub proof of theorem c}]
\label{theorem b2}
    Let $\pi\colon X\to B$ be a Lagrangian fibration on an irreducible hyperk\"ahler manifold $X$ and $\phi\in \Sha$. Then exactly one of the following two cases occurs.
    \begin{enumerate}
        \item If the image $\overline\phi$ of $\phi$ in $H^2(R^1\pi_*\Q)$ lies in the image of $d_2$, then $b_2(X^\phi)=b_2(X)$. Moreover, there is a cohomology class $h\in H^2(X^\phi)$ which restricts to an ample class on a smooth fiber.
        \item If $\overline\phi$ is not in the image of $d_2$, then $b_2(X^\phi) = b_2(X) - 1$. In this case all cohomology classes $h\in H^2(X^\phi)$ restrict trivially to a smooth fiber.
    \end{enumerate}
\end{theor}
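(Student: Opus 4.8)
The plan is to compare the Leray spectral sequences of $\Q_X$ and $\Q_{X^\phi}$ for $\pi$ and $\pi^\phi$ side by side and to locate the single term responsible for any change in $b_2$. Because $\pi$ and $\pi^\phi$ agree locally over $B$, their higher direct images coincide, $R^q\pi_*\Q\cong R^q\pi^\phi_*\Q$, so both spectral sequences have the \emph{same} second page $E_2^{p,q}=H^p(B,R^q\pi_*\Q)$; only the differentials can differ. Among the terms with $p+q=2$, the edge term $E_2^{2,0}=H^2(B,\Q)$ receives no incoming $d_2$ because $E_2^{0,1}=H^0(B,R^1\pi_*\Q)=0$ (this follows from $b_1(X)=0$ together with $b_1(X^\phi)=0$, Theorem~\ref{theor irr hol symp}, and $H^1(B,\Q)=0$), so $E_\infty^{2,0}=H^2(B,\Q)$ for both manifolds. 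The term $E_2^{1,1}=H^1(B,R^1\pi_*\Q)$ has its outgoing $d_2$ landing in $H^3(B,\Q)$, which vanishes since $H^{\mathrm{odd}}(B,\Q)=0$ for the base, so it survives unchanged to $E_\infty$ for both. Hence the entire discrepancy is concentrated in
\[
E_\infty^{0,2}=\ker\bigl(d_2\colon H^0(B,R^2\pi_*\Q)\to H^2(B,R^1\pi_*\Q)\bigr),
\]
and the problem reduces to comparing $\ker d_2^X$ with $\ker d_2^{\phi}$ on the common space $E_2^{0,2}$.

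The next step is to pin down $E_2^{0,2}=H^0(B,R^2\pi_*\Q)$ and the fiber-ample direction inside it. Over the smooth locus $R^2\pi_*\Q$ is $\wedge^2 R^1\pi_*\Q$, and since the monodromy of the abelian fibration is large (Zariski dense in the symplectic group preserving the fiber polarization), its invariants are spanned by the polarization class; I would show that this forces $E_2^{0,2}$ to be one-dimensional, spanned by the fiber polarization $\theta$, which is exactly the fiber-ample direction. For the original $X$ a relatively ample line bundle provides a global class in $H^2(X)$ restricting to $\theta$ on a fiber, so $\theta$ survives the spectral sequence of $X$, giving $d_2^X(\theta)=0$ and hence $\operatorname{im} d_2^X=0$; in particular the hypothesis $\overline\phi\in\operatorname{im} d_2$ of case~(1) is the statement $\overline\phi=0$. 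Since the image of the fiber-restriction map $H^2(X^\phi)\to H^2(F)$ for a smooth fiber $F$ equals $E_\infty^{0,2}(X^\phi)=\ker d_2^{\phi}\subseteq\Q\theta$, the two alternatives of the theorem are precisely $\theta\in\ker d_2^{\phi}$ (a class restricting to an ample class on $F$ exists) versus $\theta\notin\ker d_2^{\phi}$ (every class restricts trivially).

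The heart of the proof, and the step I expect to be the main obstacle, is the comparison identity
\[
d_2^{\phi}(\theta)=d_2^X(\theta)+\overline\phi=\overline\phi
\]
in $H^2(B,R^1\pi_*\Q)\simeq(\Sha/\Sha^0)\otimes\Q$. I would derive it from the \v{C}ech description of $X^\phi$: the gluing automorphisms $\exp(v_{ij})$ are fiberwise translations, so they act trivially on the local system $R^2\pi_*\Q$ and fix $\theta$, but they shift the way a local lift of $\theta$ to the Leray filtration is patched, and the first-order term of this shift is cup product of $\theta$ with the cocycle $\{v_{ij}\}$. Tracking this through the exponential and the polarization identification should match the resulting obstruction with the image $\overline\phi$ of $\phi$ in $(\Sha/\Sha^0)\otimes\Q\simeq H^2(B,R^1\pi_*\Q)$. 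The delicate points are the precise first-order computation (signs, and why the translation acting on a lift of the polarization produces exactly $\overline\phi$ rather than a scalar multiple), the reduction modulo $\Sha^0$, and the compatibility of the polarization isomorphism $R^1\pi_*\Q\xrightarrow{\sim}(R^1\pi_*\Q)^\vee$ with the identification $H^2(B,R^1\pi_*\Q)\simeq(\Sha/\Sha^0)\otimes\Q$.

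Granting the comparison identity, the theorem follows immediately. If $\overline\phi\in\operatorname{im} d_2^X$, i.e.\ $\overline\phi=0$, then $d_2^{\phi}(\theta)=0$, so $\ker d_2^{\phi}=\Q\theta=E_2^{0,2}$, whence $b_2(X^\phi)=b_2(X)$ and the surviving class $\theta$ is realized by an $h\in H^2(X^\phi)$ restricting to an ample class on $F$. If $\overline\phi\notin\operatorname{im} d_2^X$, then $d_2^{\phi}(\theta)=\overline\phi\ne 0$, so $d_2^{\phi}$ is injective on the one-dimensional space $E_2^{0,2}$, giving $E_\infty^{0,2}(X^\phi)=0$; thus $b_2(X^\phi)=b_2(X)-1$ and, since $\ker d_2^{\phi}=0$, the image of $H^2(X^\phi)\to H^2(F)$ vanishes, i.e.\ every cohomology class restricts trivially to a smooth fiber. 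These are exactly the two cases.
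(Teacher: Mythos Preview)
Your overall architecture is the same as the paper's: compare the Leray spectral sequences, observe that $E_\infty^{2,0}$ and $E_\infty^{1,1}$ are unchanged, and reduce to comparing $\ker d_2$ with $\ker d_2^\phi$ on $E_2^{0,2}=H^0(B,R^2\pi_*\Q)$ via a formula of the shape $d_2^\phi = d_2 + (\text{something involving }\overline\phi)$. That is exactly how the paper proceeds (Propositions \ref{prop b_2} and \ref{prop classes non trivial on fibers}, resting on Proposition \ref{proposition d_2}).

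The gap is your claim that $E_2^{0,2}=H^0(B,R^2\pi_*\Q)$ is one-dimensional. Your monodromy argument only controls sections of the local system $R^2\pi_*\Q|_{B^\circ}$ over the smooth locus, and indeed $H^0(B^\circ,R^2\pi_*\Q|_{B^\circ})=\Q\theta$ by Theorem~\ref{theorem_restriction} and the global invariant cycle theorem (see \S\ref{subsub restriction map}). But the restriction map $H^0(B,R^2\pi_*\Q)\to H^0(B^\circ,R^2\pi_*\Q|_{B^\circ})$ is in general \emph{not} an isomorphism: reducible singular fibers contribute extra global sections of $R^2\pi_*\Q$ that vanish on smooth fibers (already for an elliptic K3 with an $I_n$ fiber, $H^0(\mathbb P^1,R^2\pi_*\Q)$ has dimension $>1$). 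Consequently your deduction that $\im d_2^X=0$, and hence that case~(1) is simply ``$\overline\phi=0$'', is unjustified; the statement of the theorem genuinely allows $d_2$ to have nonzero image.

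The paper's remedy is to keep track of the whole space $H^0(B,R^2\pi_*\Q)$. For each $\xi\in H^0(B,R^2\pi_*\Q)$ one has a scalar $\lambda_\xi$ measuring its restriction to a smooth fiber (\S\ref{subsub isos}), and the key identity is
\[
d_2^\phi(\xi)=d_2(\xi)+\lambda_\xi\,\overline\phi
\]
(Proposition~\ref{proposition d_2}); your formula is the special case $\lambda_\theta=1$. From this one sees that $\ker d_2$ and $\ker d_2^\phi$ share the hyperplane $\{\lambda_\xi=0\}\cap\ker d_2$ (Corollary~\ref{corollary kernels equal}), so their dimensions differ by at most one, and the missing dimension is present in $\ker d_2^\phi$ exactly when some $\xi$ with $\lambda_\xi\neq 0$ satisfies $d_2(\xi)=-\lambda_\xi\overline\phi$, i.e.\ when $\overline\phi\in\im d_2$ (Proposition~\ref{prop classes non trivial on fibers}). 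This yields the dichotomy without any assumption on $\dim H^0(B,R^2\pi_*\Q)$. Your \v Cech computation of $d_2^\phi(\theta)$ is the right idea; carried out for a general $\xi$ (as in the proof of Proposition~\ref{proposition d_2}) it produces the $\lambda_\xi$ factor and fixes the argument.
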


\begin{definition}
    A complex manifold is said to be of {\em Fujiki class $\mathcal C$} if it is bimeromorphic to a K\"ahler manifold.
\end{definition}
We will derive the following criterion for non-K\"ahlerness of Shafarevich--Tate twists as an easy corollary of Theorem \ref{theorem b2}.
\begin{theor}[\ref{subsub proof of theorem d}]
\label{theorem fujiki class c}
    Let $\pi\colon X\to B$ be a Lagrangian fibration on a hyperk\"ahler manifold $X$. Pick $\phi\in \Sha$ such that $\overline\phi$ is not in the image of $d_2$. Then $X^\phi$ is not of Fujiki class $\mathcal C$, in particular, not K\"ahler.
\end{theor}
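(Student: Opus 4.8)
The plan is to contradict the second alternative of Theorem~\ref{theorem b2}: under the hypothesis $\overline\phi\notin\im d_2$, \emph{every} class in $H^2(X^\phi,\R)$ restricts to $0$ on a smooth fibre $F$ of $\pi^\phi$. The Kähler case is then immediate, and is exactly the ``in particular'' clause: if $X^\phi$ carried a Kähler form $\omega$, its restriction $\omega|_F$ would be a nonzero (Kähler) class in $H^2(F,\R)$, contradicting Theorem~\ref{theorem b2}(2). The real content is to upgrade this to a bimeromorphic obstruction, and the key technical input will be that the fibres of $\pi^\phi$ are genuinely compact Kähler tori even when $X^\phi$ itself is not Kähler.

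So suppose $X^\phi$ is of Fujiki class $\mathcal C$. Resolving the bimeromorphic map to a Kähler manifold, I obtain a modification $\mu\colon\tilde Y\to X^\phi$ with $\tilde Y$ compact Kähler; fix a Kähler class $[\omega]$ on $\tilde Y$ and put $h:=\mu_*[\omega]\in H^2(X^\phi,\R)$. Let $Z\subset X^\phi$ be the proper analytic locus over which $\mu$ is not an isomorphism, and choose a general point $b\in B$ with smooth fibre $F=(\pi^\phi)^{-1}(b)$. For general $b$ the preimage $\tilde F:=\mu^{-1}(F)$ is a smooth compact Kähler submanifold of $\tilde Y$, and $g:=\mu|_{\tilde F}\colon\tilde F\to F$ is a modification, since it is an isomorphism over the dense open set $F\setminus Z$. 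Moreover $F$ is a compact Kähler complex torus: as $\pi^\phi$ is locally isomorphic to $\pi$ over $B$, the fibre $F$ is one of the torus fibres of the hyperkähler manifold $X$, on which an ambient Kähler form restricts.

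Now I push the Kähler form forward along $g$ and test it against the class $h$. The current $T:=g_*\bigl(\omega|_{\tilde F}\bigr)$ is a closed positive $(1,1)$-current on $F$, and it is nonzero: for a Kähler form $\omega_F$ on $F$ one computes
\[
\langle T,\omega_F^{\,n-1}\rangle=\int_{\tilde F}\omega|_{\tilde F}\wedge g^*\omega_F^{\,n-1}>0,
\]
because $\omega|_{\tilde F}$ is Kähler and $g^*\omega_F^{\,n-1}$ is strictly positive on the dense locus where $g$ is a local isomorphism. On the other hand its cohomology class is $[T]=g_*[\omega|_{\tilde F}]=h|_F$, which vanishes by Theorem~\ref{theorem b2}(2). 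Thus $0=[T]\cdot[\omega_F]^{\,n-1}=\int_F T\wedge\omega_F^{\,n-1}>0$, a contradiction; hence $X^\phi$ is not of Fujiki class $\mathcal C$.

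The main point to nail down is the identity $g_*[\omega|_{\tilde F}]=h|_F$. I would establish it either by base change for the general fibre, or, to be safe, by writing $[\omega]=\mu^*h+e$ on $\tilde Y$ with $e:=[\omega]-\mu^*\mu_*[\omega]$ supported on the $\mu$-exceptional divisors (as $\mu_*e=0$), restricting to $\tilde F$ to get $[\omega|_{\tilde F}]=g^*(h|_F)+e|_{\tilde F}=e|_{\tilde F}$ since $h|_F=0$, and concluding $g_*[\omega|_{\tilde F}]=g_*[e|_{\tilde F}]=0$. The only other ingredient is the standard fact that a closed positive current with vanishing cohomology class on a compact Kähler manifold is zero; everything then reduces to the observation, already noted, that the general fibre $F$ is Kähler, which is what makes this positivity argument available despite $X^\phi$ being non-Kähler.
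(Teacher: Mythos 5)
Your proposal is correct and takes essentially the same route as the paper: both derive a contradiction with case (2) of Theorem \ref{theorem b2} by extracting from a K\"ahler class on a bimeromorphic model a class $h\in H^2(X^\phi,\R)$ whose restriction to a general fiber is nonzero, using that the bimeromorphism is an isomorphism over a dense open subset of that fiber. Your current-theoretic computation on $\tilde F\to F$ simply makes explicit the positivity step that the paper compresses into the single assertion ``the restriction of $f$ to a general fiber is birational onto its image, hence $h|_F$ is non-trivial.''
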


\subsubsection{Outline of the paper.} We start by recalling basic facts about Lagrangian fibrations and their Shafarevich--Tate twists in Section \ref{section preliminaries}. Many results in Section \ref{section preliminaries} were contained in our previous work \cite{abasheva2025shafarevich} but were stated assuming that the base $B$ of a Lagrangian fibration $\pi$ is smooth and $\pi$ has no multiple fibers in codimension one. We show that these assumptions are not necessary. In Section \ref{sec projective twists} we will prove the second part of Theorem \ref{theorem_kahler}, which is easier than the first part. The first part of Theorem \ref{theorem_kahler} will be proven in Section \ref{sec kahler twists}. In Section \ref{section topology of shafarevich} we study cohomological properties of Shafarevich--Tate twists. We will see that Shafarevich--Tate twists have trivial first cohomology in Subsection \ref{subsection fundamental group of twists} and prove that $H^0(\Omega^2_{X^\phi})$ is one-dimensional in Subsection \ref{subsection hodge numbers of twists}. These two statements immediately imply Theorem \ref{theor irr hol symp}. Finally, in Subsection \ref{subsection second cohomology of a twist} we prove Theorem \ref{theorem b2} and then show how to derive Theorem \ref{theorem fujiki class c} from Theorem \ref{theorem b2}.

\subsubsection{Acknowledgements.} I thank my advisor Giulia Sacc\`a as well as (in alphabetical order) Rodion D\'eev, Daniel Huybrechts, Yoon-Joo Kim, Nikita Klemyatin, Morena Porzio, Evgeny Shinder, Sasha Viktorova, and Claire Voisin for their interest and helpful conversations. I'd like to thank especially Daniel Huybrechts; I benefited enormously from conversations with him while I was working on the final version of the paper. Giulia Sacc\`a and Sasha Viktorova read the final draft of this paper, and I truly appreciate their comments. I completed the first version of this paper during my stay in Oberwolfach, Germany at the workshop ``Algebraic Geometry: Wall Crossing and Moduli Spaces, Varieties and Derived Categories''. I am deeply grateful to the organizers of the workshop for the invitation and the opportunity to present the results of this paper. The period when I actively worked on this project was very hard for me for multiple reasons. I thank everyone who supported me during this time, especially my friends Masha, Morena, Sasha, and Zoe. I acknowledge partial support from NSF FRG grant DMS-2052934.

\section{Preliminaries}
\label{section preliminaries}

\subsection{Lagrangian fibrations}

\subsubsection{Beauville-Bogomolov-Fujiki form.}\label{subsub bbf form}
One of the key cohomological features of hyperk\"ahler manifold is the existence of a quadratic form on their second cohomology called {\em Beauville-Bogomolov-Fujiki form (BBF form)}. 
%\begin{definition}
%\label{def bbf form}
%    Let $X$ be an irreducible holomorphic symplectic manifold of dimension $2n$ with a holomorphic symplectic form $\sigma$. Normalize $\sigma$ so that $\int_X\sigma^n\overline\sigma^n = 1$. The {\em Beauville-Bogomolov-Fujiki form (BBF form)} is the quadratic form $q$ on $H^2(X,\C)$ defined by the formula
%    \begin{equation}
%    \label{eq bbf}
%        q_X(\alpha) = \frac{n}{2}\int_X\alpha^2\sigma^{n-1}\overline\sigma^{n-1} - (n-1)\int_X\alpha\sigma^n\overline\sigma^{n-1}\cdot\int_X\alpha\sigma^{n-1}\overline\sigma^n.
%    \end{equation}
%\end{definition}
%It is immediate to see that for any $d$-closed $(1,1)$-form $\alpha$
%$$
%q_X(\alpha,\sigma) = q_X(\alpha,\overline\sigma) = 0.
%$$
%Indeed, all summands in equation (\ref{eq bbf}) vanish for type reasons.

%When $X$ is hyperk\"ahler, the Beaville-Bogomolov-Fujiki form enjoys very special properties:
\begin{theorem}[{\cite[Part III, Corollary 23.11 \& Proposition 23.14]{gross2003calabi}}]\label{theorem bbf}
    Let $X$ be an irreducible hyperk\"ahler manifold of dimension $2n$. Then there exists an integral symmetric non-degenerate form $q$ on $H^2(X)$ such that $\forall \alpha\in H^2(X,\Z)$,
    $$
    c_Xq(\alpha)^n = \int_X \alpha^{2n}.
    $$
    The constant $c_X$ is positive and depends only on the deformation type of $X$.
\end{theorem}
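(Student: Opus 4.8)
The plan is to build $q$ analytically out of the holomorphic symplectic form $\sigma$ and then to prove the Fujiki relation by a vanishing-order argument on the period domain. Since $X$ is irreducible holomorphic symplectic (Definition \ref{definition irreducible holomorphic symplectic}) we have $h^{2,0}(X)=1$, so $H^{2,0}(X)=\C\cdot\sigma$; as $X$ is Kähler, the Hodge decomposition reads $H^2(X,\C)=\C\sigma\oplus H^{1,1}(X)\oplus\C\bar\sigma$. Normalizing $\sigma$ so that $\int_X(\sigma\bar\sigma)^n=1$, I would set
\[
q(\alpha)=\frac n2\int_X\alpha^2(\sigma\bar\sigma)^{n-1}+(1-n)\Big(\int_X\alpha\,\sigma^{n-1}\bar\sigma^{n}\Big)\Big(\int_X\alpha\,\sigma^{n}\bar\sigma^{n-1}\Big),
\]
which is manifestly a quadratic form. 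First I would record its values on the Hodge pieces: using only that $\sigma^{n+1}=\bar\sigma^{n+1}=0$ for type reasons, a short computation gives $q(\sigma)=q(\bar\sigma)=0$, $q(\sigma,\bar\sigma)=\tfrac12$, $q(\sigma,\beta)=q(\bar\sigma,\beta)=0$ and $q(\beta)=\tfrac n2\int_X\beta^2(\sigma\bar\sigma)^{n-1}$ for $\beta\in H^{1,1}(X)$. The Hodge--Riemann bilinear relations on $H^{1,1}(X)$ then show $q|_{H^{1,1}}$ is nondegenerate and positive on a Kähler class, whence $q$ is nondegenerate on all of $H^2(X,\C)$.

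For the Fujiki relation write $F(\alpha)=\int_X\alpha^{2n}$, a degree-$2n$ polynomial that is purely topological, hence independent of the complex structure. The key observation is that $F$ vanishes to order $n$ at the period of every deformation of $X$: if $\sigma'$ is such a period and $\eta$ is arbitrary, then $F(\sigma'+t\eta)=\sum_k\binom{2n}k t^k\int_X(\sigma')^{2n-k}\eta^{k}$, and the factor $(\sigma')^{2n-k}$ has Hodge type $(4n-2k,0)$, which forces $\int_X(\sigma')^{2n-k}\eta^{k}=0$ whenever $k<n$ (a $(p,0)$-form with $p>2n$ vanishes). Thus all derivatives of $F$ of order $<n$ vanish at $\sigma'$. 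Moreover $q(\sigma')=0$ by the computation above, so every period lies on the smooth quadric $Q=\{q=0\}\subset\bP(H^2(X,\C))$. By unobstructedness of deformations (Bogomolov--Tian--Todorov) together with local Torelli, the period map is a local isomorphism onto an open, hence Zariski-dense, subset of $Q$, whose dimension $b_2-2$ matches $h^{1,1}=\dim\mathrm{Def}(X)$. Consequently $F$ vanishes to order $n$ along all of $Q$; since $q$ is irreducible, $q^n\mid F$, and comparing degrees yields $F=c\,q^n$ for a constant $c$. Evaluating at $\sigma+\bar\sigma$, where $F(\sigma+\bar\sigma)=\binom{2n}n$ and $q(\sigma+\bar\sigma)=1$, identifies $c=\binom{2n}n>0$.

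It remains to arrange integrality and deformation invariance. Because $F$ has rational (indeed integral) coefficients, the reduced quadric $\{F=0\}_{\mathrm{red}}=Q$ is defined over $\Q$, so its defining quadratic form $q$ is proportional to a rational one; rescaling to a primitive integral form, and adjusting the constant to the corresponding $c_X$, produces the integral nondegenerate $q$ of the statement. Positivity of $c_X$ is visible from $c=\binom{2n}n$, or directly from $\int_X\omega^{2n}>0$ and $q(\omega)>0$ for a Kähler class $\omega$. Finally both the integral lattice $(H^2(X,\Z),q)$ and the Fujiki constant $c_X$ are monodromy- and deformation-invariant, since $F$ is topological and $q$ is reconstructed from $F$; this gives the last clause. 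The principal obstacle in this scheme is the deformation-theoretic input — unobstructedness plus local Torelli — needed to guarantee that the periods fill an open subset of $Q$; everything else is either a bounded type computation or formal.
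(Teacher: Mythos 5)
The paper does not prove this statement: it is imported verbatim from the literature, with a citation to \cite[Part III, Corollary 23.11 \& Proposition 23.14]{gross2003calabi}, so there is no in-paper argument to compare against. Your proposal reproduces what is essentially the standard proof found in that reference (Beauville's integral formula for $q$, plus Fujiki's vanishing-order argument on the period quadric), and its overall architecture is sound: the computation of $q$ on the Hodge pieces, the order-$n$ vanishing of $F(\alpha)=\int_X\alpha^{2n}$ at every period, the use of unobstructedness and local Torelli to see that periods fill an open subset of $Q=\{q=0\}$, the divisibility $q^n\mid F$, and the descent to a primitive integral form via the rationality of $\{F=0\}_{\mathrm{red}}$ are all correct and are exactly the steps of the classical argument.

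The one place you genuinely gloss over a nontrivial input is the nondegeneracy of $q$ on $H^{1,1}(X)$. The form there is $\beta\mapsto\tfrac n2\int_X\beta^2(\sigma\bar\sigma)^{n-1}$, and the Hodge--Riemann bilinear relations as usually stated apply to $\int_X\beta^2\omega^{2n-2}$ for a K\"ahler class $\omega$; they do not directly address the pairing against $(\sigma\bar\sigma)^{n-1}$. The standard resolution (Beauville's) invokes Yau's theorem to produce a genuinely hyperk\"ahler metric, writes $\sigma=\omega_J+i\omega_K$ so that $\sigma\bar\sigma=\omega_J^2+\omega_K^2$, and then computes the signature $(3,b_2-3)$ by an explicit primitive-decomposition argument; this is the single analytic step that cannot be dismissed as ``a bounded type computation.'' The same signature statement is what pins down the sign convention $q(\omega)>0$ for K\"ahler classes, which you need to conclude $c_X>0$ after rescaling $q$ to a primitive integral form when $n$ is odd (for $n$ even the sign of the rescaling is invisible in $c_X$, but the normalization of $q$ itself still requires it). With that step filled in, your proof is complete and agrees with the cited source.
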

\begin{remark}
    The integral form $q$ from Theorem \ref{theorem bbf} is uniquely defined if we require it to be non-divisible.
\end{remark}
\begin{definition}
\label{definition bbf}
    The form $q$ from Theorem \ref{theorem bbf} is called the {\em Beauville--Bogomolov--Fujiki form} or {BBF form}. 
\end{definition}

\subsubsection{Fibers of Lagrangian fibrations are abelian varieties.}\label{subsub fibers are projective} Consider a Lagrangian fibration $\pi\colon X\to B$ (Definition \ref{definition lagrangian fibration}) on an irreducible holomorphic symplectic manifold $X$. A general fiber of $\pi$ is a complex torus and even an abelian variety \cite{campana2006isotrivialite}. The projectivity of smooth fibers follows easily from the theorem below.
\begin{theorem}[\cite{voisin1992stabilite, matsushita1999fibre}]
\label{theorem_restriction}
    Let $\pi\colon X\to B$ be a Lagrangian fibration on a hyperk\"ahler manifold and $F$ its smooth fiber. Then the restriction map
    $$
        H^2(X,\Q)\to H^2(F,\Q)
    $$
    has rank one.
\end{theorem}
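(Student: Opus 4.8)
The plan is to follow the Beauville--Bogomolov--Fujiki approach: combine the Fujiki relation (Theorem \ref{theorem bbf}) with the Hodge index theorem on the fiber to show the restriction map has rank at most one, and then exhibit one nonzero restricted class to see the rank is at least one. Throughout, write $r\colon H^2(X,\Q)\to H^2(F,\Q)$ for the restriction and $2n=\dim_\C X$, so $\dim_\C F=n$.

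First I would reduce to $(1,1)$-classes. The map $r$ is a morphism of weight-two Hodge structures, and since $H^{2,0}(X)=\C\cdot\sigma$ with $\sigma|_F=0$ (the fiber is Lagrangian, Definition \ref{definition lagrangian fibration}), we get $r(H^{2,0})=r(H^{0,2})=0$. Hence $\rk r=\rk(r|_{H^{1,1}(X)})$, and it suffices to analyze real $(1,1)$-classes. Next I would introduce $f=\pi^*\eta$, the pullback of a K\"ahler/ample class $\eta$ on the $n$-dimensional base, so that $f|_F=0$, $f^{n+1}=0$ and $f^n=c\,[F]$ with $c>0$. Since $\int_X f^{2n}=0$, the Fujiki relation forces $q(f)^n=0$, hence $q(f)=0$. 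Now fix a K\"ahler class $\omega$ on $X$ and a class $\alpha\in H^{1,1}(X,\R)$. The polarized Fujiki relation applied to the monomial $\alpha^k\,\omega^{n-k}\,f^n$ collapses dramatically: because $q(f)=0$, no factor $f$ can be paired with another $f$, so in every surviving term each of the $n$ factors $f$ is paired with one of the $n$ remaining factors, and each such term is proportional to $q(\alpha,f)^k\,q(\omega,f)^{n-k}$. Intersecting against $[F]=c^{-1}f^n$ this yields, for $1\le k\le n$,
\[
\int_F (\alpha|_F)^k\,(\omega|_F)^{n-k}=\mathrm{const}\cdot q(\alpha,f)^k\,q(\omega,f)^{n-k}.
\]

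The crux is the implication: if $q(\alpha,f)=0$, then $\alpha|_F=0$. Set $a:=\alpha|_F$, a real $(1,1)$-class on the K\"ahler manifold $F$, and $\omega_F:=\omega|_F$, which is a nonzero K\"ahler class on $F$. The displayed identity with $k=1$ and $k=2$ gives $\int_F a\,\omega_F^{n-1}=0$ and $\int_F a^2\,\omega_F^{n-2}=0$; the first says $a$ is primitive, and by the Hodge--Riemann bilinear relations the pairing $a\mapsto\int_F a^2\,\omega_F^{n-2}$ is negative definite on primitive real $(1,1)$-classes, so the second forces $a=0$. I expect this to be the main obstacle to state cleanly: everything depends on getting the primitive decomposition and the sign in Hodge--Riemann right, and on the fiber genuinely being K\"ahler (which it is, as a smooth fiber of a fibration on a hyperk\"ahler manifold).

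Finally I would assemble the bound. The functional $\alpha\mapsto q(\alpha,f)$ is nonzero on $H^{1,1}(X,\R)$: it vanishes on $\C\sigma\oplus\C\bar\sigma$ by the Hodge orthogonality of the BBF form (here I use $f\in H^{1,1}$), while $q$ is nondegenerate and $f\ne 0$, so $q(\cdot,f)$ cannot vanish identically and must therefore be nonzero on $H^{1,1}$. Its kernel thus has codimension one and, by the previous paragraph, is contained in $\ker(r|_{H^{1,1}})$; hence $\rk r\le 1$. Conversely $r\ne 0$ because $\omega|_F$ is a nonzero K\"ahler class on $F$, and therefore $\rk r=1$, as claimed.
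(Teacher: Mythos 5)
Your argument is correct, and it is essentially the standard proof of this statement: the paper itself offers no proof but cites \cite{voisin1992stabilite, matsushita1999fibre}, and the route you take --- $q(\pi^*\eta)=0$ from the Fujiki relation of Theorem \ref{theorem bbf}, the polarized Fujiki identity to evaluate $\int_F(\alpha|_F)^k(\omega|_F)^{n-k}$, and then primitivity plus Hodge--Riemann on the fiber to conclude that $q(\alpha,\pi^*\eta)=0$ forces $\alpha|_F=0$ --- is exactly the mechanism used in those references. The only point worth a parenthetical in a final write-up is the case $n=1$ (where your $k=2$ step is vacuous), but there $H^2(F,\Q)$ is one-dimensional and $\omega|_F\ne 0$ already gives the claim.
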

Thanks to Theorem \ref{theorem_restriction}, for any K\"ahler class $h\in H^2(X,\R)$ some real multiple $c\cdot h$ of $h$ restricts to an integral class on $F$. The class $c\cdot h\restrict{F}$ is K\"ahler and integral, hence ample. It follows that $F$ is indeed an abelian variety.

\subsubsection{Discriminant.} The image in $B$ of singular fibers of $\pi$ is called the {\em discriminant} of the Lagrangian fibration and will be denoted by $\Delta$. It is known to be a divisor \cite[Proposition 3.1]{hwang2009characteristic}. We define $B^\circ$ to be the complement of $\Delta$ and $X^\circ:=\pi^{-1}(B^\circ)$.

\subsubsection{Vertical vector fields.}\label{subsub vertical vector fields} The holomorphic symplectic form $\sigma$ enables us to construct a lot of vertical vector fields on $X$. First, it induces an isomorphism $\Omega_X\xrightarrow{\iota_\sigma} T_X$. Let $X'$ denote the subset $\pi^{-1}(B^{reg})\subset X$. Consider the composition of maps
$$
\pi^*\Omega_{B^{reg}} \hookrightarrow \Omega_{X'}\xrightarrow[\sim]{\iota_\sigma} T_{X'} \to \pi^*T_{B^{reg}}.
$$
It is easy to see that it vanishes on $X^\circ$. Indeed, for every form $\alpha$ on an open subset of the base, the vector field dual to $\pi^*\alpha$ is tangent to smooth fibers of $\pi$. Since $\pi^*T_{B^{reg}}$ is locally free, the map $\pi^*\Omega_{B^{reg}}\to \pi^*T_{B^{reg}}$ vanishes on $X'$. Therefore, the map $\iota_\sigma$ sends $\pi^*\Omega_{B^{reg}}$ into $T_{X'/B^{reg}}$ (Definition \ref{definition vertical vector fields}). By taking pushforwards to $B^{reg}$ we obtain a map
$$
\pi_*\pi^*\Omega_{B^{reg}} \hookrightarrow \pi_*T_{X'/B^{reg}}.   
$$
Since $\pi_*\mathcal O_X\simeq\mathcal O_B$, the projection formula implies that $\pi_*\pi^*\Omega_{B^{reg}}\simeq\Omega_{B^{reg}}$, and we get a map:
\begin{equation}
\label{map omega to txb on breg}
\Omega_{B^{reg}} \hookrightarrow \pi_*T_{X'/B^{reg}}.   
\end{equation}

The sheaf $T_{X/B}$ is the kernel of the map $T_X\to \pi^*T_B/\operatorname{Tors}(\pi^*T_B)$ (Definition \ref{definition vertical vector fields}). The kernel of a map of a reflexive sheaf to a torsion-free sheaf is a reflexive sheaf, hence $T_{X/B}$ is reflexive. The pushforward of a reflexive sheaf along an equidimensional morphism is reflexive \cite[Corollary 1.7]{hartshorne1980stable}, hence $\pi_*T_{X/B}$ is reflexive as well. Therefore, the map (\ref{map omega to txb on breg}) extends to a map
\begin{equation}
\label{map omega to txb}
    \iota_\sigma\colon \Omega^{[1]}_B \hookrightarrow \pi_*T_{X/B}.
\end{equation}
Here $\Omega^{[1]}_B$ denotes the sheaf of {\em reflexive differentials} on $B$, i.e., the double dual of $\Omega_B$. Equivalently, $\Omega^{[1]}_B:= j_*\Omega_{B^{reg}}$, where $j\colon B^{reg}\hookrightarrow B$ is the embedding of the smooth locus of $B$ into $B$. Similarly, we define $\Omega^{[i]}_B$ as $j_*\Omega^i_{B^{reg}}$.

\subsubsection{} The map (\ref{map omega to txb}) turns out to be an isomorphism. We showed this fact in \cite[Lemma 2.3]{abasheva2025shafarevich} assuming that $B$ is smooth. This assumption is not necessary, as we will see very soon. The proof relies on the following elementary lemma.

\begin{lemma}
\label{lemma pullbacks of forms}
    Let $\pi\colon Y\to S$ be a proper flat morphism of possibly non-compact complex manifolds. As before, denote by $S^\circ$ the image of smooth fibers of $\pi$ and by $Y^\circ$ the preimage of $S^\circ$ in $Y$. Let $\Delta := S\setminus S^\circ$ be the discriminant locus of $\pi$. Suppose that $\alpha$ is a holomorphic $k$-form on $Y$ such that the restriction of $\alpha$ to $Y^\circ$ satisfies
    $$
    \alpha\restrict{Y^\circ} = \pi^*\beta^\circ
    $$
    for some holomorphic $k$-form $\beta^\circ$ on $S^\circ$. Then the form $\beta^\circ$ extends to a holomorphic $k$-form $\beta$ on $S$ and $\alpha = \pi^*\beta$.
\end{lemma}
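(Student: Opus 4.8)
The plan is to reduce everything to extending $\beta^\circ$ across $\Delta$ as a holomorphic form, after which the identity $\alpha = \pi^*\beta$ comes for free. First I would observe that $Y^\circ = \pi^{-1}(S^\circ)$ is dense in $Y$: since $\pi$ is flat of finite type it is open, and $\Delta$ is a proper analytic subset of $S$, so $\pi^{-1}(\Delta)$ is a proper analytic subset of $Y$. Hence, once $\beta^\circ$ is extended to a holomorphic $k$-form $\beta$ on $S$, the two holomorphic forms $\alpha$ and $\pi^*\beta$ agree on the dense open set $Y^\circ$ and therefore coincide everywhere by the identity theorem. Thus the entire content of the lemma is the holomorphic extension of $\beta^\circ$, which is a local question on $S$.

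By the second Riemann extension theorem for holomorphic forms, it suffices to extend $\beta^\circ$ across the smooth codimension-one locus of $\Delta$: the remaining part of $\Delta$ (its singular locus together with all components of codimension $\geq 2$) has codimension $\geq 2$ in the manifold $S$, and a holomorphic form defined off an analytic subset of codimension $\geq 2$ extends automatically. I would therefore fix a general point $s_0$ of a codimension-one component of $\Delta$, choose coordinates $w_1, \ldots, w_m$ on $S$ near $s_0$ with $\Delta = \{w_1 = 0\}$, and write $\beta^\circ = \sum_I g_I(w)\, dw_I$ with each $g_I$ holomorphic on $\{w_1 \neq 0\}$; the goal is to show that each $g_I$ has no negative powers of $w_1$ in its Laurent expansion.

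The key step, and the main obstacle, is the local analysis at a multiple fiber, since the assumption forbidding multiple fibers has been dropped and no local holomorphic section of $\pi$ need exist over $s_0$. At a general point $y_0$ of the fiber over $s_0$, flatness together with the smoothness of $Y$ provides a local normal form in which $\pi$ reads $(\tau, x_2, \ldots, x_m, u_1, \ldots, u_d) \mapsto (\tau^\mu, x_2, \ldots, x_m)$, where $\mu$ is the multiplicity of the fiber along $\Delta$ and the $u_j$ are fiber coordinates. Then $\pi^*(dw_1) = \mu \tau^{\mu-1}\, d\tau$ and $\pi^*(dw_j) = dx_j$, so in $\pi^*\beta^\circ = \alpha$ the coefficient attached to a multi-index $I$ is, up to the factor $\mu\tau^{\mu-1}$ when $1 \in I$, the function $g_I(\tau^\mu, x_2, \ldots, x_m)$. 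Expanding $g_I = \sum_n c_{I,n}(x)\, w_1^n$ and imposing holomorphicity of $\alpha$ in $\tau$ forces $\mu n \geq 0$ when $1 \notin I$, and $\mu n + \mu - 1 \geq 0$ when $1 \in I$; since $n$ is an integer and $\mu \geq 1$, both inequalities yield $n \geq 0$. Hence every $g_I$ extends holomorphically across $\{w_1 = 0\}$, which is exactly what is needed.

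Combining the three steps finishes the argument: the local extensions glue to a holomorphic $\beta$ on $S \setminus \Sigma$ with $\Sigma$ of codimension $\geq 2$, Riemann extension produces $\beta$ on all of $S$, and the identity-theorem argument of the first step yields $\alpha = \pi^*\beta$. The only delicate point to verify with care is the local normal form at a general point of a possibly non-reduced fiber; everything else is formal.
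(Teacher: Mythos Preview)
Your argument is correct and follows the same overall strategy as the paper: reduce by Hartogs to the smooth codimension-one part of $\Delta$, then carry out a local computation there. The paper packages the local step differently: instead of a normal form on $Y$, it passes to a finite cover $f\colon V\to U$ of the base, ramified along $\Delta\cap U$, over which the base-changed fibration acquires a section $s$; then $s^*F^*\alpha$ extends $f^*\beta^\circ$ holomorphically to $V$, and a Laurent computation (written out only for $k=1$) shows that $\beta^\circ$ itself extends to $U$. Your normal form is equivalent to this---the coordinate slice $\{u_1=\cdots=u_d=0\}$ is exactly such a section after base change---so the substance is the same, but your presentation is a little more direct and treats all $k$ uniformly. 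Your caveat about justifying the normal form is well placed: at a general $y_0$ it follows from generic smoothness of a component of $\pi^{-1}(\Delta)_{\mathrm{red}}\to\Delta$ (forcing $d(\pi^*w_2),\ldots,d(\pi^*w_m)$ to be independent at $y_0$) together with the fact that $\mathcal O_{Y,y_0}$ is a UFD, which lets one write $\pi^*w_1=\tau^\mu$ after absorbing a unit into $\tau$.
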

\begin{proof}
    Suppose that $\alpha\restrict{\pi^{-1}(S')} = \pi^*\beta'$ for some form $\beta'$ on an open subset $S'\subset S$ with complement of codimension at least two. Then we are done. Indeed, by Hartogs theorem $\beta'$ extends to a holomorphic form $\beta$ on $S$. The forms $\pi^*\beta$ and $\alpha$ coincide on an open subset, hence they coincide on $Y$. Therefore, it is enough to prove the statement for some $S'\subset S$ as above.

    If $\codim \Delta \ge 2$, then we are done, so let us assume that $\codim \Delta = 1$. Pick a general point $b\in\Delta$. Let $U$ be a neighborhood of $b$. It is enough to show that $\beta^\circ$ extends to a holomorphic form on $U$. The fibration $\pi$ might not admit a local section in a neighborhood $U$ of $b$; yet, for some finite cover $f\colon V\to U$ ramified in $\Delta\cap U$, the base change morphism $\pi_V\colon X_V\to V$ of $\pi$ to $V$ admits a section. Call this section $s\colon V\to X_V$ and denote the map $X_V\to X$ by $F$. We obtain the following diagram
    $$
    % https://tikzcd.yichuanshen.de/#N4Igdg9gJgpgziAXAbVABwnAlgFyxMJZABgBpiBdUkANwEMAbAVxiRAA0B9ANRAF9S6TLnyEUZAIxVajFm14Ch2PASITy0+s1aIOnAKr9BIDMtFrSU6lrm7Dik8JVjkAJkubZOkACEjSkVUUd0prLzZ2fmkYKABzeCJQADMAJwgAWyQAVmocCCQAFjDtNgAdUrQsfxBUjKR3EDykAGZqBjoAIxgGAAUnc10GGCScEGLbEHLKg2razMQyRvzEdRkS3SmsHlm0+dWmxFa1iaSdusPc5aLGuiwGNgALCAgAazP5hoOcm7vH57eHHMkIsDg0bN4AGLvJD7ZaLLpgKAtRbgtgINpYMDeOAQBhYJF8Ch8IA
\begin{tikzcd}
X_V \arrow[d, "\pi_V"] \arrow[r, "F"]      & X_U \arrow[d, "\pi_U"] \arrow[r, hook] & X \arrow[d, "\pi"] \\
V \arrow[r, "f"] \arrow[u, "s", bend left] & U \arrow[r, hook]                      & B                 
\end{tikzcd}
    $$
    The following equality of forms on $F^{-1}((X_U)^\circ)$ holds:
    %\cite[Corollaire 17.16.2]{EGAIV}
    $$
    F^*\alpha\restrict{F^{-1}((X_U)^\circ)} = \pi_V^*f^*\beta^\circ.
    $$
    It follows that the form $s^*F^*\alpha$ coincides with $f^*\beta^\circ$ on $V^\circ$. Therefore $f^*\beta^\circ$ can be extended to a form $\beta_V:=s^*F^*\alpha$ on $V$. As we will see in a moment, this implies that $\beta^\circ$ extends to a holomorphic form on $U$. Indeed, choose coordinates $(t,z_1,\ldots z_{n-1})$ on $U$ and $(s,z_1,\ldots z_{n-1})$ on $V$ such that $\Delta\cap U = \{t=0\}$ and the map $f$ sends $(s,z_1,\ldots z_{n-1})$ to $(s^k, z_1,\ldots z_{n-1})$. Write 
    $$
    \beta^\circ = hdt + \sum\limits_{i=1}^{n-1}h_i dz_i
    $$
    for some functions $h$ and $h_i$ on $U^\circ$. Then
    $$
    f^*\beta^\circ = kh(s^k,z)s^{k-1}ds + \sum\limits_{i=1}^{n-1} h_i dz_i.
    $$
    The form $f^*\beta^\circ$ extends to a holomorphic form on $V$. Hence the functions $h_i$'s extend to holomorphic functions on $V$. They are bounded on $V$, hence bounded on $U$. Therefore, $h_i$'s extend to holomorphic functions on $U$. The function $h(s^k,z)s^{k-1}$ is also bounded, hence so is
    $$
    h(t,z)t = h(s^k,z)s^k.
    $$
    Therefore, $h$ has at worst a simple pole at $\Delta$. But the form
    $$
    f^*\frac{dt}{t} = k\frac{ds}{s},
    $$
    is not holomorphic. Hence $h$ is actually holomorphic on $U$. It follows that $\beta^\circ$ extends to a holomorphic form on $U$.
\end{proof}

\begin{theorem}
\label{theorem iso omega and t}
    The map $\iota_\sigma\colon\Omega^{[1]}_B\to \pi_*T_{X/B}$ is an isomorphism.
\end{theorem}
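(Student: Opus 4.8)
The plan is to exploit that both $\Omega^{[1]}_B$ and $\pi_*T_{X/B}$ are reflexive sheaves on the normal variety $B$, together with the injectivity of $\iota_\sigma$ already established in (\ref{map omega to txb}). A morphism between reflexive sheaves that restricts to an isomorphism over the complement of an analytic subset of codimension $\ge 2$ is itself an isomorphism, since a reflexive sheaf $\mathcal F$ satisfies $\mathcal F \simeq j_*(\mathcal F|_U)$ for any such open $U$. Hence it suffices to prove that $\iota_\sigma$ is an isomorphism away from a codimension-two subset of $B$. As $B$ is normal, $B^{sing}$ already has codimension $\ge 2$, so I may work over $B^{reg}$; there the discriminant $\Delta$ is a divisor, and I only need to verify the isomorphism at a general point of each component of $\Delta$, the rest of $B$ being of codimension at least two.

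Over $B^\circ = B^{reg}\setminus\Delta$ the map is already an isomorphism: the smooth fibers are Lagrangian tori, $\sigma$ identifies $T_{X^\circ/B^\circ}$ with $\pi^*\Omega_{B^\circ}$, and pushing forward via $\pi_*\mathcal O_X \simeq \mathcal O_B$ and the projection formula recovers $\iota_\sigma$ as exactly this identification, as in (\ref{map omega to txb on breg}). The real content is therefore surjectivity at a general point $b\in\Delta$. Fix such a $b$, a small neighborhood $U\subset B^{reg}$, and an arbitrary section $v$ of $\pi_*T_{X/B}$ over $U$, that is, a vertical vector field $v$ on $X_U$. I must produce a holomorphic $1$-form $\beta$ on $U$ with $v = \iota_\sigma(\beta)$.

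The idea is to contract $\sigma$ with $v$: set $\alpha := \iota_v\sigma$, a holomorphic $1$-form on the (smooth) manifold $X_U$. Since $v$ is tangent to the fibers and a smooth fiber is Lagrangian, for any vertical $w$ over $X^\circ_U$ we have $\alpha(w) = \sigma(v,w) = 0$; thus $\alpha|_{X^\circ_U}$ is a section of $\pi^*\Omega_{U^\circ}$ and equals $\pi^*\beta^\circ$ for a $1$-form $\beta^\circ$ on $U^\circ := U\setminus\Delta$, again by the projection formula. I then apply Lemma \ref{lemma pullbacks of forms} to the morphism $\pi\colon X_U\to U$, which is flat by miracle flatness ($X_U$ and $U$ are smooth and $\pi$ is equidimensional), and to the form $\alpha$: the lemma yields that $\beta^\circ$ extends to a holomorphic $1$-form $\beta$ on $U$ and that $\alpha = \pi^*\beta$. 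The equality $\iota_v\sigma = \pi^*\beta$ then says precisely that $v$ is the image of $\beta$ under $\iota_\sigma$, establishing surjectivity at $b$ and completing the argument.

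The main obstacle is exactly the behavior at the general point of $\Delta$, where the fibers degenerate and $\pi$ need not admit a local section — in particular $\pi$ may have multiple fibers in codimension one, the very hypothesis our earlier argument in \cite{abasheva2021shafarevich} had to assume away. This is where Lemma \ref{lemma pullbacks of forms} carries the weight: by passing to a finite cover $f\colon V\to U$ ramified along $\Delta$ that restores a section, pulling $\alpha$ back, extending via the section, and descending, it shows that $\beta^\circ$ extends across $\Delta$ without acquiring a pole. Once this extension is secured the remainder is formal, and the reflexivity reduction closes the gap in codimension $\ge 2$.
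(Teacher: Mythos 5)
Your proof is correct and follows essentially the same route as the paper's: reduce to surjectivity, contract $\sigma$ with a vertical vector field to get a form that is a pullback over $B^\circ$, extend across the discriminant via Lemma \ref{lemma pullbacks of forms}, and close the codimension-two gap using reflexivity of $\Omega^{[1]}_B$ and $\pi_*T_{X/B}$. You spell out the reflexivity reduction and the flatness hypothesis more explicitly than the paper does, but the key idea and the key lemma are identical.
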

\begin{proof}
    This map is definitely an isomorphism over $B^\circ$ and is injective (\ref{subsub vertical vector fields}). It is enough to show that it is surjective. Let $v$ be a vertical vector field over an open subset $U\subset B$. Then the form $\iota_v\sigma$ equals $\pi^*\beta^\circ$ for some holomorphic $1$-form $\beta^\circ$ on $B^\circ\cap U$. By Lemma \ref{lemma pullbacks of forms}, the form $\beta^\circ$ extends to a holomorphic form $\beta$ on $U^{reg}$ and $\iota_v\sigma\restrict{\pi^{-1}(U^{reg})}$ coincides with $\pi^*\beta$. Hence the map $\iota_\sigma$ sends the form $\beta$, considered as a section of $\Omega^{[1]}_B$ over $U$, to $v$. 
\end{proof}

\subsubsection{Higher pushforwards of $\mathcal O_X$}\label{subsub higher pushforwards} When the base $B$ of a Lagrangian fibration is smooth, the higher pushforward sheaves $R^i\pi_*\mathcal O_X$ are locally free \cite{matsushita2005higher}. Without the smoothness assumption one can show that the sheaves $R^i\pi_*\mathcal O_X$ are reflexive for all $i\ge 0$ \cite[Proposition 3.6]{ou2019lagrangian}. Let $\omega$ be a K\"ahler form on $X$. Consider the composition of maps
$$
\Omega^{[1]}_B\xrightarrow{\iota_\sigma} \pi_*T_{X/B}\xrightarrow{f_\omega} R^1\pi_*\mathcal O_X. 
$$
Here $f_\omega$ sends a vertical vector field $v$ to the class $[\iota_v\omega]_{\bar\partial}$ of the $\bar\partial$-closed $(0,1)$-form $\iota_v\omega$ under the $\bar\partial$-differential. 

%Note that the map $f_\omega\colon \pi_*T_{X/B}\to R^1\pi_*\mathcal O_X$ depends only on the class of $\omega$ under the $\bar\partial$-differential. Indeed, for any $(1,0)$-form $\alpha$,
%$$
%\iota_v\bar\partial\alpha = -%\bar\partial(\iota_v\alpha)
%$$
%because the vector field $v$ is holomorphic.

\begin{theorem}[\cite{ou2019lagrangian},\cite{matsushita2005higher}]
\label{theorem matsushita}
    Let $\pi\colon X\to B$ be a Lagrangian fibration on a projective manifold. Then the map $\Omega^{[1]}_B\to R^1\pi_*\mathcal O_X$ and the induced maps $\Omega^{[i]}_B\to R^i\pi_*\mathcal O_X$ are isomorphisms.
\end{theorem}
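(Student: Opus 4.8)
The plan is to reduce everything to the degree-one statement over the smooth locus $B^{reg}$ of the base, where it becomes a comparison of vector bundles, and then bootstrap to higher degrees. First I would exploit reflexivity. By definition $\Omega^{[1]}_B = j_*\Omega_{B^{reg}}$, and $R^1\pi_*\O_X$ is reflexive by \cite[Proposition 3.6]{ou2019lagrangian}, so both sheaves are recovered as the pushforward of their restriction from $B^{reg}$, whose complement has codimension $\geq 2$ since $B$ is normal. Hence a morphism between them is an isomorphism as soon as its restriction to $B^{reg}$ is one, and it suffices to work there. Moreover the map factors as $\Omega^{[1]}_B\xrightarrow{\iota_\sigma}\pi_*T_{X/B}\xrightarrow{f_\omega}R^1\pi_*\O_X$, and $\iota_\sigma$ is already an isomorphism by Theorem \ref{theorem iso omega and t}; so I only need $f_\omega$ to be an isomorphism over $B^{reg}$, where both sides are locally free of rank $n = \dim B$.

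Over the open set $B^\circ$ of smooth fibers the map $f_\omega$ is an isomorphism by the Hodge theory of abelian varieties. A general fiber $F$ is an abelian variety (\ref{subsub fibers are projective}), so a vertical vector field is a translation-invariant section $v\in H^0(F,T_F)$, and $f_\omega$ sends it to the class of $\iota_v\omega$ in $H^{0,1}(F) = H^1(F,\O_F)$. Choosing a flat Kähler metric on $F$, the form $\iota_v\omega\restrict{F}$ is harmonic, and $v\mapsto \iota_v\omega\restrict{F}$ is the contraction $T^{1,0}F\to\overline{\Omega^{1,0}F}$, which is an isomorphism because $\omega\restrict{F}$ is nondegenerate. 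Thus $f_\omega$ is an isomorphism of rank-$n$ bundles over $B^\circ$.

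It remains to extend the isomorphism across the discriminant $\Delta$, which has codimension one in $B^{reg}$; this is the main obstacle, since reflexivity only controls codimension two. Injectivity of $f_\omega$ over all of $B^{reg}$ is free: its kernel would be a subsheaf of the torsion-free sheaf $\pi_*T_{X/B}$ supported on $\Delta$, hence zero. To kill the cokernel, which is supported on $\Delta$, I would control the determinant. Since $X$ is hyperkähler, $\omega_X\simeq\O_X$, so $\omega_{X/B}\simeq\pi^*\omega_{B^{reg}}^{-1}$ over $X':=\pi^{-1}(B^{reg})$; relative Grothendieck duality for the equidimensional map $\pi$ then gives $R^n\pi_*\O_{X'}\simeq(\pi_*\omega_{X'/B^{reg}})^\vee\simeq\omega_{B^{reg}}$, matching $\det\Omega_{B^{reg}} = \omega_{B^{reg}}$. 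Comparing these via the cup product $\det R^1\pi_*\O_{X'}\to R^n\pi_*\O_{X'}$ reduces everything to showing that the determinant of $f_\omega$ has no zeros along $\Delta$. The hard part is this nonvanishing: I would establish it using the local normal form of codimension-one degenerations of Lagrangian fibrations (the generic singular fibers being of semi-abelian type, cf.\ \cite{matsushita2005higher}) and computing the limit of $\iota_v\omega$ directly. Alternatively, because $B^{reg}$ is smooth, one may simply localize Matsushita's argument \cite{matsushita2005higher} over a neighborhood of a general point of $\Delta$.

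Finally, the higher-degree statements $\Omega^{[i]}_B\simeq R^i\pi_*\O_X$ follow formally once the degree-one case is known over $B^{reg}$. There $\Omega^{[i]}_B = \wedge^i\Omega_{B^{reg}}$, the induced map is $\wedge^i f_\omega$ followed by the cup product $\wedge^i R^1\pi_*\O_X\to R^i\pi_*\O_X$, and for abelian fibers the algebra $H^\bullet(F,\O_F)$ is the exterior algebra on $H^1(F,\O_F)$, so this cup product is an isomorphism over $B^\circ$ and extends over $B^{reg}$ by the same codimension-one analysis. Reapplying reflexivity then yields the isomorphisms over all of $B$.
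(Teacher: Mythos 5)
This statement is not proved in the paper at all: it is imported as a black box from \cite{ou2019lagrangian} and \cite{matsushita2005higher}, and the paper's own contribution starts only with Corollary \ref{corollary matsushita}, which transports the cited result to non-projective twists. So there is no internal proof to compare against; your proposal has to be judged on its own.

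As a proof it has a genuine gap, and the gap sits exactly where the entire content of the theorem lives. Your reductions are fine as far as they go: reflexivity does reduce the problem to $B^{reg}$, the composition with $\iota_\sigma$ does reduce it to $f_\omega$, and the Hodge-theoretic computation over $B^\circ$ is correct. But the extension across the discriminant divisor $\Delta$ is the theorem, and at that point you write that you would ``establish it using the local normal form of codimension-one degenerations'' or ``simply localize Matsushita's argument'' --- i.e.\ you invoke the very result you are supposed to be proving. The determinant strategy you set up does not close the loop either: it requires knowing that $\det R^1\pi_*\mathcal O_X\simeq\omega_{B^{reg}}$, which you propose to extract from the cup product $\det R^1\pi_*\mathcal O_X\to R^n\pi_*\mathcal O_X$; but that cup product is only known to be an isomorphism over $B^\circ$, so it carries exactly the same codimension-one degeneration problem you are trying to solve. (Had you observed that, once both determinants are identified with $\omega_{B^{reg}}$, the determinant of $f_\omega$ becomes a global holomorphic function on $B^{reg}$, hence a nonzero constant by normality of the projective variety $B$, the argument would be closer to closing --- but you did not, and the identification of determinants is itself the unproven step.) A second, smaller gap: you assert that $R^1\pi_*\mathcal O_X$ is locally free of rank $n$ over $B^{reg}$. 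Reflexivity gives this only when $\dim B\le 2$; in general local freeness over $B^{reg}$ is a nontrivial conclusion of Matsushita's theorem (the paper explicitly derives it as a consequence of Corollary \ref{corollary matsushita}), not an input you may assume.
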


\begin{corollary}
\label{corollary matsushita}
    Let $\pi\colon X\to B$ be a Lagrangian fibration on an irreducible hyperk\"ahler manifold, not necessarily projective. Then Theorem \ref{theorem matsushita} holds for any Shafarevich--Tate twist $X^\phi$ of $X$, in particular for $X$ itself, i.e.,
    $$
    R^i\pi^\phi_*\mathcal O_{X^\phi}\simeq \Omega^{[i]}_B.
    $$
\end{corollary}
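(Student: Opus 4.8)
The plan is to reduce the corollary to the projective case already established in Theorem \ref{theorem matsushita}, in two independent steps: first remove the twist, replacing $X^\phi$ by $X$; then remove the projectivity hypothesis on $X$.

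\emph{Removing the twist.} Since $\pi^\phi$ is isomorphic to $\pi$ locally over $B$, I would choose a cover $B=\bigcup U_i$ together with isomorphisms of fibrations $\pi^{-1}(U_i)\cong (\pi^\phi)^{-1}(U_i)$. These induce isomorphisms $R^i\pi^\phi_*\O_{X^\phi}\restrict{U_i}\cong R^i\pi_*\O_X\restrict{U_i}$, and on an overlap $U_{ij}$ the two identifications differ by the automorphism of $R^i\pi_*\O_X\restrict{U_{ij}}$ induced by the transition map $\phi_{ij}\in Aut^0_{X/B}$. The key point is that this induced automorphism is the identity. Indeed, over $B^\circ$ the map $\phi_{ij}$ restricts on each fiber to a translation of a complex torus — a vertical holomorphic vector field on a torus is translation invariant, so its flow, and hence any element of $Aut^0_{X/B}$, acts by translation — and a translation is isotopic to the identity, so it acts trivially on the cohomology of the fiber. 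Thus $\phi_{ij}$ acts trivially on $R^i\pi_*\O_X\restrict{B^\circ}$, and since $R^i\pi_*\O_X$ is reflexive (hence torsion free) by \cite{ou2019lagrangian}, the action is trivial over all of $U_{ij}$. Therefore the local isomorphisms glue to a global isomorphism $R^i\pi^\phi_*\O_{X^\phi}\cong R^i\pi_*\O_X$; the same computation identifies $\pi^\phi_*T_{X^\phi/B}$ with $\pi_*T_{X/B}$ compatibly with $\iota_\sigma$. This reduces the corollary to proving Theorem \ref{theorem matsushita} for $X$ itself, without assuming $X$ projective.

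\emph{Removing projectivity.} For $X$ the map $\Omega^{[i]}_B\to R^i\pi_*\O_X$ is the one built in \ref{subsub higher pushforwards} from a K\"ahler form on $X$. As both sheaves are reflexive, it suffices to prove the map is an isomorphism away from a closed set of codimension $\ge 2$; that is, over $B^\circ$ and at the generic point of each component of the discriminant $\Delta$. Over $B^\circ$ the fibers are complex tori and the statement is the classical identification $R^i\pi_*\O\cong\Omega^i_{B^\circ}$ for a smooth torus fibration. The entire difficulty is therefore concentrated at the generic points of $\Delta$, and this is the step I expect to be the main obstacle.

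To handle it I would deform $X$ to a projective manifold. Using Matsushita's deformation theory of Lagrangian fibrations, $X$ fits into a family $\mathcal X\to S$ over a disk with $\mathcal X_0=X$ in which the fibration class $\pi^*\O_B(1)$ stays of type $(1,1)$, so that the Lagrangian fibration persists as a family $\Pi\colon\mathcal X\to\mathcal B$ over $S$, and in which the projective fibers are dense. By Theorem \ref{theorem matsushita} the relative map $\Omega^{[i]}_{\mathcal B_s}\to R^i\Pi_{s*}\O_{\mathcal X_s}$ is an isomorphism for the dense set of projective $s$. The remaining — and genuinely delicate — point is to descend this to $s=0$: I would show that the cokernel of the relative map is flat over $S$, so that its vanishing on a dense set of fibers forces its vanishing on the central fiber. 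This rests on base-change compatibility of the higher direct images in the $S$-direction, which is available because the Hodge numbers of the smooth K\"ahler fibers $\mathcal X_s$ are deformation invariant. Equivalently, one could analyze the local model of the degeneration along a disk transverse to a generic point of $\Delta$ and note that its degeneration type, hence the cokernel of the comparison map there, is unchanged under the deformation, so the projective computation transfers verbatim. An alternative to the deformation argument would be to invoke a K\"ahler version of the decomposition theorem in the theory of Hodge modules, which directly yields $R^i\pi_*\O_X\cong\Omega^{[i]}_B$ in the non-projective setting.
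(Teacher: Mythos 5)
Your first step (removing the twist) is correct and is exactly the mechanism the paper uses: elements of $Aut^0_{X/B}$ restrict to translations on smooth fibers, hence act trivially on $R^i\pi_*\mathcal O_X$ over $B^\circ$, and torsion-freeness propagates this over $\Delta$, so $R^i\pi^\phi_*\mathcal O_{X^\phi}\simeq R^i\pi_*\mathcal O_X$ canonically for every $\phi$. The problem is your second step. The descent of the isomorphism $\Omega^{[i]}_{\mathcal B_s}\to R^i\Pi_{s*}\mathcal O_{\mathcal X_s}$ from a dense set of projective parameters $s$ to the central fiber $s=0$ is not justified: the flatness of the cokernel over $S$ is precisely the content you would need to prove, and it does not follow from deformation-invariance of the Hodge numbers of $\mathcal X_s$. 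That invariance gives cohomology-and-base-change for the composite $\mathcal X\to S$ (constancy of $h^i(\mathcal X_s,\mathcal O)$), not for the relative pushforwards along $\Pi\colon\mathcal X\to\mathcal B$ followed by restriction to $\mathcal B_0$, which is the base-change statement actually required. Nor does ``the support of the cokernel is closed'' save you: a closed analytic subset of $\mathcal B$ disjoint from $\mathcal B_s$ for all $s$ in a dense subset of the disk can perfectly well be contained in the single fiber $\mathcal B_0$. On top of this, setting up the family $\Pi\colon\mathcal X\to\mathcal B$ over $S$ with a varying, possibly singular base and dense projective members is itself nontrivial. The ``Kähler decomposition theorem'' alternative is likewise not something you can invoke off the shelf here; Theorem \ref{theorem matsushita} as cited is a projective statement, which is why the reduction is needed at all.

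The paper avoids all of this with one observation that your own Step 1 already makes available: you do not need to prove the theorem for $X$ itself and then transfer to twists; you only need \emph{one} projective manifold in the twist class. By \cite[Theorem 3.5]{huybrechts1999compact}, any non-trivial deformation family of an irreducible hyperkähler manifold contains a projective member, and applying this to the degenerate twistor (Shafarevich--Tate) family of $\pi$ produces a projective twist $X^\psi$ with the \emph{same} base $B$ and the same local structure over $B$. Theorem \ref{theorem matsushita} applies to $X^\psi$, and since $X^\phi$ and $X^\psi$ are Shafarevich--Tate twists of one another, your Step 1 identifies $R^i\pi^\phi_*\mathcal O_{X^\phi}$ with $R^i\pi^\psi_*\mathcal O_{X^\psi}\simeq\Omega^{[i]}_B$. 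No base change, no flatness, no family over a varying base. I recommend replacing your Step 2 with this existence statement.
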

\begin{proof}
    By \cite[Theorem 3.5]{huybrechts1999compact} any non-trivial family of deformations of an irreducible hyperk\"ahler manifolds contains a projective deformation. Therefore there exists a projective Shafarevich--Tate deformation $\pi^\psi\colon X^\psi \to B$ of the Lagrangian fibration $\pi\colon X\to B$. It follows from Theorem \ref{theorem matsushita} that
    $$
    R^i\pi^\psi_*\mathcal O_{X^\psi}\simeq \Omega^{[i]}_B.
    $$
    The sheaf of groups $Aut^0_{X/B}$ acts trivially on $R^i\pi_*\mathcal O_X$. Indeed, the restriction of $R^i\pi_*\mathcal O_X$ to $B^\circ$ is a vector bundle with fibers $H^{0,i}(F)$. Automorphisms in $Aut^0_{X/B}$ act trivially on $H^{0,i}(F)$ for any smooth fiber $F$. Thus the action of $Aut^0_{X/B}$ on $R^i\pi_*\mathcal O_X$ is trivial over $B^\circ$. The sheaf $R^i\pi_*\mathcal O_X$ is torsion-free, hence the action of $Aut^0_{X/B}$ is trivial everywhere. 
    
    We obtain that for any $\phi\in\Sha$
    $$
    R^i\pi^\phi_*\mathcal O_{X^\phi} \simeq R^i\pi^\psi_*\mathcal O_{X^\psi}\simeq \Omega^{[i]}_B.
    $$
\end{proof}

\begin{remark}   
It follows from Corollary \ref{corollary matsushita} that the sheaves $R^i\pi_*\mathcal O_X$ are locally free on $B^{reg}$. The base change theorem \cite[Chapter 5, Corollary 2\&3]{mumford1974abelian} implies that for all points $b\in B^{reg}$ the dimension of $H^i(\mathcal O_{\pi^{-1}(b)})$ does not depend on $b$. In particular, $h^0(\mathcal O_{\pi^{-1}(b)}) = 1$ for every $b\in B^{reg}$.
\end{remark}

%\begin{remark}
%    Ou and Matsushita proved Theorem \ref{theorem matsushita} assuming $X$ is projective. This assumption can be dropped \cite[Remark 2.8]{abasheva2025shafarevich}.
%\end{remark}

\begin{theorem}
\label{theorem cohomology of omega}
    Let $B$ be the base of a Lagrangian fibration on an irreducible hyperk\"ahler manifold $X$. Then the cohomology groups $H^j(B, \Omega^{[i]}_B)$ are the same as for $B=\mathbb P^n$.
\end{theorem}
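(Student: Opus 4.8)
The plan is to compute $H^j(B,\Omega^{[i]}_B)$ through the coherent cohomology of $X$, using the identification $\Omega^{[i]}_B\simeq R^i\pi_*\O_X$ of Corollary \ref{corollary matsushita} together with the Leray spectral sequence of $\pi$ for the structure sheaf. Since the groups $H^j(B,\Omega^{[i]}_B)$ depend only on $B$, which is the common base of $X$ and of all its Shafarevich--Tate twists, I would first replace $X$ by a projective Shafarevich--Tate deformation $\pi^\psi\colon X^\psi\to B$ (which exists by the argument in the proof of Corollary \ref{corollary matsushita}). This reduction buys two things: $B$ becomes the base of a Lagrangian fibration on a \emph{projective} hyperk\"ahler manifold, hence $B$ is a normal projective variety carrying an ample line bundle $A$; and we may use tools that need projectivity. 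Throughout I would use that $B$ is Cohen--Macaulay with $\omega_B\simeq\Omega^{[n]}_B$ (where $n=\dim B$), which is known to hold for bases of Lagrangian fibrations since they have rational singularities. The goal is then to show $H^j(B,\Omega^{[i]}_B)\simeq\C\cdot\delta_{ij}$, which is exactly the Hodge cohomology of $\bP^n$.

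The central step is degeneration. Because $X^\psi$ is hyperk\"ahler, its canonical bundle is trivial, $\omega_{X^\psi}\simeq\O_{X^\psi}$. Applying Koll\'ar's theorem on higher direct images of dualizing sheaves to the projective morphism $\pi^\psi$ then yields a decomposition $R\pi^\psi_*\O_{X^\psi}\simeq\bigoplus_i\Omega^{[i]}_B[-i]$ and, in particular, the degeneration at $E_2$ of the Leray spectral sequence
$$
E_2^{j,i}=H^j(B,R^i\pi^\psi_*\O_{X^\psi})=H^j(B,\Omega^{[i]}_B)\ \Longrightarrow\ H^{i+j}(X^\psi,\O_{X^\psi}).
$$
Hence $\dim H^k(X^\psi,\O_{X^\psi})=\sum_{i+j=k}\dim H^j(B,\Omega^{[i]}_B)$. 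On the other hand $X^\psi$ is an irreducible hyperk\"ahler manifold of dimension $2n$, so its only holomorphic forms are powers of the symplectic form and, by Hodge symmetry, $\dim H^k(X^\psi,\O_{X^\psi})=h^{0,k}(X^\psi)$ equals $1$ for even $k$ with $0\le k\le 2n$ and $0$ for odd $k$. Comparing, I conclude that $H^j(B,\Omega^{[i]}_B)=0$ whenever $i+j$ is odd, while for each even $k=2m$ one has $\sum_{i+j=2m}\dim H^j(B,\Omega^{[i]}_B)=1$.

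It remains, for each $m$ with $0\le m\le n$, to locate the unique one-dimensional summand on the antidiagonal $i+j=2m$ and to check that it sits on the diagonal $i=j=m$. For this I would produce a nonzero class in $H^m(B,\Omega^{[m]}_B)$ directly: the ample line bundle $A$ has a first Chern class $c_1(A)\in H^1(B,\Omega^{[1]}_B)$, and cup product through the reflexive wedge maps $\Omega^{[1]}_B\otimes\Omega^{[1]}_B\to\Omega^{[2]}_B$, and so on, produces $c_1(A)^m\in H^m(B,\Omega^{[m]}_B)$. These powers are all nonzero, because $c_1(A)^m\cup c_1(A)^{n-m}=c_1(A)^n$ lands in $H^n(B,\Omega^{[n]}_B)\simeq H^n(B,\omega_B)\simeq\C$ (Serre duality), where under the trace it equals $\int_B c_1(A)^n>0$ since $A$ is ample; thus $c_1(A)^m\ne 0$. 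Since the diagonal term $H^m(B,\Omega^{[m]}_B)$ is then nonzero and the whole antidiagonal is one-dimensional, the diagonal term is the unique nonzero summand and all off-diagonal terms vanish. This gives $H^j(B,\Omega^{[i]}_B)\simeq\C\cdot\delta_{ij}$, matching $\bP^n$.

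The main obstacle is the $E_2$-degeneration of the Leray spectral sequence for $\O_X$: a priori coherent Leray sequences need not degenerate, and the decisive input is the triviality of the canonical bundle, which turns $\O_{X^\psi}$ into the dualizing sheaf and brings Koll\'ar's decomposition into play. A secondary point requiring care is the identification $\omega_B\simeq\Omega^{[n]}_B$ together with Serre duality on the singular base $B$, which is what lets the ample-class argument pin the nonzero cohomology onto the diagonal; both rest on $B$ having rational, hence Cohen--Macaulay, singularities.
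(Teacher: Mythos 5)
Your first half coincides with the paper's Step~1: reduce to a projective Shafarevich--Tate twist, identify $\Omega^{[i]}_B\simeq R^i\pi_*\O_X$ via Corollary \ref{corollary matsushita}, invoke Koll\'ar's splitting $R\pi_*\O_X\simeq\bigoplus R^i\pi_*\O_X[-i]$ to degenerate the Leray spectral sequence, and compare with $h^{0,k}(X)$ to conclude that each antidiagonal $i+j=2m$ carries total dimension one and the odd antidiagonals vanish. Where you genuinely diverge is in pinning the surviving one-dimensional piece to the diagonal $i=j=m$. The paper stays on the smooth total space: it shows $\overline\sigma\in F^1H^{0,2}(X)\setminus F^0H^{0,2}(X)$ for the Leray filtration, deduces $\overline\sigma^m\in F^mH^{0,2m}(X)$, and rules out $\overline\sigma^m\in F^{m-1}H^{0,2m}(X)$ by propagating to $m=n$, where $F^{n-1}H^{0,2n}(X)=0$ for dimension reasons would force $\overline\sigma^n=0$. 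This uses nothing about $B$ beyond $\dim B=n$. You instead work on the singular base with $c_1(A)^m\in H^m(B,\Omega^{[m]}_B)$ and Serre duality. That route can be made to work, but it imports machinery the paper deliberately avoids: you need $B$ to be Cohen--Macaulay with $\omega_B\simeq\Omega^{[n]}_B$ (true, via Matsushita's result that $B$ has klt, hence rational, singularities, but not established here), and --- the real pressure point --- you need the Serre-duality trace of the reflexive class $c_1(A)^n\in H^n(B,\Omega^{[n]}_B)$ to equal the topological number $\int_Bc_1(A)^n$. On a singular $B$ this is not a formality: one has to compare with a resolution $\mu\colon\tilde B\to B$ using the pullback of reflexive differentials on klt spaces (Greb--Kebekus--Kov\'acs--Peternell) together with $R\mu_*\O_{\tilde B}=\O_B$ and Grauert--Riemenschneider to match the trace maps. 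You flag the rational-singularities input but not this compatibility, which is where most of the hidden work sits. So: the skeleton is sound and the conclusion is reached, but your second half trades the paper's elementary filtration argument on $X$ for a heavier (though standard) package of facts about differential forms on the singular base; if you keep your route, the trace identity is the step you must actually prove.
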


\begin{proof}
{\bf Step 1.} By Corollary \ref{corollary matsushita}, $H^j(\Omega^{[i]}_B)\simeq H^j(R^i\pi_*\mathcal O_X)$. It follows from a result by Koll\'ar \cite[p.172]{kollar1986higher} that
$$
 R\pi_*\mathcal O_X \simeq \bigoplus R^i\pi_*\mathcal O_X[-i].   
$$
Therefore the Leray spectral sequence for $\mathcal O_X$ degenerates on $E^2$ and
$$
h^{0,k}(X) = \sum\limits_{i=0}^k h^{k-i}(R^i\pi_*\mathcal O_X).
$$
When $k$ is odd $h^{0,k} = 0$ and when $k$ is even $h^{0,k} = 1$. We see immediately that $H^j(R^i\pi_*\mathcal O_X) = 0$ when $i+j$ is odd. When $k$ is even, there is exactly one $i\le k$ such that $H^{k-i}(R^i\pi_*\mathcal O_X)$ is non-zero.

\hfill

{\bf Step 2.} We will show that $H^i(R^i\pi_*\mathcal O_X)$ does not vanish. This will complete the proof. Consider the filtration $F^iH^{0,k}(X)$ on $H^{0,k}(X)$ induced by the Leray spectral sequence. First, consider the case $k=2$. The cohomology group $H^{0,2}(X)$ is generated by $\overline\sigma$. The restriction of $\overline\sigma$ to a smooth fiber is zero, hence the image of $\overline\sigma$ in $H^0(B, R^2\pi_*\mathcal O_X)$ vanishes. The form $\overline\sigma$ is non-degenerate, hence not the pullback of a $(0,2)$-form on the base even locally. Therefore $F^0H^{0,2}(X) = 0$ and $F^1H^{0,2}(X) = F^2H^{0,2}(X) = H^{0,2}(X)$. 

It follows that $\overline\sigma^i\in F^iH^{0,2i}(X)$ for all $i$. Suppose that we know that $\overline\sigma^i\not\in F^{i-1}H^{0,2i}(X)$. Then $H^i(R^i\pi_*\mathcal O_X) = F^iH^{0,2i}(X)/F^{i-1}H^{0,2i}(X)$ is non-zero, and we are done. If $\overline\sigma^i$ happens to be contained in  $F^{i-1}H^{0,2i}(X)$, then $\overline\sigma^n$ is contained in $F^{n-1}H^{0,2n}(X)$. However, $F^{n-1}H^{0,2n}(X)$ vanishes for dimension reasons. Indeed, $H^{n+k}(R^{n-k}\pi_*\mathcal O_X) = 0$ for $k>0$. Hence $\overline\sigma^n = 0$, contradiction.
\end{proof}

\begin{remark}
    A base of a Lagrangian fibration behaves like $\mathbb P^n$ from many points of view (conjecturally because it is always $\mathbb P^n$). We encourage an interested reader to look into the wonderful survey \cite{huybrechts2022lagrangian} for details.
\end{remark}

\subsection{Shafarevich--Tate group}
\label{subsection Shafarevich--Tate group}

\subsubsection{Structure of the Shafarevich--Tate group.}

Recall that the sheaf of groups $Aut^0_{X/B}$ is defined as the image of the exponential map $\pi_*T_{X/B}\to Aut_{X/B}$ (Definition \ref{definition of Shafarevich-Tate group}). Define $\Gamma$ to be the kernel of this map. The short exact sequence
$$
    0\to \Gamma\to \pi_*T_{X/B}\to Aut^0_{X/B}\to 0
$$
induces the long exact sequence of cohomology groups:
\begin{equation}
\label{seq sha}
    H^1(\Gamma)\to H^1(\pi_*T_{X/B})\to \Sha \to H^2(\Gamma).
\end{equation}
We will call the image of $H^1(\pi_*T_{X/B})$ in $\Sha$ the {\em connected component of unity} of $\Sha$ and will denote it by $\Sha^0$. The quotient $\Sha/\Sha^0$ is the {\em discrete part} of $\Sha$.

The sequence (\ref{seq sha}) is exact on the right. Indeed, the cohomology group $H^2(\pi_*T_{X/B})$ is isomorphic to $H^2(\Omega^{[1]}_B)$ by Theorem \ref{theorem iso omega and t}. By Theorem \ref{theorem cohomology of omega} this cohomology group vanishes. Similarly, the vector space $H^1(\pi_*T_{X/B})$ is isomorphic to $H^1(B,\Omega^{[1]}_B)$ and is one-dimensional.

\subsubsection{Degenerate twistor deformations.}\label{subsub degenerate twistor deformations}There is a useful differential geometric point of view on Shafarevich--Tate deformations  \cite[Subsection 2.3]{abasheva2025shafarevich}. Let $\sigma$ be a holomorphic symplectic form on $X$ and $\alpha$ be a closed $(1,1)$-form on $B$. The form $\sigma + t\pi^*\alpha$ is obviously not holomorphic, but it turns out that there exists a {\em different} complex structure $I_t$ on $X$ making $\sigma+t\pi^*\alpha$ holomorphic symplectic \cite[Section 2.2]{soldatenkov2024hermitian}. Moreover, such a complex structure is unique.

\begin{definition}
\label{definition deg tw}
    Denote by $X_t$ the manifold $X$ with the new complex structure $I_t$. It is called a {\em degenerate twistor deformation} of $X$.
\end{definition} 
It is not hard to see that the fibration $\pi\colon X_t\to B$ is holomorphic and Lagrangian with respect to the new complex structure.

Degenerate twistor deformations form a family
$$
\Pi\colon \mathcal X \to \mathbb A^1,
$$
and the fiber of $\Pi$ over $t\in \mathbb A^1$ is isomorphic to the degenerate twistor deformation $X_t$.
\begin{definition}[{\cite[Definition 2.14, Definition 3.4]{abasheva2025shafarevich}}]
\label{definition sht family}
    The family $\Pi\colon \mathcal X\to \mathbb A^1$ is called the {\em degenerate twistor family} or the {\em Shafarevich--Tate family}.
\end{definition}
We will see in Theorem \ref{theorem follows word by word} that all degenerate twistor deformations are Shafarevich--Tate deformations (Definition \ref{def sht deformation}). That justifies the use of the term Shafarevich--Tate family.

\subsubsection{The connected component of unity of $\Sha$.}\label{subsub gamma and r1} The isomorphism $f_\omega\colon \pi_*T_{X/B}\to R^1\pi_*\mathcal O_X$ from \ref{subsub higher pushforwards} sends the subsheaf $\Gamma\subset \pi_*T_{X/B}$ into $R^1\pi_*\Q$ \cite[Proposition 4.4]{abasheva2025shafarevich}. In the same paper we showed that the sheaf $\Gamma_\Q:=\Gamma\otimes\Q$ is isomorphic to $R^1\pi_*\Q$. The exact sequence (\ref{seq sha}) implies that 
$$
\Sha^0 = H^1(B,\pi_*T_{X/B})/\im H^1(B,\Gamma).
$$
The isomorphism $f_\omega\colon  \pi_*T_{X/B}\to R^1\pi_*\mathcal O_X$ identifies $\Sha^0$ with a quotient of 
\begin{equation}
\label{eq sha as h1r1pistarox}
H^1(B,R^1\pi_*\mathcal O_X)/\im H^1(R^1\pi_*\Z)
\end{equation}
by a finite subgroup. In Theorem \ref{theorem sha naught} we will describe $\Sha^0$ in terms of cohomology of $X$. First, let us introduce some notation. Let $W_\Z\subset H^2(X,\Z)$ be the subgroup of cohomology classes on $X$ that restrict trivially to all fibers. By \cite{matsushita1999fibre} $\operatorname{Pic}(B)$ has rank one. Denote by $\eta$ the class of the pullback of the ample generator of $\operatorname{Pic}(B)/\operatorname{Tors}(\operatorname{Pic}(B))$ to $X$. 

\begin{definition}
\label{definition isogenous}
    Let $G_i$, $i=1,2$ be two abelian groups of the form $G_i = \mathbb C^k/\Lambda_i$, where $\Lambda_i$ is a finitely generated subgroups of $\mathbb C^k$. We will call $G_1$ and $G_2$ {\em isogenous} if the subgroup $\Lambda_1\cap\Lambda_2$ is of finite index in both $\Lambda_1$ and $\Lambda_2$. Equivalently, the subspace $\Lambda_1\otimes\Q\subset\mathbb C^k$ coincides with $\Lambda_2\otimes\Q$
\end{definition}

\begin{theorem}
\label{theorem sha naught}
    Let $\pi\colon X\to B$ be a Lagrangian fibration on an irreducible hyperk\"ahler manifold $X$. Then the group $\Sha^0$ is isogenous to 
    $$
    H^{0,2}(X)/p(H^2(X,\mathbb Z)),
    $$
    where $p\colon H^2(X,\mathbb Z)\to H^{0,2}(X)$ is the Hodge projection.
\end{theorem}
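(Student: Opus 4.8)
The plan is to transport the description of $\Sha^0$ in (\ref{eq sha as h1r1pistarox}) to the group $H^{0,2}(X)/p(H^2(X,\Z))$ by comparing the Leray spectral sequences of $\pi$ for the sheaves $\Z_X$ and $\O_X$; I will write $E_r^{p,q}(\Z)$ and $E_r^{p,q}(\O)$ for the two. First I would record the target as a Leray graded piece. By (\ref{eq sha as h1r1pistarox}) the group $\Sha^0$ is isogenous to $H^1(B,R^1\pi_*\O_X)/\im(c)$, where $c\colon H^1(B,R^1\pi_*\Z)\to H^1(B,R^1\pi_*\O_X)$ is induced by $\Z\hookrightarrow\O_X$. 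By Corollary \ref{corollary matsushita} and Theorem \ref{theorem cohomology of omega} the space $H^1(B,R^1\pi_*\O_X)\simeq H^1(\Omega^{[1]}_B)$ is one-dimensional, so $\Sha^0$ is indeed of the form $\C/\Lambda$ with $\Lambda$ finitely generated. Kollár's decomposition degenerates the spectral sequence for $\O_X$, and since $E_\infty^{2,0}(\O)=H^2(\O_B)=0$ and $E_\infty^{0,2}(\O)=H^0(\Omega^{[2]}_B)=0$ by Theorem \ref{theorem cohomology of omega}, the edge map identifies $H^{0,2}(X)=H^2(X,\O_X)$ with $E_\infty^{1,1}(\O)=H^1(B,R^1\pi_*\O_X)$ — this is exactly the computation in the proof of Theorem \ref{theorem cohomology of omega}, where $\overline\sigma$ generates this piece. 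It therefore suffices to show that, under this identification, $\im(c)$ and $p(H^2(X,\Z))$ span the same $\Q$-subspace of $H^{0,2}(X)$.

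Next I would identify $p$ as the map induced on abutments by $\Z\to\O_X$. The sheaf morphism $\Z_X\to\O_X$ induces a morphism of Leray spectral sequences, and on degree-two abutments it is the composite $H^2(X,\Z)\to H^2(X,\C)\to H^2(X,\O_X)=H^{0,2}(X)$, i.e.\ the Hodge projection $p$, which is compatible with the Leray filtrations $L^\bullet$. On the target $L^2H^{0,2}=E_\infty^{2,0}(\O)=0$, so $p$ kills $L^2H^2(X,\Q)=\pi^*H^2(B,\Q)$: these are pullback classes, which have vanishing $(0,2)$-component because $H^2(\O_B)=0$, and $\eta$ spans this line. Hence the restriction of $p$ to $W_\Q:=L^1H^2(X,\Q)$ (the classes restricting trivially to the fibers) factors through $\mathrm{Gr}^1_L=E_\infty^{1,1}(\Q)$, and the morphism of spectral sequences gives $p(W_\Q)=c\big(E_\infty^{1,1}(\Q)\big)$.

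To replace $W_\Q$ by all of $H^2(X,\Q)$ I would use Hodge theory. The Leray filtration on $H^2(X,\Q)$ is a filtration by sub-Hodge-structures, so $\mathrm{Gr}^0_L H^2(X,\Q)=H^2(X,\Q)/W_\Q$ is a weight-two Hodge structure. Since $\sigma$ is Lagrangian, $\overline\sigma\in W_\Q\otimes\C$, whence $H^{0,2}(X)\subset W_\Q\otimes\C$ and $(\mathrm{Gr}^0_L)^{0,2}=0$; by complex conjugation $\mathrm{Gr}^0_L$ is of pure type $(1,1)$. As polarizable $\Q$-Hodge structures form a semisimple category, the extension $0\to W_\Q\to H^2(X,\Q)\to\mathrm{Gr}^0_L\to0$ splits, and the $(0,2)$-projection of the $(1,1)$-summand vanishes. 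Therefore $p(H^2(X,\Q))=p(W_\Q)=c\big(E_\infty^{1,1}(\Q)\big)$.

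The last step — comparing $c\big(E_\infty^{1,1}(\Q)\big)$ with $\im(c)=c\big(E_2^{1,1}(\Q)\big)$ — is where I expect the main obstacle. Since the incoming differentials into $E^{1,1}$ vanish, $E_\infty^{1,1}(\Q)=\ker\big(d_2\colon H^1(R^1\pi_*\Q)\to H^3(B,\Q)\big)$, so a priori $c\big(E_\infty^{1,1}(\Q)\big)\subseteq c\big(E_2^{1,1}(\Q)\big)$ could be a proper inclusion; note that this is a different differential from the one controlling $\Sha/\Sha^0$ in Theorem \ref{theorem b2}, which is why the Leray sequence need not fully degenerate. I would close the gap by showing that this particular $d_2$ does not enlarge the $(0,2)$-image: it is a morphism of mixed Hodge structures, a complement of its kernel embeds via $d_2$ into $H^3(B,\Q)$, and one must check that this image carries no weight-two $(0,2)$-part, so the extra classes project trivially into the one-dimensional $H^{0,2}(X)$. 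It suffices, for instance, to know $H^3(B,\Q)=0$, which holds when $B\simeq\bP^n$ and is expected for the base of any Lagrangian fibration. Granting this, $c\big(E_2^{1,1}(\Q)\big)=c\big(E_\infty^{1,1}(\Q)\big)=p(H^2(X,\Q))$, and together with the first three paragraphs this shows that $\Sha^0$ is isogenous to $H^{0,2}(X)/p(H^2(X,\Z))$.
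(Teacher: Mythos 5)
Your proposal is correct and follows essentially the same route as the paper: both identify $\Sha^0$ up to isogeny with $H^{0,2}(X)/p(W_\Z)$ via the Leray spectral sequence (the $E_2$-versus-$E_\infty$ issue you flag is closed exactly as you suggest, since $H^3(B,\Q)=0$ by the cited result of Shen--Yin, which the paper invokes elsewhere), and then reduce everything to showing $p(W_\Q)=p(H^2(X,\Q))$. The only divergence is in that last step: where you split off a type-$(1,1)$ complement of $W_\Q$ by semisimplicity of polarizable Hodge structures, the paper argues directly with the BBF form --- since $\sigma,\overline\sigma\in W_\C$ one gets $(W_\Q)^\perp\subset NS_\Q(X)$, hence $T_\Q(X)\subset W_\Q$ and $p(T_\Q(X))=p(H^2(X,\Q))$ --- which is the same idea made explicit via the transcendental lattice.
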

\begin{proof}
    By \cite[Proposition 4.7]{abasheva2025shafarevich}, the Leray spectral sequence induces the following isomorphisms:
    $$
    H^1(B,R^1\pi_*\mathcal O_X) \simeq H^{0,2}(X), \:\:\:\:\:\text{and}\:\:\:\:\: H^1(B, R^1\pi_*\Z) = W_\Z/\eta.
    $$
    It follows from (\ref{eq sha as h1r1pistarox}) that $\Sha^0$ is isogenous to 
    $$
    H^{0,2}(X)/p(W_\Z).
    $$
    For every ring $\mathcal R$ define $W_{\mathcal R}:= W_\Z\otimes\mathcal R$. It is enough to show that $p(W_\Q) = p(H^2(X,\Q))$. The inclusion $p(W_\Q)\subset p(H^2(X,\Q))$ is clear. For the opposite inclusion, note that $W_\C$ contains $\sigma$ and $\overline\sigma$ \cite[Lemma 3.5]{abasheva2025shafarevich}. Therefore $(W_\Q)^\perp$ is contained in $H^{1,1}(X)$. It is a rational subspace, hence $(W_\Q)^\perp\subset NS_\Q(X)$. It follows that
    $$
    T_\Q(X):=NS_\Q(X)^\perp\subset W_\Q.
    $$
    The image of $T_\Q(X)$ under the Hodge projection coincides with the image of $H^2(X,\Q)$. Indeed, the kernel of $p\colon H^2(X,\Q)\to H^{0,2}(X)$ is $NS_\Q(X)$. Therefore,
    $$
    p(H^2(X,\Q)) = p(T_\Q(X))\subset p(W_\Q),
    $$
    and we are done.
\end{proof}
As an immediate corollary we obtain:
\begin{corollary}
\label{corollary dense}
    The set of torsion elements of $\Sha^0$ is dense in $\Sha^0$. 
\end{corollary}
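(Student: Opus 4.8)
The plan is to translate the statement, via Theorem~\ref{theorem sha naught}, into an elementary density question in $\C$. That theorem identifies $\Sha^0$ up to isogeny with $A := H^{0,2}(X)/p(H^2(X,\Z))$, and since $X$ is irreducible holomorphic symplectic we have $H^{0,2}(X) = \C\cdot\overline\sigma \cong \C$; write $\Lambda := p(H^2(X,\Z))$, a finitely generated subgroup of $\C$. First I would record two reductions. An element $\overline x$ of $\C/\Lambda$ is torsion iff $nx\in\Lambda$ for some $n$, i.e. iff $x\in\Lambda\otimes\Q$; thus the torsion subgroup is $(\Lambda\otimes\Q)/\Lambda$, and since the projection $\C\to\C/\Lambda$ is continuous, open and surjective, this subgroup is dense iff $\Lambda\otimes\Q = p(H^2(X,\Q))$ is dense in $\C$. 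Moreover this last condition depends only on $\Lambda\otimes\Q$, which is an isogeny invariant (by Definition~\ref{definition isogenous}, two groups $\C/\Lambda_1,\C/\Lambda_2$ are isogenous precisely when $\Lambda_1\otimes\Q=\Lambda_2\otimes\Q$), so it is legitimate to replace $\Sha^0$ by $A$.

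Next I would invoke a soft density criterion. The closure of any $\Q$-subspace $W\subset\R^2$ is a closed subgroup stable under multiplication by all rationals; a nontrivial lattice is not $\Q$-divisible, so the closure can contain no lattice factor and is therefore an $\R$-linear subspace of $\R^2$. Consequently $W = p(H^2(X,\Q))$ is dense in $\C$ iff its real span is all of $\C$, i.e. iff $W$ is not contained in a real line. Because $H^2(X,\Q)\otimes\R = H^2(X,\R)$ and $p$ is $\R$-linear, the real span of $p(H^2(X,\Q))$ equals $p(H^2(X,\R))$, so the whole problem reduces to showing that the Hodge projection $p\colon H^2(X,\R)\to H^{0,2}(X)$ is surjective.

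This last surjectivity I would verify by hand: for any $w = z\overline\sigma\in H^{0,2}(X)$ the class $\overline z\,\sigma + z\,\overline\sigma$ lies in $H^2(X,\R)$, being its own complex conjugate, and since $\overline z\,\sigma\in H^{2,0}(X)$ projects to $0$ we get $p(\overline z\,\sigma + z\,\overline\sigma) = w$. Hence $p(H^2(X,\R)) = H^{0,2}(X)$ and the chain of equivalences closes. I expect the only genuinely delicate point to be the density criterion of the second paragraph, namely the passage from ``$p(H^2(X,\Q))$ has full real span'' to ``$p(H^2(X,\Q))$ is dense'': one must use that a $\Q$-subspace of $\R^2$ with full real span is automatically dense, a fact that rests on $\Q$-divisibility (the closure contains no lattice factor) and is exactly why one argues with the torsion points, i.e. with $\Lambda\otimes\Q$, rather than with $\Lambda$ itself. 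Everything else — the isogeny invariance and the surjectivity computation — is routine linear algebra.
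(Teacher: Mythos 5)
Your proof is correct and follows essentially the same route as the paper: both reduce via Theorem~\ref{theorem sha naught} to the group $H^{0,2}(X)/p(H^2(X,\Z))$, identify its torsion subgroup with $p(H^2(X,\Q))/p(H^2(X,\Z))$, and then establish density of $p(H^2(X,\Q))$ in $H^{0,2}(X)$ using the surjectivity of $p$ on real cohomology. The only (harmless) variation is in the last step, where the paper simply pushes forward the density of $H^2(X,\Q)$ in $H^2(X,\R)$ through the continuous surjection $p$, while you argue that a $\Q$-subspace with full real span is dense; your version has the small merit of making the surjectivity of $p\colon H^2(X,\R)\to H^{0,2}(X)$ explicit via the class $\overline z\,\sigma+z\,\overline\sigma$.
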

\begin{proof}
    By Theorem \ref{theorem sha naught} it is enough to prove the same statement for the group $H^{0,2}(X)/p(H^2(X,\Z))$. The subgroup of torsion elements of this group is $p(H^2(X,\Q))/p(H^2(X,\Z))$. The projection $H^2(X,\R)\to H^{0,2}(X)$ is surjective and $H^2(X,\Q)$ is dense in $H^2(X,\R)$, hence the claim.
\end{proof}

\subsubsection{Degenerate twistor deformations are Shafarevich--Tate twists.} By Theorem \ref{theorem iso omega and t} and Corollary \ref{corollary matsushita} the following one-dimensional vector spaces are isomorphic 
\begin{equation}
\label{eq isos isos}
H^1(\pi_*T_{X/B})\simeq H^1(\Omega^{[1]}_B) \simeq H^{1,1}(R^1\pi_*\mathcal O_X)\simeq H^{0,2}(X) \simeq \C.
\end{equation}
Let $\sigma$ be a holomorphic symplectic form on $X$. Pick a $d$-closed $(1,1)$-form $\alpha$ on $B$, whose class in $H^1(\Omega^{[1]}_B)$ is non-trivial. We may and will choose the isomorphisms (\ref{eq isos isos}) in such a way that $[\alpha]\in H^1(\Omega^{[1]}_B)$ is identified with $\overline\sigma\in H^{0,2}(X)$, which is identified with $1\in\C$. 
\begin{theorem}
\label{theorem follows word by word}
     Let $\pi\colon X\to B$ be a Lagrangian fibration on a hyperk\"ahler manifold. For every $t\in H^1(\pi_*T_{X/B})\simeq \C$ consider its image $\phi_t\in \Sha$ by the map (\ref{seq sha}). Then the degenerate twistor deformation $X_t$ is isomorphic to the Shafarevich--Tate twist $X^{\phi_t}$ of $X$ by $\phi_t$. This isomorphism preserves the Lagrangian fibrations.
\end{theorem}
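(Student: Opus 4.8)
The plan is to build the isomorphism chart by chart over the base and then identify the resulting gluing data with the \v{C}ech cocycle defining $\phi_t$. Cover $B$ by polydisk-like opens $U_i$ and set $X_i=\pi^{-1}(U_i)$. First I would show that the degenerate twistor deformation is locally trivial over $B$, i.e.\ that $(\pi^{-1}(U_i),I_t)$ is biholomorphic to $X_i$ with its original complex structure. Since $\alpha$ is a real $d$-closed $(1,1)$-form, the local $\di\dibar$-lemma gives $\alpha=i\,\di\dibar f_i$ on $U_i$ for a real function $f_i$; setting $\gamma_i:=-i\,\di f_i$ produces a $(1,0)$-form with $\di\gamma_i=0$ and $\dibar\gamma_i=\alpha$, so $d\gamma_i=\alpha$. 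Pushing $\pi^*\gamma_i$ through the isomorphism $\iota_\sigma$ of Theorem \ref{theorem iso omega and t} yields a smooth vertical $(1,0)$-field $v_i:=\iota_\sigma(\pi^*\gamma_i)$ on $X_i$ (smooth but not holomorphic, because $\gamma_i$ is not). Let $F_i$ be the time-$t$ flow of the real vertical field $u_i:=v_i+\overline{v_i}$. As $u_i$ is vertical and the fibers are compact, this flow is complete and fiber-preserving, and a Moser computation using $\iota_{u_i}\sigma=\pi^*\gamma_i$ and $d\sigma=0$ gives $\mathcal L_{u_i}\sigma=\pi^*d\gamma_i=\pi^*\alpha$, whence
$$
(F_i^t)^*\sigma=\sigma+t\,\pi^*\alpha .
$$
Since the complex structure $I_t$ making $\sigma+t\pi^*\alpha$ holomorphic symplectic is unique (Definition \ref{definition deg tw}), $F_i\colon(\pi^{-1}(U_i),I_t)\to X_i$ is a biholomorphism.

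Next I would compute the transition maps $F_i\circ F_j^{-1}$ on overlaps and match them with the twisting cocycle. On a smooth fiber the field $v_i$ is translation-invariant (in action--angle coordinates $\sigma=\sum dp_\mu\wedge dq^\mu$ the field $\iota_\sigma(\pi^*\gamma_i)$ has coefficients constant along the fiber), so each $F_i$ acts fiberwise by translation of the abelian varieties; in particular the $F_i$ commute and their flows add. Hence $F_i\circ F_j^{-1}$ is the time-$t$ flow of $u_i-u_j$; since $u_i-u_j=v_{ij}+\overline{v_{ij}}$ with $v_{ij}:=v_i-v_j=\iota_\sigma(\pi^*(\gamma_i-\gamma_j))$ holomorphic and vertical (because $\gamma_i-\gamma_j$ is $\dibar$-closed of type $(1,0)$, hence holomorphic), this flow is exactly the vertical automorphism $\exp(t\,v_{ij})$ of Definition \ref{definition of Shafarevich-Tate group}. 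The cochain $(\gamma_i-\gamma_j)$ is the \v{C}ech representative of $[\alpha]\in H^1(\Omega^{[1]}_B)$ under the \v{C}ech--Dolbeault dictionary, so $(v_{ij})$ represents the image of $[\alpha]$ in $H^1(\pi_*T_{X/B})$ under $\iota_\sigma$, and $(\exp(t\,v_{ij}))$ represents $\phi_t\in\Sha$ via the normalization (\ref{eq isos isos}) and the boundary map (\ref{seq sha}). Finally, sending $x\in\pi^{-1}(U_i)$ to $F_i(x)$ in the $i$-th chart of $X^{\phi_t}$ is well defined because $F_i=(F_i\circ F_j^{-1})\circ F_j=\phi_{ij}\circ F_j$ on overlaps; this glues the $F_i$ into a global biholomorphism $X_t\to X^{\phi_t}$, which is fiber-preserving and so respects the Lagrangian fibrations.

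The hard part will be Step 1: pinning down the local trivialization precisely. One must choose the primitive so that $d\gamma_i=\alpha$ holds exactly (the $\di\dibar$-choice $\gamma_i=-i\,\di f_i$ is what kills the spurious $(2,0)$-term $\pi^*\di\gamma_i$ in $\mathcal L_{u_i}\sigma$), verify completeness of the flow, and check that $F_i$ not merely pulls $\sigma$ back to $\sigma+t\pi^*\alpha$ but genuinely intertwines $I_t$ and $I$ (which follows from the stated uniqueness of $I_t$), together with the fiberwise-translation property that makes the $F_i$ commute. Some care is also needed over the discriminant and the singular locus of $B$: there $\iota_\sigma$ and the fields $v_i$ should be handled via the reflexive extension of Theorem \ref{theorem iso omega and t} and a Hartogs-type extension argument as in Lemma \ref{lemma pullbacks of forms}, exactly as the smooth-base and no-multiple-fiber hypotheses were removed earlier in this section. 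Up to these points the argument runs parallel to the smooth-base case of \cite{abasheva2021shafarevich}.
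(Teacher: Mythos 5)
Your argument is correct and is essentially the proof the paper relies on: the paper's own ``proof'' is merely a pointer to \cite[Theorem 3.9]{abasheva2021shafarevich} plus the observation that smoothness of $B$ and the no-multiple-fibers hypothesis are never used, and your Moser-flow construction --- local primitives $\gamma_i$ of pure type with $d\gamma_i=\alpha$, the fiberwise-translation flows $F_i^t$ satisfying $(F_i^t)^*\sigma=\sigma+t\pi^*\alpha$, the uniqueness of $I_t$ to conclude $F_i^t\colon(X_i,I_t)\to(X_i,I)$ is biholomorphic, and the identification of the transition maps with the cocycle $(\exp(t\,v_{ij}))$ representing $\phi_t$ --- is exactly that argument, with the singular-base and discriminant issues correctly routed through $\Omega^{[1]}_B$, Theorem \ref{theorem iso omega and t} and Lemma \ref{lemma pullbacks of forms}. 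Beyond the technical points you already flag (and the usual sign conventions in the \v{C}ech--Dolbeault comparison), there is no gap.
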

\begin{proof}
In \cite[Theorem 3.8 = Theorem 1.2]{abasheva2025shafarevich} this result was proven under the additional assumptions that $B$ is smooth and $\pi$ has no multiple fibers in codimension one. The proof actually does not use these assumptions. The reader can mentally replace $\Omega^1_B$ in the proof of \cite[Theorem 3.8]{abasheva2025shafarevich} with $\Omega^{[1]}_B$ and keep in mind that thanks to Theorem \ref{theorem cohomology of omega}
$$
H^1(\Omega^{[1]}_B) \simeq H^1(\pi_*T_{X/B})\simeq \C
$$
regardless of whether the base is smooth or fibers in codimension one are non-multiple.
\end{proof}

\subsubsection{The discrete part of $\Sha$.} The isomorphism $\Gamma_\Q\simeq R^1\pi_*\Q$ gives an easy description of the discrete part of $\Sha$. By the exact sequence (\ref{seq sha}), the discrete part $\Sha/\Sha^0$ satisfies
$$
(\Sha/\Sha^0)\otimes \Q \simeq H^2(\Gamma_\Q) \simeq H^2(R^1\pi_*\Q).
$$

%%%%%%%%%%%%%%%%%%%%%%%%%%%%%%%%%%%%%%%%%%%%%%
%%%%%%%%%%%%%%%%%%%%%%%%%%%%%%%%%%%%%%%%%%%
%%%%%%%%%%%%%%%%%%%%%%%%%%%%%%%%%%%%%%%%%%%%%%%%%%

\section{Projective twists}
\label{sec projective twists}
 %First, we will show that the first statement of the theorem implies the second. The proof relies on a theorem due to Perego stated below.

%\begin{definition}
%    A compact complex manifold $X$ is called a $b_2$-manifold if 
%    $$
%    h^{2,0}(X) + h^{1,1}(X) + h^{0,2}(X)
%    $$
%\end{definition}

%\begin{remark}
%    We can rephrase the second part of Theorem \ref{theorem_kahler} as follows. Let $\pi\colon X\to B$ be a Lagrangian fibration on a hyperk\"ahler manifold. Then $\forall \phi\in\Sha$ such that $\overline\phi\in\Sha/\Sha^0\otimes \Q$ vanishes, the twist $X^\phi$ is K\"ahler. This is because every Lagrangian fibration on a hyperk\"ahler manifold admits a projective Shafarevich--Tate deformation. Indeed, by \cite{abasheva2025shafarevich} for every class $h\in H^2(X,\Q)$ such that $h$ restricts non-trivially to a general fiber, there exists a degenerate twistor deformation $X_t$ such that $h$ is of type (1,1) on $X_t$. The value of $t$ is equal to $-q(\sigma,h)/q(h,\pi^*\omega)$, where $q$ is the Beauville-Bogomolov-Fujiki form and $\omega$ is an ample class on $B$. We can find $h\in H^2(X,\Q)$ such that $q(h)>0$ and $t$ is arbitrarily close to zero. Arbitrarily small deformations of $X$ are K\"ahler. By \cite[Theorem 3.11]{huybrechts1999compact} a hyperk\"ahler manifold admitting a rational (1,1)-class $h$ with $q(h)>0$ is projective, hence $X_t$ is projective. Every degenerate twistor deformation induces a Shafarevich--Tate twist by Theorem \ref{theorem follows word by word}.
%\end{remark}

The goal of this section is to prove the second part of Theorem \ref{theorem_kahler}. It will follow from the statement below:

\begin{theorem}
\label{theorem iso between neron severis}
    Let $\pi\colon X\to B$ be a Lagrangian fibration on a holomorphic symplectic manifold, and $\phi\in\Sha$ a torsion element. Then there is a natural isomorphism
    \begin{equation}
    \label{iso between neron severis}
    NS_\Q(X)/\eta \to NS_\Q(X^\phi)/\eta,
    \end{equation}
    where $\eta$ is the pullback of an ample class on $B$. Moreover, the isomorphism (\ref{iso between neron severis}) sends 
    \begin{itemize}
        \item classes on $X$ with cohomologically trivial restriction to smooth fibers to classes with cohomologically trivial restriction to smooth fibers;
        \item relatively ample classes to relatively ample classes.
    \end{itemize} 
\end{theorem}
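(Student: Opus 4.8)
Since $\phi$ is torsion, say $r\phi=0$ in $\Sha$, the twist $X^{r\phi}$ is isomorphic to $X^0=X$ over $B$. I would exploit this by producing a finite surjective morphism over the base
$$
\mu\colon X^\phi \to X
$$
realizing $X$ as the quotient of $X^\phi$ by the relative $r$-torsion. Concretely, the sheaf $Aut^0_{X/B}$ is a relative group (fiberwise the translations of the abelian fibers), $X^\phi$ is a torsor under it, and its $r$-torsion subgroup $Aut^0_{X/B}[r]$ acts freely by translations on $X^\phi$ fiberwise. Over $B^\circ$ the quotient $X^\phi/Aut^0_{X/B}[r]$ is a finite \'etale cover of degree $r^{2n}$ (where $\dim X=2n$), it is again a torsor under $Aut^0_{X/B}$ via the isogeny $[r]$, and its class is $[r]_*\phi=r\phi=0$; hence it is isomorphic over $B^\circ$ to the trivial twist $X^0=X$. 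The first task is to extend this construction of $\mu$ across the discriminant $\Delta$ to an honest morphism $\mu\colon X^\phi\to X$ over all of $B$, using that both sides are compact complex manifolds together with the \v{C}ech cocycle description of $\phi$ (replacing each transition $\phi_{ij}=\exp(v_{ij})$ by the induced map on the fiberwise $r$-torsion quotient and checking the cocycle condition descends).

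\textbf{Descent of Néron--Severi.} Granting $\mu$, the map $\mu^*\colon NS_\Q(X)\to NS_\Q(X^\phi)$ is the candidate for the isomorphism (\ref{iso between neron severis}). Because $\mu$ is fiberwise Galois with group $A[r]$ (for $A$ a smooth fiber), one has $\mu_*\mu^*=r^{2n}\cdot\mathrm{id}$ and $\mu^*\mu_*=\sum_{g\in A[r]} g^*$. The key input is that every $g\in Aut^0_{X/B}[r]$ is isotopic to the identity---it is a time-one flow $\exp(v)$ of a vertical vector field---so it acts trivially on $H^2$, hence on $NS_\Q$, of both $X$ and $X^\phi$. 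Therefore $\mu^*\mu_*=r^{2n}\cdot\mathrm{id}$ as well, so $\mu^*$ is an isomorphism on rational Néron--Severi groups with inverse $r^{-2n}\mu_*$. Since $\mu$ lies over $B$, we have $\mu^*\eta=\eta$, so $\mu^*$ descends to the claimed isomorphism $NS_\Q(X)/\eta\xrightarrow{\sim} NS_\Q(X^\phi)/\eta$, and naturality is automatic from the naturality of pullback.

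\textbf{The two refinements.} Both listed properties are read off fiberwise and are preserved because $\mu$ restricts on a smooth fiber to an isogeny $A^\phi\to A$ of abelian varieties. A class on $X$ with cohomologically trivial restriction to a smooth fiber pulls back, under an isogeny, to a class with cohomologically trivial restriction, so the fiber-trivial classes (that is, $W_\Q\cap NS_\Q$) are matched on the two sides. A relatively ample class pulls back under the finite morphism $\mu$ to a relatively ample class, since for a finite morphism the pullback of a (relatively) ample class is (relatively) ample; this handles the second bullet. I expect the genuine obstacle to be the \emph{global} construction and regularity of $\mu$ over the discriminant $\Delta$---away from $B^\circ$ the $r$-torsion quotient must still yield a morphism of smooth manifolds and be identified with $X$---whereas the descent of $NS_\Q$ and the two refinements are then formal consequences of $Aut^0_{X/B}$ acting trivially on cohomology.
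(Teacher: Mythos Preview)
Your isogeny approach is different from the paper's, and the obstacle you flag---extending $\mu$ across the discriminant $\Delta$---is a genuine gap that I do not see how to close. Over a singular fiber the action of $Aut^0_{X/B}[r]$ is typically not free: for instance on an $I_1$ fiber the vertical vector fields vanish at the node, so every $\exp(v)$ fixes it, and the quotient $X^\phi/Aut^0_{X/B}[r]$ will in general be singular rather than isomorphic to the smooth manifold $X$. There is also a secondary issue with your descent step: the $r$-torsion is only a local system over $B^\circ$, not a constant group, so the individual ``deck transformations'' $g$ do not globalize to automorphisms of $X^\phi$ and hence do not act on $H^2(X^\phi)$; the identity $\mu^*\mu_*=r^{2n}\cdot\mathrm{id}$ would need a different justification even if $\mu$ existed.

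The paper avoids constructing any map between $X^\phi$ and $X$ and instead transports line bundles directly. After normalizing the \v{C}ech cocycle so that each $r\phi_{ij}=\mathrm{id}$ (possible because $Aut^0_{X/B}$ is divisible), the theorem of the square gives $\phi_{ij}^*L^r\simeq L^r$ on every smooth fiber; Raynaud's specialization theorem (using that $h^0(\mathcal O_{\pi^{-1}(b)})=1$ for $b\in B^{reg}$) then forces $\phi_{ij}^*L^s\simeq L^s$ on all of $X_{ij}$ for some $s>0$. The remaining obstruction to gluing these local isomorphisms into a global line bundle on $X^\phi$ is a gerbe class in $H^2(B,\mathcal O_B^\times)\simeq H^3(B,\Z)$, which is torsion since $B$ has the rational cohomology of $\mathbb P^n$; after a further power one obtains $L^\phi$ on $X^\phi$ agreeing with a power of $L$ on each $X_i^\phi$. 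Both bulleted refinements are then immediate from this local description. The upshot is that line bundles, unlike the underlying spaces, can be made to glue across the twist after raising to a suitable power.
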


%In the rest of the section we will suppose that $\phi\in\Sha$ is a torsion element.

\begin{lemma}
\label{lemma torsion in sha}
Let $\phi$ be an $r$-torsion element in $\Sha$. Cover $B$ by small open subsets $U_i$ and represent $\phi$ by a \v{C}ech cocycle $(\phi_{ij})$, $\phi_{ij}\in Aut^0_{X/B}(U_{ij})$. Then we can choose $\phi_{ij}$ in such a way that $r\phi_{ij}$ is the identity automorphism of $X_{ij}$ for each $i,j$.
\end{lemma}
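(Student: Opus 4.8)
The plan is to exploit that $Aut^0_{X/B}$ is a sheaf of \emph{abelian} groups---its local sections are flows $\exp(v)$ of vertical vector fields, which commute since they act by translations along the fibers---together with the fact that the sheaf $\pi_*T_{X/B}$ from which it is exponentiated is a sheaf of $\C$-vector spaces, so that one may freely divide local vector fields by $r$. Conceptually, the statement is the cocycle incarnation of the Kummer-type short exact sequence
$$
0\to \Gamma/r\Gamma \to Aut^0_{X/B}\xrightarrow{[r]} Aut^0_{X/B}\to 0,
$$
where $[r]$ denotes $r$-fold composition; this sequence identifies the $r$-torsion subsheaf of $Aut^0_{X/B}$ with $\Gamma/r\Gamma$. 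Applying the snake lemma to multiplication by $r$ on $0\to\Gamma\to\pi_*T_{X/B}\xrightarrow{\exp}Aut^0_{X/B}\to 0$ gives this at once: $[r]$ is an isomorphism on the vector-space sheaf $\pi_*T_{X/B}$ and injective on the torsion-free sheaf $\Gamma$ with cokernel $\Gamma/r\Gamma$, whence $[r]$ is surjective on $Aut^0_{X/B}$ with kernel $\Gamma/r\Gamma$. The associated long exact sequence $H^1(B,\Gamma/r\Gamma)\to\Sha\xrightarrow{[r]}\Sha$ then shows that the $r$-torsion class $\phi$ lifts to $H^1(B,\Gamma/r\Gamma)$, i.e.\ to a class represented by automorphisms killed by $[r]$.

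Rather than invoke a Leray refinement to realize this lift, I would argue directly with \v{C}ech cochains on the given cover. Choose $v_{ij}\in\pi_*T_{X/B}(U_{ij})$ with $\phi_{ij}=\exp(v_{ij})$, which is possible since the $U_{ij}$ are small. The hypothesis that $\phi$ is $r$-torsion says that the cocycle $(r\phi_{ij})=(\exp(rv_{ij}))$ is a coboundary: there are $\mu_i=\exp(u_i)\in Aut^0_{X/B}(U_i)$ with $\exp(rv_{ij})=\exp(u_j-u_i)$. Now set
$$
\phi'_{ij}:=\exp\!\Big(v_{ij}-\tfrac1r(u_j-u_i)\Big),
$$
which is legitimate because $\tfrac1r(u_j-u_i)$ is again a section of the $\C$-vector space sheaf $\pi_*T_{X/B}$. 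Then $(\phi'_{ij})$ differs from $(\phi_{ij})$ by the coboundary of $(\exp(-\tfrac1r u_i))$, so it represents the same class $\phi$, while
$$
r\phi'_{ij}=\exp\!\big(rv_{ij}-(u_j-u_i)\big)=\exp(rv_{ij})\cdot\exp(u_j-u_i)^{-1}=\mathrm{id}.
$$
Renaming $\phi'_{ij}$ to $\phi_{ij}$ gives the desired representative.

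The only genuinely non-formal ingredient is the sheaf-level input that $[r]$ is surjective on $Aut^0_{X/B}$ with kernel exactly the $r$-torsion subsheaf $\Gamma/r\Gamma$; equivalently, in the cocycle form, that every local section of $Aut^0_{X/B}$ is an honest exponential $\exp(v)$ and that $\Gamma\subset\pi_*T_{X/B}$ is torsion-free, so that $rw\in\Gamma$ forces $w\in\tfrac1r\Gamma$. Both follow from the definition of $Aut^0_{X/B}$ as the image of the exponential map and from the reflexivity (hence torsion-freeness) of $\pi_*T_{X/B}$ established earlier in \ref{subsub vertical vector fields}. Everything else is routine bookkeeping with the abelian structure, so I expect no serious obstacle beyond correctly handling the \v{C}ech coboundary conventions.
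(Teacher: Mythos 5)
Your proof is correct and is essentially the paper's own argument: the paper likewise writes $r\phi_{ij}=\beta_j-\beta_i$ with $\beta_i=\exp(v_i)$, divides the exponent by $r$ to get $\gamma_i=\exp(v_i/r)$, and replaces $\phi_{ij}$ by $\phi_{ij}+\gamma_i-\gamma_j$, which is exactly your $\phi'_{ij}=\exp\bigl(v_{ij}-\tfrac1r(u_j-u_i)\bigr)$. The Kummer-sequence framing is a pleasant conceptual gloss but is not needed for (and is not part of) the core computation.
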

\begin{proof}
    Since the class of $r\phi$ is trivial in $\Sha$, we can find automorphisms $\beta_i\in Aut^0_{X/B}(U_i)$ such that
    $$
    r\phi_{ij} = \beta_j - \beta_i.
    $$
    There exist automorphisms $\gamma_i$ such that $r\gamma_i = \beta_i$. Indeed, we can write $\beta_i = \exp(v_i)$ for some vertical vector field $v_i$. The automorphism $\gamma_i:=\exp(v_i/r)$ will do the job. Replace $\phi_{ij}$ with $\phi_{ij} + \gamma_i - \gamma_j$. The new set of automorphisms satisfies the condition of the lemma.
\end{proof}

\subsubsection{Gluing a line bundle.} The proof of Theorem \ref{theorem iso between neron severis} relies on the following idea. Pick a line bundle $L$ on $X$ and cover $B$ by open disks $U_i$. Let $L_i$ denote the restriction of $L_i$ to $X_i$. We will see that for some $s\in\Z_{>0}$, the line bundles $L_i^s$ can be glued into a line bundle on $X^\phi$. This result will eventually follow from the lemma below.

\begin{lemma}
\label{lemma theorem of the square}
    Let $L$ be a line bundle on an abelian variety $A$ and $t$ an $r$-torsion element of $A$. Then 
    $$
    t^*L^r\simeq L^r.
    $$
\end{lemma}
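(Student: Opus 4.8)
The plan is to deduce this from the \emph{Theorem of the Square} for line bundles on an abelian variety, which states that for any line bundle $L$ on $A$ and any two points $a,b\in A$ one has
$$
t_{a+b}^*L\otimes L\simeq t_a^*L\otimes t_b^*L,
$$
where $t_x\colon A\to A$ denotes translation by $x$. This is the only nontrivial input, and it applies in our situation because the fibers of a Lagrangian fibration are genuine abelian varieties (\ref{subsub fibers are projective}).

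From the Theorem of the Square I would first extract the homomorphism property of the map $\phi_L\colon A\to\operatorname{Pic}(A)$ defined by $a\mapsto t_a^*L\otimes L^{-1}$. Indeed, rewriting the displayed isomorphism gives
$$
t_{a+b}^*L\otimes L^{-1}\simeq (t_a^*L\otimes L^{-1})\otimes(t_b^*L\otimes L^{-1}),
$$
that is, $\phi_L(a+b)\simeq\phi_L(a)\otimes\phi_L(b)$, so $\phi_L$ is a group homomorphism from $(A,+)$ to $(\operatorname{Pic}(A),\otimes)$.

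Now I would apply this to the $r$-torsion point $t$, so that $rt=0$. Since $\phi_L$ is a homomorphism, $\phi_L(rt)\simeq\phi_L(t)^{\otimes r}$. The left-hand side equals $\phi_L(0)=t_0^*L\otimes L^{-1}=\mathcal{O}_A$, while the right-hand side equals $(t^*L\otimes L^{-1})^{\otimes r}=t^*L^r\otimes L^{-r}$, using that translation pullback commutes with tensor powers. Comparing the two yields $t^*L^r\otimes L^{-r}\simeq\mathcal{O}_A$, i.e.\ $t^*L^r\simeq L^r$, as desired. Equivalently, one may iterate the Theorem of the Square directly to prove $t_{rt}^*L\simeq (t^*L)^{r}\otimes L^{1-r}$ by induction on $r$ and then substitute $rt=0$; this variant avoids referring to $\operatorname{Pic}(A)$ altogether. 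I expect no genuine obstacle here: the statement is classical, and its entire content is the Theorem of the Square together with the observation that $t$ has finite order.
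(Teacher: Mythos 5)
Your proof is correct and follows essentially the same route as the paper: both arguments reduce the statement to the fact that $\phi_L(x)=t_x^*L\otimes L^{-1}$ defines a group homomorphism, so that $\phi_L(t)^{\otimes r}=\phi_L(rt)=\mathcal O_A$. The only (immaterial) difference is the justification of the homomorphism property — you invoke the Theorem of the Square directly, while the paper cites the rigidity fact that a morphism of abelian varieties sending zero to zero is a homomorphism.
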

\begin{proof}
    Consider the morphism $\phi_L\colon A\to A^\vee$ sending $x$ to $x^*L\otimes L^{-1}$. The map $\phi_L$ is a homomorphism because any morphism of abelian varieties sending zero to zero is a homomorphism \cite[Section 4, Corollary 1]{mumford1974abelian}. Therefore $\phi_L(t)$ is an $r$-torsion line bundle, i.e.,
    $$
    (\phi_L(t))^r = t^*L^r\otimes L^{-r}\simeq \mathcal O_A.
    $$
\end{proof}

\begin{lemma}
\label{lemma raynaud}
    Let $\pi\colon Y\to S$ be a proper flat morphism between normal varieties such that $h^0(\mathcal O_{Y_b})=1$ for all $b\in S$ outside a codimension at least two subset of $S$ and $S$ is locally $\mathbb Q$-factorial. Consider a line bundle $M$ on $Y$ with the following properties:
    \begin{enumerate}
        \item the restriction of $M$ to any smooth fiber is trivial;
        \item the restriction of $M$ to any fiber $Y_b$ lies in $Pic^0(Y_b)$. Here $Pic^0(Y_b)$ is the connected component of unity of $Pic(Y_b)$.
    \end{enumerate}
    Then some positive multiple $M^s$ of $M$ for $s\in\Z_{>0}$ is isomorphic to the pullback of a line bundle from $S$.  
\end{lemma}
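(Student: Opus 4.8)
The plan is to reduce the statement to a \emph{seesaw}-type descent principle and then to produce the integer $s$ by controlling the restriction of $M$ to the singular fibers through the relative Picard functor.

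First I would record the descent principle. Let $S'\subseteq S$ be the open locus where $h^0(\mathcal O_{Y_b})=1$; by hypothesis its complement has codimension at least two, and over $S'$ the map $\pi$ satisfies $\pi_*\mathcal O_{Y'}=\mathcal O_{S'}$ by cohomology and base change \cite[Chapter 5]{mumford1974abelian}, where $Y':=\pi^{-1}(S')$. Suppose a line bundle $M'$ on $Y$ restricts trivially to $Y_b$ for every $b\in S'$. Then, again by base change, $\pi_*M'$ is invertible on $S'$ and the natural map $\pi^*\pi_*M'\to M'$ is an isomorphism over $Y'$, so $M'|_{Y'}\simeq\pi^*N'$ for some line bundle $N'$ on $S'$. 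As $S$ is normal and $S\setminus S'$ has codimension at least two, $N'$ extends to a line bundle $N$ on $S$; since $\pi$ is equidimensional, $\pi^*N$ and $M'$ agree on $Y'$, whose complement has codimension at least two in the normal variety $Y$, and hence $M'\simeq\pi^*N$ on all of $Y$. Thus it suffices to find $s\in\Z_{>0}$ with $M^s|_{Y_b}\simeq\mathcal O_{Y_b}$ for every $b\in S'$.

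Next I would reinterpret the fiberwise restrictions of $M$ as a section $\tau_M$ of the relative Picard functor $\operatorname{Pic}_{Y'/S'}$ over $S'$. By assumption (2) this section factors through the identity component $\operatorname{Pic}^0_{Y'/S'}$, and by assumption (1) it coincides with the zero section over the dense open subset $S^\circ\subseteq S'$ parametrizing smooth fibers. Were $\operatorname{Pic}^0_{Y'/S'}$ separated over $S'$ we could conclude $\tau_M=0$ outright; separatedness can fail precisely because the singular fibers may be reducible, and this is the heart of the matter.

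The main obstacle, and the place where Raynaud's theory of the relative Picard functor enters, is to show that a section of $\operatorname{Pic}^0_{Y'/S'}$ vanishing over the smooth locus is annihilated by a \emph{fixed} positive integer. I would test this on a general one-parameter disk $D\subseteq S'$ meeting the discriminant transversally: over such a $D$ the family $\operatorname{Pic}^0$ is computed by the identity component of a Néron model, a separated semiabelian scheme, so a section vanishing on $D\setminus\{0\}$ also vanishes at $0$; this trivializes $M$ on fibers over the codimension-one part of the discriminant. Over the deeper strata the specialization of $\tau_M$ lands in the torsion of $\operatorname{Pic}^0(Y_b)$ controlled by the finite component groups of the special fibers, and by constructibility of these groups over the base there is a uniform bound $s$, depending only on the family, with $s\,\tau_M=0$. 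With such an $s$ fixed, $M^s$ restricts trivially to every fiber over $S'$, and the descent principle of the first paragraph gives $M^s\simeq\pi^*N$, as desired. I expect the extraction of the \emph{uniform} annihilator $s$ from Raynaud's separatedness results to be the delicate step.
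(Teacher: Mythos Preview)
Your reduction in the first paragraph is fine and matches the paper's Step~1. The structural idea---produce a uniform $s$ such that $M^s$ is fiberwise trivial outside codimension two, then descend---is also the paper's. The gap is in your treatment of the codimension-one discriminant. You assert that over a transversal disk $D$ the family $\operatorname{Pic}^0$ ``is computed by the identity component of a N\'eron model, a separated semiabelian scheme,'' and conclude that $\tau_M(0)=0$. But the relative $\operatorname{Pic}^0_{Y_D/D}$ is \emph{not} in general the identity component of the N\'eron model of the generic fibre, and the hypothesis $h^0(\mathcal O_{Y_b})=1$ does not force separatedness. What Raynaud's theorem \cite{raynaud1970specialisation} actually gives is that the group $E_b$ of restrictions to $Y_b$ of line bundles on $Y$ trivial on the smooth fibres has \emph{dimension} $h^0(\mathcal O_{Y_b})-1=0$, i.e.\ $E_b$ is discrete, not that $E_b=0$. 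So one only gets $M_b\in E_b\cap\operatorname{Pic}^0(Y_b)$, a discrete subgroup of a group scheme of finite type, hence $M_b$ is \emph{torsion}---not necessarily trivial. This is exactly what the paper does.

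Once you accept that the conclusion at a general point of each irreducible component of the codimension-one discriminant is ``$M_b$ is torsion'' rather than ``$M_b=0$'', the uniform $s$ is immediate: there are finitely many such components, so take a common multiple. Then $M^s$ is fiberwise trivial over an open set whose complement has codimension at least two, and your own descent paragraph applies. In particular your ``deeper strata'' discussion is unnecessary---those points can simply be thrown into the codimension-two locus---and the constructibility/component-group argument you sketch there is both vague and not needed.
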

\begin{proof}
{\bf Step 1.} It is enough to show this statement for some $S'\subset S$ with complement of codimension at least two. Indeed, suppose that $M^s|_{\pi^{-1}(S')}$ is isomorphic to $\pi^*K'$ for a line bundle $K'$ on $S'$. We can extend $K'$ to a line bundle $K$ on $S$ because $S$ is $\mathbb Q$-factorial. The line bundles $\pi^*K$ and $M^s$ are isomorphic outside a codimension at least two subset of $Y$, hence they are isomorphic.

\hfill

{\bf Step 2.}  Denote by $\Delta$ the discriminant locus of $\pi$. If $\codim \Delta \ge 2$, then we are done thanks to Step 1. So we may assume $\codim \Delta = 1$. Consider the group $E_b\subset Pic(Y_b)$ of line bundles $L$ on $Y_b$ with the following property: there exists a line bundle $\tilde L$ on $Y$ which is trivial on smooth fibers and restricts to $L$ on $Y_b$. By Raynaud's theorem \cite[Introduction]{raynaud1970specialisation}, $E_b$ has dimension $h^0(\mathcal O_{Y_b})-1$ for a general point $b\in \Delta$. The assumption that $h^0(\mathcal O_{Y_b})=1$ for a general point $b\in\Delta$ implies that $E_b$ is discrete for any fiber of $\pi$ over a general point $b\in\Delta$. The line bundle $M_b:=M\restrict{Y_b}$ is in $E_b$ by the first property. By the second assumption, $M_b\in Pic^0(Y_b)$. Consider the group $\langle M_b\rangle$ generated by $M_b$ inside $Pic^0(Y_b)$. It is contained inside $E_b$, hence is discrete. Since the group space $Pic^0(Y_b)$ is of finite type \cite[Proposition 9.5.3]{fantechi2005fundamental}, the group $\langle M_b\rangle$ is of finite type as well. Hence $\langle M_b\rangle$ is finite, in other words, $M_b$ is torsion. Therefore, some power $M^s$ of $M$ restricts trivially to all fibers over $S'\subset S$ with complement of codimension at least two. Define a line bundle $K':=\pi_*M^s\restrict{\pi^{-1}(S')}$. The natural map $\pi^*K'\to M^s\restrict{\pi^{-1}(S')}$ is an isomorphism.
\end{proof}

\subsubsection{}\label{subsub proof of kahlerness} We are now ready to prove Theorem \ref{theorem iso between neron severis}.
\begin{proof}[Proof of Theorem \ref{theorem_kahler} (2)]
    {\bf Step 1.} Pick a line bundle $L$ on $X$. As before, choose a \v{C}ech cocycle $(\phi_{ij})$ with $r\phi_{ij} = 0$ representing an $r$-torsion class $\phi\in \Sha$. We will construct an isomorphism:
    $$
    f_{ij}\colon\phi_{ij}^*L^s\restrict{\pi^{-1}(U_{ij)}}\to L^s\restrict{\pi^{-1}(U_{ij)}}.
    $$
    for some $s\in\Z_{>0}$. The line bundle $\phi_{ij}^*L_j^r\otimes L_i^{-r}$ on $X_{ij}$ restricts trivially to smooth fibers by Lemma \ref{lemma theorem of the square}. Moreover, it satisfies the second condition of Lemma \ref{lemma raynaud} because $\phi_{ij}\in Aut^0_{X/B}$. For every $b\in B^{reg}$ the fibers $\pi^{-1}(b)$ satisfy $h^0(\mathcal O_{\pi^{-1}(b)}) = 1$, see \ref{subsub higher pushforwards}. By Lemma \ref{lemma raynaud} some multiple of $\phi_{ij}^*L^r\otimes L^{-r}$ is the pullback of a line bundle on $U_{ij}$. When the subsets $U_i$'s are sufficiently small, all line bundles on $U_{ij}$'s are trivial. Therefore the sheaves $\phi_{ij}^*L^s\restrict{pi^{-1}(U_{ij})}$ and $L^s\restrict{\pi^{-1}(U_{ij)}}$ are isomorphic.

    \hfill
    
    {\bf Step 2.} The isomorphisms $f_{ij}$ might not a priori satisfy the cocycle condition. In other words, the following map
    $$
    f_{ij}^{-1}\circ\phi_{ij}^*f_{jk}^{-1}\circ f_{ik}
    $$
    is some automorphism of $L\restrict{U_{ijk}}$, which might not be trivial. Denote it by $\lambda_{ijk}$. The automorphism $\lambda_{ijk}$ is a multiplication by a non-zero holomorphic function on $X_{ijk}$, which must be the pullback of a function on the base. Therefore the automorphisms $\lambda_{ijk}$ define a \v{C}ech 2-cocycle on $B$ with coefficients in $\mathcal O_B^\times$. 
    
    Consider the following chunk of the long exact sequence of cohomology of the exponential exact sequence on $B$:
    $$
    H^2(B,\mathcal O_B)\to H^2(B,\mathcal O_B^\times)\to H^3(B,\Z)\to H^3(B,\mathcal O_B).
    $$
    By Theorem \ref{theorem cohomology of omega}, the cohomology groups $H^2(B,\mathcal O_B)$ and $H^3(B,\mathcal O_B)$ vanish. Hence $H^2(B,\mathcal O_B^\times)\simeq H^3(B,\Z)$. The cohomology groups $H^i(B,\Q)$ are the same as for $\mathbb P^n$ \cite[Theorem 0.2]{shen2022topology}, in particular $H^3(B,\Z)$ is torsion. Hence some power, say $s'$, of the cocycle $(\lambda_{ijk})$ vanishes. Replace the line bundle $L^s$ with $L^{ss'}$ and the isomorphisms $f_{ij}$ with $f_{ij}^{\otimes s'}$. Then $\lambda_{ijk}$ gets replaced with $\lambda_{ijk}^{s'}$, which is a coboundary. Write $\lambda_{ijk}^{s'} = \mu_{ij}\mu_{jk}\mu_{ki}$ for some $\mu_{ij}\in\mathcal O_B^\times(U_{ij})$. Then the isomorphisms $(\mu_{ij}^{-1}\cdot f_{ij})$ satisfy the cocycle condition. It follows that we can glue the line bundles $L^s\restrict{\pi^{-1}(U_{ij})}$ into a global line bundle $L^\phi$ on $X^\phi$.
    
    The line bundle $L^\phi$ depends only on the choice of $\mu_{ij}\in \mathcal O_B^\times(U_{ij})$. Different choices of $\mu_{ij}$ differ by a $1$-cocycle with coefficients in $\mathcal O_B^\times(U_{ij})$. Therefore, $L^\phi$ is well-defined up to the pullback of a line bundle on $B$. We construct a map
    $$
    NS_\Q(X)/\eta \to NS_\Q(X^\phi)/\eta
    $$
    by sending the class of $L$ in $NS_\Q(X)/\eta$ to the class $[L^\phi]/(ss')\in NS_\Q(X^\phi)/\eta$.

    \hfill

    {\bf Step 3.} The restriction of $L^\phi$ to $X^\phi_i$ coincides with a power of $L_i$. Therefore the class of $L^\phi$ in $NS_\Q(X)$ has trivial restriction to smooth fibers if and only if this is true for $L$, and $L^\phi$ is relatively ample if and only if so is $L$.
\end{proof}

Instead of proving Theorem \ref{theorem_kahler} directly, we will show a more general statement.

\begin{theorem}
\label{theorem kahler substitute}
    Let $\pi\colon X\to B$ be a Lagrangian fibration on a projective hyperk\"ahler manifold, and $\phi\in\Sha'$. Then the following are equivalent:
    \begin{enumerate}
        \item $\phi$ is torsion;
        \item $X^\phi$ is projective;
        \item there is a class $\alpha\in NS_{\mathbb Q}(X^\phi)$ such that $q(\alpha,\eta)\ne 0$.
    \end{enumerate}
\end{theorem}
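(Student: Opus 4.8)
The plan is to prove the cyclic chain $(1)\Rightarrow(2)\Rightarrow(3)\Rightarrow(1)$. The first two links are quick consequences of results already established; the real content is the last link, ``projective $\Rightarrow$ torsion'', which I expect to be the main obstacle. Throughout I use that $\eta$ is isotropic: since $\eta$ is the pullback of an ample class on the $n$-dimensional base $B$, one has $\eta^{n+1}=0$, whence $c_{X^\phi}q(\eta)^n=\int_{X^\phi}\eta^{2n}=0$ and so $q(\eta)=0$.

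\textbf{The two easy implications.} For $(1)\Rightarrow(2)$, if $\phi$ is torsion then Theorem \ref{theorem iso between neron severis} furnishes an isomorphism $NS_\Q(X)/\eta\to NS_\Q(X^\phi)/\eta$ carrying relatively ample classes to relatively ample classes. As $X$ is projective it has a relatively ample class, hence so does $X^\phi$; since $B$ is projective with $\eta=(\pi^\phi)^*A$ for $A$ ample, adding a large multiple of $\eta$ to a relatively ample class produces an ample class on $X^\phi$, so $X^\phi$ is projective. For $(2)\Rightarrow(3)$, a projective $X^\phi$ is in particular hyperk\"ahler, so the BBF form and an ample (rational) class $h\in H^{1,1}(X^\phi,\Q)$ are available; $h$ lies in the interior of the positive cone and $\eta$ is a nonzero nef class with $q(\eta)=0$, so the reverse Cauchy--Schwarz inequality for the Lorentzian form $q|_{H^{1,1}}$ gives $q(h,\eta)>0$, which is $(3)$ with $\alpha=h$.

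\textbf{The hard implication $(3)\Rightarrow(1)$.} Assume $\alpha\in H^{1,1}(X^\phi,\Q)$ satisfies $q(\alpha,\eta)\neq 0$; I must show $\phi$ is torsion. Since $\phi\in\Sha'$ there is $r>0$ with $r\phi\in\Sha^0$, and $\phi$ is torsion iff $r\phi$ is. First I would reduce to $r\phi$: fibrewise multiplication by $r$ on $Aut^0_{X/B}$ induces a finite surjective morphism $g\colon X^\phi\to X^{r\phi}$ over $B$ sending the class $\phi$ to $r\phi$, and by the projection formula $g_*\alpha$ is a rational $(1,1)$-class on $X^{r\phi}$ with $q(g_*\alpha,\eta)\neq 0$ (using $g^*\eta=\eta$); thus condition $(3)$ passes to $X^{r\phi}$ and I may assume $\phi\in\Sha^0$. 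Now Theorem \ref{theorem follows word by word} identifies $X^\phi$ with a degenerate twistor deformation $X_t$ whose period is $[\sigma_t]=[\sigma]+t\,[\pi^*\alpha_0]$, where $[\pi^*\alpha_0]$ is a nonzero multiple of $\eta$. A rational class $\beta$ is of type $(1,1)$ on $X_t$ precisely when $q(\beta,\sigma)+t\,q(\beta,\pi^*\alpha_0)=0$; since $q(\beta,\eta)\neq 0$ this linear equation in $t$ has a unique solution, so the existence of such a $\beta$ pins $t$ to the single value $t_\beta=-q(\beta,\sigma)/q(\beta,\pi^*\alpha_0)$.

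It remains to recognize that this value is a torsion element of $\Sha^0$. Using the description $\Sha^0\sim H^{0,2}(X)/p(H^2(X,\Z))$ of Theorem \ref{theorem sha naught}, the torsion subgroup of $\Sha^0$ is $p(H^2(X,\Q))$; on the other hand the locus of $t$ for which $X_t$ acquires an extra rational class pairing nontrivially with $\eta$ is the set $\{t_\beta\}$ computed above. Both are $\Q$-subspaces of the period line, and the whole implication comes down to showing they coincide. This matching is the step I expect to be the genuine obstacle: it requires pinning down the normalizing constant relating $[\pi^*\alpha_0]$ to $\eta$ against $q(\sigma,\overline\sigma)$, so that the $\Q$-span $\{t_\beta\}$ and the lattice $p(H^2(X,\Q))$ agree. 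The fact that torsion is an isogeny-invariant notion helps, since it lets me compare only the $\Q$-spans and not the lattices themselves; carrying this comparison out, via the explicit chain of identifications in (\ref{eq isos isos}) and (\ref{eq sha as h1r1pistarox}), is the heart of the proof.
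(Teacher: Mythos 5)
Your two easy implications are fine and match the paper: $(1)\Rightarrow(2)$ is exactly the paper's use of Theorem \ref{theorem iso between neron severis}, and $(2)\Rightarrow(3)$ is the paper's one-line ``an ample class will do the job'', which your positivity argument correctly justifies. The problem is $(3)\Rightarrow(1)$, where your proof has two genuine gaps. First, the reduction to $\Sha^0$ is asserted, not proved: ``fibrewise multiplication by $r$'' does not obviously produce a finite morphism $g\colon X^\phi\to X^{r\phi}$, because $\pi$ has no global section (so there is no canonical multiplication map on the fibers; local choices of sections introduce correction translations whose cocycle must be checked to land you in $X^{r\phi}$ and not some other twist), and because the map would at best be defined over $B^\circ$ and must be extended across the singular fibers before you can push forward a cohomology class. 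Second, and decisively, the step you yourself call ``the heart of the proof'' --- showing that the set $\{t_\beta=-q(\beta,\sigma)/q(\beta,\pi^*\alpha_0)\}$ coincides with the torsion subgroup $p(H^2(X,\Q))/p(H^2(X,\Z))$ of $\Sha^0$ under the normalization of (\ref{eq isos isos}) --- is not carried out. This is not a routine verification: the two sets a priori differ by the factor $q(\sigma,\overline\sigma)$ and by the comparison of $H^1(\Gamma)\otimes\Q$ with $p(W_\Q)=p(H^2(X,\Q))$ from the proof of Theorem \ref{theorem sha naught}, so as written your argument establishes only that $t$ is pinned to a single value, not that this value corresponds to a torsion class.

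The paper avoids this computation entirely by a different mechanism. For $(3)\Rightarrow(1)$ it picks a torsion $\psi\in\Sha$ with $\phi-\psi$ arbitrarily close to $0$ (possible by Corollary \ref{corollary dense} and divisibility of $\Sha^0$), so that $X^{\phi-\psi}$ is K\"ahler by openness of K\"ahlerness; it then transports your class $\alpha$ through the torsion-twist isomorphism $NS_\Q(X^\phi)/\eta\simeq NS_\Q(X^{\phi-\psi})/\eta$ of Theorem \ref{theorem iso between neron severis} (using that $q(\cdot,\eta)\neq 0$ is equivalent to nontrivial restriction to a smooth fiber, Theorem \ref{theorem_restriction}), invokes the projectivity criterion of the prequel ([Lemma 5.15] of \cite{abasheva2021shafarevich}: a hyperk\"ahler Lagrangian fibration carrying a $(1,1)$-class not orthogonal to $\eta$ is projective) to conclude $X^{\phi-\psi}$ is projective, and finally applies $(1)\Rightarrow(2)$ to the torsion twist by $\psi$. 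Note that, as written, this chain lands on statement $(2)$ rather than literally on ``$\phi$ is torsion'', so your instinct that some arithmetic on the period line is ultimately needed is not unreasonable; but the intended route in the paper is the approximation-plus-transport argument, not the direct lattice computation you sketch, and in any case a proof that stops at ``this is the step I expect to be the genuine obstacle'' is not a proof.
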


\begin{proof}
    {\bf (1) $\Rightarrow$ (2).} By Theorem \ref{theorem iso between neron severis} there is a relatively ample class on $X^\phi$. Hence $X^\phi$ is projective.

    \hfill

    {\bf (2) $\Rightarrow$ (3).} An ample class $\alpha$ on $X^\phi$ will do the job.

    \hfill

    {\bf (3) $\Rightarrow$ (1).} We can find a torsion element $\psi\in\Sha$ such that $\phi - \psi$ is arbitrarily close to $0$. In particular, we may assume that $X^{\phi-\psi}$ is K\"ahler. A cohomology class $\alpha$ has non-zero intersection with $\eta$ if and only if the restriction of $\alpha$ to a smooth fiber is non-trivial (Theorem \ref{theorem_restriction}). By Theorem \ref{theorem iso between neron severis} the manifold $X^{\phi-\psi}$ carries a rational $(1,1)$-class $\alpha'$ such that $q(\alpha',\eta) \ne 0$ as well. By \cite[Theorem 5.20]{abasheva2025shafarevich}, the class $\phi-\psi$ is torsion, and hence so is $\phi$. 
\end{proof}

\begin{corollary}
\label{corollary all kahler except nowhere dense}
    Let $\pi\colon X\to B$ be a Lagrangian fibration on a hyperk\"ahler manifold. As before, denote by $\Sha'$ the subset of $\phi\in \Sha$, s.t., $N\phi\in\Sha^0$ for some $N\in\mathbb Z_{>0}$. Then the set of $\phi\in\Sha'$ such that $X^\phi$ is K\"ahler is open and dense in $\Sha'$.
\end{corollary}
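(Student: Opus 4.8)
The plan is to set
$\mathcal K := \{\phi\in\Sha' : X^\phi \text{ is K\"ahler}\}$
and prove separately that $\mathcal K$ is open and that it is dense. Throughout I will use that $\Sha'$ is a subgroup of $\Sha$ (the preimage of the torsion subgroup of $\Sha/\Sha^0$) and that, since $\Sha/\Sha^0$ is discrete, its connected components are exactly the cosets of $\Sha^0$, each of which is therefore open in $\Sha'$. It thus suffices to prove openness and density one coset at a time, and translation by any fixed class is a self-homeomorphism of $\Sha'$ that I will exploit freely.

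For openness, fix $\phi_0\in\mathcal K$, so that $X^{\phi_0}$ is a compact K\"ahler holomorphic symplectic manifold carrying the Lagrangian fibration $\pi^{\phi_0}$. I would form its degenerate twistor family (Definition \ref{definition sht family}), a smooth proper family $\mathcal X\to\mathbb A^1$ whose central fiber is $X^{\phi_0}$. By Theorem \ref{theorem follows word by word} applied to $X^{\phi_0}$, together with the additivity of twisting, the fiber over $t\in H^1(\pi_*T_{X/B})\simeq\C$ is isomorphic to $X^{\phi_0+\phi_t}$, where $t\mapsto\phi_t$ is the quotient map $\C\to\C/\Lambda=\Sha^0$ and is therefore open. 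Kodaira's stability theorem for K\"ahler structures in smooth proper families then shows that the fibers over a neighborhood of $0$ are K\"ahler, i.e. a neighborhood of $\phi_0$ in the coset $\phi_0+\Sha^0$ lies in $\mathcal K$. Since this coset is open in $\Sha'$, the set $\mathcal K$ is open.

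For density I would first reduce to the projective case. By the argument in the proof of Corollary \ref{corollary matsushita} there is a class $\psi\in\Sha^0$ with $X^\psi$ projective; since $X^\phi\cong (X^\psi)^{\phi-\psi}$ and translation by $\psi\in\Sha^0\subseteq\Sha'$ is a self-homeomorphism of $\Sha'$, the K\"ahler locus of $X$ is the $\psi$-translate of the K\"ahler locus of $X^\psi$, so density of one is equivalent to density of the other. Hence I may assume $X$ itself is projective. Then Theorem \ref{theorem kahler substitute} applies directly: every torsion class $\phi\in\Sha'$ has $X^\phi$ projective, in particular K\"ahler, so $\mathcal K$ contains all torsion elements of $\Sha$. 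It remains to check that torsion elements are dense in $\Sha'$. Corollary \ref{corollary dense} gives density of torsion in $\Sha^0$, and since $\Sha^0=\C/\Lambda$ is divisible, every coset in $\Sha'$ contains a torsion point (correct a representative by a suitable element of $\Sha^0$); its translates by torsion elements of $\Sha^0$ are then dense in that coset. Thus $\mathcal K$ is dense.

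The main obstacle I anticipate is the density step rather than openness. Openness is a soft deformation-theoretic statement once the degenerate twistor family is in hand, and in particular it does \emph{not} invoke the full strength of Theorem \ref{theorem_kahler}(1), which is proven only later. Density, by contrast, genuinely requires leaving the given (possibly non-projective) $X$ and transporting the problem to a projective Shafarevich--Tate twist, where Theorem \ref{theorem kahler substitute} makes all torsion twists projective. The two points to handle with care are that the twisting families of $X$ and of $X^\psi$ differ only by the homeomorphism of translation by $\psi$, and that the torsion classes accumulate at every point of \emph{every} component of $\Sha'$, for which the divisibility of $\Sha^0$ is the essential input.
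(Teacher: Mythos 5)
Your proposal is correct and follows essentially the same route as the paper: reduce to the projective case via a projective twist $X^\psi$ with $\psi\in\Sha^0$, invoke Theorem \ref{theorem kahler substitute} to get projectivity of all torsion twists, use density of torsion (Corollary \ref{corollary dense}) for density, and openness of K\"ahlerness in families for openness. Your extra care in extending density of torsion from $\Sha^0$ to every coset of $\Sha^0$ in $\Sha'$ via divisibility is a detail the paper leaves implicit, but it is not a different argument.
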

\begin{proof}
    First, it is enough to prove this corollary for a projective $X$. Indeed, $X^\psi$ is projective for some $\psi\in\Sha^0$ by the same argument as the one used in the proof of Corollary \ref{corollary matsushita}. If we manage to prove Corollary \ref{corollary all kahler except nowhere dense} for $X^\psi$, then the same result for $X$ will follow because every Shafarevich-Tate twist of $X^\psi$ is a Shafarevich-Tate twist of $X$.

    Let us assume that $X$ is projective. Twists $X^\phi$ of $X$ with respect to torsion elements $\phi\in\Sha'$ are projective (Theorem \ref{theorem kahler substitute}). Moreover, the set of torsion elements is dense in $\Sha'$ (Corollary \ref{corollary dense}). Hence the set of K\"ahler twists with respect to $\phi\in \Sha'$ is dense in $\Sha'$. K\"ahlerness is open in a space of deformations, therefore, this set is also open.
\end{proof}

%%%%%%%%%%%%%%%%%%%%%%%%%%%%%%%%%%%%%%%%%%%%%%
%%%%%%%%%%%%%%%%%%%%%%%%%%%%%%%%%%%%%%%%%%%%%%%%%%
%%%%%%%%%%%%%%%%%%%%%%%%%%%%%%%%%%%%%%%%%%%%%%%%%

\section{K\"ahler twists}
\label{sec kahler twists}

As we showed in Corollary \ref{corollary all kahler except nowhere dense}, all twist $X^\phi$ with respect to $\phi\in\Sha'$ are K\"ahler except maybe for a nowhere dense subset of $\Sha'$. In this section we will show that $X^\phi$ is actually K\"ahler for all $\phi\in\Sha'$, and thus we prove Theorem \ref{theorem_kahler}(1). Note that Theorem \ref{theorem_kahler}(1) will immediately follow from the statement below by applying it to $X^\phi$ for some $\phi\in\Sha'$. 

\begin{proposition}
\label{proposition deformations are kahler replacement}
    Let $\pi\colon X\to B$ be a Lagrangian fibration on an irreducible holomorphic symplectic manifold. Consider the restriction $\mathcal X\to \mathbb D$ of its Shafarevich--Tate family to a disk $\mathbb D\subset \mathbb A^1$. Suppose that the set $U\subset \mathbb D$ parametrizing K\"ahler Shafarevich--Tate deformations of $X$ is non-empty and $0\in\overline{U}$. Then $X$ is hyperk\"ahler.
\end{proposition}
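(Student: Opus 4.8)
The plan is to turn the hypothesis into a statement about deformations of a genuine hyperkähler manifold, and then to produce a Kähler class on the central fibre by a limiting argument organised around the Beauville--Bogomolov--Fujiki form. First I would reduce to a Kählerness statement about $X_0=X$ alone. Since $U\neq\emptyset$, pick $t_1\in U$; then $X_{t_1}$ is Kähler and holomorphic symplectic, hence irreducible hyperkähler. As $\mathcal X\to\mathbb D$ is a smooth proper family over the connected base $\mathbb D$, every fibre $X_t$ (in particular $X_0$) is deformation equivalent to $X_{t_1}$, so each $X_t$ is again an irreducible holomorphic symplectic manifold, and the common underlying smooth manifold carries the BBF form $q$ of constant signature $(3,b_2-3)$. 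On each $X_t$ the real $(1,1)$-space $H^{1,1}_t:=H^{1,1}(X_t,\R)$ has $q$-signature $(1,b_2-3)$, so its positive cone and its Kähler and nef cones are defined. It therefore suffices to show that $X_0$ is Kähler, since $X_0=X$ is already holomorphic symplectic, and Kähler plus holomorphic symplectic means hyperkähler.

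Next I would record how the family sits inside the period domain. Identifying all $H^2(X_t,\C)$ with the fixed $H^2(X,\C)$, the period of $X_t$ is $[\sigma_t]=[\sigma]+t[\pi^*\alpha]$. Because $B$ has the rational cohomology of $\mathbb P^n$ (Theorem \ref{theorem cohomology of omega}), the class $[\pi^*\alpha]$ is a real multiple of $\eta$, and since $q(\eta)=0$, $q(\sigma,\eta)=0$ and $q(\sigma)=0$, one checks that $q(\sigma_t)=0$ and $q(\sigma_t,\overline{\sigma_t})=q(\sigma,\overline\sigma)>0$ for all $t$. Thus the periods trace a holomorphic arc in the period domain that converges to $[\sigma]$ as $t\to 0$, and the Hodge structures — together with the spaces $H^{1,1}_t$ and their positive cones — vary continuously up to $t=0$. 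A crucial structural point is that $\eta$ lies in $H^{1,1}_t$ for every $t$ (the Lagrangian fibration $X_t\to B$ is preserved) and is nef on every fibre.

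The heart of the argument is to manufacture a Kähler class on $X_0$. I would take $t_k\to 0$ with $t_k\in U$, choose Kähler classes $\kappa_k\in H^{1,1}_{t_k}$ normalised by $q(\kappa_k)=1$, and note that $q(\kappa_k,\eta)>0$ since $\kappa_k$ is Kähler and $\eta$ is a nonzero nef class; this pins down which component of the positive cone the $\kappa_k$ occupy. Granting that the $\kappa_k$ stay in a bounded region of $H^2(X,\R)$, extract a limit $\kappa_0$; by continuity of the Hodge structures $\kappa_0\in H^{1,1}_0$ with $q(\kappa_0)=1>0$, so $\kappa_0$ lies in the \emph{open} positive cone of $X_0$, and $\kappa_0$ is nef as a limit of Kähler classes. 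To upgrade nef to Kähler I would invoke the Huybrechts--Boucksom description of the Kähler cone of an irreducible holomorphic symplectic manifold: it is exactly the set of positive-cone classes that are strictly positive on every (rational) curve, so its complement inside the positive cone is a countable union of hyperplanes $[C]^\perp$. A small perturbation of $\kappa_0$ inside the open positive cone then avoids all these walls and is genuinely Kähler, whence $X_0$ is Kähler and $X$ is hyperkähler.

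The step I expect to be the main obstacle is precisely the boundedness (non-escape) of the normalised classes $\kappa_k$, equivalently the assertion that the Kähler cones $\mathcal K_{t_k}$ do not collapse onto the light cone as $t_k\to 0$. This is delicate because the only canonical class available on $X_0$ is the isotropic class $\eta$, whose $q$-orthogonal complement is degenerate; consequently controlling $q(\kappa_k,\eta)$ alone does not cut out a compact slice of the positive cone, and a naive compactness argument fails. I would resolve this using the special geometry of the degenerate twistor family — the persistence of the nef class $\eta$ together with the continuity of the positive cones — to bound the $\kappa_k$ against a fixed comparison class, or, failing an elementary bound, replace the cohomological limit by a weak-$*$ limit of the Kähler forms $\omega_{t_k}$ of bounded mass, producing a closed positive current on $X_0$ whose class sits in the positive cone, and then deduce Kählerness from the hyperkähler cone theory. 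Either way, turning the limiting positivity into an honest interior class of the positive cone is the crux on which the whole proposition turns.
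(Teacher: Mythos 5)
Your proposal has a genuine gap, and it sits exactly where you flag it --- but the problem is deeper than a missing compactness bound. Even if you could extract a limit class $\kappa_0\in H^{1,1}(X_0,\R)$ with $q(\kappa_0)>0$, the final step is circular: the Boucksom--Huybrechts description of the K\"ahler cone (positive-cone classes pairing positively with all rational curves) is a theorem \emph{about} irreducible hyperk\"ahler manifolds, i.e., about manifolds already known to be K\"ahler. You cannot invoke it on $X_0$ to prove that $X_0$ is K\"ahler. There is no general principle saying that a holomorphic symplectic manifold carrying a class of positive BBF square that is a ``limit of K\"ahler classes'' is K\"ahler; the classes $\kappa_k$ live on different complex structures, so the limit is not even nef on $X_0$ in any a priori sense, and the weak-$*$ current variant runs into the same wall (a mass-bounded limit of the $\omega_{t_k}$ need not dominate a positive form, so it need not be a K\"ahler current, and without that one cannot even conclude Fujiki class $\mathcal C$). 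The correct version of your last step is Perego's criterion (Theorem \ref{theorem perego kahler}), which does let one promote a suitable $(1,1)$-class to a K\"ahler class --- but only under the hypothesis that $X_0$ is \emph{already of Fujiki class $\mathcal C$} and a limit of hyperk\"ahler manifolds. Establishing that hypothesis is the actual content of the proposition and is entirely absent from your argument.

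The paper's route is correspondingly quite different: it uses the Global Torelli theorem to build a second family $\mathcal Y\to\mathbb D$ of honest hyperk\"ahler manifolds with the same periods, with $Y_t$ bimeromorphic to $X_t$ for $t\in U$; shows via Greb--Lehn--Rollenske that $Y_t$ inherits a Lagrangian fibration and that $\mathcal Y$ is locally a Shafarevich--Tate family; uses a Bishop-type volume bound on graphs of isomorphisms (following Perego) to show the central fibers of the two families are bimeromorphic; and shows that Shafarevich--Tate deformations of bimeromorphic Lagrangian fibrations stay bimeromorphic, so that $X_0$ is bimeromorphic to a hyperk\"ahler manifold and hence of Fujiki class $\mathcal C$. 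Only then does Proposition \ref{proposition if fujiki then kahler} apply Perego's criterion, with the required class built as $\beta'+k\eta$ for $k\gg 0$ --- note that even there the positivity on curves is checked using Lemma \ref{lemma all curves in fibers} (all curves lie in fibers when $NS(X)\subset\eta^\perp$), not by a limiting argument in the positive cone. In short: your Step with the period computation is fine and agrees with the paper, but the passage from ``limit of K\"ahler manifolds'' to ``K\"ahler'' cannot be done by cohomological limits alone; it requires first producing a bimeromorphic hyperk\"ahler model.
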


\subsection{Limits of hyperk\"ahler manifolds}

It follows from Corollary \ref{corollary all kahler except nowhere dense} that every Shafarevich--Tate twist $X^\phi$ with respect to $\phi\in\Sha'$ is a {\em limit of hyperk\"ahler manifolds} in the sense of the following definition.

\begin{definition}
    Let $X$ be a compact complex manifold. Consider a family of deformations $\mathcal X\to T$ of $X$, and let $0\in T$ be the point corresponding to $X$. The manifold $X$ is said to be a {\em limit of K\"ahler manifolds} if for some family of deformations $\mathcal X\to T$ there is a sequence of points $t_n\in T$ converging to $0$ such that the deformation $X_{t_n}$ is a K\"ahler manifold.
\end{definition}

A limit of K\"ahler manifolds does not have to be K\"ahler, however the following is expected to be true.

\begin{conjecture}\cite{popovici2011deformation}
    A limit of K\"ahler manifolds is of Fujiki class $\mathcal C$, i.e., is bimeromorphic to a K\"ahler manifold.
\end{conjecture}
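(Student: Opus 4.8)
This is a deep open problem, and the plan is to attack it through Popovici's program of strongly Gauduchon metrics, whose payoff in the projective and Moishezon case is exactly his deformation-limit theorem \cite{popovici2011deformation}. Set up the family as $\mathcal X\to\mathbb D$ over a disk, with central fibre $X_0$ and a sequence $t_n\to 0$ for which every $X_{t_n}$ is K\"ahler. The goal is to produce a \emph{K\"ahler current} on $X_0$, that is, a closed positive $(1,1)$-current $T$ with $T\ge\varepsilon\omega$ for some Hermitian form $\omega$, and then to regularise it: by Demailly's regularisation theorem one may take $T$ to have analytic singularities, and after a finite sequence of blow-ups along those singularities one hopes to recover a K\"ahler metric on a bimeromorphic model, certifying Fujiki class $\mathcal C$. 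This last implication is a theorem when the cohomology class of $T$ is rational, which is the Moishezon case underlying \cite{popovici2011deformation}; it is the transcendental case that makes the general conjecture hard.

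First I would manufacture this limiting current. A naive limit of the K\"ahler forms $\omega_{t_n}$ fails, because K\"ahler classes are unstable under degeneration and mass may be lost, so I would pass to the dual Gauduchon picture. Fix a strongly Gauduchon metric $\Omega$ on the total space $\mathcal X$ and normalise each $\omega_{t_n}$ so that $\int_{X_{t_n}}\omega_{t_n}\wedge\Omega^{\,n-1}=1$, where $n=\dim_{\mathbb C}X_0$. Regarding each normalised $\omega_{t_n}$ as a positive current on $\mathcal X$ supported on the slice $X_{t_n}$, the resulting family has uniformly bounded mass, so by weak-$*$ compactness a subsequence converges to a positive current $T$ supported on $X_0$; since the mass paired against the fixed form $\Omega^{\,n-1}$ is preserved in the limit, $T\ne 0$. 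The strongly Gauduchon hypothesis on $\Omega$, together with the Lamari--Toma duality between the Gauduchon cone and the cone of closed positive currents, is what should guarantee both that $T$ is genuinely $d$-closed on $X_0$ and that the normalisation cannot force $T\equiv 0$.

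The main obstacle is upgrading $T$ from a nonzero closed positive current to a genuine K\"ahler current: a priori all of its mass could concentrate along a proper analytic subset, leaving a merely nef-type current with $\int_{X_0}T^{\,n}=0$, which certifies nothing. In Popovici's Moishezon theorem this collapse is excluded using the integrality of the relevant classes and a Grauert--Riemenschneider-type bigness estimate; in the K\"ahler setting the classes are transcendental and no such estimate is at hand. I would try to close the gap on two fronts: first, a semicontinuity argument for the Fr\"olicher spectral sequence, transferring the $\partial\bar\partial$-lemma valid on the K\"ahler fibres $X_{t_n}$ into enough cohomological control on $X_0$ to keep the class of $T$ in the interior of the relevant positive cone, in the spirit of Popovici's Hodge-number techniques; second, a uniform lower bound $\liminf_n\int_{X_{t_n}}\omega_{t_n}^{\,n}>0$ under a suitable renormalisation, which through the Demailly--P\u{a}un numerical criterion would force the limiting class to contain a K\"ahler current. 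The crux, and the reason this remains a conjecture, is that neither the persistence of the $\partial\bar\partial$-lemma nor the non-collapse of the top self-intersection is known in the required generality; any complete proof must supply a degeneration-stable lower bound on the moving self-intersection of the forms $\omega_{t_n}$.
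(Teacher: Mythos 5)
This statement is stated in the paper as an open conjecture (due to Popovici) and the paper offers no proof of it; it only invokes the known special case due to Perego (Theorem \ref{theorem perego fujiki}) for holomorphic symplectic $b_2$-manifolds that are limits of \emph{hyperk\"ahler} manifolds, which is what actually powers the arguments in Section \ref{sec kahler twists}. Your proposal, to its credit, is explicit that it is a program rather than a proof, and the program is the right one historically: the weak-$*$ compactness argument with a strongly Gauduchon normalisation does produce a nonzero $d$-closed positive $(1,1)$-current on the central fibre (this is essentially Popovici's own mechanism), and the Lamari--Toma duality is the correct tool to rule out the zero limit. But the decisive step --- upgrading the limit current $T$ to a \emph{K\"ahler} current, i.e.\ excluding that all mass of $T$ concentrates on a proper analytic subset --- is left as two hopes (persistence of the $\partial\bar\partial$-lemma in the limit, and a uniform lower bound on $\liminf_n\int_{X_{t_n}}\omega_{t_n}^n$ after renormalisation), neither of which is established or even reduced to a checkable statement. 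Since that is exactly the open content of the conjecture, the proposal contains a genuine, and acknowledged, gap: nothing in it closes the argument, so it cannot be accepted as a proof of the statement.

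One technical correction to your final step: once a K\"ahler current exists on $X_0$, membership in Fujiki class $\mathcal C$ follows for \emph{arbitrary}, including transcendental, classes --- by Demailly's regularisation to a current with analytic singularities together with the Demailly--P\u{a}un characterisation, a compact complex manifold is of Fujiki class $\mathcal C$ if and only if it carries a K\"ahler current. Rationality or integrality of the class is only needed to conclude Moishezon-ness, as in Popovici's theorem. So the transcendental difficulty sits entirely in the non-collapse step, not in the regularisation step as your sketch suggests. It is also worth noting that the paper's Theorem \ref{theorem fujiki class c} is consistent with the conjecture rather than in tension with it: the twists $X^\phi$ with $\overline\phi\notin\im d_2$ shown there not to be of Fujiki class $\mathcal C$ lie in connected components of $\Sha$ disjoint from $\Sha'$, and the paper only establishes that twists with $\phi\in\Sha'$ are limits of K\"ahler manifolds.
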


Arvid Perego in \cite{perego2019kahlerness} showed that this conjecture holds for holomorphic symplectic manifolds with some additional assumptions.

\begin{theorem}[{\cite[Theorem 1.18]{perego2019kahlerness}}]
\label{theorem perego fujiki}
    Let $(X,\sigma)$ be a compact holomorphic symplectic manifold satisfying the $\partial\overline{\partial}$-lemma for $2$-forms, which is a limit of irreducible hyperk\"ahler manifolds. Then $X$ is bimeromorphic to an irreducible hyperk\"ahler manifold, in particular, it is of Fujiki class $\mathcal C$.
\end{theorem}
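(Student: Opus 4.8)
The plan is to produce an irreducible hyperkähler manifold $Y$ carrying the same period as $X$ and then to prove that $X$ is bimeromorphic to it; the symplectic form $\sigma$ and the $b_2$-manifold hypothesis are exactly what make the period of $X$ a genuine point of the period domain and what control the final comparison. First I would fix the period data. Since $X$ is a limit of irreducible hyperkähler manifolds, choose a smooth proper family $f\colon\mathcal X\to\Delta$ over a disk with $X_0\simeq X$ and a sequence $t_n\to 0$ for which each $X_{t_n}$ is irreducible hyperkähler. The local system $R^2f_*\Z$ is trivial over the simply connected base, so a marking $H^2(X_t,\Z)\simeq\Lambda$ extends to a marking of $H^2(X_0,\Z)$, and the Beauville--Bogomolov--Fujiki form (Theorem \ref{theorem bbf}), being locally constant on the hyperkähler fibers, descends to a single integral form $q$ on $\Lambda$.

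Next I would produce the limit period. As $f$ is a submersion and $b_2$ is constant, Gauss--Manin identifies each $H^2(X_t,\C)$ with $\Lambda_\C$, and the lines $H^{2,0}(X_t)=\C\cdot[\sigma_t]$ vary holomorphically. The holomorphic symplectic hypothesis gives $h^{2,0}=1$ throughout, while the $b_2$-manifold hypothesis forces the Hodge decomposition of $H^2(X_0)$ to be genuine and of K3 type; consequently the periods $[\sigma_{t_n}]$ converge to a well-defined point $p_0=[\sigma_0]\in\mathbb P(\Lambda_\C)$. The relations $q(\sigma_0)=0$ and $q(\sigma_0,\overline{\sigma_0})>0$ pass to the limit — the former is a closed condition, and the positivity survives precisely because the $b_2$-condition prevents the limiting Hodge structure from degenerating — so $p_0$ lies in the period domain $\Omega_\Lambda$ of the deformation type. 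By continuity the Fujiki relation $\int_{X_0}\alpha^{2n}=c_X\,q(\alpha)^n$ holds on $X_0$, so $q$ is genuinely its BBF form. By Huybrechts' surjectivity of the period map \cite{huybrechts1999compact} there is then a marked irreducible hyperkähler manifold $(Y,\eta)$ with period $p_0$.

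The main obstacle is to compare $X_0$ with $Y$, because $X_0$ is not a priori Kähler, so neither the global Torelli theorem nor Huybrechts' theorem on non-separated points applies to it directly. Since the period map is a local isomorphism near $(Y,\eta)$, for each large $n$ there is a small hyperkähler deformation $Y_n$ of $Y$ with period $p(X_{t_n})$; by the Torelli theorem $X_{t_n}$ and $Y_n$ are then bimeromorphic, and $Y_n\to Y$ as $n\to\infty$. Thus $X_0$ and $Y$ arise as limits of two sequences of pairwise bimeromorphic hyperkähler manifolds. I would now take the closures of the graphs of the bimeromorphisms between $X_{t_n}$ and $Y_n$ as analytic cycles inside the relevant fiber-product family and pass to a subsequential limit cycle $\Gamma_0\subset X_0\times Y$, using compactness of the relative Barlet cycle space — whose applicability I would justify through the Kählerness of $Y$ together with the purity supplied by the $b_2$-condition. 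The crux is then to verify that $\Gamma_0$ is the closure of a degree-one correspondence, i.e. of a bimeromorphic map from $X_0$ to $Y$: one must rule out spurious components supported over proper subvarieties and check dominance and degree one, and it is exactly here that the $b_2$-manifold hypothesis and the symplectic form $\sigma$ on $X_0$ must enter in an essential way. Granting this, $X=X_0$ is bimeromorphic to the irreducible hyperkähler manifold $Y$ and hence of Fujiki class $\mathcal C$, as asserted. I expect everything before this cycle-limit step to be formal period theory, and the identification of $\Gamma_0$ as a bimeromorphism to be the genuine difficulty.
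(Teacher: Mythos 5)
First, a point of comparison: the paper does not prove this statement at all --- Theorem \ref{theorem perego fujiki} is imported verbatim from Perego \cite[Theorem 1.14]{perego2017k}, so the relevant benchmarks are Perego's argument and the paper's own adaptation of its key step in Lemma \ref{lemma central fibers birational}. Your skeleton coincides with that argument: transport the marking and the BBF form along the family, show the limit period lands in $\Omega_\Lambda$, use surjectivity of the period map and local Torelli to produce a hyperk\"ahler family $\{Y_n\}$ with $Y_n\to Y$ and $\operatorname{Per}(Y_n)=\operatorname{Per}(X_{t_n})$, invoke Global Torelli (Theorem \ref{theorem global torelli}(3)) to get bimeromorphisms $X_{t_n}\dashrightarrow Y_n$, and pass to a limit of the graphs. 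So the approach is the right one; the problem is that the two places where real content is required are asserted rather than proven.

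Gap (a): the strict positivity $q(\sigma_0,\overline\sigma_0)>0$. Saying the $b_2$-condition ``prevents the limiting Hodge structure from degenerating'' is not an argument --- positivity is an open condition and does not pass to limits for free. The honest route is one you already set up but did not use: having transported the Fujiki relation to $X_0$, apply it to the real class $\sigma_0+\overline\sigma_0$; since $q(\sigma_0)=q(\overline\sigma_0)=0$ and $\int_{X_0}(\sigma_0\overline\sigma_0)^n\neq 0$ by nondegeneracy of the symplectic form, one extracts $q(\sigma_0,\overline\sigma_0)\neq 0$, and the sign then follows since $q(\sigma_t,\overline\sigma_t)>0$ along the sequence. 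The $b_2$-hypothesis enters earlier than you place it: its role is to guarantee that $H^{2,0}(X_0)=\C\sigma_0$ injects into $H^2(X_0,\C)$ and that the Gauss--Manin limit of the lines $H^{2,0}(X_{t_n})$ is this line, i.e.\ that $X_0$ has a well-defined period at all. Gap (b), the serious one: ``compactness of the relative Barlet cycle space justified by K\"ahlerness of $Y$ together with purity'' does not deliver the hypothesis that compactness actually needs, namely a uniform bound $\sup_n\mathrm{vol}(\Gamma_n)<\infty$ for the graphs $\Gamma_n\subset X_{t_n}\times Y_n$. K\"ahlerness of $Y$ controls only one factor; the $X$-factor degenerates to a non-K\"ahler limit, so one must exhibit a family of closed $(1,1)$-forms $\alpha_t$ on $X_t$, K\"ahler for $t=t_n$ and convergent as $t\to 0$, together with K\"ahler forms $\beta_t$ on $Y_t$, and then compute $\mathrm{vol}(\Gamma_n)=\int_{Y_n}\bigl([\beta_n]+f_n^*[\alpha_n]\bigr)^{2n}$ cohomologically so that it converges. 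This construction is precisely the content of Perego's Lemma 2.6 and of Lemma \ref{lemma central fibers birational} in this paper, where the Shafarevich--Tate structure (constancy of the fibers in the family) and the K\"ahler-cone criterion of \cite{boucksom2001cone} are used to manufacture the $\alpha_t$; in your blind setting some substitute argument is mandatory and none is offered. Relatedly, you locate the ``genuine difficulty'' in extracting a degree-one component from the limit cycle $\Gamma_0$, but that extraction follows the argument of \cite[Theorem 4.3]{huybrechts1999compact} essentially verbatim, as the paper itself remarks; the step where the hypotheses do real work is the volume bound you deferred.
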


We will use some of Perego's ideas in the proof of Theorem \ref{theorem_kahler}(1).

\subsection{Idea of the proof}
\label{subsection idea of the proof}

Before we get started with the proof of Proposition \ref{proposition deformations are kahler replacement}, we will sketch its main steps below.

\subsubsection*{Step 1. Period map and Torelli theorems.}\label{subsub step 1} (Subsection \ref{subsection period map and torelli theorems}). Using Local and Global Torelli Theorems (Theorem \ref{theorem global torelli}), we construct a family
$$
\mathcal Y\to \mathbb D
$$
such that $Y_t$ is hyperk\"ahler for all $t\in \mathbb D$ and $X_t$ is bimeromorphic to $Y_t$ for all $t\in U\subset \mathbb D$ (Lemma \ref{lemma birational family}).

\subsubsection*{Step 2. Lagrangian fibration on $Y_t$.}\label{subsub step 2} (Subsection \ref{subsection lagrangian fibrations on non-projective hyperkahler manifolds}). Let $t$ be a very general point in $U$. We will show in Corollary \ref{corollary admits lagrangian fibration} that $Y_t$ admits a Lagrangian fibration $p_t\colon Y_t\to B'$, and every bimeromorphism $f_t\colon X_t\dashrightarrow Y_t$ commutes with the Lagrangian fibrations on $X_t$ and $Y_t$. This step relies on a result by Greb-Lehn-Rollenske \cite{greb2013lagrangian}. Namely, they proved that a non-projective hyperk\"ahler manifold containing a Lagrangian torus admits a Lagrangian fibration.

\subsubsection*{Step 3. $\mathcal Y\to \mathbb D$ is almost a Shafarevich--Tate family.}\label{subsub step 3} (Subsection \ref{subsection almost shafarevich tate family}). We will see in Proposition \ref{proposition almost shafarevich tate family} that the family $\mathcal Y\to \mathbb D$ is a Shafarevich--Tate family after restriction to some open dense subset $V\subset U$. Moreover, it will turn out that the base $B'$ of the Lagrangian fibration $p_t\colon Y_t\to B'$ for $t\in V$ is isomorphic to $B$ (Proposition \ref{proposition bases are isomorphic}).

\subsubsection*{Step 4. $Y_0$ is bimeromorphic to a degenerate twistor deformation of $Y_t$.}\label{subsub step 4} (Subsection \ref{subsection perego bimeromorphic}). Let $\mathcal Y'\to \mathbb D$ be the Shafarevich--Tate family of a Lagrangian fibration $p_\tau\colon Y_\tau\to B$ for some $\tau\in V$. By the previous step, $Y_t\simeq Y'_t$ for all $t\in V$. Essentially the same argument as the one used by Perego in his proof of \cite[Lemma 2.5]{perego2019kahlerness} will show that $Y:=Y_0$ is bimeromorphic to $Y':=Y'_0$ (Lemma \ref{lemma central fibers birational}). Therefore, $Y'$ is of Fujiki class $\mathcal C$.

\subsubsection*{Step 5. Shafarevich--Tate deformations of bimeromorphic Lagrangian fibrations are bimeromorphic.}\label{subsub step 5} (Subsection \ref{subsection bimeromorphic shafarevich tate}). We saw in Step 2 that the Lagrangian fibrations $X_t$ and $Y_t = Y'_t$ are bimeromorphic for some $t\in V$. We will see in  \ref{proposition deg tw deformations of bimeromorphic} that all Shafarevich--Tate deformations of $X_t$ and $Y'_t$ are bimeromorphic. Therefore, $X$ is bimeromophic to $Y'$, which is in its turn bimeromorphic to a hyperk\"ahler manifold $Y$ (Corollary \ref{corollary of fujiki class c}). Hence $X$ is of Fujiki class $\mathcal C$.

\subsubsection*{Step 6. Criterion for K\"ahlerness.}\label{subsub step 6} (Subsection \ref{subsection kahler classes on lagrangian fibrations}). Perego discovered in \cite[Theorem 1.19]{perego2019kahlerness} a cohomological criterion for K\"ahlerness of limits of hyperk\"ahler manifolds which are of Fujiki class $\mathcal C$. We will check that the assumptions of Perego's criterion are satisfied for Shafarevich--Tate twists and will conclude that $X$ is hyperk\"ahler (Proposition \ref{proposition if fujiki then kahler}). 

%%%%%%%%%%%%%%%%%%%%%%%%%%%%%%%%%%%555
%%%%%%%%%%%%%%%%%%%%%%%%%%%%%%%%%%%%%%

\subsection{Period map and Torelli theorems}
\label{subsection period map and torelli theorems}

\subsubsection{Period map for hyperk\"ahler manifolds.} 
%By Local Torelli Theorem \cite[Theorem 22.7]{gross2003calabi}, deformations of $X$ are unobstructed and all deformations of $X$ are holomorphic symplectic. Consider the universal deformation family $\mathcal X\to Def(X)$. After possibly shrinking $Def(X)$ we can identify $H^2(X_t)$ for all $t\in Def(X)$ using a Gauss-Manin connection. This identification enables us to construct a map
%\begin{equation}
%\label{eq local period map}
%    Per\colon Def(X) \to \mathbb P(H^2(X,\C))
%\end{equation}
%sending $t$ to the class of the line $\C\cdot\sigma_t\subset H^2(X_t)\simeq H^2(X)$. Here $\sigma_t$ denotes a holomorphic symplectic form on $X_t$. 
Let $X$ be a hyperk\"ahler manifold and $\Lambda$ be a lattice isomorphic to the lattice $(H^2(X,\Z),q_X)$, where $q_X$ is the BBF form (Definition \ref{definition bbf}). Denote $\Lambda_\C:=\Lambda\otimes \C$.

\begin{definition}
    The {\em moduli space $\mathcal M_\Lambda$ of $\Lambda$-marked hyperk\"ahler manifolds} is the moduli space of pairs $(Y,g)$ where $Y$ is a hyperk\"ahler manifold and $g\colon H^2(Y,\Z)\to \Lambda$ is an isomorphism of lattices.
\end{definition}

\begin{definition}
\label{definition period map}
    The {\em period map}
    $$
    \operatorname{Per}\colon \mathcal M_\Lambda\to \mathbb P(\Lambda_\C)
    $$
    sends the point of $\mathcal M_\Lambda$ corresponding to a pair $(Y,g)$ to the class of the line $g(H^{2,0}(Y))\subset \Lambda_\C$. The image of a pair $(Y,g)$ under the period map is called its {\em period}.
\end{definition}

\begin{theorem}
\label{theorem global torelli}
    \begin{enumerate}
        \item The image of the period map is contained in the subset $\Omega_\Lambda$ consisting of $[\sigma]\in \mathbb P(\Lambda_\C)$ such that 
        $$
            q(\sigma) = 0\:\:\:\:\: and \:\:\:\:\: q(\sigma,\overline\sigma) >0.
        $$
        \item \text{(Local Torelli Theorem \cite{beauville1983varietes})} The period map is a local biholomorphism onto $\Omega_\C$.
        \item \text{(Global Torelli Theorem \cite[Theorem 8.1]{huybrechts1999compact}, \cite[Corollary 6.1]{huybrechts2011global})}. Let $\mathcal M^0_\Lambda$ be a connected component of $\mathcal M_\Lambda$. Then the period map 
        $$
        \operatorname{Per}\colon \mathcal M^0_\Lambda\to \Omega_\Lambda
        $$
        is surjective. Moreover, two points $(X,g)$ and $(X',g')$ of $\mathcal M^0_\Lambda$ have the same periods if and only if there exists a bimeromorphism $f\colon X\dashrightarrow X'$ such that the pullback map $f^*\colon H^2(X')\to H^2(X)$ coincides with $g^{-1}\circ g$.
    \end{enumerate}
\end{theorem}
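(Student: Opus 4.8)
The three parts have rather different characters, so the plan is to treat them separately, drawing on the deformation theory and Hodge theory of hyperk\"ahler manifolds rather than attempting a uniform argument.

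For part (1) I would argue directly from the Fujiki relation (Theorem \ref{theorem bbf}) together with the Hodge--Riemann relations. Since $\sigma$ is a holomorphic $(2,0)$-form on a manifold of complex dimension $2n$, its $(n+1)$-st power is a section of $\Omega^{2n+2}_X = 0$, so $\sigma^{n+1} = 0$ and in particular $\int_X \sigma^{2n} = 0$; the Fujiki relation $c_X\,q(\sigma)^n = \int_X\sigma^{2n}$ then forces $q(\sigma)^n = 0$, hence $q(\sigma) = 0$. For the positivity $q(\sigma,\overline\sigma) > 0$ I would fix a K\"ahler class $\omega$ and use that, up to the positive constant $c_X$, the form $q$ restricted to the $\omega$-primitive part of $H^2$ is proportional to $(\alpha,\beta)\mapsto \int_X \alpha\wedge\beta\wedge\omega^{2n-2}$. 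The Hodge--Riemann bilinear relations applied to the primitive class $\sigma$ give $\int_X \sigma\wedge\overline\sigma\wedge\omega^{2n-2} > 0$, whence $q(\sigma,\overline\sigma) > 0$.

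For the Local Torelli statement (2) I would invoke the Bogomolov--Tian--Todorov unobstructedness theorem: the Kuranishi family of a hyperk\"ahler manifold is smooth of dimension $h^1(T_X) = h^{1,1}(X) = b_2(X) - 2$. The differential of the period map at $(X,g)$ factors through the Kodaira--Spencer map followed by the contraction $H^1(T_X) \to \Hom\!\big(H^{2,0}(X), H^{1,1}(X)\big)$ sending $v$ to $\omega\mapsto\iota_v\omega$. The holomorphic symplectic form induces $T_X \simeq \Omega^1_X$, under which this contraction becomes the cup product $H^1(\Omega^1_X) \to \Hom\!\big(H^{2,0}(X),H^{1,1}(X)\big)$; since $H^{2,0}(X)$ is one-dimensional the target is canonically $H^{1,1}(X)$ again and the map is an isomorphism. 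As the period domain $\Omega_\Lambda$ is a smooth quadric of the same dimension $b_2(X)-2$, the period map is a local biholomorphism.

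For the Global Torelli statement (3) the surjectivity onto $\Omega_\Lambda$ I would obtain by Huybrechts' twistor-line argument \cite{huybrechts1999compact}: each hyperk\"ahler metric produces a twistor $\bP^1$ in $\mathcal M^0_\Lambda$ whose period image is a rational curve on the quadric $\Omega_\Lambda$, and a density-plus-connectedness argument shows these twistor lines sweep out all of $\Omega_\Lambda$. The characterization of the fibers --- that equal periods are realized by a bimeromorphism inducing $g^{-1}\circ g'$ --- is the deep theorem of Verbitsky, sharpened and corrected by Huybrechts \cite{huybrechts2011global}, and I expect this to be the main obstacle. Its proof is not something I would reproduce: it requires the construction of the Teichm\"uller space, the identification of its inseparable (non-Hausdorff) points with distinct bimeromorphic models, Verbitsky's theorem that the Hausdorffification of a connected component maps bijectively to $\Omega_\Lambda$, and the ergodicity/monodromy input controlling the mapping class group action. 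I would simply cite \cite{huybrechts2011global} for this part.
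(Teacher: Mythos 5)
This is a background theorem that the paper does not prove at all: it is stated with attributions to Beauville, Huybrechts, and Verbitsky, and the surrounding text simply uses it. Your proposal is therefore doing more than the paper does, and what you do is correct. The Fujiki-relation argument for $q(\sigma)=0$ and the Hodge--Riemann argument for $q(\sigma,\overline\sigma)>0$ in part (1), and the Bogomolov--Tian--Todorov plus contraction-with-$\sigma$ argument identifying the differential of the period map with an isomorphism $H^{1,1}(X)\to\Hom(H^{2,0}(X),H^{1,1}(X))$ in part (2), are exactly the standard proofs one finds in Beauville and in the Gross--Huybrechts--Joyce notes the paper cites for the BBF form. Two minor points worth flagging if you were to write this out: the Fujiki relation is stated in the paper for integral classes, so one should note that the polynomial identity $c_Xq(\alpha)^n=\int_X\alpha^{2n}$ extends to $H^2(X,\C)$ by density and polarization before applying it to $\sigma$; and in part (2) one should say explicitly that the tangent space to the quadric $\Omega_\Lambda$ at $[\sigma]$ is $\Hom(\C\sigma,\sigma^{\perp_q}/\C\sigma)$ with $\sigma^{\perp_q}=H^{2,0}\oplus H^{1,1}$, which is why the image of the differential lands where you claim. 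Your decision to cite Verbitsky--Huybrechts for part (3) rather than reproduce it is the only reasonable choice and matches what the paper does; a self-contained proof of the Global Torelli theorem is far outside the scope of either text.
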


\subsubsection{Period map for Shafarevich--Tate deformations.} Assume that $X$ admits a Lagrangian fibration $\pi\colon X\to B$. Consider its Shafarevich--Tate family (Definition \ref{definition sht family})
$$
\Pi\colon \mathcal X\to \mathbb A^1.
$$
We can construct a period map
$$
\operatorname{Per}_{\Sha T}\colon \mathbb A^1 \to \mathbb P(H^2(X,\C)).
$$
exactly as in Definition \ref{definition period map} by sending the class of $t\in\mathbb A^1$ to the class of the holomorphic symplectic form $\sigma_t$ on $X_t$. Denote by $\eta$ the class of the pullback of an ample class on $B$ to $X$. It is easy to see \cite[Proposition 3.9]{abasheva2025shafarevich} that the map $Per_{\Sha T}$ is an isomorphism onto the affine line
$$
\{[\sigma + t\eta]\mid t\in\C\}\subset \mathbb P(H^2(X,\C)).
$$
In particular, the image of $\operatorname{Per}_{\Sha T}$ lies in $\Omega_\Lambda$.
\begin{lemma}
\label{lemma birational family}
    As in Proposition \ref{proposition deformations are kahler replacement}, let $\mathcal X\to \mathbb D$ be a Shafarevich--Tate deformation over a disk $\mathbb D\subset \mathbb A^1$. Assume that $0\in \overline U$, where $U\subset\mathbb D$ is the set of K\"ahler Shafarevich-Tate twists. Then there exists a family $\mathcal Y\to \mathbb D$ such that
    \begin{itemize}
        \item $\forall t\in\mathbb D$, $Y_t$ is hyperk\"ahler;
        \item $\forall t\in U$, the manifolds $X_t$ and $Y_t$ are bimeromorphic
    \end{itemize}
\end{lemma}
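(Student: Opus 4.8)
The plan is to realize the Shafarevich--Tate period curve as the period curve of an honest family of hyperk\"ahler manifolds and then compare the two families fibrewise via the Torelli theorems. Since the Shafarevich--Tate family $\mathcal X\to\mathbb D$ is topologically trivial (all fibers $X_t$ share the same underlying smooth manifold, only the complex structure varies), I would fix once and for all an isometric marking $g\colon H^2(X_t,\Z)\xrightarrow{\sim}\Lambda$ independent of $t$. With this marking the Shafarevich--Tate period map of \cite[Proposition 3.10]{abasheva2021shafarevich} becomes a holomorphic embedding $\gamma\colon\mathbb D\to\Omega_\Lambda$, $t\mapsto[\sigma+t\eta]$, whose image lies in the period domain $\Omega_\Lambda$. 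The goal is to lift $\gamma$ to a holomorphic map $\tilde\gamma\colon\mathbb D\to\mathcal M^0_\Lambda$ into a connected component of the marked moduli space, and then define $\mathcal Y\to\mathbb D$ as the pullback along $\tilde\gamma$ of the (locally defined, by Local Torelli plus Kuranishi) universal family over $\mathcal M^0_\Lambda$.

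To construct $\tilde\gamma$, pick a base point $t_0\in U$, which is non-empty by hypothesis. Then $X_{t_0}$ is hyperk\"ahler, so $(X_{t_0},g)$ is a point of some component $\mathcal M^0_\Lambda$ with $\operatorname{Per}(X_{t_0},g)=\gamma(t_0)$. By the Local Torelli Theorem (Theorem \ref{theorem global torelli}(2)) the period map is a biholomorphism from a neighborhood of $(X_{t_0},g)$ onto a neighborhood of $\gamma(t_0)$, so composing $\gamma$ with the local inverse produces a lift on a small disk around $t_0$. The essential task is to propagate this lift over all of $\mathbb D$: since $\operatorname{Per}\colon\mathcal M^0_\Lambda\to\Omega_\Lambda$ is a local biholomorphism (Local Torelli) and surjective (Global Torelli, Theorem \ref{theorem global torelli}(3)), and $\mathbb D$ is simply connected, the curve $\gamma$ admits a global lift $\tilde\gamma$ extending the germ at $t_0$. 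Pulling back the universal family along $\tilde\gamma$ yields $\mathcal Y\to\mathbb D$ with $Y_t$ hyperk\"ahler for every $t\in\mathbb D$.

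It then remains to compare $X_t$ and $Y_t$ for $t\in U$. Both are hyperk\"ahler, both carry the marking $g$, and both have period $\gamma(t)$: indeed $Y_t=\tilde\gamma(t)$ has period $\gamma(t)$ by construction, while $X_t$ has period $\gamma(t)$ because $\gamma$ is the Shafarevich--Tate period map. As all fibers of $\mathcal X$ are degenerate twistor deformations of the single manifold $X$, the marked K\"ahler fibers $(X_t,g)$ lie in the same connected component $\mathcal M^0_\Lambda$ as $\tilde\gamma(t)$. The ``only if'' direction of the Global Torelli Theorem (Theorem \ref{theorem global torelli}(3)) then produces a bimeromorphism $X_t\dashrightarrow Y_t$, which is exactly the required conclusion.

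The delicate point, and the one I expect to be the main obstacle, is the global lifting of $\gamma$ over $\mathbb D$. The moduli space $\mathcal M^0_\Lambda$ is non-Hausdorff, so path lifting through the \'etale map $\operatorname{Per}$ is not formal: a priori the lift could fail to extend across some value of $t$ (in particular across $t=0$, where $X_0=X$ need not be K\"ahler) or could be forced to jump between inseparable points of a fiber of $\operatorname{Per}$. What saves the argument is precisely the combination in Theorem \ref{theorem global torelli}: surjectivity guarantees a preimage of every $\gamma(t)$, while the characterization of equal periods shows that the distinct preimages over a given period are pairwise bimeromorphic, so a lift always exists after possibly replacing a fiber by an inseparable neighbour. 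This inseparability is also the reason the statement can only assert that $X_t$ and $Y_t$ are bimeromorphic, rather than isomorphic.
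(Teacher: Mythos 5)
Your overall strategy --- produce a family of marked hyperk\"ahler manifolds whose periods match those of the Shafarevich--Tate family, then invoke the Global Torelli Theorem fibrewise over $U$ --- is exactly the paper's strategy. The difference is where you base the lift of the period curve, and that is where your write-up has a gap. You start the lift at a K\"ahler point $t_0\in U$ and then try to propagate it across all of $\mathbb D$, in particular across $t=0$. As you yourself note, $\operatorname{Per}$ is an \'etale map from a non-Hausdorff, non-proper space, so simple connectedness of $\mathbb D$ does not give path lifting; and your proposed repair (``replace a fiber by an inseparable neighbour'') does not actually produce a holomorphic family over a neighborhood of $0$ --- it only reasserts that each individual period $\gamma(t)$ has \emph{some} preimage, which is weaker than having a holomorphic lift of the curve near $0$. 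As written, the existence of $\mathcal Y$ over all of $\mathbb D$ (rather than just over a neighborhood of $t_0$ in $U$) is not established.

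The paper sidesteps this entirely by basing the lift at $0$ rather than at $t_0$: by surjectivity of the period map (Theorem \ref{theorem global torelli}(3)) there is a marked hyperk\"ahler manifold $Y_0$, deformation equivalent to $X_0$, whose period equals that of $X_0$; by Local Torelli (Theorem \ref{theorem global torelli}(2)) the period map is a biholomorphism near $Y_0$, so composing its local inverse with the Shafarevich--Tate period map yields $\mathcal Y\to\mathbb D$ directly on a neighborhood of $0$. Shrinking $\mathbb D$ is harmless here, since the hypotheses $U\neq\emptyset$ and $0\in\overline U$ of Proposition \ref{proposition deformations are kahler replacement} persist under shrinking. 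Your comparison step for $t\in U$ (same period, same marking, both K\"ahler and deformation equivalent, hence bimeromorphic by Global Torelli) is then identical to the paper's. If you reorganize the argument to apply surjectivity at the period of the central fiber instead of propagating from $t_0$, the proof closes and coincides with the paper's.
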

\begin{proof}
    Let us apply the Global Torelli theorem (Theorem \ref{theorem global torelli}(3)) to some hyperk\"ahler Shafarevich--Tate deformation of $X$. We obtain that there exists a hyperk\"ahler manifold $Y_0$ deformation equivalent to $X_0$ whose period coincides with the period of $X_0$. The period map is a biholomorphism in a neighborhood of $Y_0$ in $\mathcal M_\Lambda$ (Theorem \ref{theorem global torelli}(2)). Hence we can find a family
    $$
    \mathcal Y\to \mathbb D
    $$
    of hyperk\"ahler manifolds such that its image under the period map coincides with the image of $\mathcal X\to \mathbb D$. For every $t\in U\subset \mathbb D$, the manifolds $X_t$ and $Y_t$ are deformation equivalent hyperk\"ahler manifolds whose periods coincide. Hence they are bimeromorphic (Theorem \ref{theorem global torelli}(3)).
\end{proof}

We are done with Step 1 (\ref{subsub step 1}) of the proof of Proposition \ref{proposition deformations are kahler replacement}.

%%%%%%%%%%%%%%%%%%%%%%%%%%%%%%%%%%%%%%

\subsection{Lagrangian fibrations on non-projective hyperk\"ahler manifolds}
\label{subsection lagrangian fibrations on non-projective hyperkahler manifolds}

%\subsubsection{} Let $\mathcal Y\to \Delta$ be a family of hyperk\"ahler manifolds as in Lemma \ref{lemma birational family}. Denote by $f_t\colon X_t\dashrightarrow Y_t$ a bimeromorphism between $X_t$ and $Y_t$ for $t\in U\subset \Delta$. A bimeromorphism of hyperk\"ahler manifolds is an isomorphism in codimension one, and hence induces an isomorphism
%$$
%H^2(X_t)\simeq H^2(Y_t).
%$$

%\subsubsection{}\label{subsub very general non algebraic} For a very general $t\in U$, the group $NS(X_t)$, which is isomrophic to $NS(Y_t)$, \textcolor{magenta}{is contained in $\eta^\perp$}. Indeed, by \cite[Lemma 5.14]{abasheva2025shafarevich} every class $\alpha\not\in \eta^\perp$ becomes of type $(1,1)$ on exactly one degenerate twistor deformation of $X$. Therefore, only countably many Shafarevich--Tate deformations carry an integral class in $H^{1,1}(X_t)$ which is not in $\eta^\perp$. 
Recall that a hyperk\"ahler manifold $X$ with a Lagrangian fibration $\pi\colon X\to B$ is non-projective if and only if $NS(X)\subset \eta^\perp$ \cite[Lemma 5.17]{abasheva2025shafarevich}.
\begin{lemma}
\label{lemma all curves in fibers}
    Let $\pi\colon X\to B$ be a Lagrangian fibration on a hyperk\"ahler manifold. Assume that $NS(X)\subset \eta^\perp$, where $\eta = \pi^*h$ is the pullback of an ample class $h$ of $B$. Then all curves on $X$ lie in fibers of $\pi$.
\end{lemma}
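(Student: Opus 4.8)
The plan is to reduce the statement to a single intersection-theoretic vanishing and then extract it from the hypothesis via Beauville--Bogomolov--Fujiki duality. First I would observe that an irreducible curve $C\subset X$ lies in a fiber of $\pi$ if and only if $\eta\cdot C = 0$. Indeed, by the projection formula $\eta\cdot C = \pi^*h\cdot C = h\cdot\pi_*C$, and since $h$ is ample on $B$, the number $h\cdot\pi_*C$ is strictly positive whenever $\pi(C)$ is a curve and vanishes exactly when $\pi(C)$ is a point. So the whole lemma amounts to proving $\eta\cdot C = 0$ for every irreducible curve $C$; the case of an arbitrary curve then follows by passing to irreducible components.

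To prove $\eta\cdot C = 0$ I would use the non-degeneracy of the BBF form $q$ (Theorem \ref{theorem bbf}) to turn the curve $C$ into a second-cohomology class. Concretely, the functional $\alpha\mapsto \int_C\alpha$ on $H^2(X,\Q)$ corresponds, under the isomorphism $H^2(X,\Q)\xrightarrow{\sim} H^2(X,\Q)^\vee$ induced by $q$, to a unique rational class $\gamma_C\in H^2(X,\Q)$ characterized by $q(\gamma_C,\alpha)=\int_C\alpha$ for all $\alpha$. The key point is then to show that $\gamma_C$ is a N\'eron--Severi class, i.e. $\gamma_C\in NS_\Q(X)$, so that the hypothesis $NS(X)\subset\eta^\perp$ (which passes to $NS_\Q(X)\subset\eta^\perp$ after tensoring with $\Q$) applies to it.

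The Hodge type of $\gamma_C$ is where the real content sits. Since $C$ has complex dimension one, the restriction to $C$ of any form of type $(2,0)$ or $(0,2)$ vanishes, so $\int_C\alpha = 0$ for $\alpha\in H^{2,0}(X)\oplus H^{0,2}(X)$; hence $\gamma_C$ is $q$-orthogonal to $H^{2,0}(X)\oplus H^{0,2}(X)$. Because $q$ is a morphism of Hodge structures, $q(H^{p,q},H^{p',q'})=0$ unless $(p+p',q+q')=(2,2)$, which yields $(H^{2,0}(X)\oplus H^{0,2}(X))^{\perp}=H^{1,1}(X)$; thus $\gamma_C\in H^{1,1}(X)$. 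As $\gamma_C$ is also rational, it lies in $H^{1,1}(X)\cap H^2(X,\Q)=NS_\Q(X)$. Now the hypothesis forces $q(\gamma_C,\eta)=0$, and by the defining property of $\gamma_C$ this reads $\eta\cdot C = \int_C\eta = q(\gamma_C,\eta)=0$, which is exactly what we wanted.

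The main obstacle, and essentially the only nontrivial input, is the Hodge-theoretic identification $(H^{2,0}\oplus H^{0,2})^{\perp_q}=H^{1,1}$ that forces $\gamma_C$ into $NS_\Q(X)$; everything else is formal manipulation of the projection formula and the ampleness of $h$. I would be careful to state explicitly that $\eta^\perp$ denotes the orthogonal complement with respect to $q$ (consistent with the usage in Theorem \ref{theorem kahler substitute}), since the whole argument is precisely the translation of ``$C$ is $q$-orthogonal to the BBF-class dual to $\eta$'' into ``$C$ is $\pi$-vertical.''
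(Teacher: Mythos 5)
Your proposal is correct and follows essentially the same route as the paper: the paper's proof also passes to the BBF-dual class $c$ of $C$, uses the hypothesis $NS(X)\subset\eta^\perp$ to get $\eta\cdot C = q(\eta,c)=0$, and then applies the projection formula and ampleness of $h$. You simply make explicit the Hodge-theoretic verification that the dual class lies in $NS_\Q(X)$, which the paper leaves implicit.
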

\begin{proof}
    Let $C\subset X$ be a curve. Denote by $c\in H^2(X,\Q)$ the class BBF dual to $C$.
    Then
    $$
    \eta\cdot C = q(\eta,c) = 0.
    $$
    Therefore, 
    $$
    h\cdot \pi_*C = 0.
    $$
    The class $h$ is ample, hence $\pi_*C$ is a trivial cycle. Therefore, $C$ is contained in a fiber of $\pi$.  
\end{proof}

\begin{proposition}
\label{proposition birational lagrangian fibrations}
    Let $f\colon X\dashrightarrow Y$ be a bimeromophism of hyperk\"ahler manifolds. Suppose that $X$ admits a Lagrangian fibration $\pi\colon X\to B$ and $NS(X)\subset \eta^\perp$. Then the following holds.
    \begin{enumerate}
        \item The hyperk\"ahler manifold $Y$ admits a Lagrangian fibration $p\colon Y\to B'$.
        \item There exists a birational map $g\colon B\dashrightarrow B'$ making the diagram 
        $$
        % https://tikzcd.yichuanshen.de/#N4Igdg9gJgpgziAXAbVABwnAlgFyxMJZABgBpiBdUkANwEMAbAVxiRAA0QBfU9TXfIRQBGclVqMWbAJrdeIDNjwEiZYePrNWiEACE5fJYKKj11TVJ26A5N3EwoAc3hFQAMwBOEALZIyIHAgkURAGLDBtECg6OAALBxBzSUi3AxBPHz9qQKQAJiStNgAdIrQsNIzfRBCcxABmAssQErLbHncvKvyAoPrqMIi2aLiExsjHOy4gA
\begin{tikzcd}
X \arrow[r, "f", dashed] \arrow[d, "\pi"] & Y \arrow[d, "p"] \\
B \arrow[r, "g", dashed]                  & B'                 
\end{tikzcd}
        $$
        commutative.
        \item The meromoprhic map $f$ is holomorphic on $X^\circ$ and induces an isomorphism $X^\circ\to Y^\circ$. As before, $X^\circ$ (resp. $Y^\circ$) denotes the union of smooth fibers of $\pi$ (resp. $p$).
    \end{enumerate}
\end{proposition}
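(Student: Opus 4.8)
The plan is to transport the Lagrangian structure across $f$ using two inputs: the hypothesis $NS(X)\subset\eta^\perp$, which by Lemma \ref{lemma all curves in fibers} forces every curve on $X$ to lie in a fibre of $\pi$, and the standard fact that a bimeromorphism of hyperk\"ahler manifolds is an isomorphism away from a codimension $\ge 2$ locus swept out by rational curves. First I would record the two facts that set up the whole argument. One is that $f$ is an isomorphism in codimension one and that $f^*\colon H^2(Y)\to H^2(X)$ is an isometry of integral Hodge structures, hence restricts to an isomorphism $NS(Y)\to NS(X)$. The other is that $X$ is non-projective: an ample class $A\in NS(X)$ would satisfy $q(A,\eta)>0$ for the nonzero nef class $\eta$, contradicting $A\in\eta^\perp$. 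Since projectivity of a hyperk\"ahler manifold is equivalent to the existence of a class of positive square in $NS$, and this is preserved by the isometry $f^*$, the manifold $Y$ is non-projective as well.

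Next I would prove the forward direction, giving (1) and (2). Let $E_X\subset X$ be the locus where $f$ is not a local isomorphism; it has codimension $\ge 2$ and is covered by rational curves. By Lemma \ref{lemma all curves in fibers} these rational curves lie in fibres of $\pi$, and a smooth fibre, being an abelian variety, contains no rational curve; hence $E_X\subset X\setminus X^\circ=\pi^{-1}(\Delta)$ and $f$ restricts to an isomorphism of $X^\circ$ onto an open subset of $Y$. For a smooth fibre $F$ the image $f(F)\cong F$ is then a smooth Lagrangian torus in $Y$ (Lagrangian because $f^*\sigma_Y$ is proportional to $\sigma_X$, so $\sigma_Y$ pulls back to $0$ on $F$). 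As $Y$ is non-projective, the theorem of Greb--Lehn--Rollenske \cite{greb2013lagrangian} yields a Lagrangian fibration $p\colon Y\to B'$ having these tori as fibres, which is (1). The isomorphism $f|_{X^\circ}$ carries the smooth fibres of $\pi$ to smooth fibres of $p$, so it descends to an injective holomorphic map $B^\circ\to B'$ onto a dense open subset; this extends to the bimeromorphism $g\colon B\dashrightarrow B'$ making the square commute, proving (2).

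For (3) it remains to show $f(X^\circ)=Y^\circ$, which I would obtain by running the same argument for $f^{-1}$. The key observation is that $(Y,p)$ automatically satisfies the standing hypothesis: writing $\eta_Y=p^*h'$ for the class of $p$, one has $q(\eta_Y)=0$ because $\eta_Y$ is pulled back from the $n$-dimensional base, so $\int_Y\eta_Y^{2n}=0$; and since $Y$ is non-projective the form $q|_{NS(Y)}$ is negative semidefinite, so every isotropic class of $NS(Y)$, in particular $\eta_Y$, lies in its radical. Hence $NS(Y)\subset\eta_Y^\perp$, Lemma \ref{lemma all curves in fibers} applies to $(Y,p)$, and the rational-curve argument shows that the exceptional locus of $f^{-1}$ sits inside the singular fibres of $p$. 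Therefore $f^{-1}$ is an isomorphism on $Y^\circ$ with image in $X^\circ$, which combined with $f(X^\circ)\subset Y^\circ$ gives $f(X^\circ)=Y^\circ$ and an isomorphism $X^\circ\xrightarrow{\sim}Y^\circ$.

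The main obstacle I anticipate is the matching of fibres needed to make the Greb--Lehn--Rollenske output interact correctly with $f$: I must verify that \emph{every} torus $f(F)$ is genuinely a fibre of $p$, not merely some Lagrangian torus in $Y$, so that $f$ respects the two fibrations over the smooth loci. I would settle this by a uniqueness argument, observing that $\{f(F)\}$ is an $n$-dimensional family of pairwise disjoint Lagrangian tori sweeping out the dense open set $f(X^\circ)$, so that through a very general point of $Y$ there is a unique fibre of $p$, which must coincide with the unique $f(F)$ through that point. A secondary technical point to be careful about is the precise citation that the non-isomorphism locus of a bimeromorphism of hyperk\"ahler manifolds is covered by rational curves, which I would reference from the theory of flops \cite{huybrechts1999compact}.
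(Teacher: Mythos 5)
Your proposal is correct and follows essentially the same route as the paper: Lemma \ref{lemma all curves in fibers} confines the rational curves sweeping out the exceptional locus of $f$ to the singular fibres, so $f$ is an isomorphism on $X^\circ$, and Greb--Lehn--Rollenske then produces the fibration on the non-projective manifold $Y$; the paper merely derives the rational-curve statement explicitly from rational chain connectedness of the fibres of a resolution (Hacon--McKernan) and uses the symplectic forms to see that the induced map has finite fibres. Your symmetric treatment of part (3), via the observation that non-projectivity makes $q$ negative semidefinite on $NS(Y)$ and hence forces $NS(Y)\subset\eta_Y^\perp$, is a worthwhile addition that the paper leaves implicit.
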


\begin{proof}
    {\bf Step 1.} First, we will show that $f$ is defined on $X^\circ$ and sends it isomorphically onto its image. Since $X$ and $Y$ are bimeromorphic, we can find a complex manifold $Z$ together with bimeromorphic maps $p\colon Z\to X$ and $q\colon Z\to Y$ making the following diagram commutative
    $$
    % https://tikzcd.yichuanshen.de/#N4Igdg9gJgpgziAXAbVABwnAlgFyxMJZARgBoAGAXVJADcBDAGwFcYkQAtEAX1PU1z5CKcqWLU6TVuwAaPPiAzY8BIgCYxEhizaIQATR4SYUAObwioAGYAnCAFskokDghIyknezQgajegBGMIwACgIqwiA2WKYAFjjy1naOiM6uSBqe0noAjokgtg7uNOmImYxYYLogUPRwsSa+WdVWRtxAA
\begin{tikzcd}
                          & Z \arrow[ld, "p"'] \arrow[rd, "q"] &   \\
X \arrow[rr, "f", dashed] &                                    & Y.
\end{tikzcd}
    $$
    For every $y\in Y$, the preimage $q^{-1}(y)\subset Y$ is rationally chain connected (see \cite{hacon2007shokurov} for the algebraic version of this theorem and \cite[Theorem 5]{fujino2023rational} for the analytic version). Let $C\subset q^{-1}(y)$ be a rational curve. Then either $C$ is contracted by $p$ or $p(C)$ is contained in $\pi^{-1}(\Delta)$. Indeed, by Lemma \ref{lemma all curves in fibers} there are no rational curves in $X$ passing through a point in $X^\circ = X\setminus \pi^{-1}(\Delta)$. If for some rational curve $C\subset q^{-1}(y)$, the image of $C$ in $X$ lies in $\pi^{-1}(\Delta)$, then the image of $q^{-1}(y)$ under $p$ lies in $\pi^{-1}(\Delta)$ because $q^{-1}(y)$ is rationally chain connected. Similarly, if some rational curve $C\subset q^{-1}(y)$ is contracted by $p$ to a point in $X^\circ$ then $q^{-1}(y)$ is contracted to this point. 

%     Let $Z^\circ \subset Z$ the preimage of $X^\circ$ in $Z$ and denote by $p^\circ$ and $q^\circ$ the restrictions of $p$ and $q$ to $Z^\circ$ respectively. Consider the triangle
%    $$
    % https://tikzcd.yichuanshen.de/#N4Igdg9gJgpgziAXAbVABwnAlgFyxMJZARgBoAGAXVJADcBDAGwFcYkQAtAPQB0eBjLACd+IAL6l0mXPkIpypYtTpNW7ABq8Bw0RKnY8BIgCZFyhizaIQATT5wYOALZYwzOAAIAcuOUwoAObwRKAAZkIQTkgKIDgQSGQqluxoWoIiIDSM9ABGMIwACtKGciBCWAEAFjjikiDhkdE0cUimSWrWAI5pOrVhEVGIiS2IbYyuViBQ9HCV-pntk6F8QvA45fw4wJp86fxivmJAA
%\begin{tikzcd}
%                                                  & Z^\circ \arrow[ld, "p^\circ"'] \arrow[rd, "q^\circ"] &              \\
%X^\circ \arrow[rr, "f\restrict{X^\circ}", dashed] &                                                      & Y\setminus N
%\end{tikzcd}
%    $$
%   The maps $p^\circ$ and $q^\circ$ are projective and $p^\circ$ contracts fibers of $q^\circ$. Indeed, for every $y\in Y\setminus N$, $q^{-1}(y)$ is contracted to a point in $X^\circ$ by the map $p$. 

    Denote by $N\subset Y$ the image of $p^{-1}(\pi^{-1}(\Delta)))$ in $Y$. We have just shown that $p(q^{-1}(N)) = \pi^{-1}(\Delta)$ and $p(q^{-1}(Y\setminus N)) = X^\circ$. Moreover, all fibers of $q\restrict{q^{-1}(Y\setminus N)}$ are contracted by $p$. Therefore, the inverse rational map $f^{-1}\colon Y\dashrightarrow X$ is defined on $Y\setminus N$ and maps it to $X^\circ$. 

    We can choose holomorphic symplectic forms $\sigma_X$ and $\sigma_Y$ on $X$ and $Y$ respectively in such a way that 
    $(f^{-1})^*\sigma_X = \sigma_Y$. Since both forms $\sigma_X$ and $\sigma_Y$ are non-degenerate, the morphism $f^{-1}\restrict{Y\setminus N}\colon Y\setminus N\to X^\circ$ has $0$-dimensional fibers, hence is an isomorphism. That clearly implies that the map $f\restrict{X^\circ}$ is an isomorphism from $X^\circ$ onto $Y\setminus N$.

    \hfill

    {\bf Step 2.} The manifold $Y$ contains an open subset isomorphic to $X^\circ$, in particular, it contains a Lagrangian torus. Moreover, $Y$ is non-projective because it is bimeromorphic to a non-projective K\"ahler manifold. By Greb-Lehn-Rollenske theorem \cite{greb2013lagrangian}, the Lagrangian fibration $Y\setminus N\simeq X^\circ\to B^\circ$ extends to a Lagrangian fibration
    $$
    p\colon Y\to B'.
    $$
    Moreover, the base $B'$ is birational to $B$. The statement is proven.    
\end{proof}

\begin{corollary}
\label{corollary admits lagrangian fibration}
    In the notation of Lemma \ref{lemma birational family}, let $Y_t$ be the fiber of $\mathcal Y\to \mathbb D$ over a very general $t\in U$. Then $Y_t$ admits a Lagrangian fibration $p_t\colon Y_t\to B'$ bimeromorphic to the Lagrangian fibration $\pi_t\colon X_t\to B$.
\end{corollary}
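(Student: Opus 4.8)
The plan is to reduce the statement to a single application of Proposition \ref{proposition birational lagrangian fibrations}. By Lemma \ref{lemma birational family}, for every $t\in U$ the manifolds $X_t$ and $Y_t$ are bimeromorphic irreducible hyperk\"ahler manifolds; fix a bimeromorphism $f_t\colon X_t\dashrightarrow Y_t$. The source $X_t$ carries the Lagrangian fibration $\pi_t\colon X_t\to B$ coming from the Shafarevich--Tate family. Hence, to invoke Proposition \ref{proposition birational lagrangian fibrations} and extract from it a Lagrangian fibration $p_t\colon Y_t\to B'$ commuting with $\pi_t$, the only hypothesis that still needs checking is that $NS(X_t)\subset\eta^\perp$, and it is precisely here that the genericity of $t$ is used.

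To verify $NS(X_t)\subset\eta^\perp$ for very general $t$, I would use the description of the period of a Shafarevich--Tate twist. The period map of the Shafarevich--Tate family sends $t$ to the line spanned by $\sigma+t\eta$, so an integral class $\alpha\in H^2(X,\Z)$ lies in $NS(X_t)$ precisely when $q(\alpha,\sigma+t\eta)=0$, i.e.
$$
q(\alpha,\sigma)+t\,q(\alpha,\eta)=0.
$$
Since the period $[\sigma+t\eta]$ lies in $\Omega_\Lambda$ for all $t$, we have $q(\sigma+t\eta)=0$, and together with $q(\sigma)=0$ this forces $q(\eta)=0$ and $q(\sigma,\eta)=0$; in particular $\eta\in NS(X_t)\cap\eta^\perp$ for every $t$, as it must be, since the fibration is holomorphic in each complex structure $I_t$. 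For any integral class $\alpha$ with $q(\alpha,\eta)\ne 0$, the displayed equation can hold only for the single value $t=-q(\alpha,\sigma)/q(\alpha,\eta)$. The set of all such values, indexed by the countable group $H^2(X,\Z)$, is countable; since the Kähler locus is open, $U$ is nonempty and open, hence uncountable, so for a very general $t\in U$ no class $\alpha$ with $q(\alpha,\eta)\ne0$ is algebraic. For such $t$ every class of $NS(X_t)$ is orthogonal to $\eta$, that is, $NS(X_t)\subset\eta^\perp$.

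With this hypothesis in hand, Proposition \ref{proposition birational lagrangian fibrations} applied to $f_t\colon X_t\dashrightarrow Y_t$ produces a Lagrangian fibration $p_t\colon Y_t\to B'$, a birational map $g\colon B\dashrightarrow B'$ fitting into a commutative square with $\pi_t$ and $p_t$, and the fact that $f_t$ restricts to an isomorphism $X_t^\circ\to Y_t^\circ$. This is exactly the assertion that $p_t$ is bimeromorphic to $\pi_t$ as a Lagrangian fibration, which is the claim. The genuine geometric content is carried entirely by Proposition \ref{proposition birational lagrangian fibrations} (and, behind it, the Greb--Lehn--Rollenske extension theorem); the only point that is new here is the genericity argument of the second paragraph, whose sole subtlety is to ensure that $U$ is large enough to contain points avoiding the countable exceptional locus, which it is because it is a nonempty open subset of $\mathbb D$.
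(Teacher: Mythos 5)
Your proposal is correct and follows essentially the same route as the paper: both reduce the statement to a single application of Proposition \ref{proposition birational lagrangian fibrations}, the only hypothesis needing verification being $NS(X_t)\subset\eta^\perp$ for very general $t\in U$. The sole difference is in how that condition is obtained — the paper cites Theorem \ref{theorem kahler substitute} (for very general $t$ the class $\phi_t$ is non-torsion, hence no rational $(1,1)$-class pairs nontrivially with $\eta$), whereas you rederive it directly from the period description $[\sigma+t\eta]$ of the Shafarevich--Tate family together with a countability argument; both verifications are valid and amount to the same underlying computation.
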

\begin{proof}
    By Theorem \ref{theorem kahler substitute} for a very general $t\in U$, $NS(X_t)\subset \eta^\perp$. By construction of $\mathcal Y$ (Lemma \ref{lemma birational family}), the manifolds $X_t$ and $Y_t$ are bimeromorphic for every $t\in U$. The statement of the corollary follows by applying Proposition \ref{proposition birational lagrangian fibrations} to $X_t$ and $Y_t$.
\end{proof}

We are done with Step 2 (\ref{subsub step 2}).

%%%%%%%%%%%%%%%%%%%%%%%%%%%%%%%%
%%%%%%%%%%%%%%%%%%%%%%%%%%%%%%%%

\subsection{$\mathcal Y\to \mathbb D$ is almost a Shafarevich--Tate family}
\label{subsection almost shafarevich tate family}

\begin{proposition}
\label{proposition almost shafarevich tate family}
    In the notation of Lemma \ref{lemma birational family} there exists an open dense subset $V\subset U$ such that the restriction of $\mathcal Y$ to $V$ is a Shafarevich--Tate family.
\end{proposition}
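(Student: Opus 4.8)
The plan is to show that, near a very general base point, the family $\mathcal{Y}$ coincides with the degenerate twistor (Shafarevich--Tate) family of the Lagrangian fibration carried by the corresponding fiber of $\mathcal{Y}$; the two families will be identified by a period computation together with the fact that they share a fiber, which sidesteps the non-separatedness of the marked moduli space.

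First I would set up markings. Every fiber of $\mathcal{X} \to \mathbb{D}$ is diffeomorphic to $X$, so I identify all $H^2(X_t, \Z)$ with the lattice $\Lambda$; under this marking $\operatorname{Per}_{\Sha T}(t) = [\sigma + t\eta]$, where $\eta$ is the primitive isotropic fibration class. By Lemma \ref{lemma birational family} the family $\mathcal{Y}$ is built to have the same period image as $\mathcal{X}$, so after reparametrizing I may mark $\mathcal{Y}$ so that its period map is again $t \mapsto [\sigma + t\eta]$; in particular $\operatorname{Per}(Y_t) = [\sigma + t\eta]$ and, by the marking clause of the Global Torelli theorem (Theorem \ref{theorem global torelli}(3)), each bimeromorphism $f_t \colon X_t \dashrightarrow Y_t$ acts as the identity on $\Lambda$. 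Fix a very general $\tau \in U$; by Corollary \ref{corollary admits lagrangian fibration}, $Y_\tau$ carries a Lagrangian fibration $p_\tau \colon Y_\tau \to B'$ with $f_\tau$ commuting with $\pi_\tau$ and $p_\tau$ (Proposition \ref{proposition birational lagrangian fibrations}). Since $\operatorname{Pic}(B)$ has rank one, $f_\tau^*$ sends the fibration class $\eta'$ of $p_\tau$ to a positive multiple of $\eta$; as $f_\tau^* = \mathrm{id}$ on $\Lambda$ and both classes are primitive, $\eta' = \eta$.

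Next I would form the Shafarevich--Tate family $\mathcal{Z} \to \mathbb{A}^1$ of $p_\tau \colon Y_\tau \to B'$ (Definition \ref{definition sht family}), so that $\mathcal{Z}_0 = Y_\tau$. By the description of $\operatorname{Per}_{\Sha T}$ recalled before Lemma \ref{lemma birational family}, the period of $\mathcal{Z}$ is $s \mapsto [\sigma_{Y_\tau} + s\eta']$; propagating the marking $g$ of $Y_\tau$ and using $\operatorname{Per}(Y_\tau) = [\sigma + \tau\eta]$ together with $\eta' = \eta$, this becomes $s \mapsto [\sigma + (\tau + s)\eta]$. After the reparametrization $t = \tau + s$ this is exactly the period map of $\mathcal{Y}$. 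Thus $\mathcal{Y}$ and $\mathcal{Z}$ define two holomorphic maps to the component $\mathcal{M}^0_\Lambda$ which compose with $\operatorname{Per}$ to the same curve in $\Omega_\Lambda$, and which agree at the parameter $\tau$, both sending it to the marked pair $(Y_\tau, g)$.

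The key point will then be that these two lifts coincide near $\tau$. Although $\mathcal{M}^0_\Lambda$ is non-Hausdorff, the period map is a local biholomorphism near the single point $(Y_\tau, g)$ by the Local Torelli theorem (Theorem \ref{theorem global torelli}(2)); hence two holomorphic lifts of the same curve that agree at $(Y_\tau, g)$ must agree on a neighborhood, and it is precisely to arrange a common point that I chose $\mathcal{Z}_0 = Y_\tau$. This yields an open neighborhood $V_\tau \subset U$ of $\tau$ on which $\mathcal{Y}$ is isomorphic to $\mathcal{Z}$, hence is a Shafarevich--Tate family. Finally I would take $V = \bigcup_\tau V_\tau$, the union over all very general $\tau \in U$: it is open, contains the dense set of very general parameters, and is therefore dense in $U$; on overlaps the local degenerate twistor structures are compatible (a degenerate twistor deformation of a degenerate twistor deformation is again one), so $\mathcal{Y}|_V$ is a Shafarevich--Tate family. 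The main obstacle is the bookkeeping that pins down the twistor direction as $\eta' = \eta$ and keeps the two markings aligned, since everything downstream depends on the two period curves being the same line rather than merely meeting at $\tau$.
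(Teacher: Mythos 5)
Your proposal is correct and follows essentially the same route as the paper: the paper's proof also invokes Corollary \ref{corollary admits lagrangian fibration} at a very general $t\in U$, observes that the period image of the Shafarevich--Tate family of $Y_t$ coincides with that of $\mathcal Y\to\mathbb D$, and concludes via the Local Torelli theorem that the two families agree near $t$, taking $V$ to be the (open, dense) locus where $Y_t$ carries a Lagrangian fibration. Your additional bookkeeping --- pinning down $\eta'=\eta$ and noting that the two lifts must share the fiber $Y_\tau$ to get around the non-Hausdorffness of $\mathcal M^0_\Lambda$ --- makes explicit what the paper leaves implicit, but it is the same argument.
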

\begin{proof}
   For a very general $t\in U$, the manifold $Y_t$ admits a Lagrangian fibration (Corollary \ref{corollary admits lagrangian fibration}). We claim that the family $\mathcal Y$ coincides with the Shafarevich--Tate family of $Y_t$ in a neighborhood of $t$. Indeed, the images under the period map of the Shafarevich--Tate family and of $\mathcal Y\to \mathbb D$ coincide. By the Local Torelli theorem (Theorem \ref{theorem global torelli}(2)), these families must coincide in a neighborhood of $t\in\mathbb D$. Denote by $V\subset U$ the set of $t\in U$ such that $Y_t$ admits a Lagrangian fibration $p_t\colon Y_t\to B'$. We have just shown that $V$ is open and dense in $U$. Moreover, the restriction of $\mathcal Y\to \mathbb D$ to $V$ is a Shafarevich--Tate family in a neighborhood of a very general point of $V$. 
\end{proof}

\begin{proposition}
\label{proposition bases are isomorphic}
    The bases $B$ and $B'$ of Lagrangian fibrations $\pi_t\colon X_t\to B$ and $p_t\colon Y_t\to B'$ are isomorphic for all $t\in V$.
\end{proposition}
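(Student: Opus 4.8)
The plan is to reduce the statement to a single very general parameter and then recognize both bases as the projective spectrum of one section ring. First I would record that the bases are constant along the family: since $\mathcal Y|_V$ is a Shafarevich--Tate family (Proposition \ref{proposition almost shafarevich tate family}) and $\mathcal X|_V$ is a restriction of the Shafarevich--Tate family of $X$, every $X_t$ with $t\in V$ is a Shafarevich--Tate twist over the \emph{fixed} base $B$, and every $Y_t$ is one over the \emph{fixed} base $B'$. In particular the discriminants $\Delta\subset B$ and $\Delta'\subset B'$ do not move. Hence it suffices to produce a single isomorphism $B\simeq B'$ for one very general $t\in V$, after which the proposition holds for all $t\in V$.

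For such a $t$ we have $NS(X_t)\subset\eta^\perp$ by Theorem \ref{theorem kahler substitute}, so Proposition \ref{proposition birational lagrangian fibrations} applies to the bimeromorphism $f_t\colon X_t\dashrightarrow Y_t$. It furnishes a birational map $g_t\colon B\dashrightarrow B'$ restricting to an isomorphism $B^\circ\xrightarrow{\sim}(B')^\circ$, together with the fact that $f_t$ itself restricts to an isomorphism $X_t^\circ\xrightarrow{\sim}Y_t^\circ$ commuting with the two Lagrangian fibrations. Writing $\eta=\pi_t^*h$ and $\eta'=p_t^*h'$ for the pullbacks of the ample generators of $\operatorname{Pic}(B)$ and $\operatorname{Pic}(B')$, the equalities $\pi_{t*}\mathcal O_{X_t}=\mathcal O_B$ and $p_{t*}\mathcal O_{Y_t}=\mathcal O_{B'}$ \cite{matsushita1999fibre} together with the projection formula identify $B=\operatorname{Proj}\bigoplus_m H^0(X_t,\eta^{\otimes m})$ and $B'=\operatorname{Proj}\bigoplus_m H^0(Y_t,(\eta')^{\otimes m})$.

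I would then set $L:=f_t^*\eta'$. As $f_t$ is a bimeromorphism of smooth compact manifolds it is an isomorphism away from a locus of codimension at least two, so pullback of line bundles and of their global sections is well behaved and gives $H^0(Y_t,(\eta')^{\otimes m})\simeq H^0(X_t,L^{\otimes m})$ compatibly with multiplication; thus $B'\simeq\operatorname{Proj}\bigoplus_m H^0(X_t,L^{\otimes m})$. The remaining task is to show that $L$ has the same section ring as a power of $\eta$. Since $f_t$ carries smooth fibers of $\pi_t$ isomorphically onto smooth fibers of $p_t$, and $\eta'$ restricts trivially to every such fiber, $L$ restricts trivially to every smooth fiber of $\pi_t$. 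Using $h^0(\mathcal O_{\pi_t^{-1}(b)})=1$ for $b\in B^{reg}$ (the Remark after Corollary \ref{corollary matsushita}) and $\pi_{t*}\mathcal O_{X_t}=\mathcal O_B$, a seesaw argument then shows that $L|_{X_t^\circ}$ is the pullback of the ample line bundle $g_t^*(h'|_{(B')^\circ})$ from $B^\circ$.

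The hard part will be upgrading this identification across the discriminant, whose preimage $\pi_t^{-1}(\Delta)$ is only of codimension one: a priori $L$ and $\pi_t^*\bar M$, for an extension $\bar M$ of $g_t^*h'$ to $B$, differ by a divisor supported on $\pi_t^{-1}(\Delta)$, which is precisely the vertical-divisor phenomenon attached to multiple fibers in codimension one. To control it I would exploit that $\operatorname{Pic}(B)$ has rank one, forcing $\bar M$ to be a positive multiple of $h$, and that $\eta$ and $L$ share the Iitaka fibration $\pi_t$; this should force the ample models of $L$ and of $\eta$ to coincide, yielding $B'\simeq\operatorname{Proj}\bigoplus_m H^0(X_t,L^{\otimes m})=\operatorname{Proj}\bigoplus_m H^0(X_t,\eta^{\otimes dm})=B$ for a suitable $d>0$. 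The subtle point, and the one I expect to require the most care, is ensuring that the vertical contributions over $\Delta$ do not alter the section ring, so that the two Proj's genuinely agree rather than merely being birational.
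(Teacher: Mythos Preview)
Your reduction to a single $t\in V$ is sound, but you then make the opposite choice of $t$ from the paper, and this is where the trouble begins. You pick a \emph{very general} $t$ with $NS(X_t)\subset\eta^\perp$ so as to invoke Proposition~\ref{proposition birational lagrangian fibrations}; the paper instead picks $t\in V$ with $NS(X_t)\not\subset\eta^\perp$, so that $X_t$---and hence the bimeromorphic hyperk\"ahler manifold $Y_t$---is \emph{projective} (Theorem~\ref{theorem kahler substitute}). Such $t$ exist because $V$ is open in $\mathbb D$ and the projective locus is dense (Corollary~\ref{corollary dense}). In the projective setting the birational map supplied by Global Torelli preserves $\eta$ and hence commutes with the two fibrations, and one concludes by citing Matsushita's result \cite[Corollary~2]{matsushita2014almost} that bases of birational Lagrangian fibrations on projective irreducible holomorphic symplectic manifolds are isomorphic. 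That is the whole proof.

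By contrast, your route leads straight into the obstacle you yourself flag and do not resolve. The line bundle $L=f_t^*\eta'$ can differ from a power of $\eta$ by an integral combination of irreducible components of $\pi_t^{-1}(\Delta)$, and such vertical divisors can genuinely alter the section ring: for a reducible fiber one component may be in the stable base locus of $L$ and not of $\eta$, so $\operatorname{Proj}\bigoplus_m H^0(X_t,L^{\otimes m})$ is a priori only birational to $B$, not isomorphic. Saying that $L$ and $\eta$ ``share the Iitaka fibration $\pi_t$'' does not help---that only recovers the birational map $g_t$ you already had. Moreover, you have deliberately placed yourself in the non-projective analytic category, where the MMP-type tools one would want for comparing ample models are unavailable. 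The lesson is that the freedom in choosing $t\in V$ should be spent landing in the projective world, where the comparison of bases is already a theorem.
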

\begin{proof}
    Let $t\in V\subset \mathbb D$ be such that $NS(X_t)\not\subset \eta^\perp$. By Theorem \ref{theorem kahler substitute} $X_t$, and hence also $Y_t$, is projective. The manifolds $X_t$ and $Y_t$ are birational by Global Torelli Theorem (Theorem \ref{theorem global torelli}(3)). This birational isomorphism preserves the class $\eta$, hence commutes with Lagrangian fibrations. 
    By \cite[Corollary 2]{matsushita2014almost}, the bases of birational Lagrangian fibrations on projective irreducible holomorphic symplectic manifolds are isomorphic. Hence $B\simeq B'$. 
\end{proof}

The two propositions of this subsection complete the proof of Step 3 (\ref{subsub step 3}) of Proposition \ref{proposition deformations are kahler replacement}.

%%%%%%%%%%%%%%%%%%%%%%%%%%%%%%%%%%%%%
%%%%%%%%%%%%%%%%%%%%%%%%%%%%%%%%%%%%%

\subsection{Limits of isomorphisms}
\label{subsection perego bimeromorphic}

Consider the following three families of irreducible holomorphic symplectic manifolds:
\begin{enumerate}
    \item $\mathcal X\to \mathbb D$. A Shafarevich--Tate family over a disk $\mathbb D\subset\mathbb A^1$. We assume that there exists an open subset $U\subset \mathbb D$ such that $\forall t\in U$, $X_t$ is hyperk\"ahler and $0\in\overline U$.
    \item $\mathcal Y\to \mathbb D$. A family of hyperk\"ahler manifolds such that for all $t\in U$, $Y_t$ is bimeromorphic to $X_t$ constructed in Lemma \ref{lemma birational family}. 
    \item $\mathcal Y'\to \mathbb D$. The family of Shafarevich--Tate deformations of a Lagrangian fibration $p_t\colon Y_t\to B$ for some $t\in V$. Its restriction to $V$ coincides with the restriction of $\mathcal Y$ to $V$ (Proposition \ref{proposition almost shafarevich tate family}).
\end{enumerate}
The images under the period map (Definition \ref{definition period map}) of all three families coincide.
\begin{lemma}
\label{lemma central fibers birational}
    The holomorphic symplectic manifolds $Y:=Y_{0}$ and $Y':= Y'_{0}$ are bimeromorphic.
\end{lemma}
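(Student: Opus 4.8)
We have two families $\mathcal Y \to \mathbb D$ and $\mathcal Y' \to \mathbb D$ of hyperkähler manifolds whose fibers agree over an open dense subset $V \subset U$ (where $Y_t \simeq Y'_t$), and both families have the same image under the period map. We want to conclude that the central fibers $Y = Y_0$ and $Y' = Y'_0$ are bimeromorphic. I would follow the strategy Perego uses in his proof of \cite[Lemma 2.6]{perego2017k}, which the authors explicitly invoke.

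The plan is to produce a bimeromorphism $Y \dashrightarrow Y'$ as a limit of the isomorphisms $Y_t \xrightarrow{\sim} Y'_t$ that exist for $t \in V$.

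**Constructing the limit.** First I would consider the family of isomorphisms $f_t\colon Y_t \xrightarrow{\sim} Y'_t$ for $t\in V$. Over $V$ the families $\mathcal Y$ and $\mathcal Y'$ literally coincide (Proposition \ref{proposition almost shafarevich tate family}), so these $f_t$ are the identity maps under that identification; what is genuinely at stake is comparing the two different \emph{completions} of this common family across the locus $\mathbb D \setminus V$ down to $t=0$. I would encode the isomorphisms by taking the closure of the graph $\Gamma \subset \mathcal Y \times_{\mathbb D} \mathcal Y'$ of the identification over $V$, obtaining an analytic subvariety $\overline\Gamma$ of the fiber product. Restricting $\overline\Gamma$ to the central fiber $t = 0$ yields a correspondence $Z_0 \subset Y \times Y'$ between $Y$ and $Y'$. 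The content to be checked is that $Z_0$ defines a \emph{bimeromorphism}: one must show that the two projections $Z_0 \to Y$ and $Z_0 \to Y'$ are bimeromorphic, i.e.\ that the limiting correspondence does not degenerate into something of positive-dimensional general fiber.

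**Controlling degeneration via periods and symplectic forms.** The mechanism that prevents degeneration is the holomorphic symplectic form. Since the two families have equal periods (so in particular the holomorphic symplectic forms $\sigma_{Y_t}$ and $\sigma_{Y'_t}$ match up under $g^{-1}\circ g'$ on $H^2$), I would arrange that the correspondence pulls back $\sigma_{Y'}$ to $\sigma_Y$ on the central fiber. As in Step 1 of the proof of Proposition \ref{proposition birational lagrangian fibrations}, nondegeneracy of $\sigma_Y$ and $\sigma_{Y'}$ then forces the projections from $Z_0$ to have zero-dimensional generic fibers on the locus where the symplectic form is preserved, which is exactly the statement that $Z_0$ induces a bimeromorphism $Y \dashrightarrow Y'$. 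Properness of $\mathcal Y \to \mathbb D$ and $\mathcal Y' \to \mathbb D$ guarantees that $\overline\Gamma$ is proper over $\mathbb D$, so its central fiber is nonempty and of the expected dimension, giving a genuine correspondence rather than an empty one.

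**Main obstacle.** The hard part is the last point: controlling how the family of isomorphisms degenerates as $t \to 0$, i.e.\ showing that the flat (or analytic) limit of the graphs $\Gamma_t$ really is the graph of a bimeromorphism and does not collapse. This is precisely the delicate analytic input in Perego's argument, and it rests on the fact that both central fibers are irreducible holomorphic symplectic with one-dimensional $H^{2,0}$ (Theorem \ref{theor irr hol symp}) together with the matching of symplectic forms coming from the coincidence of periods. Once the limiting correspondence is seen to preserve a nondegenerate two-form, the symplectic rigidity argument already used in Proposition \ref{proposition birational lagrangian fibrations} closes the gap and yields the desired bimeromorphism.
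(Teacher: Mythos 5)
Your overall strategy --- realize the bimeromorphism $Y\dashrightarrow Y'$ as a limit of the graphs of the isomorphisms $Y'_t\simeq Y_t$ for $t\in V$ --- is the same as the paper's, but there is a genuine gap at the crucial step. Taking the topological closure of the graph inside $\mathcal Y\times_{\mathbb D}\mathcal Y'$ does not produce an analytic cycle in the central fiber, and properness of the two families does \emph{not} guarantee that the central fiber of this closure is ``of the expected dimension'': the graphs $\Gamma_t$ could degenerate arbitrarily badly as $t\to 0$ (their volumes could blow up), in which case no analytic limit cycle exists. The only tool that produces a limit cycle is Bishop's compactness theorem, and it requires a \emph{uniform volume bound} on the graphs $\Gamma_{t_m}$ along a sequence $t_m\to 0$. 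Establishing that bound is where almost all of the work in the paper's proof lies: one needs a family of K\"ahler forms $\beta_t$ on $Y_t$ (easy, since $\mathcal Y\to\mathbb D$ is a family of hyperk\"ahler manifolds) \emph{and} a family of $(1,1)$-forms $\alpha_t$ on $Y'_t$ that are K\"ahler for $t$ in a dense open subset of $V$ and converge to a class $\alpha_0$ at $t=0$. The latter is nontrivial because $\mathcal Y'$ is a Shafarevich--Tate family whose central fiber is not yet known to be K\"ahler; the paper constructs $\alpha_t$ by choosing classes positive on the relative Mori cone with $q([\alpha_t])>0$, invoking Lemma \ref{lemma all curves in fibers} to see that for very general $t$ all rational curves lie in fibers, and then applying Boucksom's description of the K\"ahler cone plus openness of K\"ahlerness. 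Only then is $\mathrm{vol}(\Gamma_m)=\int_{Y_m}([\beta_m]+f_m^*[\alpha_m])^{2n}$ a convergent, hence bounded, sequence. Your proposal never produces the K\"ahler (or even positive) classes on the $\mathcal Y'$ side, so the volume bound, and with it the existence of the limit cycle, is missing.

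A secondary inaccuracy: the nondegeneracy of the symplectic form is not what ``prevents degeneration'' of the limit. The limit cycle $\Gamma\subset Y'_0\times Y_0$ produced by Bishop's theorem will in general contain extra components beyond the graph of a map; the argument (which the paper takes word by word from Perego and Huybrechts) is to show that $\Gamma$ contains \emph{one} irreducible component that is the graph of a bimeromorphism, using that $[\Gamma]$ induces on $H^2$ the isomorphism $(g'_0)^{-1}\circ g_0$ compatible with the Hodge structures and the BBF form. The symplectic-rigidity argument you borrow from Proposition \ref{proposition birational lagrangian fibrations} applies to a correspondence already known to be generically finite over both factors, which is precisely what has to be proved here, not assumed.
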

\begin{proof}
    The proof of this lemma follows closely the second part of the proof of \cite[Lemma 2.5]{perego2019kahlerness}. First, since $Y_t$ is K\"ahler for all $t\in\mathbb D$, we can find a family $\{\beta_t\}_{t\in \mathbb D}$ of K\"ahler forms on $Y_t$. Second, we can find a family of $d$-closed $(1,1)$-forms $\{\alpha_t\}_{t\in\mathbb D}$ such that $[\alpha_t]$ interesects positively all rational curves in fibers of the Lagrangian fibration $p_t\colon Y'_t\to B'$. It is possible to find such $\{\alpha_t\}_{t\in\mathbb D}$ because $Y'_t$ is a Shafarevich--Tate deformation and fibers of $p_t$ are the same for all $t$. Moreover, we can suppose that $q([\alpha_t])>0$ $\forall t$ up to possibly shrinking $\mathbb D$.

    By Lemma \ref{lemma all curves in fibers}, for a very general $t\in V$, all rational curves on $Y'_t$ are contained in fibers of $p_t$. Hence the class $[\alpha_t]$ intersects all rational curves on $Y'_t$ positively and has positive square with respect to the BBF form. By \cite{boucksom2001cone}, $[\alpha_t]$ or $-[\alpha_t]$ is a K\"ahler class. 

    Up to changing the sign of $\alpha_t$ we can assume that $[\alpha_t]$ is a K\"ahler class on $Y'_t$ for a very general $t\in V$. Since K\"ahleness is an open property, we conclude that $[\alpha_t]$ is a K\"ahler class for all $t\in V'$, where $V'\subset V$ is a dense open subset of $V$. Therefore, there exists a family of forms $\{\alpha_t\}$ on $Y'_t$ such that the form $\alpha_t$ is K\"ahler for all $t\in V'$.
    
    We conclude that there exists a sequence $\{t_m\}_{m\in\mathbb N}$ of points in $V$ which converges to $0$, and such that for every $m\in\mathbb N$, we have a K\"ahler form $\alpha_m:=\alpha_{t_m}$ on $Y'_m:=Y'_{t_m}$ and a K\"ahler form $\beta_m:=\beta_{t_m}$ on $Y_m:= Y_{t_m}$, such that the sequence $\{\alpha_m\}$ converges to $\alpha_0$ and $\{\beta_m\}$ converges to $\beta_0$.  

    Introduce $\Lambda$-markings $g_t\colon H^2(Y_t)\to \Lambda$ and $g'_t\colon H^2(Y'_t)\to \Lambda$ on $Y_t$ and $Y'_t$ respectively. We can assume that $\forall t\in V$, the isomorphism $f_t\colon Y'_t\to Y_t$ satisfies:
    $$
    f_t^* = (g'_t)^{-1}\circ g_t.
    $$
    Let $\Gamma_m\subset Y'_m\times Y_m$ be the graph of the isomorphism $f_m\colon Y_m'\to Y_m$. Let us compute its volume with respect to the K\"ahler form $P_1^*\alpha_m + P_2^*\beta_m$, where $P_1$ and $P_2$ are the projections of $Y'_m\times Y_m$ to $Y'_m$ and $Y_m$ respectively. We have
    $$
    \mathrm{vol}(\Gamma_m) = \int_{Y_m}(\beta_m + f_m^*\alpha_m)^{2n} = \int_{Y_m}([\beta_m] + f_m^*[\alpha_m])^{2n}
    $$
    Taking the limit as $m$ goes to infinity, we get
    $$
    \lim\limits_{m\to\infty}\mathrm{vol}(\Gamma_m) = \int_{Y_0}([\beta_0] + (g'_0)^{-1}\circ g_0([\alpha_0])^{2n} < \infty.
    $$

    Hence, the volumes of the graphs $\Gamma_m$ are bounded. By Bishop's Theorem \cite{bishop1964conditions} (see also \cite[Lemma 5.1]{burns1975torelli}), the cycles $\Gamma_m$ converge to a cycle $\Gamma\subset Y_0'\times Y_0$. Next, we need to show that $\Gamma$ contains an irreducible component of a graph of a bimeromorphism. The proof follows word by word the argument in \cite[Lemma 2.5]{perego2019kahlerness} (see also the proof of \cite[Theorem 4.3]{huybrechts1999compact}) 
\end{proof}

Lemma \ref{lemma central fibers birational} concludes Step 4 (\ref{subsub step 4}) of the proof of Proposition \ref{proposition deformations are kahler replacement}.

%%%%%%%%%%%%%%%%%%%%%%5
%%%%%%%%%%%%%%%%%%%%%%%%%
\subsection{Shafarevich--Tate deformations of bimeromorphic Lagrangian fibrations}
\label{subsection bimeromorphic shafarevich tate}
\begin{proposition}
\label{proposition deg tw deformations of bimeromorphic}
    Let $\pi\colon X\to B$ and $p\colon Y\to B$ be two Lagrangian fibrations on irreducible holomorphic symplectic manifolds $X$ and $Y$. Suppose that there is a bimeromorphic map $f\colon X\dashrightarrow Y$ which commutes with the Lagrangian fibrations. Fix a K\"ahler form $\alpha$ on $B$ and consider the degenerate twistor deformations $X_t$ and $Y_t$ corresponding to $\alpha$ (Definition \ref{definition deg tw}). Then there exists a bimeromorphism $f_t\colon X_t\dashrightarrow Y_t$ which commutes with the Lagrangian fibrations on $X_t$ and $Y_t$.
\end{proposition}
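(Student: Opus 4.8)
The plan is to realize both degenerate twistor deformations as Shafarevich--Tate twists glued from the \emph{same} local pieces, and then to show that $f$ intertwines the two gluing cocycles, so that it descends to a bimeromorphism of the twists. First I would fix a common covering $B = \bigcup U_i$ by small open subsets and, using Theorem \ref{theorem follows word by word}, present $X_t$ as the Shafarevich--Tate twist $X^{\phi_t}$ with a \v{C}ech cocycle $\phi_{ij} = \exp(t\, v^X_{ij})$, where the vertical vector fields $v^X_{ij}\in (\pi_*T_{X/B})(U_{ij})$ are obtained from $\alpha$ via the isomorphism $\iota_{\sigma_X}\colon \Omega^{[1]}_B\to \pi_*T_{X/B}$ of Theorem \ref{theorem iso omega and t}: writing $\alpha = d\gamma_i$ for a $1$-form $\gamma_i$ on $U_i$, one sets $v^X_{ij} = \iota_{\sigma_X}(\gamma_j - \gamma_i)$. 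I would present $Y_t = Y^{\psi_t}$ in exactly the same way with cocycle $\psi_{ij} = \exp(t\, v^Y_{ij})$ and $v^Y_{ij} = \iota_{\sigma_Y}(\gamma_j-\gamma_i)$, using the \emph{same} forms $\gamma_i$. After rescaling I may assume $f^*\sigma_Y = \sigma_X$, which is possible because $f$ is an isomorphism over a dense open subset $W\subset B$ and $H^{2,0}$ is one-dimensional.

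The key step is an intertwining lemma: for every $1$-form $\beta$ on an open $U\subset B$, the associated vertical vector fields satisfy $f_* \iota_{\sigma_X}(\beta) = \iota_{\sigma_Y}(\beta)$, where $f_*$ is the pushforward along the isomorphism $f\colon \pi^{-1}(W)\to p^{-1}(W)$. This is a direct computation: if $w = \iota_{\sigma_Y}(\beta)$, then $\iota_{f^*w}\sigma_X = \iota_{f^*w}(f^*\sigma_Y) = f^*(\iota_w\sigma_Y) = f^*(p^*\beta) = \pi^*\beta = \iota_{\iota_{\sigma_X}(\beta)}\sigma_X$, using $p\circ f = \pi$; injectivity of $\iota_{\sigma_X}$ then gives $f^* w = \iota_{\sigma_X}(\beta)$. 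Applying this to $\beta = \gamma_j-\gamma_i$ yields $f_* v^X_{ij} = v^Y_{ij}$ over $W\cap U_{ij}$, hence $f\circ \phi_{ij} = \psi_{ij}\circ f$ as biholomorphisms over $W\cap U_{ij}$, and therefore as meromorphic maps over all of $U_{ij}$.

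Finally I would glue. On each chart $\pi^{-1}(U_i)\subset X_t$ I take the meromorphic map $f$, landing in $p^{-1}(U_i)\subset Y_t$. The identity $f\circ\phi_{ij} = \psi_{ij}\circ f$ is precisely the compatibility needed for these local maps to agree on overlaps after the regluings $\phi_{ij}$ on the source and $\psi_{ij}$ on the target, so they define a single meromorphic map $f_t\colon X_t\dashrightarrow Y_t$ commuting with the Lagrangian fibrations. Running the same construction with $f^{-1}$ produces an inverse, so $f_t$ is bimeromorphic.

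The step I expect to be the main obstacle is keeping the bookkeeping with the meromorphic (rather than everywhere-defined) nature of $f$ honest: one has to check that the vector-field identity, established only over the locus $W$ where $f$ is an isomorphism, propagates to an equality of meromorphic maps on the overlaps, and that the glued object is genuinely a single bimeromorphic, fibration-preserving map rather than merely a collection of compatible local maps. Everything conceptual is contained in the intertwining computation; the remaining care is purely in handling indeterminacy loci and the passage from an isomorphism over $W$ to a bimeromorphism of the compact twists via closures of graphs.
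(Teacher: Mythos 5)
Your proposal is essentially correct, but it takes a genuinely different route from the paper. You work at the level of \v{C}ech cocycles: you present both degenerate twistor deformations as Shafarevich--Tate twists glued by $\exp(t\,\iota_{\sigma_X}(\gamma_j-\gamma_i))$ and $\exp(t\,\iota_{\sigma_Y}(\gamma_j-\gamma_i))$ for the \emph{same} base data $\gamma_i$, prove the intertwining identity $f_*\iota_{\sigma_X}(\beta)=\iota_{\sigma_Y}(\beta)$ (your computation is correct, and it is exactly where the hypothesis $p\circ f=\pi$ enters), and glue local copies of $f$. The paper instead works with the graph $\Gamma\subset X\times Y$ of $f$: it is Lagrangian for $P_X^*\sigma_X-P_Y^*\sigma_Y$, and since $f$ commutes with the fibrations the form $P_X^*\pi^*\alpha-P_Y^*p^*\alpha$ vanishes on $\Gamma$, so $\Gamma$ stays Lagrangian for the deformed form, which is holomorphic symplectic for the product complex structure on $X_t\times Y_t$; a real-analytic subvariety that is Lagrangian for a holomorphic symplectic form is complex analytic, so $\Gamma$ remains the graph of a bimeromorphism $f_t$. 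The paper's argument sidesteps all cocycle bookkeeping and all issues with indeterminacy loci (and yields the extra observation that $f_t$ coincides with $f$ real-analytically), whereas your argument stays entirely inside the Shafarevich--Tate formalism and makes transparent that the two twists are governed by the same class. Two small repairs your route needs: (i) the phrase ``$f$ is an isomorphism over a dense open subset $W\subset B$'' is both stronger than needed and not obviously available --- what you actually have (and all you need) is that, after normalizing $f^*\sigma_Y=\sigma_X$, the map $f$ is a local biholomorphism off a codimension-$\ge 2$ analytic subset $Z\subset X$, because $df$ pulls back a nondegenerate form to a nondegenerate form; (ii) to conclude $f\circ\exp(tv^X_{ij})=\exp(tv^Y_{ij})\circ f$ from the pointwise identity $df(v^X_{ij})=v^Y_{ij}\circ f$ you must restrict to points whose entire flow line for $t\in[0,1]$ avoids $Z$; such points form a dense open set since the flow is an analytic map $\C\times X_{ij}\to X_{ij}$ and $Z$ has codimension $\ge 2$, after which the identity of meromorphic maps follows by density. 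With these points made honest, your gluing produces the desired $f_t$ and its inverse, so the proof goes through.
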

\begin{proof}
    By possibly replacing $\sigma_Y$ with its multiple we may assume that $f^*\sigma_Y = \sigma_X$. Consider the graph $\Gamma\subset X\times Y$ of the bimeromorphism $f$. It is a Lagrangian subvariety of $X\times Y$ with respect to the holomorphic symplectic form $P_X^*\sigma_X - P_Y^*\sigma_Y$, where $P_X$ and $P_Y$ are projection of $X\times Y$ on $X$ and $Y$ respectively and $\sigma_X$, $\sigma_Y$ are holomorphic symplectic forms on $X$ and $Y$ respectively. It is assumed that $f^*\sigma_Y = \sigma_X$. The form $P_X^*\pi_X^*\alpha - P_Y^*\pi_Y^*\alpha$ vanishes on $\Gamma$. Therefore $\Gamma$ is Lagrangian with respect to a form
    \begin{equation}
    \label{eq form}
    P_X^*(\sigma_X + t\pi_X^*\alpha) - P_Y^*(\sigma_Y + t\pi_Y^*\alpha)
    \end{equation}
    for any $t\in\C$. Consider the complex structures $I_t$ on $X_t$ and $J_t$ on $Y_t$ induced by holomorphic symplectic forms $\sigma_X + t\pi_X^*\alpha$ and $\sigma_Y + t\pi_Y^*\alpha$ (see \ref{subsub degenerate twistor deformations}). The form (\ref{eq form}) is holomorphic symplectic with respect to the complex structure $(I_t, J_t)$ on $X\times Y$. In other words, the form (\ref{eq form}) is holomorphic symplectic on $X_t\times Y_t$. A Lagrangian submanifold of a holomorphic symplectic manifold is necessarily complex. This is an immediate consequence of the following linear algebraic fact: a real subspace of a complex vector space which is Lagrangian with respect to a holomorphic symplectic form is complex. A priori $\Gamma$ is only a real analytic subvariety of $X_t\times Y_t$, but it must be complex analytic in its smooth points because it is Lagrangian. By \cite{reiffen1970fastholomorphe} (see also \cite{MOKurbachReal}) $\Gamma_t$ is a complex analytic subvariety of $X_t\times Y_t$. It induces a desired bimeromorphism $f_t\colon X_t\dashrightarrow Y_t$. 
\end{proof}

\begin{remark}
    The proof of Proposition \ref{proposition deg tw deformations of bimeromorphic} shows that the bimeromorphism $f_t\colon X_t\dashrightarrow Y_t$ is the same as $f\colon X\dashrightarrow Y$ {\em real analytically}.
\end{remark}

\begin{corollary}
\label{corollary of fujiki class c}
    Let $\mathcal X\to\mathbb D$ be a Shafarevich--Tate family as in Proposition \ref{proposition deformations are kahler replacement}. Then $X=X_0$ is bimeromorphic to a hyperk\"ahler manifold, in particular, is of Fujiki class $\mathcal C$.
\end{corollary}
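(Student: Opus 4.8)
The plan is to assemble the pieces from Steps 1--4 using the rigidity of degenerate twistor deformations furnished by Proposition \ref{proposition deg tw deformations of bimeromorphic}. Fix a point $\tau\in V$. By Corollary \ref{corollary admits lagrangian fibration} the fiber $Y_\tau=Y'_\tau$ carries a Lagrangian fibration $p_\tau\colon Y'_\tau\to B'$, and there is a bimeromorphism $X_\tau\dashrightarrow Y'_\tau$ commuting with the Lagrangian fibrations $\pi_\tau\colon X_\tau\to B$ and $p_\tau$. By Proposition \ref{proposition bases are isomorphic} the bases $B$ and $B'$ are isomorphic, so after this identification both fibrations have one and the same base and the bimeromorphism commutes with two Lagrangian fibrations over $B$.

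The first key observation I would record is that, near $\tau$, the family $\mathcal X\to\mathbb D$ is exactly the degenerate twistor family of $X_\tau$ with respect to a fixed Kähler form $\alpha$ on $B$, and likewise $\mathcal Y'\to\mathbb D$ is the degenerate twistor family of $Y'_\tau$ with respect to the same $\alpha$ under $B\simeq B'$. Indeed, both are Shafarevich--Tate families and their periods $[\sigma+t\eta]$ agree, which pins down the class $\eta=\pi^*\alpha$ on both sides. Concretely, $X=X_0$ is the degenerate twistor deformation of $X_\tau$ at parameter $-\tau$, and $Y'=Y'_0$ is the degenerate twistor deformation of $Y'_\tau$ at the same parameter $-\tau$ (Definition \ref{definition deg tw}).

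Now I apply Proposition \ref{proposition deg tw deformations of bimeromorphic} to the Lagrangian-fibration-commuting bimeromorphism $X_\tau\dashrightarrow Y'_\tau$. It produces, for every value of the degenerate twistor parameter, a bimeromorphism between the corresponding degenerate twistor deformations commuting with the Lagrangian fibrations; specializing to the parameter $-\tau$ that carries us to the central fibers gives a bimeromorphism $X=X_0\dashrightarrow Y'_0=Y'$. Combining this with Lemma \ref{lemma central fibers birational}, which says $Y'=Y'_0$ is bimeromorphic to $Y=Y_0$, and recalling that $Y_0$ is hyperkähler by the construction in Lemma \ref{lemma birational family}, I conclude that $X$ is bimeromorphic to the hyperkähler manifold $Y$, hence of Fujiki class $\mathcal C$.

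The only genuinely delicate point is the bookkeeping in the second paragraph: one must ensure that the fixed Kähler form $\alpha$ on the base governing $\mathcal X$ matches the one governing $\mathcal Y'$ after the identification $B\simeq B'$, and that the parameter shift carrying $X_\tau$ to $X_0$ agrees with the one carrying $Y'_\tau$ to $Y'_0$. Both follow from the fact that all three families $\mathcal X$, $\mathcal Y$, $\mathcal Y'$ share the same image under the period map, which forces the relevant $\eta$-classes and the affine parametrization $[\sigma+t\eta]$ to be compatible; once this is set up correctly, Proposition \ref{proposition deg tw deformations of bimeromorphic} does the rest.
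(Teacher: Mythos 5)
Your proof is correct and follows essentially the same route as the paper: identify the bases via Proposition \ref{proposition bases are isomorphic}, apply Proposition \ref{proposition deg tw deformations of bimeromorphic} to the fibration-commuting bimeromorphism $X_\tau\dashrightarrow Y'_\tau$ to propagate it to the central fibers, and then compose with Lemma \ref{lemma central fibers birational}. The extra bookkeeping you supply about matching the K\"ahler form on the base and the twistor parameter is a correct elaboration of what the paper leaves implicit.
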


\begin{proof}
    Consider the Shafarevich--Tate family $\mathcal Y'\to\mathbb D$ introduced in Subsection \ref{subsection perego bimeromorphic}. For every $t\in V\subset\mathbb D$ the manifold $Y'_t$ admits a Lagrangian fibration $p'_t\colon Y'_t\to B$ over the same base as $X_t$ (Proposition \ref{proposition bases are isomorphic}) and the manifolds $Y'_t$ and $X_t$ are bimeromorphic as Lagrangian fibrations. Proposition \ref{proposition deg tw deformations of bimeromorphic} implies that all degenerate twistor deformations of $X_t$ and $Y'_t$ are bimeromorphic. In particular, $X = X_0$ is bimeromorphic to $Y':= Y'_0$. By Lemma \ref{lemma central fibers birational} the manifold $Y'$ is bimeromorphic to $Y$. Hence $X$ is bimeromorphic to the hyperk\"ahler manifold $Y$.
\end{proof}

We completed the proof of Step 5 (\ref{subsub step 4}).

%%%%%%%%%%%%%%%%%%%%%%%%%%%%%%%
%%%%%%%%%%%%%%%%%%%%%%%%%%%
\subsection{Criterion for K\"ahlerness}
\label{subsection kahler classes on lagrangian fibrations}

The last step of the proof of Proposition \ref{proposition deformations are kahler replacement} will rely on the following theorem by Perego.

\begin{theorem}[{\cite[Theorem 1.19]{perego2019kahlerness}}]
\label{theorem perego kahler}
    Let $X$ be a compact holomorphic symplectic manifold of Fujiki class $\mathcal C$ which is a limit of hyperk\"ahler manifolds. Assume that there is a class $\beta\in H^{1,1}(X)$ satisfying the following properties:
    \begin{enumerate}
        \item $q(\beta) >0$;
        \item $\beta\cdot C>0$ for any rational curve $C\subset X$;
        \item $q(\beta,\xi)\ne 0$ for any non-zero $\xi\in NS(X)$.
    \end{enumerate}
    Then $X$ is hyperk\"ahler and $\beta$ is a K\"ahler class on $X$.
\end{theorem}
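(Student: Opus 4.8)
The plan is to reduce the K\"ahlerness of $X$ to the known description of the K\"ahler cone of an irreducible hyperk\"ahler manifold, by transporting the class $\beta$ and the positivity hypotheses across a bimeromorphism onto such a manifold. Since $X$ is holomorphic symplectic, of Fujiki class $\mathcal C$ (hence admits a Hodge decomposition and is a $b_2$-manifold), and a limit of irreducible hyperk\"ahler manifolds, Theorem \ref{theorem perego fujiki} supplies a bimeromorphism $f\colon X\dashrightarrow X'$ onto an irreducible hyperk\"ahler manifold $X'$. A bimeromorphism between manifolds of this type is an isomorphism in codimension one, so $f^*\colon H^2(X')\to H^2(X)$ is a Hodge isometry for the BBF forms. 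I would set $\beta':=(f^*)^{-1}\beta\in H^{1,1}(X')$, so that $q(\beta')=q(\beta)>0$ by assumption (1).

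The heart of the argument is to prove that $\beta'$ is a K\"ahler class on $X'$, using the Huybrechts--Boucksom description of the K\"ahler cone of an irreducible hyperk\"ahler manifold: a class lying in the connected component of the positive cone $\{q>0\}$ that contains the K\"ahler classes is itself K\"ahler precisely when it is strictly positive on every rational curve. For membership in the correct component one uses that $X$ is a limit of hyperk\"ahler manifolds, so $\beta$ is a specialization of genuine K\"ahler classes and lies in the appropriate component; the isometry $f^*$ then places $\beta'$ in the right component of the positive cone on $X'$. For positivity on curves one relates rational curves on $X'$ to rational curves on $X$: resolving $f$ by $Z\xrightarrow{p} X$ and $Z\xrightarrow{q} X'$ and taking strict transforms, the number $\beta'\cdot C$ for a rational curve $C\subset X'$ is computed as $\beta\cdot(\text{strict transform})$ plus correction terms supported on the $p$- and $q$-exceptional loci, which have codimension at least two; assumption (2) together with careful bookkeeping of these corrections yields $\beta'\cdot C>0$.

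Finally I would upgrade the conclusion about $X'$ to one about $X$ itself by showing that $f$ is in fact an isomorphism. Being an isomorphism in codimension one, if $f$ were not globally an isomorphism it would be a nontrivial composition of Mukai flops, so there would exist a rational curve $C\subset X$ with $\beta\cdot C=f^*\beta'\cdot C<0$, since flopping curves pair negatively with the pullbacks of the K\"ahler classes of the flopped model; this contradicts assumption (2). Assumption (3), which says $\beta$ is orthogonal to no nonzero class of $NS(X)$, guarantees that $\beta$ lies in the interior of a chamber of the cone structure rather than on a wall $\xi^\perp$ (the walls being cut out by algebraic, hence $NS$, classes), so that $X'$ is the unambiguous model associated to $\beta$ and no boundary phenomenon can intervene. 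Hence $f$ is an isomorphism, $X\cong X'$ is hyperk\"ahler, and $\beta=f^*\beta'$ is a K\"ahler class.

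The main obstacle I expect lies in the middle step. Because $f$ is only an isomorphism in codimension one, with indeterminacy in codimension at least two, the delicate part is matching rational curves on $X$ and $X'$ and controlling the exceptional correction terms on the resolution $Z$, so that conditions (2) and (3) on $X$ translate into the \emph{strict} positivity on all rational curves of $X'$ demanded by the K\"ahler-cone criterion, and simultaneously into the exclusion of flopping curves in the final step.
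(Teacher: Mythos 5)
There is nothing in the paper to compare your argument against: Theorem \ref{theorem perego kahler} is not proved in this paper at all. It is quoted verbatim (with the small generalization recorded in the footnote) from Perego, \cite[Theorem 1.15]{perego2017k}, and used as a black box; the only thing the paper derives from it is Corollary \ref{corollary like perego}. So your proposal has to be judged as a free-standing reconstruction of Perego's proof, and as such it has genuine gaps.

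The central gap is the one you yourself flag as ``the main obstacle'' and then do not resolve. After passing to the hyperk\"ahler model $X'$ via Theorem \ref{theorem perego fujiki}, you need $\beta'\cdot C'>0$ for \emph{every} rational curve $C'\subset X'$. A rational curve contained in the exceptional locus of $f^{-1}$ is not the strict transform of anything on $X$: it is an entirely new curve class, not ``a curve on $X$ plus correction terms supported in codimension two.'' Hypothesis (2), which speaks only about rational curves on $X$, gives no direct control over such curves, and no amount of bookkeeping on a resolution $Z$ turns it into the required strict inequality on $X'$. This is precisely where the real content of Perego's theorem lies, and your sketch leaves it open. Two further points are unsupported. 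First, your final step invokes a decomposition of $f$ into Mukai flops and the negativity of flopping curves against pulled-back K\"ahler classes; the wall-and-chamber/flop machinery you are appealing to lives on hyperk\"ahler manifolds, whereas the hyperk\"ahlerness of $X$ is exactly what is being proved, so the argument is circular as written (and the Mukai-flop decomposition is not known in this generality anyway). Second, the claim that $\beta$ ``is a specialization of genuine K\"ahler classes'' and hence lies in the correct component of the positive cone does not follow from the hypotheses: $\beta$ is an arbitrary class satisfying (1)--(3), with no stated relation to the K\"ahler classes on the nearby hyperk\"ahler deformations. Relatedly, apart from the single invocation of Theorem \ref{theorem perego fujiki}, your argument never uses the hypothesis that $X$ is a limit of hyperk\"ahler manifolds, which in Perego's actual proof does essential work. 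If you need this statement, cite it as the paper does rather than reproving it.
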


Perego's result easily implies the following criterion for K\"ahlerness. Before stating it, let us recall that the {\em Mori cone} of a compact complex manifold $X$ is the cone $NE(X)\subset H_2(X,\R)$ generated by classes of curves on $X$. For any morphism $X\to Y$ we define the {\em relative Mori cone} $NE(X/Y)\subset H_2(X,\R)$ as the cone generated by classes of curves contained in fibers of $X\to Y$.
\begin{corollary}
\label{corollary like perego}
    Let $X$ be a compact holomorphic symplectic manifold of Fujiki class $\mathcal C$ which is a limit of hyperk\"ahler manifolds. Assume that there is a class $\beta\in H^{1,1}(X)$ satisfying the following two properties:
    \begin{enumerate}
        \item $q(\beta)>0$;
        \item $\beta\cdot c >0$ for any class $c\in \overline{NE(X)}$. 
    \end{enumerate}
    Then $X$ is hyperk\"ahler, and $\beta$ is a K\"ahler class on $X$.
\end{corollary}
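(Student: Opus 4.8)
The goal is to deduce Corollary \ref{corollary like perego} from Perego's criterion (Theorem \ref{theorem perego kahler}). The plan is to show that the two hypotheses of the corollary imply the three hypotheses of Theorem \ref{theorem perego kahler}. Hypothesis (1) is identical in both statements, so it remains to deduce Perego's conditions (2) and (3) from the single condition that $\beta\cdot c > 0$ for every $c\in\overline{NE(X)}$.

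First I would handle Perego's condition (2). If $C\subset X$ is any rational curve, then its class $[C]$ lies in $NE(X)\subset\overline{NE(X)}$, so by hypothesis (2) of the corollary we get $\beta\cdot C = \beta\cdot[C] > 0$ immediately. This is the easy direction and requires no extra work.

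The substantive step is deducing Perego's condition (3): that $q(\beta,\xi)\ne 0$ for every nonzero $\xi\in NS(X)$. The idea is to argue by contradiction using the duality between $NS(X)$ and curve classes together with the positivity hypothesis. Suppose $q(\beta,\xi)=0$ for some nonzero $\xi\in NS(X)$. Via the BBF form, $\xi$ corresponds to a class in $H_2(X,\R)$; the condition $\beta\cdot c>0$ for all $c\in\overline{NE(X)}$ says that $\beta$ is strictly positive on the whole closed Mori cone, so the hyperplane $\{c : \beta\cdot c = 0\}$ meets $\overline{NE(X)}$ only at the origin. The plan is to show that a nonzero $\xi\in NS(X)$ with $q(\beta,\xi)=0$ would force a nonzero class in that hyperplane to lie in $\overline{NE(X)}$ (or its negative), contradicting strict positivity. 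Concretely, one uses that on a manifold of Fujiki class $\mathcal{C}$ with $q(\beta)>0$, the class $\beta$ is (up to the cone structure) interior-positive, so its $q$-orthogonal complement inside $NS_\R(X)$ is a negative subspace for $q$; a class $\xi$ in this complement then produces, via BBF duality, an effective or anti-effective curve class on which $\beta$ vanishes, which is impossible unless $\xi=0$.

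The hard part will be making this last contradiction rigorous, since it requires relating the abstract BBF-orthogonality condition $q(\beta,\xi)=0$ to the geometry of the Mori cone. The cleanest route is probably to invoke the positivity/duality theory for the cone of curves on hyperk\"ahler-type manifolds (as in \cite{boucksom2001cone}), noting that a class $\beta$ with $q(\beta)>0$ lies in the positive cone and that hypothesis (2) forces $\beta$ to pair positively with every nonzero nef or movable direction meeting $\overline{NE(X)}$; any $\xi$ orthogonal to $\beta$ then cannot be represented by a nonzero curve class of either sign, and since $NS(X)$ is spanned by such classes this pins down $\xi=0$. Once conditions (1)--(3) of Theorem \ref{theorem perego kahler} are verified, the conclusion that $X$ is hyperk\"ahler and $\beta$ is K\"ahler follows directly by applying that theorem.
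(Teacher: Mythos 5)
Your reduction of Perego's conditions (1) and (2) to the hypotheses of the corollary is fine, but the argument for Perego's condition (3) contains a genuine gap: that condition simply does not follow from the hypotheses, so no argument of the kind you sketch can work. The hypothesis $\beta\cdot c>0$ on $\overline{NE(X)}$ controls $\beta$ only against the Mori cone, whereas condition (3) demands $q(\beta,\xi)\ne 0$ for \emph{every} nonzero $\xi\in NS(X)$. The BBF-orthogonal complement of $\beta$ inside $NS_\R(X)$ is indeed a negative subspace for $q$, but there is no reason a nonzero integral class $\xi$ in it should be effective, anti-effective, or BBF-dual to anything in $\pm\overline{NE(X)}$; $NS(X)$ is not spanned by curve classes in the sense your argument needs. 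Concretely, a K\"ahler class on a hyperk\"ahler manifold of Picard rank at least two satisfies both hypotheses of the corollary yet can perfectly well be $q$-orthogonal to a nonzero class of negative square in $NS(X)$, so the implication you are trying to prove is false.

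The paper avoids this by not verifying condition (3) for $\beta$ itself. It sets $\mathcal W=\bigcup_{\xi\in NS(X)\setminus\{0\}}\left(\xi^\perp\cap H^{1,1}(X)\right)$, a countable union of hyperplanes, and observes that the two hypotheses of the corollary are open conditions on $\beta$; hence a very general small perturbation $\beta'$ of $\beta$ still satisfies them and lies outside $\mathcal W$, so Perego's theorem applies to $\beta'$ and yields that $X$ is hyperk\"ahler. Only then is the K\"ahlerness of the original $\beta$ recovered, from the Boucksom--Huybrechts description of the K\"ahler cone of a hyperk\"ahler manifold (a class with $q(\beta)>0$ pairing positively with the closed Mori cone is K\"ahler). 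You need both of these extra ingredients --- the perturbation off the countable union of walls, and the cone characterization applied after hyperk\"ahlerness is established --- to close the argument.
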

\begin{proof}
    The class $\beta$ obviously satisfies the first two assumptions of Theorem \ref{theorem perego kahler}. Consider the set $\mathcal W\subset H^{1,1}(X)$ defined as
    $$
    \mathcal W = \bigcup\limits_{\xi\in NS(X)\setminus \{0\}}\left(\xi^\perp\cap H^{1,1}(X)\right).
    $$
    The set $\mathcal W$ is a union of a countable number of hyperplanes. If $\beta\not\in\mathcal W$, then we are done. Assume that $\beta\in\mathcal W$. There is a neighborhood $U$ of $\beta$ inside $H^{1,1}(X)$ such that every $\beta'\in U$ satisfies the assumptions of the corollary. A very general $\beta'\in U$ does not lie in $\mathcal W$. Theorem \ref{theorem perego kahler} implies that $X$ is hyperk\"ahler. A class $\beta\in H^{1,1}(X)$ on a hyperk\"ahler manifold $X$ is K\"ahler if and only if it satisfies the two assumptions of the corollary \cite[Th\'eor\`eme 1.2]{boucksom2001cone} (see also \cite[Proposition 3.2]{huybrechts2003kahler}), hence $\beta$ is a K\"ahler class.
\end{proof}

\begin{proposition}
\label{proposition if fujiki then kahler}
    As in Proposition \ref{proposition deformations are kahler replacement} let $\mathcal X\to \mathbb D$ be a Shafarevich--Tate family over a disk such that $X_t$ is K\"ahler for all $t\in U$ and $0\in\overline U$. Assume that $X = X_0$ is of Fujiki class $\mathcal C$. Then $X$ is K\"ahler.
\end{proposition}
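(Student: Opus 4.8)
The plan is to deduce the statement from Corollary \ref{corollary like perego} applied to $X=X_0$, so the entire proof reduces to two tasks: checking the standing hypotheses of that corollary, and producing a single class $\beta\in H^{1,1}(X)$ with $q(\beta)>0$ and $\beta\cdot c>0$ for every nonzero $c\in\overline{NE(X)}$. The standing hypotheses are immediate. The manifold $X$ is of Fujiki class $\mathcal C$ by assumption, and it is a limit of hyperk\"ahler manifolds: for $t\in U$ the fibre $X_t$ is a K\"ahler irreducible holomorphic symplectic manifold, hence irreducible hyperk\"ahler, and since $0\in\overline U$ we may pick a sequence $t_n\to 0$ with $X_{t_n}$ hyperk\"ahler. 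I also record that $X_0$, being a member of the Shafarevich--Tate family, carries the Lagrangian fibration $\pi\colon X_0\to B$, so the nef class $\eta=\pi^*h$ and the subgroup $W\subset H^2(X,\Z)$ of classes restricting trivially to the fibres are both available; by the standard description of Lagrangian fibrations $W_\Q=\eta^\perp_\Q$, so $H^2(X,\R)/W_\R$ is one-dimensional.

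I would then construct $\beta$ in the form $\beta=\eta+\varepsilon\rho$ with $\varepsilon>0$ small, where $\rho\in H^{1,1}(X_0)_\R$ is a \emph{relatively positive} class, i.e. one whose pairing with fibre curves is positive. Such a $\rho$ exists: by Theorem \ref{theor irr hol symp} we have $h^{2,0}(X_0)=h^{0,2}(X_0)=1$, and since $\sigma$ and $\overline\sigma$ restrict trivially to the fibres they lie in $W_\C$; a dimension count then shows $H^{1,1}(X_0)\not\subset W_\R=\eta^\perp$, so there is a $(1,1)$-class with nonzero image in the one-dimensional quotient $H^2(X,\R)/W_\R$. Rescaling and fixing the sign, I take $\rho$ with $q(\eta,\rho)>0$; because $W=\eta^\perp$, this sign condition is precisely relative positivity of $\rho$. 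The BBF square is then handled at once: as $q(\eta)=0$, one has $q(\beta)=2\varepsilon\,q(\eta,\rho)+\varepsilon^2 q(\rho)$, which is positive for all sufficiently small $\varepsilon>0$.

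The remaining and most delicate requirement is $\beta\cdot c>0$ for every nonzero $c\in\overline{NE(X)}$. Since $\eta=\pi^*h$ is the pullback of an ample class, $\eta\cdot c\ge 0$ on the whole cone, and the face $\eta^\perp\cap\overline{NE(X)}$ coincides with the relative Mori cone $\overline{NE(X_0/B)}$ of vertical classes. On vertical classes the pairing with any $H^2$-class factors through $H^2(X,\R)/W_\R$, because vertical curves pair to zero with every class in $W$. This is exactly where I would use that $X_0$ is a \emph{limit} of hyperk\"ahler manifolds: a Shafarevich--Tate twist has the same fibres as $X_0$, so $\overline{NE(X_t/B)}=\overline{NE(X_0/B)}$ for every $t$, and for $t\in U$ a K\"ahler class $\kappa_t$ on $X_t$ is strictly positive on $\overline{NE(X_t)}\setminus\{0\}$, hence on $\overline{NE(X_0/B)}\setminus\{0\}$. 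As $\rho$ and $\kappa_t$ have the same nonzero sign in the one-dimensional quotient $H^2(X,\R)/W_\R$, the two pairings are positive multiples of one another on the vertical cone, so $\rho\cdot c>0$ for all nonzero $c\in\overline{NE(X_0/B)}$. A routine compactness argument on a sphere slice of $\overline{NE(X)}$ then upgrades this to $\beta\cdot c=\eta\cdot c+\varepsilon\,\rho\cdot c>0$ for all nonzero $c$ once $\varepsilon$ is small: near the face $\{\eta\cdot c=0\}$ positivity comes from the $\varepsilon\,\rho\cdot c$ term, and away from it from $\eta\cdot c$ being bounded below.

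With such a $\beta$ in hand, Corollary \ref{corollary like perego} gives at once that $X=X_0$ is hyperk\"ahler (and that $\beta$ is K\"ahler), which is the assertion. I expect the genuine obstacle to be the positivity on the \emph{closed} relative Mori cone: relative ampleness by itself only yields positivity on honest effective vertical curves, and ruling out a degenerate limiting ray on which $\rho$ vanishes is precisely what the transported K\"ahler positivity from the nearby fibres $X_t$ accomplishes. The bookkeeping that the vertical pairing factors through the one-dimensional quotient $H^2(X,\R)/W_\R$, forcing $\rho$ and $\kappa_t$ to agree in sign there, is the technical heart of the step.
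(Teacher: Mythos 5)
Your overall strategy (apply Corollary \ref{corollary like perego} to $X_0$, using that it is a limit of hyperk\"ahler manifolds and of Fujiki class $\mathcal C$, and build a class $\beta$ positive on the Mori cone with $q(\beta)>0$) is the right one, but the step you yourself identify as ``the technical heart'' is wrong. You claim that for vertical curve classes the intersection pairing factors through $H^2(X,\R)/W_\R$, ``because vertical curves pair to zero with every class in $W$.'' That is false: $W$ consists of classes restricting trivially to \emph{smooth} fibers, and such classes can pair nontrivially with curves contained in \emph{singular} fibers. The basic example is a component $C$ of a reducible singular fiber of an elliptic K3: the class $[C]$ restricts to zero on every smooth fiber, so $[C]\in W$, yet $[C]\cdot C=-2$. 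Consequently, two classes with the same image in the one-dimensional quotient $H^2(X,\R)/W_\R$ need not induce proportional linear functionals on $\overline{NE(X/B)}$, and your $\rho$ --- chosen only so that $q(\eta,\rho)>0$ --- may well be negative on some rational curve in a singular fiber. The transported K\"ahler positivity from $X_t$ does not rescue an arbitrary such $\rho$; it only controls the image of $\rho$ modulo $W$, which is not enough.

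The paper avoids this by making the transported K\"ahler class itself do the work: take a K\"ahler class $\beta''$ on a nearby hyperk\"ahler fiber $X_t$, identify $H^2(X_t)\simeq H^2(X_0)$ by Gauss--Manin (under which the classes of vertical curves are preserved, since a Shafarevich--Tate twist does not change the fibers of $\pi$), and let $\beta'$ be its $(1,1)$-part; then $\beta'\cdot c=\beta''\cdot c>0$ for every nonzero vertical class $c$, because the $(2,0)$ and $(0,2)$ parts pair trivially with curves. Finally $\beta:=\beta'+k\eta$ with $k\gg0$ has $q(\beta)>0$. Note also that the paper first disposes of the case $NS(X)\not\subset\eta^\perp$ (then $X$ is projective by Theorem \ref{theorem kahler substitute}) and in the remaining case invokes Lemma \ref{lemma all curves in fibers} to get $\overline{NE(X)}=\overline{NE(X/B)}$; your alternative of keeping the full cone and arguing by compactness near the face $\eta^\perp\cap\overline{NE(X)}$ would additionally require knowing that this face equals $\overline{NE(X/B)}$, which is not automatic for the closed cone. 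If you replace your $\rho$ by the $(1,1)$-part of the Gauss--Manin transport of a K\"ahler class and add the reduction via Lemma \ref{lemma all curves in fibers}, your argument becomes the paper's proof.
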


\begin{proof}
    As before, we denote by $\eta$ the pullback of an ample class on $B$ to $X$. If $NS(X)\not\subset \eta^\perp$, then $X$ is projective (Theorem \ref{theorem kahler substitute}), and we are done. Hence we may assume that $NS(X)\subset\eta^\perp$. In this case $NE(X/Y) = NE(X)$ by Lemma \ref{lemma all curves in fibers}. By Corollary \ref{corollary like perego} it is enough to construct a class $\beta\in H^{1,1}(X)$ such that $q(\beta) > 0$ and $\beta\cdot c > 0$ for any class $c\in \overline{NE(X/Y)}$.

    Pick a K\"ahler class $\beta''$ on $H^2(X_t)$ for some $t\in U$. Then $\beta''\cdot c >0$ for any class $c\in \overline{NE(X/Y)}$. Recall that we can consider $X_t$ as a degenerate twistor deformation of $X$ (Definition \ref{definition deg tw}), i.e., the underlying real manifold of $X_t$ is $X$ but the complex structure $I_t$ on $X_t$ is the unique complex structure making the $2$-form $\sigma_X + t\eta$ of type $(2,0)$. That description enables us to identify $H^2(X_t)$ and $H^2(X_0)$ so that the classes of horizontal curves on $X_t$ get identified with classes of horizontal curves on $X$. Let $\beta'$ be the $(1,1)$-part of $\beta''$ considered as a class in $H^2(X)$. Since every class in $H^{2,0}(X)$ and $H^{0,2}(X)$ restricts trivially to any curve on $X$, the class $\beta'$ satisfies the second condition of Corollary \ref{corollary like perego}. Define $\beta:= \beta' + k\eta$ for $k>\!\!>0$. Then
    $$
    q(\beta) = q(\beta') + 2kq(\beta',\eta),
    $$
    which is positive for sufficiently big $k$. The class $\beta\in H^2(X)$ satisfies both condition of Corollary \ref{corollary like perego}, hence $X$ is hyperk\"ahler.
\end{proof}

\subsubsection{}\label{subsub proof of theorem a} We are ready to prove Proposition \ref{proposition deformations are kahler replacement} and Theorem \ref{theorem_kahler}.

\begin{proof}[Proof of Theorem \ref{theorem_kahler} and Proposition \ref{proposition deformations are kahler replacement}.] As explained in the beginning of Section \ref{sec kahler twists}, Theorem \ref{theorem_kahler} follows easily from Proposition \ref{proposition deformations are kahler replacement}. The proof of Proposition \ref{proposition deformations are kahler replacement} follows the steps outlined in Subsection \ref{subsection idea of the proof}. We are done with all of them by now. We started with a Shafarevich--Tate family $\mathcal X\to \mathbb D$ satisfying the conditions of Proposition \ref{proposition deformations are kahler replacement}. Then in Lemma \ref{lemma birational family} we constructed a family of hyperk\"ahler manifolds $\mathcal Y\to \mathbb D$ with the same period as $\mathcal X\to \mathbb D$ such that $X_t$ is bimeromorphic to $Y_t$ for $t\in U$. Next, we proved that for a very general $t\in U$, the manifold $Y_t$ admits a Lagrangian fibration $p_t\colon Y_t\to B'$ (Proposition \ref{proposition birational lagrangian fibrations}). After that we showed that actually $Y_t$ admits a Lagrangian fibration for any $t\in V$ for some open dense $V\subset U$ and the restriction of $\mathcal Y$ to $V$ is a Shafarevich--Tate family (Proposition \ref{proposition almost shafarevich tate family}). Moreover the base of the Lagrangian fibration on $Y_t$ is actually isomorphic to $B$ (Proposition \ref{proposition bases are isomorphic}). In the next step, we showed that $Y:=Y_0$ is bimeromorphic to $Y':= Y'_0$, which is a Shafarevich--Tate deformation of $Y_t$ for $t\in V$ (Lemma \ref{lemma central fibers birational}). Corollary \ref{corollary of fujiki class c} implies that $X$ is bimeromorphic to $Y'$ and hence to $Y$. Finally, we use a version of \cite[Theorem 1.19]{perego2019kahlerness} in Lemma \ref{lemma central fibers birational} to conclude that a Shafarevich--Tate deformation of Fujiki class $\mathcal C$ must be hyperk\"ahler. That finishes the proof.
\end{proof}

%%%%%%%%%%%%%%%%%%%%%%%%%%%%%%%%%%%%%%%%%%%%%%
%%%%%%%%%%%%%%%%%%%%%%%%%%%%%%%%%%%%%%%%%%%%%%%%

\section{Topology of Shafarevich--Tate twists}
\label{section topology of shafarevich}

In this section we will prove Theorems \ref{theor irr hol symp}, \ref{theorem b2} and \ref{theorem fujiki class c}.

\subsubsection{Higher pushforwards of $\Q_X$ do not depend on a twist.} Let $\pi\colon X\to B$ be a Lagrangian fibration and $\pi^\phi\colon X^\phi\to B$ its Shafarevich--Tate twist. Then the sheaves $R^k\pi_*\Z$ and $R^k\pi^\phi_*\Z$ are canonically identified. Indeed, represent $\phi$ as a \v{C}ech cocycle $(\phi_{ij})$, where $\phi_{ij}\in Aut^0_{X/B}(U_{ij})$. The automorphisms $\phi_{ij}$ are flows of vector fields, hence they act trivially on $H^k(X_{ij})$.

In particular the vector spaces $H^0(R^2\pi_*\Q)$ and $H^0(R^2\pi^\phi_*\Q)$ are canonically identified. However, the differentals
$$
d_2\colon H^0(R^2\pi_*\Q) \to H^2(R^1\pi_*\Q)\:\:\:\:\:\text{and}\:\:\:\:\: d_2^\phi\colon H^0(R^2\pi_*\Q) \to H^2(R^1\pi_*\Q)
$$
from the Leray spectral sequence of $X$ and $X^\phi$ respectively may be different.

\subsubsection{The restriction map $H^2(X)\to H^2(F)$ has rank at most one.}\label{subsub restriction map} Suppose that $X$ is hyperk\"ahler, and let $F$ be a smooth fiber of $\pi\colon X\to B$. By Theorem \ref{theorem_restriction}, the restriction map $H^2(X) \to H^2(F)$ has a one-dimensional image generated by an ample class. Global invariant cycle theorem implies that 
$$
\im \left(H^2(X,\Q)\to H^2(F,\Q)\right) = H^2(F,\Q)^{\pi_1(B^\circ)} = H^0(B^\circ, R^2\pi_*\Q\restrict{B^\circ}).
$$
Here $H^2(F,\Q)^{\pi_1(B^\circ)}$ denotes the subspace of $H^2(F)$ invariant under the monodromy action of $\pi_1(B^\circ)$. It follows that $H^2(F,\Q)^{\pi_1(B^\circ)}$ is one-dimensional and generated by an ample class.

Let $X^\phi$ be a Shafarevich--Tate twist of $X$, not necessarily K\"ahler. Then the image of the map $H^2(X^\phi,\Q)\to H^2(F,\Q)$ still lies in $H^2(F,\Q)^{\pi_1(B^\circ)}$. The latter space is isomorphic to $H^0(B^\circ,R^2\pi_*\Q\restrict{B^\circ})$, hence does not depend on a twist. We obtain the following statement.

\begin{proposition}
\label{proposition image of restriction map}
    Let $\pi\colon X\to B$ be a Lagrangian fibration on an irreducible hyperk\"ahler manifold $X$ and $X^\phi$ be its Shafarevich--Tate twist. Then the restriction map
    $$
    H^2(X^\phi)\to H^2(F)
    $$
    is either trivial or has a one-dimensional image generated by an ample class of $F$.
\end{proposition}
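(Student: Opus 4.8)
The plan is to exploit the fact that the essential content has already been assembled in \ref{subsub restriction map}, so that only a short synthesis remains. The guiding principle is that the image of any restriction map $H^2(\text{total space})\to H^2(F)$ lands in the monodromy-invariant part of $H^2(F)$, and that this invariant part is insensitive to passing from $X$ to its twist $X^\phi$. Throughout I would work with rational coefficients; the assertion over $\R$ (or $\Z$) follows formally, since containment of the image in a one-dimensional rational space spanned by an ample class already forces the conclusion.

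First I would observe that for any class $\alpha\in H^2(X^\phi,\Q)$ and any smooth fiber $F$ over a point of $B^\circ$, the restriction $\alpha|_F$ is invariant under the monodromy action of $\pi_1(B^\circ)$. This is the standard consequence of the edge homomorphism of the Leray spectral sequence for $\pi^\phi$ over $B^\circ$: restriction to $F$ factors through $H^0(B^\circ, R^2\pi^\phi_*\Q|_{B^\circ}) = H^2(F,\Q)^{\pi_1(B^\circ)}$, the global sections of a local system being exactly its invariants. Hence the image of $H^2(X^\phi,\Q)\to H^2(F,\Q)$ is contained in $H^2(F,\Q)^{\pi_1(B^\circ)}$. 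Next I would invoke the canonical identification $R^2\pi^\phi_*\Q\simeq R^2\pi_*\Q$ established at the beginning of this section, which matches the two monodromy representations on $H^2(F,\Q)$; consequently the invariant subspace $H^2(F,\Q)^{\pi_1(B^\circ)}$ is literally the same whether computed for $\pi$ or for $\pi^\phi$. By the computation in \ref{subsub restriction map}—which combines Theorem \ref{theorem_restriction} with the global invariant cycle theorem applied to the \emph{Kähler} manifold $X$—this invariant subspace is one-dimensional and spanned by an ample class of $F$. Combining these points, the image of $H^2(X^\phi,\Q)\to H^2(F,\Q)$ sits inside a one-dimensional space generated by an ample class, so it is either zero or the whole line; in the latter case it is spanned by the ample class, as claimed.

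The proof is genuinely short, and there is no serious obstacle once the preparatory material is in place. The only point deserving care is conceptual rather than technical: Kählerness of the twist $X^\phi$ is never used, and indeed typically fails. All the positivity input—one-dimensionality of the invariant cycles and amplitude of the generator—is borrowed from the Kähler manifold $X$ and transported to $X^\phi$ purely through the twist-invariance of the local system $R^2\pi_*\Q$. I would state this emphasis explicitly, since it is precisely what makes the proposition applicable to the non-Kähler twists treated later.
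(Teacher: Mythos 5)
Your proof is correct and follows the paper's own argument essentially verbatim: the paper's proof of this proposition is exactly the discussion in \ref{subsub restriction map} — restriction factors through the monodromy invariants $H^2(F,\Q)^{\pi_1(B^\circ)}$, this invariant subspace is one-dimensional and spanned by an ample class by Theorem \ref{theorem_restriction} together with the global invariant cycle theorem applied to the K\"ahler manifold $X$, and it is unchanged under twisting because $R^2\pi_*\Q\simeq R^2\pi^\phi_*\Q$ canonically. Your explicit remark that K\"ahlerness of $X^\phi$ is never used, all positivity being imported from $X$ through the twist-invariant local system, is exactly the point of the paper's presentation as well.
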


%%%%%%%%%%%%%%%%%%%%%%%%%%%%%%%%%%%%%
%%%%%%%%%%%%%%%%%%%%%%%%%%%%%%%%%%%%%%%%

\subsection{First cohomology of twists}
\label{subsection fundamental group of twists}

\begin{lemma}
\label{lemma base is simply connected}
    Let $\pi\colon X\to B$ be a Lagrangian fibration on an irreducible hyperk\"ahler manifold. Then $B$ is simply connected.
\end{lemma}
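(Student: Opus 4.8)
The plan is to deduce the simple connectedness of $B$ from that of $X$: since $X$ is an irreducible holomorphic symplectic manifold it is simply connected by Definition \ref{definition irreducible holomorphic symplectic}, so it suffices to prove that the natural map $\pi_1(X)\to\pi_1(B)$ is surjective. I would establish this surjectivity by passing to the locus over which $\pi$ is a genuine fibre bundle. Let $B^{reg}\cap B^\circ$ be the open subset of $B$ where $B$ is smooth and the fibre of $\pi$ is smooth, and let $X^{\circ\circ}$ denote its preimage $\pi^{-1}(B^{reg}\cap B^\circ)$.

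Over $B^{reg}\cap B^\circ$ the morphism $\pi$ is flat with smooth fibres (miracle flatness applies, as $\pi$ is equidimensional and $B^{reg}$ is regular), hence a proper submersion, so by Ehresmann's theorem $\pi\colon X^{\circ\circ}\to B^{reg}\cap B^\circ$ is a locally trivial fibre bundle whose fibre $F$ is connected (indeed a complex torus). The homotopy long exact sequence of this fibration reads $\pi_1(F)\to\pi_1(X^{\circ\circ})\to\pi_1(B^{reg}\cap B^\circ)\to\pi_0(F)=1$, so the middle map $\pi_1(X^{\circ\circ})\to\pi_1(B^{reg}\cap B^\circ)$ is surjective.

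Next I would invoke the fact that the inclusion of the complement of a closed analytic subset of positive codimension into an irreducible normal complex space induces a surjection on fundamental groups. The complement $B\setminus(B^{reg}\cap B^\circ)=\Delta\cup B^{sing}$ is such a subset, since $\Delta$ is a divisor and $B^{sing}$ has codimension at least two by normality; hence $\pi_1(B^{reg}\cap B^\circ)\to\pi_1(B)$ is surjective. Likewise $X\setminus X^{\circ\circ}=\pi^{-1}(\Delta\cup B^{sing})$ has positive codimension in $X$, so $\pi_1(X^{\circ\circ})\to\pi_1(X)$ is surjective. These maps fit into a commutative square with $\pi_1(X^{\circ\circ})$ in the top-left corner and $\pi_1(B)$ in the bottom-right corner. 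Chasing it, the composite $\pi_1(X^{\circ\circ})\to\pi_1(B^{reg}\cap B^\circ)\to\pi_1(B)$ is surjective as a composite of surjections, while the equal composite $\pi_1(X^{\circ\circ})\to\pi_1(X)\to\pi_1(B)$ factors through $\pi_1(X)=1$ and is therefore trivial; hence $\pi_1(B)=1$.

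The routine inputs here are Ehresmann's theorem and the homotopy exact sequence of a fibration. The step demanding the most care is the surjectivity $\pi_1(B^{reg}\cap B^\circ)\to\pi_1(B)$ when $B$ is singular: one must ensure that excising the divisor $\Delta$ together with the codimension-$\ge 2$ singular locus from the normal variety $B$ enlarges $\pi_1$ only up to a surjection. I expect this to be the main (though standard) obstacle, and I would handle it by removing the two loci in turn: deleting $B^{sing}$ (codimension $\ge 2$) induces an \emph{isomorphism} on $\pi_1$, after which deleting the divisor $\Delta$ from the now-smooth locus induces a surjection by the classical statement for complements of divisors in manifolds.
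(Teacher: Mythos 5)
Your argument is correct in substance but takes a genuinely different route from the paper. The paper's proof is a two-line reduction: some Shafarevich--Tate twist $X^\psi$ is projective (with the same base $B$), so one may assume $X$ projective and then quote Koll\'ar's result \cite[Proposition 2.10.2]{kollar1995shafarevich} that a dominant map of normal algebraic varieties with irreducible general fiber induces a surjection on fundamental groups, giving $\pi_1(B)=\pi_1(X)=1$. You instead prove the surjectivity of $\pi_1(X)\to\pi_1(B)$ by hand: Ehresmann plus the homotopy exact sequence over $B^{reg}\cap B^\circ$, combined with the surjectivity of $\pi_1$ of a dense Zariski-open subset of a normal complex space. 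This buys you independence from the projectivity reduction (and hence from the machinery of Section 3) and works directly in the analytic category; the paper's version buys brevity at the cost of a citation whose proof is essentially the topology you spelled out. One caveat: your claimed strengthening that deleting $B^{sing}$ induces an \emph{isomorphism} on $\pi_1$ is false for general normal varieties --- a quotient $\C^2/G$ by a finite group acting freely off the origin is normal and simply connected while $(\C^2\setminus\{0\})/G$ has $\pi_1=G$ --- so you only get a surjection $\pi_1(B^{reg})\to\pi_1(B)$ there. Since surjectivity is all your diagram chase uses (and likewise the surjectivity of $\pi_1(X^{\circ\circ})\to\pi_1(X)$ is never actually needed, as the composite through $\pi_1(X)=1$ is trivial regardless), the proof stands; just state the codimension-two step as a surjection.
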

\begin{proof}
    For some $\psi\in\Sha^0$, the twist $X^\psi$ is projective. Hence we may and will assume that $X$ is projective. If $f\colon M\to N$ is a dominant map of normal algebraic varieties such that the general fiber of $f$ is irreducible, then $f(\pi_1(M)) = \pi_1(N)$ \cite[Proposition 2.10.2]{kollar1995shafarevich}. Therefore, $\pi_1(B) = \pi_1(X) = 0$. 
\end{proof}

\begin{proposition}
\label{proposition H1 vanishes}   
    Let $\pi\colon X\to B$ be a Lagrangian fibration on an irreducible hyperk\"ahler manifold $X$ and $X^\phi$ its Shafarevich--Tate twist. Then $H^1(X^\phi,\Q) = 0$. 
\end{proposition}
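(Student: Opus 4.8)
The plan is to run the Leray spectral sequence of $\pi^\phi\colon X^\phi\to B$ with rational coefficients and read off $H^1$ from its two relevant graded pieces. Writing $E_2^{p,q}=H^p(B,R^q\pi^\phi_*\Q)$, the group $H^1(X^\phi,\Q)$ carries a two-step filtration with graded pieces $E_\infty^{1,0}$ and $E_\infty^{0,1}$. The piece $E_2^{1,0}=H^1(B,\Q)$ vanishes because $B$ is simply connected (Lemma \ref{lemma base is simply connected}), and it is unaffected by differentials, so $E_\infty^{1,0}=0$. Since $E_\infty^{0,1}$ is a subgroup of $E_2^{0,1}=H^0(B,R^1\pi^\phi_*\Q)$, the whole statement reduces to showing that this latter group vanishes.

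To compute $H^0(B,R^1\pi^\phi_*\Q)$ I would first reduce to the projective case. The sheaf $R^1\pi^\phi_*\Q$ is canonically identified with $R^1\pi_*\Q$ for every twist, as recalled at the beginning of this section, so it depends only on the fibration up to Shafarevich--Tate twist. By the argument of Corollary \ref{corollary matsushita} there is a projective twist $X^\psi$, and $R^1\pi_*\Q\simeq R^1\pi^\psi_*\Q$; hence it suffices to prove $H^0(B,R^1\pi_*\Q)=0$ under the assumption that $X$ itself is projective.

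For projective $X$ I would work inside the Leray spectral sequence of $\pi\colon X\to B$ and argue that the differential $d_2\colon E_2^{0,1}\to E_2^{2,0}=H^2(B,\Q)$ is zero. The key input is that $H^2(B,\Q)\simeq\Q$ (the base behaves cohomologically like $\mathbb P^n$) and that the pullback $\pi^*\colon H^2(B,\Q)\to H^2(X,\Q)$ is injective, since $\eta=\pi^*h\neq 0$. The edge homomorphism identifies $E_\infty^{2,0}$ with $\im\pi^*$, so $E_\infty^{2,0}$ already has dimension $\dim H^2(B,\Q)=1=\dim E_2^{2,0}$; as $E_\infty^{2,0}=E_2^{2,0}/\im d_2$, this forces $\im d_2=0$. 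With $d_2$ vanishing on $E_2^{0,1}$, and no other differential touching this corner (incoming differentials originate in the zero groups $E_r^{-r,r}$, and outgoing ones for $r\ge 3$ land in negative bidegree), we obtain $E_2^{0,1}=E_\infty^{0,1}=\operatorname{gr}^0 H^1(X,\Q)$; but $X$ is simply connected, so $H^1(X,\Q)=0$ and therefore $E_2^{0,1}=H^0(B,R^1\pi_*\Q)=0$.

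The main obstacle is precisely this last step: the vanishing of the invariant part $H^0(B,R^1\pi_*\Q)$ for a genuinely singular fibration, where $R^1\pi_*\Q$ is only a constructible sheaf and one cannot simply invoke a monodromy-invariant computation over $B^\circ$ (which would leave open the possibility of sections supported on the discriminant). Packaging everything through the Leray spectral sequence of a projective model sidesteps this difficulty: the survival of the pullback class $\eta$ kills $d_2$ on the $(0,1)$-corner, and the simple connectivity of $X$ then annihilates the corner itself.
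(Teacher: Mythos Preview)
Your argument is correct and follows essentially the same route as the paper: both use the Leray spectral sequence, the injectivity of $\pi^*\colon H^2(B,\Q)\to H^2(X,\Q)$ to kill $d_2$ on the $(0,1)$-corner, and simple connectivity of $X$ to conclude $H^0(B,R^1\pi_*\Q)=0$, then transport this to $X^\phi$ via the twist-independence of $R^1\pi_*\Q$. The only difference is cosmetic: the paper packages the spectral-sequence step as the short exact sequence $0\to H^1(B,\Q)\to H^1(X,\Q)\to H^0(B,R^1\pi_*\Q)\to 0$ and cites injectivity of $\pi^*$ directly for the hyperk\"ahler $X$ (so your detour through a projective twist is unnecessary, though harmless).
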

\begin{proof}
For any Lagrangian fibration $\pi\colon X\to B$ on a hyperk\"ahler manifold, the pullback map $H^2(B,\Q)\to H^2(X,\Q)$ is injective \cite[Corollary 1.13]{huybrechts2022lagrangian}. It follows from Leray spectral sequence that the sequence
\begin{equation}
\label{seq leray h1}
0\to H^1(B,\Q)\to H^1(X,\Q)\to H^0(B,R^1\pi_*\Q)\to 0
\end{equation}
is exact. Since $B$ and $X$ are simply connected (Lemma \ref{lemma base is simply connected}), the group $H^0(B,R^1\pi_*\Q)$ vanishes. The exact sequence (\ref{seq leray h1}) for $X^\phi$ implies that for any Shafarevich--Tate twist $X^\phi$,
$$
    H^1(X^\phi,\Q) \simeq H^1(B,\Q)=0.
$$
\end{proof}

\subsection{Hodge numbers of twists}
\label{subsection hodge numbers of twists}
Recall that by \cite[Corollary 3.7]{abasheva2025shafarevich}, a Shafarevich--Tate twist $X^\phi$ inherits a holomorphic symplectic form $\sigma_\phi$. Namely, one can show that any class $\phi\in \Sha$ can be represented by a \v{C}ech cocycle $\phi_{ij}\in Aut^0_{X/B}(U_{ij})$ such that $\phi_{ij}$ preserves the holomorphic symplectic form $\sigma$ on $X$. The holomorphic symplectic form $\sigma_\phi$ is obtained by patching the forms $\sigma|_{X_i}$ using the automorphisms $\phi_{ij}$.
\begin{proposition}
\label{proposition_cohomology_of_ox}
    Let $\pi\colon X\to B$ be a Lagrangian fibration on an irreducible hyperk\"ahler manifold $X$. Then $\forall \phi\in\Sha$:
    $$
        H^{0,k}(X^\phi) := H^k(X^\phi,\mathcal O_{X^\phi}) = \begin{cases}
            0,\text{   if }k\text{ is odd;}\\
            \C\cdot\overline{\sigma_\phi}^{k/2},\text{   if }k\text{ is even.}
        \end{cases}
    $$
\end{proposition}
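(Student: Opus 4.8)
The plan is to compute these groups through the Leray spectral sequence of the map $\pi^\phi\colon X^\phi\to B$ with coefficients in $\mathcal O_{X^\phi}$, whose second page is $E_2^{p,q} = H^p(B, R^q\pi^\phi_*\mathcal O_{X^\phi})$. The crucial input is Corollary \ref{corollary matsushita}, which identifies $R^q\pi^\phi_*\mathcal O_{X^\phi}\simeq \Omega^{[q]}_B$ \emph{independently of the twist} $\phi$, together with Theorem \ref{theorem cohomology of omega}, which computes $H^p(B,\Omega^{[q]}_B)$ to be the same as for $\mathbb P^n$. For projective space one has $H^p(\mathbb P^n,\Omega^q)=\C$ when $p=q$ and $0$ otherwise, so the second page $E_2^{p,q}$ is concentrated on the diagonal $p=q$, where it is one-dimensional (for $0\le p\le n$).

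The key observation is that this shape already forces degeneration. Every differential $d_r\colon E_r^{p,q}\to E_r^{p+r,q-r+1}$ with $r\ge 2$ sends a diagonal entry $(p,p)$ to $(p+r,p-r+1)$, which is off the diagonal and hence zero, and no nonzero entry maps \emph{into} a diagonal term for the same reason; thus $E_2 = E_\infty$. Note that we do not invoke any K\"ahler or projectivity hypothesis on $X^\phi$ here: the degeneration is forced purely by the diagonal concentration of the $E_2$ page, which is precisely what lets the argument survive for non-K\"ahler twists. Reading off the abutment, $H^k(X^\phi,\mathcal O_{X^\phi}) = \bigoplus_{p+q=k}E_\infty^{p,q}$ vanishes when $k$ is odd, and equals the one-dimensional space $E_2^{k/2,k/2}$ when $k$ is even.

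It remains to identify the generator in the even case. Since $X^\phi$ carries a holomorphic symplectic form $\sigma$ (by \cite[Corollary 3.7]{abasheva2021shafarevich}), the conjugate $\overline\sigma$ is $\dibar$-closed and defines a class in $H^{0,2}(X^\phi)$, and its cup powers $\overline\sigma^{m}$ land in $H^{0,2m}(X^\phi)$. I would first establish $\overline\sigma^{n}\ne 0$ in $H^{0,2n}(X^\phi)$: the form $\sigma^n$ trivialises $K_{X^\phi}$, and Serre duality on the compact complex manifold $X^\phi$ pairs $[\overline\sigma^n]\in H^{2n}(\mathcal O_{X^\phi})$ against $\sigma^n\in H^0(K_{X^\phi})$ via $\int_{X^\phi}\overline{\sigma^n}\wedge\sigma^n$, which is nonzero because $\sigma^n\wedge\overline{\sigma^n}$ is, up to a nonzero constant, a positive volume form. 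Given $\overline\sigma^n\ne 0$, the relation $\overline\sigma^{m}\cup\overline\sigma^{n-m}=\overline\sigma^n$ forces $\overline\sigma^m\ne 0$ for every $0\le m\le n$; since each $H^{0,2m}(X^\phi)$ is one-dimensional, $\overline\sigma^m$ is a generator, as claimed.

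The only genuinely delicate point is the degeneration of the spectral sequence for a possibly non-K\"ahler $X^\phi$, where the usual Hodge-theoretic or Koll\'ar-type arguments are unavailable; but as explained above this is automatic from the diagonal concentration of $E_2$, so the dimension count requires no further input. The identification of the generator is then routine once $\overline\sigma^n\ne 0$ is in hand.
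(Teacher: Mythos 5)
Your proposal is correct and follows essentially the same route as the paper: Corollary \ref{corollary matsushita} plus Theorem \ref{theorem cohomology of omega} give a diagonal $E_2$ page, degeneration is automatic, and the generator is identified via $\int_{X}\sigma^n\wedge\overline\sigma^n\ne 0$. The only cosmetic difference is in the last step, where the paper checks directly by Stokes' theorem that each $\overline\sigma^{r}$ is not $\bar\partial$-exact, while you package the same integral as the Serre duality pairing of $[\overline\sigma^{n}]$ against $\sigma^{n}$ and propagate nonvanishing down through the cup product; both are valid on a compact complex (not necessarily K\"ahler) manifold.
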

\begin{proof}
    For any $\phi\in\Sha$, the sheaf $R^i\pi^\phi_*\mathcal O_{X^\phi}$ is isomorphic to $\Omega^{[i]}_B$ by Corollary \ref{corollary matsushita}. The Leray spectral sequence for $\mathcal O_{X^\phi}$ has the form
    \begin{equation}
    \label{spectral seq}
    E_2^{p,q} = H^q(R^p\pi_*\mathcal O_{X^\phi}) \simeq H^q(\Omega^{[p]}_B) = \begin{cases}
        0,\text{   if }p\ne q;\\
        \C,\text{   otherwise.}
    \end{cases}
    \end{equation}
    This computation follows from Theorem \ref{theorem cohomology of omega}. The spectral sequence (\ref{spectral seq}) degenerates at $E_2$, hence $H^{0,k}(X^\phi) = 0$ for $k$ odd and $H^{0,k}(X^\phi) = H^{k/2,k/2}(B)=\C$ when $k$ is even. 
    
    The cohomology group $H^{0,2r}(X^\phi)$ is generated by the class of $\overline{\sigma_\phi}^r$. Indeed, the form $\overline{\sigma_\phi}^r$ is $d$-closed and not $\bar\partial$-exact, because if $\overline{\sigma_\phi}^r = \bar\partial \alpha$, then
    $$
    0 = \int_X\overline\partial(\alpha\overline{\sigma_\phi}^{(n-r)}{\sigma_\phi}^n) = \int_Xd(\alpha\overline{\sigma_\phi}^{(n-r)}\sigma_\phi^n) = \int_X\overline{\sigma_\phi}^n\sigma_\phi^n \ne 0,
    $$
    contradiction. 
    % Hence the class of $\overline\sigma^{r}$ is non-trivial in $H^{0,2r}(X^\phi)$ and thus generates $H^{0,2r}(X^\phi)$.
\end{proof}

Next we will compute $H^0(\Omega^2_{X^\phi})$ for a Shafarevich-Tate twist $X^\phi$. We will start with a few preliminary lemmas.

\begin{lemma}
\label{lemma holomorphic 2 form restricts trivially}
    Let $\xi$ be a holomorphic $2$-form on $X^\phi$. Then $\xi$ restricts trivially to all smooth fibers.
\end{lemma}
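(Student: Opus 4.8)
The plan is to reduce the statement to the non-degeneracy of the variation of Hodge structure carried by the family of fibre tori. Fix a smooth fibre $F = (\pi^\phi)^{-1}(b)$, $b\in B^\circ$; it is a complex torus, so every holomorphic form on $F$ is translation invariant, in particular $d$-closed, and $H^0(F,\Omega^2_F) = \wedge^2 H^0(F,\Omega^1_F)$. Thus the fibrewise restriction $b\mapsto \xi|_{F_b}$ defines a holomorphic section $s$ of the rank-$\binom n2$ bundle $\wedge^2\,\pi^\phi_*\Omega^1_{X^\phi/B^\circ}$ over $B^\circ$. Dualising the isomorphism $\Omega^{[1]}_B\cong\pi^\phi_*T_{X^\phi/B}$ of Theorem \ref{theorem iso omega and t} identifies $\pi^\phi_*\Omega^1_{X^\phi/B^\circ}\cong T_{B^\circ}$, while $R^1\pi^\phi_*\mathcal O_{X^\phi}\cong\Omega^1_{B^\circ}$ by Corollary \ref{corollary matsushita}; these are the two Hodge bundles $\mathcal H^{1,0}$ and $\mathcal H^{0,1}$ of the weight-one variation $R^1\pi^\phi_*\C$ over $B^\circ$, and since the fibres are tori we have $\mathcal H^2 = \wedge^2\mathcal H^1$ and $\mathcal H^{2,0} = \wedge^2\mathcal H^{1,0}$. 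Under this identification $s$ is a holomorphic section of $\mathcal H^{2,0} = F^2\mathcal H^2$, and the claim is precisely that $s = 0$.

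The one computation I would carry out is that the Kodaira--Spencer (i.e.\ $\mathcal H^{1,1}$-)component of the Gauss--Manin derivative of $s$ vanishes. Lifting a tangent vector $v$ on the base to a $(1,0)$-field $\tilde v$ on $X^\phi$ and applying Cartan's formula gives $\nabla_v s = [\,L_{\tilde v}\xi|_{F}\,] = [\,\iota_{\tilde v}\,d\xi|_{F}\,]$, the $d\iota_{\tilde v}\xi|_F$ term being $d_F$-exact. Since $\xi$ is holomorphic, $d\xi = \partial\xi$ is of type $(3,0)$, so $\iota_{\tilde v}\,\partial\xi$ is of pure type $(2,0)$; hence its restriction to $F$ has no $(1,1)$-part and represents a class in $\mathcal H^{2,0}(F)$. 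In other words $\nabla s\in F^2\mathcal H^2\otimes\Omega^1_{B^\circ}$, i.e.\ the Kodaira--Spencer contraction $\bar\nabla s$ is zero.

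Writing $s = \tfrac12\sum S_{ab}\,e_a\wedge e_b$ with $S$ antisymmetric in a local frame $\{e_a\}$ of $\mathcal H^{1,0}$, and denoting by $\kappa(v)$ the (Griffiths-symmetric) matrix of the Kodaira--Spencer map $\kappa\colon T_{B^\circ}\to\mathrm{Hom}(\mathcal H^{1,0},\mathcal H^{0,1})$, a short linear-algebra check turns $\bar\nabla s = 0$ into the relations $S\,\kappa(v) = 0$ for every $v$, i.e.\ $\mathrm{Im}\,\kappa(v)\subseteq\ker S$ for all $v$. The heart of the proof --- and what I expect to be the main obstacle --- is the non-degeneracy of the period map of a Lagrangian fibration: for the family of abelian fibres the images $\mathrm{Im}\,\kappa(v)$, $v\in T_bB$, span all of $\mathcal H^{0,1}_b$ at a general $b\in B^\circ$ (equivalently, the associated symmetric map $T_bB\to\mathrm{Sym}^2\Omega^1_{B,b}$ has no common cokernel vector). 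Granting this, $\ker S = \mathcal H^{1,0}_b$, so $S = 0$ and $s$ vanishes at a general, hence every, $b\in B^\circ$; thus $\xi|_F = 0$ for all smooth fibres. As a sanity check, the same conclusion is reached whenever $\xi$ happens to be $d$-closed: then $[\xi|_F]$ is monodromy invariant, and by the global invariant cycle theorem together with Theorem \ref{theorem_restriction} (see \ref{subsub restriction map}) the invariant part of $H^2(F,\Q)$ is one-dimensional and of type $(1,1)$, forcing the type-$(2,0)$ class $[\xi|_F]$ to vanish.
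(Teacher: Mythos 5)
Your reduction to the weight-one variation of Hodge structure is carried out correctly as far as it goes: the identification of $s$ as a holomorphic section of $\wedge^2\mathcal H^{1,0}$, the Cartan-formula computation $\nabla_v s=[\iota_{\tilde v}\,d\xi\restrict{F}]$ with values of type $(2,0)$, and the resulting relation $S\,\kappa(v)=0$ are all fine. But the step you yourself flag as the heart of the argument --- that the images of $\kappa(v)$, $v\in T_bB$, span $\mathcal H^{0,1}_b$ at a general $b$ --- is not merely unproven; it is false in general. Lagrangian fibrations can be isotrivial, in which case the period map of the family of abelian fibers is locally constant, $\kappa\equiv 0$, and the inclusion $\operatorname{Im}\kappa(v)\subseteq\ker S$ carries no information whatsoever. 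A concrete example occurs in the last remark of the paper: the generalized Kummer fibration $K^n(A)\to\mathbb P^n$ for $A=E\times F$ has every smooth fiber isomorphic to the fixed abelian variety $F^n$. So this route cannot close without an entirely separate treatment of the isotrivial (or partially isotrivial) case, and none is supplied.

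The paper's actual proof is the argument you relegate to a final ``sanity check,'' and the hypothesis you attach to it there --- that $\xi$ be $d$-closed on $X^\phi$ --- is not needed. The only thing required is that the \emph{restriction} $\xi\restrict{F}$ be $d$-closed, and this is automatic: $F$ is an abelian variety, hence compact K\"ahler, and holomorphic forms on compact K\"ahler manifolds are closed; no K\"ahlerness of $X^\phi$ (which is exactly what is in doubt for a twist) enters. The classes $[\xi\restrict{F_b}]$ then define a section of the local system $R^2\pi_*\C\restrict{B^\circ}$; by \ref{subsub restriction map}, i.e.\ the global invariant cycle theorem combined with Theorem \ref{theorem_restriction}, the monodromy-invariant part of $H^2(F,\Q)$ is one-dimensional and generated by an ample, hence $(1,1)$, class, so the type-$(2,0)$ class $[\xi\restrict{F}]$ must vanish; finally, a holomorphic $2$-form on a complex torus with trivial cohomology class is identically zero. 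This argument is uniform in $\phi$ and covers the isotrivial case on which your main line of attack breaks down.
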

\begin{proof}
    The restriction of $\xi$ to every smooth fiber is $d$-closed because all holomorphic forms on K\"ahler manifolds are closed. Therefore, $\xi$ defines a section of the local system $R^2\pi_*\C\restrict{B^\circ}$. By \ref{subsub restriction map}, this local system has just one non-trivial section, which is the class of a form of type $(1,1)$. The class $[\xi\restrict{F}]$ is of type $(2,0)$, hence it must be trivial. There are no non-trivial exact holomorphic forms on $F$, hence $\xi\restrict{F} = 0$ for every smooth fiber.
\end{proof}

\begin{lemma}
\label{lemma contraction with holomorphic 2 forms}
    Let $\pi\colon Y\to S$ be a proper Lagrangian fibration over a not necessarily compact base. Consider a holomorphic 2-form $\xi$ on $Y$ with trivial restriction to every smooth fiber. Then $\xi$ induces a map
    $$
        \iota_\xi\colon \pi_*T_{Y/S} \to \Omega^{[1]}_S.
    $$
\end{lemma}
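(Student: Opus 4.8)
The plan is to define $\iota_\xi$ by contraction and then show that the resulting $1$-form descends to the base. Concretely, for an open $U\subseteq S$ and a vertical vector field $v\in(\pi_*T_{Y/S})(U) = T_{Y/S}(\pi^{-1}(U))$, I form the holomorphic $1$-form $\iota_v\xi$ on $\pi^{-1}(U)$ and argue that it is the pullback of a reflexive $1$-form $\beta$ on $U$; I then set $\iota_\xi(v):=\beta$. The construction mirrors the proof of Theorem \ref{theorem iso omega and t}, the one new ingredient being the hypothesis that $\xi$ restricts trivially to smooth fibers.

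First I would observe that $\iota_v\xi$ annihilates vertical directions. Since contraction commutes with restriction to a smooth fiber $F$, we have $(\iota_v\xi)\restrict{F} = \iota_{v\restrict{F}}(\xi\restrict{F}) = 0$ because $\xi\restrict{F}=0$ by assumption; equivalently $(\iota_v\xi)(w)=\xi(v,w)=0$ for every vertical $w$. Over the open set $Y^\circ$ where $\pi$ is a submersion we have the exact sequence $0\to\pi^*\Omega_{S^\circ}\to\Omega_{Y^\circ}\to\Omega_{Y^\circ/S^\circ}\to 0$, and a $1$-form lies in the image of $\pi^*\Omega_{S^\circ}$ exactly when it annihilates all vertical vectors. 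Hence there is a unique holomorphic $1$-form $\beta^\circ$ on $S^\circ\cap U$ with $\iota_v\xi\restrict{Y^\circ\cap\pi^{-1}(U)} = \pi^*\beta^\circ$.

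Next I would extend $\beta^\circ$ across the discriminant and the singular locus of the base. Restricting to the regular locus $S^{reg}$, the morphism $\pi$ is an equidimensional map of complex manifolds, hence flat, so Lemma \ref{lemma pullbacks of forms} applies to $\pi^{-1}(U^{reg})\to U^{reg}$ with the form $\iota_v\xi$: it yields a holomorphic $1$-form $\beta$ on $U^{reg}$ with $\iota_v\xi = \pi^*\beta$ over $\pi^{-1}(U^{reg})$. Because $\Omega^{[1]}_S := j_*\Omega_{S^{reg}}$ and $S\setminus S^{reg}$ has codimension at least two, this $\beta$ is precisely a section of $\Omega^{[1]}_S$ over $U$. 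I set $\iota_\xi(v):=\beta$. Finally I would check that $v\mapsto\iota_\xi(v)$ is a morphism of $\mathcal O_S$-modules: compatibility with restriction follows from the uniqueness of $\beta$, additivity from linearity of contraction, and $\mathcal O_S$-linearity from $\iota_{(\pi^*g)v}\xi = (\pi^*g)\,\iota_v\xi = \pi^*(g\beta)$ for $g\in\mathcal O_S(U)$.

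The only place where care is genuinely needed is the passage from $S^\circ$ to all of $S^{reg}$, i.e.\ crossing the discriminant $\Delta$, which is a divisor: a priori $\beta^\circ$ is defined only away from $\Delta$, and it is exactly Lemma \ref{lemma pullbacks of forms} that guarantees it extends holomorphically over $\Delta$ with no pole. The further extension over $S\setminus S^{reg}$ is then automatic by reflexivity of $\Omega^{[1]}_S$ (Hartogs). I expect this extension step to be the main obstacle in a careful write-up, while the conceptual heart of the statement is the elementary observation that $\xi(v,w)=0$ for vertical $v,w$.
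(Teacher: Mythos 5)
Your argument is correct and follows essentially the same route as the paper's proof: the key point in both is that $\xi\restrict{F}=0$ forces $\iota_v\xi$ to lie in $\pi^*\Omega_{S^\circ}$ over the smooth locus, after which Lemma \ref{lemma pullbacks of forms} extends the base form across the discriminant to a section of $\Omega^{[1]}_S$. The only cosmetic difference is that the paper packages the extension step via the saturated subsheaf $(\pi^*\Omega_S)^{sat}\subset\Omega_Y$ and its pushforward, whereas you work pointwise with the form $\beta^\circ$; both rely on the same two ingredients and on properness of $\pi$ (so that the fiberwise-constant coefficients descend to $S^\circ$).
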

\begin{proof}
Consider the map 
    $$
    \iota_\xi\colon T_{Y/S}\to \Omega_Y
    $$
    sending a vector field $v$ to $\iota_v\xi$. As before, denote by $Y^\circ$ the union of smooth fibers of $\pi$ and $S^\circ:=\pi(Y^\circ)$. For every vertical vector field $v$, the restriction of $\iota_v\xi$ to $Y^\circ$ lies in $\pi^*\Omega_{S^\circ}$ because $\xi\restrict{F} = 0$ for every smooth fiber $F$. It follows that the image of $\iota_\xi$ lies in the sheaf $(\pi^*\Omega_S)^{sat}$ consisting of $1$-forms $\alpha$ such that $\alpha\restrict{Y^\circ}\in\pi^*\Omega_S^\circ$. By taking pushforwards, we obtain a map
    $$
    \iota_\xi\colon \pi_*T_{Y/S} \to \pi_*(\pi^*\Omega^1_S)^{sat}.
    $$
    We will show that $\pi_*(\pi^*\Omega_S)^{sat}\simeq \Omega_S^{[1]}$. Indeed, this is definitely true over $S^\circ$. Let $\alpha$ be a local section of $(\pi^*\Omega_S)^{sat}$. Then the restriction of $\alpha$ to $Y^\circ$ is the pullback of a form from $S^\circ$. By Lemma \ref{lemma pullbacks of forms}, the form $\alpha$ must be the pullback of a reflexive form from $S$. 
\end{proof}

\subsubsection{}Let $\pi\colon X\to B$ be a Lagrangian fibration. Consider the subsheaf $(\pi_*\Omega^2_X)'$ of $\pi_*\Omega^2_X$ consisting of holomorphic 2-forms $\xi$ with trivial restriction to all smooth fibers. Thanks to Lemma \ref{lemma contraction with holomorphic 2 forms}, there is a natural map
\begin{equation}
\label{map contraction}
(\pi_*\Omega^2_X)'\to \mathcal{Hom}(\pi_*T_{X/B},\Omega^{[1]}_B).
\end{equation}
The holomorphic symplectic form $\sigma$ on $X$ induces an isomorphism $\pi_*T_{X/B}\simeq \Omega^{[1]}_B$ (Theorem \ref{theorem iso omega and t}). Composing the map (\ref{map contraction}) with this isomorphism, we obtain a map of sheaves
$$
    \rho \colon (\pi_*\Omega^2_X)'\to \mathcal{End}(\Omega^{[1]}_B).
$$

\begin{lemma}
\label{lemma holomorphic 2 form end doesn't change}
    Define the sheaf $\mathcal{End}'_X(\Omega^{[1]}_B)$ as the image of $\rho$. Then for any Shafarevich--Tate twist $X^\phi$ the sheaf $\mathcal{End}'_{X^\phi}(\Omega^{[1]}_B)$ coincides with $\mathcal{End}'_X(\Omega^{[1]}_B)$.
\end{lemma}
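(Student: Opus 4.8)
The plan is to verify the equality of the two subsheaves of $\mathcal{End}(\Omega^{[1]}_B)$ locally, on the open cover $\{U_i\}$ of $B$ used to construct the twist $X^\phi$. The ambient sheaf $\mathcal{End}(\Omega^{[1]}_B)$ depends only on $B$, so both $\mathcal{End}'_X(\Omega^{[1]}_B)$ and $\mathcal{End}'_{X^\phi}(\Omega^{[1]}_B)$ are subsheaves of one and the same sheaf; since the image of a morphism of sheaves is determined by its restrictions to an open cover, it suffices to show that the map $\rho$ and its analogue $\rho^\phi$ for $X^\phi$ have the same image over each $U_i$.

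Over a single chart $U_i$ the twist is trivial: by construction of $X^\phi$ there is a canonical isomorphism $X^\phi_i\cong X_i$ commuting with the projections to $U_i$. First I would record what this identification does to the three ingredients of $\rho$. The sheaves of vertical $2$-forms agree, $(\pi^\phi_*\Omega^2_{X^\phi})'|_{U_i}=(\pi_*\Omega^2_X)'|_{U_i}$; the vertical tangent sheaves agree, $\pi^\phi_*T_{X^\phi/B}|_{U_i}=\pi_*T_{X/B}|_{U_i}$ (indeed $Aut^0_{X/B}\cong Aut^0_{X^\phi/B}$); and the contraction map $\iota_\xi\colon\pi_*T_{X/B}\to\Omega^{[1]}_B$ of Lemma \ref{lemma contraction with holomorphic 2 forms}, being intrinsic to $\xi$ and the fibration, is the same on both sides.

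The only ingredient that could a priori differ is the reference symplectic form entering the isomorphism $\iota_\sigma\colon\Omega^{[1]}_B\xrightarrow{\sim}\pi_*T_{X/B}$ of Theorem \ref{theorem iso omega and t}: on $X$ we use $\sigma$, whereas on $X^\phi$ we use the inherited form $\sigma^\phi$. Here I would use that, over $U_i$, the two forms differ by the pullback of a $2$-form from the base, $\sigma^\phi|_{X_i}=\sigma|_{X_i}+\pi^*\mu_i$. This is exactly how the symplectic form of the twist is produced (\cite[Corollary 3.7]{abasheva2021shafarevich}): the local forms $\sigma|_{X_i}$ fail to glue only by the base pullbacks $\pi^*(d\beta_{ij})$ coming from the vector fields defining the cocycle $(\phi_{ij})$, so the correction needed to glue them is locally a pullback from $B$. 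Since a vertical vector field contracts any pullback from the base to zero, we get $\iota_v\sigma^\phi=\iota_v\sigma$ for every vertical $v$, and hence $\iota_{\sigma^\phi}=\iota_\sigma$ as maps $\Omega^{[1]}_B\to\pi_*T_{X/B}$ over $U_i$.

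Putting these together, $\rho^\phi=\rho$ over each $U_i$, so the image subsheaves coincide over each $U_i$ and therefore globally, proving $\mathcal{End}'_{X^\phi}(\Omega^{[1]}_B)=\mathcal{End}'_X(\Omega^{[1]}_B)$. The point that requires care — and which I expect to be the main obstacle — is precisely the comparison of the two symplectic forms over $U_i$: one must know that the inherited form $\sigma^\phi$ restricts over each chart to $\sigma$ modified only by a pullback from the base, so that it induces the same isomorphism $\iota_\sigma$. Everything else is a formal consequence of the local triviality of the twist.
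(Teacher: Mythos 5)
Your proof is correct and takes essentially the same route as the paper, which simply observes that the statement is local on $B$ and that $\pi^{-1}(U)$ and $(\pi^\phi)^{-1}(U)$ are isomorphic as Lagrangian fibrations over small $U$. Your extra care about the reference symplectic form — that $\sigma^\phi|_{X_i}=\sigma|_{X_i}+\pi^*\mu_i$ and hence induces the same isomorphism $\Omega^{[1]}_B\simeq\pi_*T_{X/B}$ because vertical fields contract base pullbacks to zero — correctly fills in the one point the paper's two-sentence proof leaves implicit.
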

\begin{proof}
    The statement is local on $B$. For every sufficiently small open disk $U\subset B$, the manifolds $\pi^{-1}(U)$ and $(\pi^\phi)^{-1}(U)$ are isomorphic as Lagrangian fibrations, hence the claim.
\end{proof}

\begin{lemma}
\label{lemma holomorpic 2 form ex sequence}
    The sequence of sheaves on $B$
    $$
        0\to \Omega^{[2]}_B\to (\pi_*\Omega^2_X)' \to \mathcal{End}'_X(\Omega^{[1]}_B) \to 0
    $$
    is exact.
\end{lemma}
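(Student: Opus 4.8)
The plan is to identify the two maps in the sequence explicitly and then verify injectivity on the left, surjectivity on the right, and exactness in the middle. The right-hand map is $\rho$, and it is surjective onto $\mathcal{End}'_X(\Omega^{[1]}_B)$ by definition, since that sheaf is declared to be $\im\rho$. The left-hand map is the pullback $\pi^*$: the natural pullback of K\"ahler differentials gives $\Omega^2_B\to\pi_*\Omega^2_X$, and since $\pi_*\Omega^2_X$ is reflexive (a pushforward of the locally free $\Omega^2_X$ along an equidimensional morphism, by \cite[Corollary 1.7]{hartshorne1980stable}) this map extends over the singular locus of $B$ via $j_*$ to $\pi^*\colon\Omega^{[2]}_B\to\pi_*\Omega^2_X$. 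A pullback form restricts to zero on every smooth fibre, so $\im\pi^*\subseteq(\pi_*\Omega^2_X)'$. It then remains to show $\pi^*$ is injective and $\ker\rho=\im\pi^*$. Since all the sheaves involved agree with $j_*$ of their restrictions to $B^{reg}$, the real work takes place over the smooth locus, and Lemma \ref{lemma pullbacks of forms} will be used to cross back over $\Delta$ and over $\mathrm{Sing}(B)$.

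First I would dispose of the two easy inclusions. Injectivity of $\pi^*$ can be checked on the dense open $B^\circ$, where $\pi$ is a submersion and $\pi^*$ is fibrewise injective on forms, so $\pi^*\beta=0$ forces $\beta=0$ as a reflexive form. For $\im\pi^*\subseteq\ker\rho$, note that for any local reflexive $2$-form $\beta$ and any vertical vector field $v$ one has $\iota_v(\pi^*\beta)=0$, because contracting a pulled-back form with a field satisfying $d\pi(v)=0$ gives zero; hence $\iota_{\pi^*\beta}$ kills all of $\pi_*T_{X/B}$ and $\rho(\pi^*\beta)=0$.

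The essential step is the reverse inclusion $\ker\rho\subseteq\im\pi^*$. Let $\xi$ be a local section of $(\pi_*\Omega^2_X)'$ with $\rho(\xi)=0$. Since $\iota_\sigma\colon\Omega^{[1]}_B\to\pi_*T_{X/B}$ is an isomorphism (Theorem \ref{theorem iso omega and t}), the vanishing of $\rho(\xi)$ means exactly that $\iota_v\xi=0$ for every vertical vector field $v$. Over $B^\circ$ the smooth fibres are complex tori, so their tangent spaces are spanned by the translation-invariant fields coming from $\pi_*T_{X/B}$; therefore $\xi$ annihilates the full vertical tangent space at every point of $X^\circ$, i.e. $\xi\restrict{X^\circ}$ is a horizontal form, a section of $\pi^*\Omega^2_{B^\circ}\subset\Omega^2_{X^\circ}$. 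Pushing forward and using $\pi_*\mathcal O_X\simeq\mathcal O_B$ with the projection formula yields
$$
\pi_*\bigl(\pi^*\Omega^2_{B^\circ}\bigr)\simeq\Omega^2_{B^\circ}\otimes\pi_*\mathcal O_{X^\circ}\simeq\Omega^2_{B^\circ},
$$
so that $\xi\restrict{X^\circ}=\pi^*\beta^\circ$ for a holomorphic $2$-form $\beta^\circ$ on $B^\circ$. Applying Lemma \ref{lemma pullbacks of forms} over $B^{reg}$ (where $\pi$ is flat) extends $\beta^\circ$ to a holomorphic form $\beta^{reg}$ with $\xi=\pi^*\beta^{reg}$ there; viewing $\beta^{reg}$ as a section $\beta$ of $\Omega^{[2]}_B=j_*\Omega^2_{B^{reg}}$ and using reflexivity of $\pi_*\Omega^2_X$ to match sections agreeing over $\pi^{-1}(B^{reg})$, we conclude $\xi=\pi^*\beta$, that is $\xi\in\im\pi^*$.

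I expect the main obstacle to be precisely this last identification of horizontal forms with pullbacks: horizontality alone only tells us that $\xi$ is a section of $\pi^*\Omega^2_{B^\circ}$, whose coefficients are a priori arbitrary holomorphic functions on $X^\circ$, and it is the properness and connectedness of the fibres---captured by $\pi_*\mathcal O_X=\mathcal O_B$---that pins these coefficients down to functions pulled back from the base. The subsequent extension across the discriminant $\Delta$ and across $\mathrm{Sing}(B)$ is exactly what Lemma \ref{lemma pullbacks of forms} and the reflexivity of the sheaves are designed to handle.
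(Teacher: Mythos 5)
Your proof is correct and follows essentially the same route as the paper's: surjectivity by definition of $\mathcal{End}'_X(\Omega^{[1]}_B)$, the composite vanishing because pullbacks are killed by vertical contractions, and exactness in the middle by showing a form in $\ker\rho$ is horizontal over $B^\circ$, descending it via the projection formula and $\pi_*\mathcal O_X\simeq\mathcal O_B$, and extending across $\Delta$ and $\mathrm{Sing}(B)$ with Lemma \ref{lemma pullbacks of forms} and reflexivity. You in fact supply slightly more detail than the paper (e.g.\ that sections of $\pi_*T_{X/B}$ span the vertical tangent space pointwise over $B^\circ$ because the fibres are tori), which is a welcome addition rather than a deviation.
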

\begin{proof}
    Note that the map $(\pi_*\Omega^2_X)' \to \mathcal{End}'_X(\Omega^{[1]}_B)$ is surjective by the definition of $\mathcal{End}'_X(\Omega^{[1]}_B)$. The first map $\Omega^{[2]}_B\to (\pi_*\Omega^2_X)'$ is clearly injective. 
    
    The composite map $\Omega^{[2]}_B\to \mathcal{End}'_X(\Omega^{[1]}_B)$ vanishes. Indeed, let $\alpha$ be a local section of $\Omega^{[2]}_B$. Then for any vertical vector field $v$, the form $\iota_v\pi^*\alpha$ vanishes on $X^\circ$, hence vanishes everywhere. Therefore, $\rho(\alpha) = 0$.

    It remains to prove exactness in the middle term. Let $U\subset B$ be an open subset and $\xi$ a holomorphic $2$-form on $\pi^{-1}(U)$ such that $\rho(\xi) = 0$. Consider the restriction of $\xi$ to $X^\circ$. Since $\iota_v\xi = 0$ for every vertical vector field $v$, the form $\xi$ is contained in $\pi^*\Omega^2_{B}(\pi^{-1}(U\cap B^\circ))$. The projection formula together with the fact that $\pi_*\mathcal O_{X^\circ} \simeq \mathcal O_{B^\circ}$ implies that 
    $$
    \pi^*\Omega^2_{B}(\pi^{-1}(U\cap B^\circ)) = \pi_*\pi^*\Omega^2_B(U\cap B^\circ) =\Omega^2_{B}(U\cap B^\circ).
    $$ 
    Hence there exists a holomorphic $2$-form $\alpha^\circ$ on $U\cap B^\circ$ such that $\xi\restrict{\pi^{-1}(U\cap B^\circ)} = \pi^*\alpha^\circ$. By Lemma \ref{lemma pullbacks of forms}, $\xi = \pi^*\alpha$ for some reflexive holomorphic $2$-form $\alpha$ on $U$. 
\end{proof}

We are finally ready to show that all holomorphic $2$-forms on $X^\phi$ are multiples of $\sigma_\phi$.
\begin{theorem}
\label{theorem generated by sigma}
Let $\pi\colon X\to B$ be a Lagrangian fibration on an irreducible hyperk\"ahler manifold $X$. Then $H^0(\Omega^2_{X^\phi})$ is generated by the holomorphic symplectic form $\sigma_\phi$ for all $\phi\in \Sha$.
\end{theorem}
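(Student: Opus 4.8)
The plan is to avoid computing $H^0(X^\phi,\Omega^2_{X^\phi})$ directly on $X^\phi$ — which is awkward because $X^\phi$ need not be Kähler — and instead to transport the answer from the untwisted hyperkähler manifold $X$, where it is known, through a sheaf on $B$ that does not depend on the twist.

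First I would reduce everything to the base. By Lemma \ref{lemma holomorphic 2 form restricts trivially} every global holomorphic $2$-form on $X^\phi$ restricts trivially to all smooth fibers, hence already lies in the subsheaf $(\pi^\phi_*\Omega^2_{X^\phi})'$; therefore $H^0(X^\phi,\Omega^2_{X^\phi})=H^0(B,(\pi^\phi_*\Omega^2_{X^\phi})')$, and likewise $H^0(X,\Omega^2_X)=H^0(B,(\pi_*\Omega^2_X)')$ for the untwisted fibration. I would then take cohomology of the short exact sequence of Lemma \ref{lemma holomorpic 2 form ex sequence},
$$0\to\Omega^{[2]}_B\to (\pi^\phi_*\Omega^2_{X^\phi})'\to \mathcal{End}'_{X^\phi}(\Omega^{[1]}_B)\to 0.$$
By Theorem \ref{theorem cohomology of omega} the groups $H^j(B,\Omega^{[2]}_B)$ equal $H^j(\mathbb P^n,\Omega^2_{\mathbb P^n})$, and these vanish for $j=0,1$ because the Hodge cohomology of $\mathbb P^n$ is concentrated on the diagonal $p=q$. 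Hence the boundary maps drop out and I obtain an isomorphism
$$H^0\big(B,(\pi^\phi_*\Omega^2_{X^\phi})'\big)\xrightarrow{\ \sim\ }H^0\big(B,\mathcal{End}'_{X^\phi}(\Omega^{[1]}_B)\big),$$
together with the analogous isomorphism for $X$.

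Next I would close the loop using two inputs. The sheaf on the right is twist-independent: $\mathcal{End}'_{X^\phi}(\Omega^{[1]}_B)=\mathcal{End}'_X(\Omega^{[1]}_B)$ by Lemma \ref{lemma holomorphic 2 form end doesn't change}, so the two isomorphisms above share the same target. And since $X$ is irreducible hyperkähler, $H^0(X,\Omega^2_X)=\C\cdot\sigma$ is one-dimensional by Definition \ref{definition irreducible holomorphic symplectic}. Chaining the isomorphisms forces $H^0(X^\phi,\Omega^2_{X^\phi})\cong\C$; as the inherited symplectic form $\sigma$ is a nonzero member of this space (being Lagrangian, it restricts trivially to smooth fibers), it must span it, which is the assertion.

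I expect the main obstacle to be conceptual rather than computational, and it is exactly what the earlier lemmas are built to overcome: because $X^\phi$ can be non-Kähler, one cannot read $h^{2,0}(X^\phi)$ off from Hodge theory or from the symplectic geometry of $X^\phi$ alone. The device that rescues the argument is that $\mathcal{End}'_X(\Omega^{[1]}_B)$ is manufactured purely locally over $B$ and is therefore blind to the gluing cocycle defining $\phi$, while the vanishing of $H^0$ and $H^1$ of $\Omega^{[2]}_B$ promotes this local datum to a global computation. The one place I would take care is to confirm that these two vanishings are genuinely the only cohomological inputs and that the isomorphisms for $X$ and $X^\phi$ are the two ends of a single chain whose middle is the common twist-invariant sheaf.
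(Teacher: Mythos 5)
Your proposal is correct and follows essentially the same route as the paper's own proof: reduce to $(\pi^\phi_*\Omega^2_{X^\phi})'$ via Lemma \ref{lemma holomorphic 2 form restricts trivially}, apply the short exact sequence of Lemma \ref{lemma holomorpic 2 form ex sequence}, kill $H^0$ and $H^1$ of $\Omega^{[2]}_B$ by Theorem \ref{theorem cohomology of omega}, and conclude via the twist-invariance of $\mathcal{End}'_{X^\phi}(\Omega^{[1]}_B)$ from Lemma \ref{lemma holomorphic 2 form end doesn't change}. Nothing is missing.
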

\begin{proof}
    By Lemma \ref{lemma holomorphic 2 form restricts trivially}, every holomorphic $2$-form $\xi$ on $X^\phi$ restricts trivially to every smooth fiber. Therefore,
    $$
    H^0(B,(\pi_*\Omega^2_{X^\phi})') = H^0(X^\phi,\Omega^2_{X^\phi}).
    $$
    Lemma \ref{lemma holomorpic 2 form ex sequence} shows that the sequence
    $$
    0\to \Omega^{[2]}_B\to (\pi_*\Omega^2_{X^\phi})' \to \mathcal{End}'_{X^\phi}(\Omega^{[1]}_B)\to 0
    $$
    is exact. Consider its long exact sequence of cohomology
    $$
    0 \to H^0(\Omega^{[2]}_B) \to H^{2,0}(X^\phi)\to H^0(\mathcal{End}'_{X^\phi}(\Omega^{[1]}_B))\to H^1(\Omega^{[2]}_B)
    $$
    The cohomology groups $H^i(\Omega^{[2]}_B)$ vanish for $i=0,1$ (Theorem \ref{theorem cohomology of omega}). Therefore,
    $$
    H^{2,0}(X^\phi)\simeq H^0(\mathcal{End}'_{X^\phi}(\Omega^{[1]}_B)).
    $$
    The sheaf $\mathcal{End}_{X^\phi}'(\Omega^{[1]}_B)$ does not depend on a twist by Lemma \ref{lemma holomorphic 2 form end doesn't change}, therefore $H^{2,0}(X^\phi)$ does not depend on a twist.
\end{proof}

\begin{remark}
    When $B = \mathbb P^n$, the proof of Theorem \ref{theorem generated by sigma} can be simplified because $\operatorname{End}(\Omega^1_{\mathbb P^n})\simeq \C$. By Lemma \ref{lemma holomorphic 2 form restricts trivially}, every holomorphic $2$-form $\xi$ on $X^\phi$ restricts trivially to smooth fibers. Hence $\xi$ induces an endomorphism $\rho(\xi)$ of $\Omega^1_{\mathbb P^n}$ (Lemma \ref{lemma contraction with holomorphic 2 forms}). Since $\operatorname{End}(\Omega^1_{\mathbb P^n}) = \C$, there exists a number $\lambda\in\C$ such that $\rho(\xi - \lambda\sigma) = 0$. The contraction of every vertical vector field on $X$ with $\xi-\lambda\sigma$ vanishes, hence
    $$
    (\xi - \lambda\sigma)\restrict{X^\circ}\in\pi^*\Omega^2_{(\mathbb P^n)^\circ}.
    $$
    Since $\pi_*\pi^*\Omega^2_{\mathbb P^n} = \Omega^2_{\mathbb P^n}$, we have $(\xi - \lambda\sigma)\restrict{X^\circ} = \pi^*\alpha^\circ$ for some holomorphic $2$-form $\alpha^\circ$ on $(\mathbb P^n)^\circ$. By Lemma \ref{lemma pullbacks of forms}, $\alpha^\circ$ extends to a holomorphic form $\alpha$ on $\mathbb P^n$ and $\xi - \lambda\sigma = \pi^*\alpha$. There are no non-trivial holomorphic forms on $\mathbb P^n$, hence $\xi = \lambda\sigma$.
    
    We were unable to show that $\operatorname{End}(\Omega^{[1]}_B)\simeq \C$ for any base of a Lagrangian fibration, although we expect it to be true.
\end{remark}

\subsubsection{}\label{subsub proof of theorem b}{\em Proof of Theorem \ref{theor irr hol symp}.} The statement immediately follows from Proposition \ref{proposition H1 vanishes} and Theorem \ref{theorem generated by sigma}.\hfill$\qed$

%%%%%%%%%%%%%%%%%%%%%%%%%5
%%%%%%%%%%%%%%%%%%%%%%%%%%%%%

\subsection{Second cohomology of a twist}
\label{subsection second cohomology of a twist}

Our goal now is to prove Theorems \ref{theorem b2} and Theorem \ref{theorem fujiki class c}.

\begin{lemma}
\label{lemma global sections of ns}
    Let $\pi\colon X\to B$ be a Lagrangian fibration on an irreducible hyperk\"ahler manifold $X$. Define the sheaf $\mathcal{NS}$ on $B$ as the image of the Chern class map $R^1\pi_*\mathcal O_X^\times\to R^2\pi_*\Z$. Then
    $$
    H^0(B,\mathcal{NS}) = H^0(R^2\pi_*\Z).
    $$
    In other words, for every section $\xi$ of $R^2\pi_*\Z$ and a sufficiently fine open cover $B = \bigcup U_i$, there are line bundles $L_i$ on $X_i$ such that $\xi\restrict{U_i}=c_1(L_i)$. In particular, every section $\xi$ of $R^1\pi_*\Z$ is locally the class of a closed $(1,1)$-form.
\end{lemma}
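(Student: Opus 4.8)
The plan is to reinterpret $\mathcal{NS}$ as a \emph{kernel} rather than an image and then to exploit the vanishing of global reflexive two-forms on $B$. Pushing the exponential sequence $0 \to \Z_X \to \mathcal O_X \to \mathcal O_X^\times \to 0$ forward along $\pi$ produces the exact sequence of sheaves on $B$
\begin{equation*}
R^1\pi_*\mathcal O_X \to R^1\pi_*\mathcal O_X^\times \xrightarrow{c_1} R^2\pi_*\Z \to R^2\pi_*\mathcal O_X,
\end{equation*}
in which $c_1$ is the connecting map, i.e.\ the relative first Chern class. By exactness, the defining image $\mathcal{NS} = \im(c_1)$ coincides with $\ker\bigl(R^2\pi_*\Z \to R^2\pi_*\mathcal O_X\bigr)$.

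Applying the left-exact functor $H^0(B,-)$ to $0 \to \mathcal{NS} \to R^2\pi_*\Z \to R^2\pi_*\mathcal O_X$ yields the exact sequence
\begin{equation*}
0 \to H^0(\mathcal{NS}) \to H^0(R^2\pi_*\Z) \to H^0(R^2\pi_*\mathcal O_X),
\end{equation*}
so it suffices to prove that $H^0(R^2\pi_*\mathcal O_X)=0$. This is where the geometry of the base enters: by Corollary \ref{corollary matsushita} one has $R^2\pi_*\mathcal O_X \simeq \Omega^{[2]}_B$, and by Theorem \ref{theorem cohomology of omega} the group $H^0(B,\Omega^{[2]}_B)$ agrees with $H^0(\mathbb P^n, \Omega^2_{\mathbb P^n}) = 0$. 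Hence the target vanishes and the inclusion $H^0(\mathcal{NS}) \hookrightarrow H^0(R^2\pi_*\Z)$ is an equality.

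For the concrete reformulation, a global section $\xi$ of $R^2\pi_*\Z$ now lies in $\mathcal{NS} = \im(c_1)$, so each germ $\xi_b$ is the image under $c_1$ of an element of $(R^1\pi_*\mathcal O_X^\times)_b = \varinjlim_{U \ni b}\operatorname{Pic}(X_U)$. Choosing the cover $\{U_i\}$ fine enough that each such germ is realized by an honest line bundle $L_i$ on $X_i$, we obtain $\xi\restrict{U_i} = c_1(L_i)$; since the Chern class of a line bundle is represented by a closed $(1,1)$-form, $\xi$ is locally of this type. I expect the only substantive point to be the vanishing $H^0(\Omega^{[2]}_B)=0$, i.e.\ the reduction to the cohomology of $\mathbb P^n$ via Corollary \ref{corollary matsushita} and Theorem \ref{theorem cohomology of omega}. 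Everything else is formal: the identification of $\mathcal{NS}$ with a kernel, left-exactness of global sections, and the routine passage from germs in the image of $c_1$ to local line bundles over sufficiently small polydisks $U_i$.
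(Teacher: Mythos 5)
Your proposal is correct and follows essentially the same route as the paper: push forward the exponential sequence, identify $\mathcal{NS}$ with the kernel of $R^2\pi_*\Z\to R^2\pi_*\mathcal O_X\simeq\Omega^{[2]}_B$, and conclude from the vanishing $H^0(\Omega^{[2]}_B)=0$ supplied by Corollary \ref{corollary matsushita} and Theorem \ref{theorem cohomology of omega}. The only (immaterial) difference is that the paper phrases the last step via the quotient sheaf $R^2\pi_*\Z/\mathcal{NS}$ embedding into $\Omega^{[2]}_B$, whereas you apply left-exactness of $H^0$ directly.
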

\begin{proof}
    Consider the exponential exact sequence 
    $$
    0\to \Z_X\to \mathcal O_X\to \mathcal O_X^\times\to 0.
    $$
    It induces a long exact sequence of pushforward sheaves:
    $$
    R^1\pi_*\mathcal O_X^\times\to R^2\pi_*\Z\to R^2\pi_*\mathcal O_X.
    $$
    The sheaf $R^2\pi_*\Z/\im (R^1\pi_*\mathcal O_X^\times) = R^2\pi_*\Z/\mathcal{NS}$ is a subsheaf of $R^2\pi_*\mathcal O_X\simeq \Omega^{[2]}_B$. Since $H^0(\Omega^2_B) = 0$ (Theorem \ref{theorem cohomology of omega}), the sheaf $R^2\pi_*\Z/\mathcal{NS}$ has no global sections. Hence the natural inclusion $H^0(B,\mathcal{NS})\to H^0(R^2\pi_*\Z)$ is an isomorphism.
\end{proof}

\subsubsection{Isomorphisms between $T_{X/B}$ and $R^1\pi_*\mathcal O_X$.}\label{subsub iso pistart and ronepistarox} Let $\xi$ be a global section of $H^0(R^2\pi_*\Q)$. It defines a map $f_\xi\colon \pi_*T_{X/B}\to R^1\pi_*\mathcal O_X$ in a similar way that a class $\omega\in H^2(X,\Q)$ defines a map $f_\omega$ in \ref{subsub higher pushforwards}. Namely, by Lemma \ref{lemma global sections of ns} we can represent $\xi\restrict{U_i}$ by a closed $(1,1)$-form $\xi_i$ on $X_i$. Consider the map
$$
f_{\xi_i}\colon \pi_*T_{X_i/U_i}\to R^1\pi_*\O_X\restrict{U_i}
$$
sending $v$ to the class of $[\iota_v\xi_i]$ under the $\bar\partial$-differential. Since the sheaf $R^1\pi_*\mathcal O_X$ is torsion-free (Theorem \ref{theorem matsushita}), the map $f_{\xi_i}$ is determined uniquely by its restriction to $B^\circ\cap U_i$. The restrictions of both sheaves $\pi_*T_{X_i/U_i}$ and $R^1\pi_*\O_X\restrict{U_i}$ to $B^\circ\cap U_i$ are vector bundles. For every point $b\in B^\circ\cap U_i$, the map $f_{\xi_i}$ over $b$ is the map
$$
H^0(F, T_{F}) = H^{1,0}(F) ^\vee\to H^{0,1}(F)
$$
given by the contraction with $[\xi\restrict{F}]\in H^{1,1}(F)$. Here $F$ denotes $\pi^{-1}(b)$. Therefore, the map $f_{\xi_i}$ depends only on the class $[\xi\restrict{F}]$ of the restriction of $\xi$ to a smooth fiber $F$. In particular, maps $f_{\xi_i}$ do not depend on the choice of the forms $\xi_i$ representing $\xi\in H^0(R^2\pi_*\Q)$ and glue into a well-defined map
$$
f_\xi\colon \pi_*T_{X/B}\to R^1\pi_*\O_X.
$$

\subsubsection{}\label{subsub isos} The argument above also shows that the map $H^0(B, R^2\pi_*\C)\to \operatorname{Hom}(\pi_*T_{X/B},R^1\pi_*\mathcal O_X)$ sending $\xi$ to $f_\xi$ factors through the restriction to a smooth fiber $F$:
$$
    H^0(B, R^2\pi_*\C)\to H^0(B^\circ, R^2\pi_*\C) = H^2(F)^{\pi_1(B^\circ)} \to \operatorname{Hom}(\pi_*T_{X/B},R^1\pi_*\mathcal O_X).
$$
The vector space $H^2(F)^{\pi_1(B^\circ)}$ is one-dimensional by \ref{subsub restriction map}. Fix an element $\xi_0\in H^0(B, R^2\pi_*\Z)$ which restricts non-trivially to $F$ and let $f_0:=f_{\xi_0}$ be the induced isomorphism $\pi_*T_{X/B}\to R^1\pi_*\mathcal O_X$. It follows that for every $\xi\in H^0(B, R^2\pi_*\C)$ there exists a unique number $\lambda_\xi$ such that
\begin{equation}
\label{equation_lambda}
    f_{\xi} = \lambda_{\xi} f_0.
\end{equation}
Recall that the isomorphism $f_0\colon \pi_*T_{X/B}\to R^1\pi_*\mathcal O_X$
sends $\Gamma_\Q=\ker(\pi_*T_{X/B}\to Aut^0_{X/B})\otimes\Q \subset \pi_*T_{X/B}$ isomorphically onto $R^1\pi_*\Q\subset R^1\pi_*\mathcal O_X$ (\ref{subsub gamma and r1}). We identify the group $H^2(\Gamma_\Q) = (\Sha/\Sha^0)\otimes \Q$ with $H^2(R^1\pi_*\Q)$ using the isomorphism $f_0\restrict{\Gamma_\Q}\colon \Gamma_\Q\to R^1\pi_*\Q$.

%\begin{remark}
%\label{remark iso does not require kahlerness}
%    Let $\pi\colon X\to B$ be a Lagrangian fibration on a hyperk\"ahler manifold $X$ and $X^\phi$ its twist, not necessarily K\"ahler. It follows from \ref{subsub isos} that 
%    $$
%    \pi_*T_{X^\phi/B}\simeq R^1\pi_*\mathcal O_{X^\phi}
%    $$
%\end{remark}

\subsubsection{Boundary map $\Sha\to H^2(\Gamma)$.} 
\label{subsub boundary for sha}The boundary map $\Sha=H^1(Aut^0(X/B)) \to H^2(\Gamma)$ coming from the short exact sequence
$$
0\to\Gamma\to\pi_*T_{X/B}\to Aut^0_{X/B}\to 0
$$
can be described in terms of \v{C}ech cocycles as follows. Pick $\phi\in \Sha$ and represent it by a $1$-cocycle $\phi_{ij}\in Aut^0_{X/B}(U_{ij})$. We can find a vertical vector field $v_{ij}$ on $X_{ij}$ such that $\exp(v_{ij}) = \phi_{ij}$. The vector field $v_{ij} + v_{jk} + v_{ki}$ on $X_{ijk}$ lies in $\Gamma$ thanks to the cocycle condition on $\phi_{ij}$. It represents the class $\overline\phi\in H^2(\Gamma)$, where $\overline\phi$ denotes the image of $\phi$ under the boundary map $\Sha\to H^2(\Gamma)$.

\subsubsection{Boundary map $H^0(R^2\pi_*\Q)\to H^2(R^1\pi_*\Q)$.}\label{subsub boundary for forms} We will describe the boundary map 
$$
d_2\colon H^0(R^2\pi_*\Q)\to H^2(R^1\pi_*\Q) \simeq H^2(\Gamma_\Q)
$$ 
from the Leray spectral sequence of $\pi$ in terms of \v{C}ech cocycles. Let $\xi$ be a section of $H^0(R^2\pi_*\Q)$. Represent it locally by $(1,1)$-forms $\xi_i$ on $X_i$. The difference $\xi_j - \xi_i$ is an exact form, hence 
$$
\xi_j - \xi_i = d\rho_{ij}
$$
for some $1$-form $\rho_{ij}$ on $X_{ij}$. The form $\rho_{ij} + \rho_{jk} + \rho_{ki}$ is closed on $X_{ijk}$, hence defines a cocycle with coefficients in $R^1\pi_*\Q$. By \ref{subsub gamma and r1}, there exists a unique vertical vector field $w_{ijk}\in \Gamma_\Q(U_{ijk})$ on $X_{ijk}$ such that $f_0(w_{ijk})$ is equal to the class of the (0,1)-form $\rho^{0,1}_{ij} + \rho^{0,1}_{jk} + \rho^{0,1}_{ki}$ under the $\bar\partial$-differential. The class of the cocycle $\{w_{ijk}\}$ in $H^2(\Gamma_\Q)$ is the image of $\xi$ under the boundary map.

\begin{proposition}
\label{proposition d_2}
    Let $\pi\colon X\to B$ be a Lagrangian fibration. Pick a class $\phi\in \Sha$. Let 
     $$
     d_2^\phi\colon H^0(R^2\pi_*\Q)\to H^2(R^1\pi_*\Q)\simeq H^2(\Gamma_\Q)
     $$
     be the differential in the Leray spectral sequence for $\pi^\phi\colon X^\phi\to B$.  Then for any $\xi\in H^0(R^2\pi_*\Q)$
     $$
     d_2^\phi(\xi) = d_2(\xi) + \lambda_\xi\overline\phi,
     $$
     where $\phi$ is the image of $\phi\in \Sha$ under the map $\Sha\to H^2(\Gamma)$ and $\lambda_\xi$ is as in (\ref{equation_lambda}).
\end{proposition}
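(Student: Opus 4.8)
The plan is to compute $d_2^\phi$ by exactly the \v{C}ech--de Rham recipe of \ref{subsub boundary for forms}, exploiting that $X^\phi$ differs from $X$ only in its gluing. I would cover $B$ by small disks $U_i$, represent $\xi$ by closed $(1,1)$-forms $\xi_i$ on $X_i=X^\phi_i$ (Lemma \ref{lemma global sections of ns}), and write $\phi_{ij}=\exp(v_{ij})$ for vertical holomorphic vector fields $v_{ij}$. The only change from the untwisted case is that the \v{C}ech differential on $X^\phi$ transports a cochain from the $j$-chart to the $i$-chart through $\phi_{ij}^*$, so that $(\delta^\phi\xi)_{ij}=\phi_{ij}^*\xi_j-\xi_i$ and $(\delta^\phi a)_{ijk}=\phi_{ij}^*a_{jk}-a_{ik}+a_{ij}$. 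Since $\xi_j$ is closed and $v_{ij}$ is of type $(1,0)$, Cartan's formula gives $\phi_{ij}^*\xi_j-\xi_j=(\exp(\mathcal{L}_{v_{ij}})-1)\xi_j=d\tau_{ij}$ with $\tau_{ij}=\sum_{k\ge 1}\tfrac1{k!}\mathcal{L}_{v_{ij}}^{\,k-1}\iota_{v_{ij}}\xi_j$, a form of pure type $(0,1)$ with leading term $\iota_{v_{ij}}\xi_j$. Thus $(\delta^\phi\xi)_{ij}=d\rho^\phi_{ij}$ with $\rho^\phi_{ij}=\rho_{ij}+\tau_{ij}$, where $\rho_{ij}$ is the untwisted primitive of $\xi_j-\xi_i$; applying $\delta^\phi$ again and using the cocycle identity $\phi_{ij}^*\phi_{jk}^*=\phi_{ik}^*$ shows $(\delta^\phi\rho^\phi)_{ijk}$ is a $d$-closed $1$-form on $X_{ijk}$, whose fibrewise Dolbeault class is $d_2^\phi(\xi)$.

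Expanding this cocycle,
\[
(\delta^\phi\rho^\phi)_{ijk}=c_{ijk}+(\phi_{ij}^*-1)\rho_{jk}+\bigl(\phi_{ij}^*\tau_{jk}-\tau_{ik}+\tau_{ij}\bigr),
\]
where $c_{ijk}=\rho_{ij}+\rho_{jk}-\rho_{ik}$ represents the untwisted $d_2(\xi)$. The heart of the matter is the first-order part. Using $\xi_j-\xi_k=-d\rho_{jk}$ one finds $\iota_{v_{ij}}\xi_j=\iota_{v_{ij}}\xi_k-\mathcal{L}_{v_{ij}}\rho_{jk}+d(\iota_{v_{ij}}\rho_{jk})$, so the term $-\mathcal{L}_{v_{ij}}\rho_{jk}$ cancels the leading cross term in $(\phi_{ij}^*-1)\rho_{jk}$, and what survives at first order is $\iota_{(\delta v)_{ijk}}\xi+d(\iota_{v_{ij}}\rho_{jk})$, with $(\delta v)_{ijk}=v_{ij}+v_{jk}-v_{ik}$. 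By \ref{subsub boundary for sha} this $\Gamma_\Q$-cocycle represents $\overline\phi$, and by the definition $f_\xi=\lambda_\xi f_0$ from \ref{subsub iso pistart and ronepistarox}--\ref{subsub isos}, the $\dibar$-class of $\iota_{(\delta v)_{ijk}}\xi$ equals $\lambda_\xi f_0(\overline\phi)$, i.e. $\lambda_\xi\overline\phi$ under the identification $H^2(\Gamma_\Q)\simeq H^2(R^1\pi_*\Q)$; the term $d(\iota_{v_{ij}}\rho_{jk})$ is fibrewise $\dibar$-exact and drops out. Hence at first order $d_2^\phi(\xi)=d_2(\xi)+\lambda_\xi\overline\phi$.

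The main obstacle is to show that the \emph{higher}-order terms of $\tau$ and the higher cross terms contribute nothing to the class in $H^2(R^1\pi_*\Q)$. I would settle this over $B^\circ$, where the fibres are abelian varieties: there one chooses the $\xi_i$ to restrict to translation-invariant (harmonic) forms on the fibres, the $v_{ij}$ then restrict to commuting invariant vector fields, and each $\phi_{ij}|_F$ is a translation acting trivially on $H^\bullet(F)$. Consequently $\mathcal{L}_{v_{ij}}$ annihilates every invariant form, so all terms $\mathcal{L}_{v_{ij}}^{\,k-1}\iota_{v_{ij}}\xi_j$ with $k\ge 2$ vanish on fibres, $(\phi_{ij}^*-1)\tau_{jk}$ vanishes on fibres, and $(\phi_{ij}^*-1)\rho_{jk}$ restricts to a fibrewise-exact form whose $(0,1)$-part is $\dibar_F$-exact; the fibrewise Dolbeault class of $(\delta^\phi\rho^\phi)_{ijk}$ over $B^\circ$ is therefore \emph{exactly} $d_2(\xi)+\lambda_\xi\overline\phi$. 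As the cocycle values are sections of $R^1\pi_*\mathcal{O}_X\simeq\Omega^{[1]}_B$ (Corollary \ref{corollary matsushita}), which is torsion-free, an identity valid on the dense open $B^\circ$ propagates to all of $B$, yielding the stated formula. Making the fibrewise-invariant representatives compatible with closedness on the total space over $B^\circ$ is the delicate technical point and the place I expect to spend the most care. A clean consistency check comes from a conceptual argument that also handles the higher-order terms in principle: from $(X^\phi)^\psi\simeq X^{\phi+\psi}$ and the twist-invariance of $\lambda_\xi$ and of the local data, the map $\phi\mapsto d_2^\phi(\xi)-d_2(\xi)$ is additive, and it vanishes on $\Sha^0$ because there $X^\phi$ is $X$ carrying a new complex structure but the \emph{same} underlying smooth fibration $\pi$ (Theorem \ref{theorem follows word by word}), so the rational Leray differential is unchanged; hence the map factors through and is $\Q$-linear in $\overline\phi$, with coefficient pinned down to $\lambda_\xi$ by the first-order computation above.
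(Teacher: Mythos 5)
Your proposal is correct and follows essentially the same route as the paper: represent $\xi$ by local closed $(1,1)$-forms, write $\phi_{ij}=\exp(v_{ij})$, and observe that the twisted \v{C}ech coboundary differs from the untwisted one by a primitive of $\phi_{ij}^*\xi_j-\xi_j$ whose fibrewise $\dibar$-class is $\lambda_\xi f_0(v_{ij})$, so that the correction cocycle is $\lambda_\xi(v_{ij}+v_{jk}+v_{ki})$, i.e.\ $\lambda_\xi\overline\phi$ by \ref{subsub boundary for sha}. The only real difference is that the paper disposes of all your higher-order and cross terms in one line --- it writes the primitive as $\gamma_{ij}=\int_0^1\exp(tv_{ij})^*\iota_{v_{ij}}\xi_j\,dt$ and uses that $\exp(tv_{ij})^*$ acts trivially on fibrewise Dolbeault cohomology, so $[\gamma_{ij}]=\int_0^1[\iota_{v_{ij}}\xi_j]\,dt=[\iota_{v_{ij}}\xi_j]$ --- which renders the detour through translation-invariant representatives over $B^\circ$ (the step you flag as the delicate one) unnecessary.
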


\begin{proof}
Represent $\xi\in H^0(R^2\pi_*\Q)$ by a collection of closed $(1,1)$-forms $\xi_i$ on $X_i$. When we view $X_i$ as an open subset of $X^\phi$, we will denote the same forms by $\xi_i^\phi$. The difference $\xi_j^\phi - \xi_i^\phi$ is not the same as $\xi_j - \xi_i$ because there is a twist by $\phi_{ij}\in Aut^0_{X/B}(U_{ij})$ involved. Namely
    $$
    \xi_j^\phi - \xi_i^\phi = \phi_{ij}^*\xi_j - \xi_i = (\phi_{ij}^*\xi_j - \xi_j) + (\xi_j - \xi_i).
    $$
    As in \ref{subsub boundary for forms}, write $\xi_j - \xi_i = d\rho_{ij}$. Find a vector field $v_{ij}$ such that $\phi_{ij} = \exp(v_{ij})$. Then
    \begin{multline*}
        \phi_{ij}^*\xi_j - \xi_j = \int\limits_0^1\frac{d}{dt}\left(\exp(tv_{ij})^*\xi_j\right) dt = \int\limits_0^1\exp(tv_{ij})^*(\mathrm{L}_{v_{ij}}\xi_j) dt = \\
        =\int\limits_0^1\exp(tv_{ij})^*(d\iota_{v_{ij}}+\iota_{v_{ij}}d)\xi_j dt =
        d\int\limits_0^1\exp(tv_{ij})^*\iota_{v_{ij}}\xi_j dt.
    \end{multline*}
    Here $\mathrm{L}$ denotes the Lie derivative. The second equality follows from the definition of the Lie derivative:
    $$
    \mathrm{L}_{v}\xi = \left.\frac{d}{dt}(\exp(tv)^*\xi) \right|_{t=0}.
    $$
    The third equality is the Cartan formula 
    $$
    \mathrm{L} = d\iota_v + \iota_vd,
    $$
    and the last equality holds because $\xi_j$ is closed. Set $\gamma_{ij} := \int\limits_0^1\exp(tv_{ij})^*\iota_{v_{ij}}\xi_j dt$. Then
    \begin{equation}
    \label{equation_xi_rho_gamma}
    \xi_j^\phi - \xi_i^\phi = d(\rho_{ij} + \gamma_{ij}).
    \end{equation}
    The form $\gamma_{ij}$ is of type $(0,1)$ and $\bar\partial$-closed. Its class under the $\bar\partial$-differential is
    $$
        \int\limits_0^1[\exp(tv_{ij})^*\iota_{v_{ij}}\xi_j]dt = \int\limits_0^1[\iota_{v_{ij}}\xi_j]dt = [\iota_{v_{ij}}\xi_j] = f_\xi(v_{ij}) = \lambda_\xi f_0(v_{ij}).
    $$
    The third equality follows from the definition of $f_\xi$ in \ref{subsub iso pistart and ronepistarox} and the last equality from the definition of $\lambda_\xi$ in (\ref{equation_lambda}). As in \ref{subsub boundary for forms}, let $w_{ijk}$ be the unique  vector field in $\Gamma_\Q(U_{ijk})$ such that $f_0(w_{ijk}) = [\rho^{0,1}_{ij} + \rho^{0,1}_{jk} + \rho^{0,1}_{ki}]$. It follows from (\ref{equation_xi_rho_gamma}) that the class $d_2^\phi(\xi)$ can be represented by the cocycle
    $$
    w_{ijk} + \lambda_\xi(v_{ij} + v_{jk} + v_{ki}).
    $$
    Indeed,
    $$
    f_0(w_{ijk} + \lambda_\xi(v_{ij} + v_{jk} + v_{ki})) = [\rho^{0,1}_{ij} + \gamma_{ij} + \rho^{0,1}_{jk} + \gamma_{jk} + \rho^{0,1}_{ki} + \gamma_{ki}].
    $$
    By \ref{subsub boundary for forms}, the class of the cocycle $w_{ijk}$ in $H^2(\Gamma_\Q)$ is $d_2(\xi)$ and by \ref{subsub boundary for sha}, the class of the cocycle $v_{ij} + v_{jk} + v_{ki}$ in $H^2(\Gamma_\Q)$ is $\overline\phi$. The claim follows.
    
\end{proof}

\begin{corollary}
\label{corollary differentials coincide}
    Let $\pi\colon X\to B$ be a Lagrangian fibration. Then for any $\phi\in \Sha'$ we have $d_2^\phi = d_2$.
\end{corollary}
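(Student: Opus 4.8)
The plan is to read this corollary off directly from Proposition \ref{proposition d_2}, which already does all the work. That proposition computes the two Leray differentials of $\pi$ and $\pi^\phi$ explicitly in \v{C}ech terms and records their difference as
$$
d_2^\phi(\xi) - d_2(\xi) = \lambda_\xi\,\overline\phi
$$
for every $\xi\in H^0(R^2\pi_*\Q)$, where $\overline\phi\in H^2(R^1\pi_*\Q)\simeq H^2(\Gamma_\Q)$ is the image of $\phi$ under the boundary map in the sequence (\ref{seq sha}), and $\lambda_\xi\in\C$ is the scalar attached to $\xi$ in \ref{subsub isos}. So the whole statement reduces to observing that the correction term vanishes identically once $\phi$ lies in $\Sha'$.

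First I would recall the definition of $\Sha'$: it is precisely the set of classes $\phi\in\Sha$ whose image $\overline\phi$ in $(\Sha/\Sha^0)\otimes\Q$ is zero. Under the identifications $(\Sha/\Sha^0)\otimes\Q\simeq H^2(\Gamma_\Q)\simeq H^2(R^1\pi_*\Q)$ fixed in \ref{subsub isos}, this is the same class $\overline\phi$ that appears in Proposition \ref{proposition d_2}; the only point worth spelling out is the compatibility between the boundary map $\Sha\to H^2(\Gamma)$ of \ref{subsub boundary for sha}, given on cocycles by $\phi_{ij}=\exp(v_{ij})\mapsto v_{ij}+v_{jk}+v_{ki}$, and the quotient map $\phi\mapsto\overline\phi$ defining $\Sha'$. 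Granting this identification, $\phi\in\Sha'$ forces $\overline\phi=0$ in $H^2(R^1\pi_*\Q)$.

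Substituting $\overline\phi=0$ into the formula of Proposition \ref{proposition d_2} gives $d_2^\phi(\xi)=d_2(\xi)$ for every $\xi\in H^0(R^2\pi_*\Q)$, hence $d_2^\phi=d_2$. There is no genuine obstacle here: the corollary is an immediate specialization of Proposition \ref{proposition d_2}, and the only bookkeeping required is to confirm that the two occurrences of the symbol $\overline\phi$ — the one controlling the difference of differentials and the one cutting out $\Sha'$ — agree under the fixed isomorphism $H^2(\Gamma_\Q)\simeq H^2(R^1\pi_*\Q)$.
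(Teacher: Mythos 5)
Your proposal is correct and matches the paper's own proof, which likewise just notes that $\phi\in\Sha'$ forces $\overline\phi=0$ and then applies Proposition \ref{proposition d_2}. The extra bookkeeping you flag about identifying the two occurrences of $\overline\phi$ under the isomorphism $H^2(\Gamma_\Q)\simeq H^2(R^1\pi_*\Q)$ is sound and consistent with the conventions fixed in \ref{subsub isos} and \ref{subsub boundary for sha}.
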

\begin{proof}
    For any $\phi\in\Sha'$, the class $\overline\phi\in H^2(\Gamma_\Q)$ vanishes by the definition of $\Sha'$. Proposition \ref{proposition d_2} implies that $d_2^\phi = d_2$.
\end{proof}

\begin{corollary}
\label{corollary kernels equal}
    Let $\pi\colon X\to B$ be a Lagrangian fibration. Pick a class $\phi\in \Sha$ such that $\overline\phi\in H^2(\Gamma_\Q)$ does not vanish. Consider the restriction maps
    $$
    r\colon H^2(X)\to H^0(R^2\pi_*\Q),\:\:\:\:\:r^\phi\colon H^2(X^\phi)\to H^0(R^2\pi_*\Q).
    $$
    Let $H^2(X)^{0}$ (resp. $H^2(X^\phi)^{0})$ denote the subspace of classes in $H^2(X)$ (resp. $H^2(X^\phi))$ that restrict trivially to a smooth fiber. Then
    $$
    \im r\cap \im r^\phi = r(H^2(X)^{0}) = r^\phi(H^2(X^\phi)^{0}).
    $$
\end{corollary}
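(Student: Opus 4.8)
The plan is to translate all five subspaces in the statement into intersections of kernels of the two Leray differentials $d_2$ and $d_2^\phi$, and then to read off every equality from Proposition \ref{proposition d_2} together with the hypothesis $\overline\phi\neq 0$.

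First I would recast the two images. The map $r$ is the edge homomorphism $H^2(X)\to H^0(R^2\pi_*\Q)=E_2^{0,2}$ of the Leray spectral sequence for $\pi$, so $\im r = E_\infty^{0,2}$. Since $B$ has the rational cohomology of $\mathbb P^n$ (Theorem \ref{theorem cohomology of omega}), the groups $E_2^{p,0}=H^p(B,\Q)$ vanish for $p$ odd; hence every differential leaving $E_3^{0,2}$ lands in an odd-degree cohomology group of $B$ and so vanishes, while there is no incoming $d_2$. This identifies $\im r=E_\infty^{0,2}=\ker\big(d_2\colon H^0(R^2\pi_*\Q)\to H^2(R^1\pi_*\Q)\big)$, and the identical argument for $\pi^\phi$ gives $\im r^\phi=\ker d_2^\phi$.

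Next I would pin down the trivial-restriction subspaces. Let $\rho\colon H^0(R^2\pi_*\Q)\to H^2(F)^{\pi_1(B^\circ)}$ be the restriction to a smooth fiber; by \ref{subsub restriction map} its target is one-dimensional, and a class $\alpha$ restricts trivially to $F$ exactly when $r(\alpha)\in\ker\rho$. Writing $V^0:=\ker\rho$, the elementary identity $r(r^{-1}(V^0))=\im r\cap V^0$ gives $r(H^2(X)^0)=\im r\cap V^0$, and likewise $r^\phi(H^2(X^\phi)^0)=\im r^\phi\cap V^0$, the sheaf $R^2\pi_*\Q$ and the fiber restriction being insensitive to the twist. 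The key auxiliary observation is that $V^0$ is also the kernel of the functional $\xi\mapsto\lambda_\xi$ from \ref{subsub isos}: since $f_\xi$ depends only on $\xi|_F$ and $f_{\xi_0}=f_0\neq 0$ for a class $\xi_0$ restricting nontrivially to $F$, one has $\lambda_\xi=0$ if and only if $\xi|_F=0$.

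Finally I would invoke Proposition \ref{proposition d_2}, which says $d_2^\phi(\xi)=d_2(\xi)+\lambda_\xi\overline\phi$. On $V^0$ the functional $\lambda$ vanishes, so $d_2$ and $d_2^\phi$ agree there, giving $\im r\cap V^0=\im r^\phi\cap V^0$; this already yields $r(H^2(X)^0)=r^\phi(H^2(X^\phi)^0)$. For the remaining equality $\im r\cap\im r^\phi=\im r\cap V^0$, the inclusion $\supseteq$ is immediate from the identity just proved, and for $\subseteq$ note that if $\xi\in\ker d_2\cap\ker d_2^\phi$ then $0=d_2^\phi(\xi)-d_2(\xi)=\lambda_\xi\overline\phi$, so the hypothesis $\overline\phi\neq 0$ forces $\lambda_\xi=0$, i.e. $\xi\in V^0$. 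I expect the main obstacle to be the bookkeeping of the first step — correctly justifying $\im r=\ker d_2$ through the degeneration of the edge map — since once Proposition \ref{proposition d_2} and the identification $V^0=\ker\lambda=\ker\rho$ are in place, everything else is a short chase turning entirely on $\overline\phi\neq 0$.
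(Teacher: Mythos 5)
Your proof is correct and follows essentially the same route as the paper: identify $\im r=\ker d_2$ and $\im r^\phi=\ker d_2^\phi$ via the Leray spectral sequence, and then use Proposition \ref{proposition d_2} together with $\overline\phi\ne 0$ to see that a class lies in both kernels exactly when $\lambda_\xi=0$, i.e.\ when it restricts trivially to a smooth fiber. You merely spell out the bookkeeping (the vanishing of $d_3$ into $H^3(B,\Q)=0$ and the identity $r(r^{-1}(V^0))=\im r\cap V^0$) that the paper leaves implicit.
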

\begin{proof}
    The image of the restriction map $r$ (resp. $r^\phi$) coincides with the kernel of $d_2$ (resp. $d_2^\phi$). By Proposition \ref{proposition d_2}, a class $\xi$ lies in the kernel of both $d_2$ and $d_2^\phi$ if and only if $\lambda_\xi = 0$, i.e., the restriction of $\xi$ to a smooth fiber is trivial. The claim follows.
\end{proof}

\begin{proposition}
\label{prop b_2}
    Let $\pi\colon X\to B$ be a Lagrangian fibration on an irreducible hyperk\"ahler manifold and $X^\phi$ its Shafarevich--Tate twist. Then either $b_2(X^\phi) = b_2(X)$ or $b_2(X^\phi) = b_2(X)-1$. The first case occurs if and only if there is a class $h\in H^2(X)$ whose restriction to a smooth fiber is non-trivial.
\end{proposition}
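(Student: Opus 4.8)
The plan is to read $b_2$ off the Leray spectral sequence $E_2^{p,q}=H^p(B,R^q\pi_*\Q)\Rightarrow H^{p+q}(X,\Q)$ and compare it with the analogous sequence for $\pi^\phi$. The decisive input is that the two $E_2$-pages coincide, since $R^q\pi_*\Q$ and $R^q\pi^\phi_*\Q$ are canonically identified; only the differentials can differ. In total degree $2$ the contributions are $E_\infty^{0,2}$, $E_\infty^{1,1}$ and $E_\infty^{2,0}$. Because $B$ has the rational cohomology of $\mathbb P^n$ we have $H^3(B,\Q)=0$, and by the proof of Proposition \ref{proposition H1 vanishes} one has $H^0(R^1\pi_*\Q)=0$; these vanishings annihilate every differential entering or leaving $E^{1,1}$ and $E^{2,0}$, so $E_\infty^{1,1}=H^1(R^1\pi_*\Q)$ and $E_\infty^{2,0}=H^2(B,\Q)$ are twist-independent. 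There is also no $d_3$ out of $E^{0,2}$ (its target is a subquotient of $H^3(B,\Q)=0$), so $E_\infty^{0,2}=\ker d_2$ and likewise $\ker d_2^\phi$. Thus the problem reduces to the identity
$$ b_2(X^\phi)-b_2(X)=\dim_\Q\ker d_2^\phi-\dim_\Q\ker d_2. $$

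Next I would invoke Proposition \ref{proposition d_2}, which gives $d_2^\phi(\xi)=d_2(\xi)+\lambda_\xi\,\overline\phi$ on $V:=H^0(R^2\pi_*\Q)$. Write $V_0:=\ker\lambda$ for the hyperplane of classes restricting trivially to a smooth fiber; the functional $\lambda$ is nonzero because, by \ref{subsub restriction map}, the image of $H^2(X,\Q)$ in $H^2(F,\Q)$ is one-dimensional and nonzero. The two differentials agree on $V_0$, so $\ker d_2\cap V_0=\ker d_2^\phi\cap V_0=:K_0$, and each kernel has dimension $\dim K_0$ or $\dim K_0+1$ according to whether it is contained in $V_0$ or not; in particular $b_2(X^\phi)$ and $b_2(X)$ differ by at most one. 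The asymmetry fixing the sign is that $X$ is genuinely Kähler: a rational ample class on $X$ restricts nontrivially to a smooth fiber, and since $\im r=\ker d_2$ (Corollary \ref{corollary kernels equal}), this forces $\ker d_2\not\subset V_0$, whence $\dim\ker d_2=\dim K_0+1$. Therefore $\dim\ker d_2^\phi\in\{\dim K_0,\dim K_0+1\}$ yields exactly $b_2(X^\phi)\in\{b_2(X)-1,\,b_2(X)\}$, with the value $b_2(X)$ attained precisely when $\ker d_2^\phi\not\subset V_0$. As $\im r^\phi=\ker d_2^\phi$, this last condition says precisely that some cohomology class on the twist $X^\phi$ restricts nontrivially to a smooth fiber, which is the asserted criterion (note that on $X$ itself such a class always exists, so the criterion is genuinely a condition on the twist, matching Theorem \ref{theorem b2}).

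I expect the main obstacle to be the bookkeeping of the first step: confirming rigorously that all discrepancy between the two spectral sequences is confined to $E_\infty^{0,2}$, so that the change in $b_2$ is governed solely by the rank-one perturbation $\lambda_{\bullet}\,\overline\phi$ of $d_2$. Once this is in place, the essential point — that the \emph{honest} Kählerness of $X$, rather than of $X^\phi$, guarantees $\ker d_2\not\subset V_0$ — is what excludes an upward jump of $b_2$ and makes the dichotomy one-sided; the remaining comparison of kernel dimensions is elementary linear algebra.
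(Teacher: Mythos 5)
Your proof is correct and follows essentially the same route as the paper: both read $b_2$ off the Leray spectral sequence, isolate the only twist-dependent term $E_\infty^{0,2}=\ker d_2^\phi$ using $H^3(B,\Q)=0$ and $H^0(R^1\pi_*\Q)=0$, and then compare kernels of the rank-one perturbation $d_2^\phi = d_2+\lambda_\bullet\,\overline\phi$ from Proposition \ref{proposition d_2}, with the K\"ahlerness of $X$ supplying $\ker d_2\not\subset\ker\lambda$ and hence the one-sidedness of the dichotomy. You also correctly read the criterion as a condition on $H^2(X^\phi)$ rather than $H^2(X)$, which matches the paper's own proof and Theorem \ref{theorem b2}.
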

\begin{proof}
As will be explained below, it follows from the Leray spectral sequence for $\Q_{X^\phi}$ that
    \begin{equation}
    \label{eq b2}
    b_2(X^\phi) = b_2(B) + \dim H^1(R^1\pi_*\Q) + \rk r^\phi.
    \end{equation}
Indeed, 
$$
h^2(X^\phi) = E_\infty^{2,0} + E_\infty^{1,1} + E_\infty^{0,2}.
$$
The vector space $H^0(R^1\pi_*\Q)$ vanishes by Proposition \ref{proposition H1 vanishes}, hence the map $H^2(B)\to H^2(X^\phi)$ is injective and $E_\infty^{2,0} = E_2^{2,0} = H^2(B, \Q)$. Moreover, the second differential $d_2\colon H^1(R^1\pi_*\Q)\to H^3(B,\Q)$ is zero because $H^3(B,\Q) = 0$ \cite{shen2022topology}. Thus $E_\infty^{1,1} = E_2^{1,1} = H^1(R^1\pi_*\Q)$. Finally $E^{0,2}_\infty = \im r_\phi$. The formula (\ref{eq b2}) follows. Applying this formula to $X^\phi$ and $X$, we obtain that
$$
b_2(X^\phi) - b_2(X) = \rk r^\phi - \rk r.
$$

Since $X$ is hyperk\"ahler, the subspace $H^2(X)^{0}$ has codimension $1$ in $H^2(X)$, hence 
$$
    \rk r = \dim r(H^2(X)^{0}) + 1 = \dim r^\phi (H^2(X^\phi)^{0}) + 1,
$$
where the last equality holds by Corollary \ref{corollary differentials coincide} if $\overline\phi = 0$ and by Corollary \ref{corollary kernels equal} if $\overline\phi\ne 0$. If there is a class in $H^2(X^\phi)$ restricting non-trivially to a smooth fiber, then $H^2(X^\phi)^{0}$ is a hyperplane in $H^2(X^\phi)$ (Proposition \ref{proposition image of restriction map}) and
    $$
    \rk r^\phi = \dim r^\phi (H^2(X^\phi)^{0}) + 1 = \rk r.
    $$
In this case, $b_2(X) = b_2(X^\phi)$. Otherwise, $H^2(X^\phi)^{0} = H^2(X^\phi)$, hence
    $$
    \rk r^\phi = \dim r^\phi (H^2(X^\phi)^{0}) = \rk r - 1,
    $$
and 
    $$
    b_2(X) = b_2(X^\phi) + 1.
    $$
\end{proof}

\begin{proposition}
\label{prop classes non trivial on fibers}
    Let $\pi\colon X\to B$ be a Lagrangian fibration on an irreducible hyperk\"ahler manifold and $X^\phi$ its Shafarevich--Tate twist. Then there is a class $h\in H^2(X^\phi)$ that restricts non-trivially to a smooth fiber if and only if $\overline\phi \in H^2(\Gamma_\Q)$ is in the image of the boundary map $d_2\colon H^0(R^2\pi_*\Q)\to H^2(R^1\pi_*\Q)\simeq H^2(\Gamma_\Q)$.
\end{proposition}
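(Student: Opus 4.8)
The plan is to translate the statement into linear algebra on $H^0(R^2\pi_*\Q)$ by means of Proposition \ref{proposition d_2}, the edge homomorphism of the Leray spectral sequence, and the special geometry of the hyperk\"ahler $X$.

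First I would reduce everything to a statement about $\ker d_2^\phi$. As recorded in the proof of Corollary \ref{corollary kernels equal}, the image of the restriction map $r^\phi\colon H^2(X^\phi)\to H^0(R^2\pi_*\Q)$ equals $\ker d_2^\phi$: it is the edge map of the Leray spectral sequence, and the only other differential out of the $E^{0,2}$--spot targets $H^3(B,\Q)=0$, so $E_\infty^{0,2}=\ker d_2^\phi$. Recalling the linear functional $\lambda\colon H^0(R^2\pi_*\Q)\to\Q$, $\xi\mapsto\lambda_\xi$ from \ref{subsub isos}, with $\lambda_\xi=0$ precisely when $\xi$ restricts trivially to a smooth fiber $F$ (proof of Corollary \ref{corollary kernels equal}), a class $h$ restricts non-trivially to $F$ exactly when $\lambda_{r^\phi(h)}\ne 0$. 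Thus the proposition becomes: there exists $\xi\in\ker d_2^\phi$ with $\lambda_\xi\ne 0$ if and only if $\overline\phi\in\im d_2$.

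The crucial step will be choosing a good splitting of $H^0(R^2\pi_*\Q)$. Since $X$ is hyperk\"ahler, \ref{subsub restriction map} supplies a rational class $h_0\in H^2(X,\Q)$ restricting non-trivially to $F$; its image $\xi_0:=r(h_0)\in H^0(R^2\pi_*\Q)$ then has $\lambda_{\xi_0}\ne 0$ and, because $\xi_0\in\im r=\ker d_2$, also $d_2(\xi_0)=0$. After rescaling $\lambda_{\xi_0}=1$, one gets $H^0(R^2\pi_*\Q)=\ker\lambda\oplus\Q\,\xi_0$, and, using $d_2(\xi_0)=0$, the identity $\im d_2=d_2(\ker\lambda)$. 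Writing a general element as $\xi=\kappa+c\,\xi_0$ with $\kappa\in\ker\lambda$ and $c=\lambda_\xi$, Proposition \ref{proposition d_2} yields
$$
d_2^\phi(\xi)=d_2(\kappa)+c\,\overline\phi .
$$
Both implications are then immediate: an element of $\ker d_2^\phi$ with $c\ne 0$ forces $\overline\phi=-c^{-1}d_2(\kappa)\in d_2(\ker\lambda)=\im d_2$; conversely, writing $\overline\phi=d_2(\kappa_0)$ with $\kappa_0\in\ker\lambda$, the element $\xi_0-\kappa_0$ satisfies $\lambda_{\xi_0-\kappa_0}=1\ne 0$ and $d_2^\phi(\xi_0-\kappa_0)=-d_2(\kappa_0)+\overline\phi=0$, so it lies in $\ker d_2^\phi=\im r^\phi$ and produces the desired class $h$.

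I expect the main obstacle to be exactly the existence of the element $\xi_0$ that simultaneously restricts non-trivially to $F$ and lies in $\ker d_2$. It is precisely the hyperk\"ahlerness of $X$ (through Theorem \ref{theorem_restriction} and \ref{subsub restriction map}) that makes such a class available, and this is what collapses the affine coset which would otherwise appear in the computation into the honest linear subspace $\im d_2$. Everything else reduces to the formal computation above together with the edge-map identification.
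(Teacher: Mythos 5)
Your proposal is correct and follows essentially the same route as the paper: both directions rest on the identity $d_2^\phi(\xi)=d_2(\xi)+\lambda_\xi\overline\phi$ from Proposition \ref{proposition d_2}, the identification $\im r^\phi=\ker d_2^\phi$, and the existence (from hyperk\"ahlerness of $X$) of a class $\xi_0=r(h_0)$ with $\lambda_{\xi_0}=1$ and $d_2(\xi_0)=0$. Your element $\xi_0-\kappa_0$ in the converse direction is literally the paper's $\xi'=(1+\lambda_\xi)\overline{h_0}-\xi$ after the substitution $\kappa_0=\xi-\lambda_\xi\xi_0$, so the splitting $H^0(R^2\pi_*\Q)=\ker\lambda\oplus\Q\,\xi_0$ is only a cleaner repackaging of the same computation.
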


\begin{proof}
    Suppose that there is a class $h\in H^2(X^\phi)$ that restricts non-trivially to a smooth fiber. Let $\overline h$ be its image in $H^0(B, R^2\pi_*\Q)$. By Proposition \ref{proposition d_2},
    $$
    0 = d_2^\phi(\overline h) = d_2(\overline h) + \lambda_h\overline\phi.
    $$
    Therefore, $\overline\phi = -d_2(\overline h)/\lambda_h$ is in the image of $d_2$.

    Conversely, suppose that $\overline\phi$ is in the image of $d_2$, i.e., there is a class $\xi\in H^0(R^2\pi_*\Q)$ such that $d_2\xi = \overline\phi$. Let $h_0$ be a K\"ahler class on $X$ with $\lambda_{h_0} = 1$ and $\overline{h_0}$ its image in $H^0(R^2\pi_*\Q)$. Consider the class
    $$
    \xi' := (1+\lambda_{\xi})\overline{h_0} - \xi.
    $$
    Then $\lambda_{\xi'} = 1$ and 
    $$
    d_2^\phi(\xi') = d_2(\xi') + \overline\phi = (1+\lambda_\xi)d_2(\overline{h_0}) - d_2(\xi) + \overline\phi = -d_2(\xi) + \overline\phi = 0.
    $$
    Here the first equality holds by Proposition \ref{proposition d_2}. Therefore, $\xi'$ lifts to a class in $H^2(X^\phi)$ which restricts non-trivially to smooth fibers.
\end{proof}

\subsubsection{}\label{subsub proof of theorem c}{\em Proof of Theorem \ref{theorem b2}.} Immediately follows from Propositions \ref{prop b_2} and \ref{prop classes non trivial on fibers}.\hfill$\qed$

\subsubsection{}\label{subsub proof of theorem d}{\em Proof of Theorem \ref{theorem fujiki class c}.}
    Suppose $X^\phi$ is of Fujiki class $\mathcal C$, i.e., there is a rational map $f\colon X^\phi\dashrightarrow Y$ to a K\"ahler manifold $Y$. Let $h\in H^2(X,\R)$ be the pullback of a K\"ahler form on $Y$. The restriction of $f$ to a general fiber $F$ of $\pi^\phi$ is birational onto its image, hence $h\restrict{F}$ is non-trivial. By Theorem \ref{theorem b2}, the twist satisfies $\overline\phi\in\im d_2$. \hfill $\qed$

\begin{remark}
Consider an abelian surface $A$ which is a product of elliptic curves $A=E\times F$. Let $K^n(A)$ be the generalized Kummer variety of $A$. It admits a Lagrangian fibration $\pi\colon K^n(A)\to \mathbb P^n$ whose general fiber is isomorphic to $F^n$. Let $p\colon S\to E$ be a primary Kodaira surface which is a principal torsor over $F$. This is a non-K\"ahler holomorphic symplectic surface. Consider its associated Bogomolov--Guan manifold $BG^n(S)$ \cite{guan1995examplesII, bogomolov1996guan}. It admits a Lagrangian fibration $\pi'\colon BG^n(S)\to \mathbb P^n$ whose general fiber is also isomorphic to $F^n$. Actually the non-K\"ahler holomorphic symplectic manifold $BG^n(S)$ is a Shafarevich--Tate twist of $K^n(A)$. As computed in \cite[Theorem 2]{guan1995examplesII}, $b_2(BG^n(S)) = 6$, which is exactly $b_2(K^n(A))-1$ in accordance with Theorem \ref{theorem b2}. Theorem \ref{theorem b2} also shows that the rank of the restriction map $H^2(BG^n(A))\to H^2(F^n)$ is zero.
\end{remark}

%%%%%%%%%%%%%%%%%%%%
%%%%%%%%%%%%%%%%%%%%%

%%%%%%%%%%%%%%%%%%%%%%%
%%%%%%%%%%%%%%%%%%%%%%%%%%
%%%%%%%%%%%%%%%%%%%%%%%%%%%

%%%%%%%%%%%%%%%%%%%%%%%%%%%
%%%%%%%%%%%%%%%%%%%%%%%%%%%%%

%%%%%%%%%%%%%%%%
%%%%%%%%%%%%%%

%%%%%%%%%%%%%%%%%%%
%%%%%%%%%%%%%%%%%%%%

\bibliographystyle{alpha}
\bibliography{main.bib}

\noindent {\sc {Anna Abasheva} \\
Columbia University,\\
Department of Mathematics, \\
2990 Broadway, \\
New York, NY, USA}\\
{\tt aa4643(at)columbia(dot)edu}

\end{document}